\numberwithin{equation}{section}
\theoremstyle{definition} 
\newtheorem{ex}{\bfseries \upshape Example}[section]
\newtheorem{dfn}[ex]{\bfseries \upshape Definition}
\newtheorem{rem}[ex]{\bfseries \upshape Remark}
\theoremstyle{plain}
\newtheorem{prop}[ex]{\bfseries \upshape Proposition}
\newtheorem{lem}[ex]{\bfseries \upshape Lemma}
\newtheorem*{theo}{\bfseries \upshape Main Theorem}
\newtheorem{thm}[ex]{\bfseries \upshape Theorem}
\newtheorem*{que*}{\bfseries \upshape Question}
\newtheorem*{claim*}{\textit{Claim}}
\newenvironment{pr}{\begin{proof}[{\bf Proof}]}{\end{proof}}
\def\a{\alpha}
\def\b{\beta}
\def\R{\mathbb R}
\def\N{{\mathbb N}}
\def\Z{\mathbb Z}
\def\T{\mathbb T}
\def\({\biggl(}
\def\){\biggr)}
\def\<{\mathbf{\langle}}
\def\>{\mathbf{\rangle}}
\newcommand{\Meng}[2]{\left\{#1\mathrel{}\middle|\mathrel{}#2\right\}}
\newcommand{\abs}[1]{\left\lvert#1\right\rvert}
\begin{document}
\title{{ \bf A smooth zero-entropy diffeomorphism whose product with itself is loosely Bernoulli}}
\author{{\sc Marlies Gerber\thanks{Indiana University, Department of Mathematics, Bloomington, IN 47405, USA} and Philipp Kunde\thanks{University of Hamburg, Department of Mathematics, 20146 Hamburg, Germany}}}

\maketitle

\bigskip

\begin{abstract} 
Let $M$ be a smooth compact connected manifold of dimension $d\geq 2$, possibly with boundary, that admits a smooth effective $\T^2$-action $\mathcal{S}=\left\{S_{\a,\b}\right\}_{(\a,\b) \in \mathbb{T}^2}$ preserving a smooth volume $\nu$, and let $\mathcal{B}$ be the $C^{\infty}$ closure of $\left\{h \circ S_{\a,\b} \circ h^{-1} \;:\;h \in \text{Diff}^{\infty}\left(M,\nu\right), (\a,\b) \in \T^2\right\}$. 
We construct a $C^{\infty}$ diffeomorphism $T \in \mathcal{B}$ with topological entropy $0$  such that $T \times T$ is loosely Bernoulli. Moreover, we show that the set of such $T \in \mathcal{B}$ contains a dense $G_{\delta}$ subset of $\mathcal{B}$. The proofs are based on a two-dimensional version of the approximation-by-conjugation method. 
\end{abstract}

\insert\footins{\footnotesize - \\
\textit{2010 Mathematics Subject classification:} Primary: 37A20; Secondary: 37A05, 37A35, 37C40\\
\textit{Key words: } Standardness, loosely Bernoulli, Kakutani equivalence, periodic approximation, smooth ergodic theory, approximation-by-conjugation method, entropy}

\section{Introduction}
An important question in ergodic theory (see, for example,  \cite[p.89]{OW}) dating back to the foundational paper \cite{Neu} of von Neumann is: 
\begin{que*} 
Are there smooth versions of the objects and concepts of abstract ergodic theory? 
\end{que*}
The objects that we consider are ergodic automorphisms of finite measure
spaces, more precisely, ergodic measure-preserving bijections from
an atom-free standard measure space of finite measure to itself. By a smooth version of an ergodic automorphism we mean a $C^{\infty}$ diffeomorphism of a compact manifold
preserving a $C^{\infty}$ measure equivalent to the volume element
that is (measure) isomorphic to that ergodic automorphism. The
only known restriction for obtaining a smooth version of such an automorphism
is due to A. G. Kushnirenko, who proved that the measure-theoretic  entropy must be finite.
However, there is a scarcity of general results on the smooth
realization problem. 

The pioneering work in the area of smooth realization is the paper
of D. Anosov and A. Katok \cite{AK}, who gave the first example of
a smooth version of a weakly mixing automorphism of the disk, using
what is now called the \emph{approximation-by-conjugation method} (sometimes
abbreviated as the\emph{ AbC method}), or the \emph{Anosov-Katok method}.
The related theory of periodic approximation was developed by Katok
and A. Stepin \cite{KS1}. These results have been applied in many
subsequent papers, e.g., \cite{FS}, \cite{FSW}, \cite{GK}, and are also important for the present paper. 

Our main result is the smooth realization of a zero-entropy automorphism
$T$ whose Cartesian product $T\times T$ is loosely Bernoulli, on
any compact manifold, possibly with boundary, that admits an effective
smooth $\mathbb{T}^{2}$-action. In the positive entropy case, the
smooth realization of such an automorphism $T$ already follows from the work
of M. Brin, J. Feldman, and Katok \cite{BFK}, who showed that any
compact manifold of dimension greater than one admits a smooth Bernoulli
diffeomorphism. In the present discussion, and throughout our paper, the entropy of an automorphism means the measure-theoretic entropy with respect to the given invariant measure, unless we specify otherwise. However, it follows from the work of S. Glasner and D. Maon \cite{GM} and H. Furstenberg and B. Weiss \cite{FuW} that our example $T$ also has topological entropy zero. 

The loosely Bernoulli property was introduced by Katok \cite{K2}
in the case of zero entropy, and, independently, by Feldman \cite{Fe} in the general case. This property is used to study orbit equivalence of
flows. Two ergodic measure-preserving automorphisms $T$ and $S$
are said to be \emph{Kakutani equivalent} if they are isomorphic to
cross-sections of the same ergodic measure-preserving flow (see Theorem 1.14 in \cite{ORW}). From Abramov's entropy formula,
it follows that two Kakutani equivalent automorphisms must both have
entropy zero, both have finite positive entropy, or both have infinite
entropy. It was a long-standing open problem whether these three possibilities
for entropy completely characterized Kakutani equivalence classes,
but Feldman \cite{Fe} showed that there are at least two non-Kakutani
equivalent ergodic transformations $T$ and $S$ of entropy zero,
and likewise for finite positive entropy and infinite entropy. D.
Ornstein, D. Rudolph, and Weiss \cite{ORW} generalized Feldman's
construction to obtain an uncountable family of pairwise non-Kakutani
equivalent ergodic automorphisms
in each of the three entropy classes. 

An ergodic automorphism $T$ that is Kakutani equivalent to an irrational
circle rotation (in the case of entropy zero) or a Bernoulli shift
(in case of non-zero entropy) is said to be \emph{loosely Bernoulli.
}Zero-entropy loosely Bernoulli automorphisms are also called \emph{standard.}
There is a metric on strings of symbols called the $\overline{f}$
metric (defined in Section \ref{section:crit}) that was used by Feldman to create a
Kakutani equivalence theory for loosely Bernoulli automorphisms that
parallels Ornstein's $\overline{d}$ metric and his isomorphism theory
for Bernoulli shifts. In case of zero entropy, the $\overline{f}$
metric gives a simple characterization of loosely Bernoulli automorphisms
\cite{KSa}. (See Theorem \ref{thm:Katok-Sataev} below.)

If $T$ is an ergodic automorphism such that $T\times T$ is loosely
Bernoulli, then $T$ must itself be loosely Bernoulli. However, the
converse is not true. M. Ratner showed that for any (non-identity)
transformation $T$ in the horocycle flow, $T$ is loosely Bernoulli
(\cite{R1}), but $T\times T$ is not loosely Bernoulli (\cite{R2}). Katok
gave the first example of a zero-entropy automorphism $T$ whose product
with itself is loosely Bernoulli (described in Section 2 of \cite{G}).
Moreover, Katok \cite[Proposition 3.5]{K} found a sufficient condition
in terms of periodic approximations of $T$ for $T\times T$
to be loosely Bernoulli (see Section \ref{subsection:theory}). We prove that $T\times T$ is loosely Bernoulli under a slightly weaker hypothesis on the periodic approximations of $T$ (see  Proposition \ref{prop:crit}). Automorphisms obtained from
periodic approximations in this way are weakly mixing but not mixing. However, there is also a zero-entropy mixing automorphism whose product with
itself is loosely Bernoulli (\cite{G}). All of the previously known
zero-entropy examples with $T\times T$ loosely Bernoulli are not
smooth. The zero-entropy example $T$ in the present paper with $T\times T$
loosely Bernoulli is a smooth weakly mixing diffeomorphism, but it
is not mixing. We also show that such examples are generic in the
following sense: Suppose  $\{S_{\alpha,\beta}\}_{(\alpha,\beta)\in\mathbb{T}^2}$ is a
smooth effective toral action preserving a smooth volume $\nu$ on a compact connected manifold $M$, and Diff$^{\infty}(M,\nu)$ is the set of $C^{\infty}$ diffeomorphisms of $M$ that preserve $\nu$. Then the set of those transformations $T$ in the $C^{\infty}$ closure
of $\{h\circ S_{\alpha,\beta}\circ h^{-1}:h\in\text{Diff}^{\infty}(M,\nu),(\alpha,\beta)\in\mathbb{T}^{2}\}$
such that $T\times T$ is loosely Bernoulli contains a dense $G_{\delta}$ subset in the $C^{\infty}$ topology.

In contrast to our results, A. Kanigowski and D. Wei (\cite{KW}) recently
proved that for a full measure set of parameters, the Cartesian product
of two Kochergin flows (\cite{Ko}) with different exponents is \emph{not}
loosely Bernoulli, even though the Kochergin flows themselves are
loosely Bernoulli. The case of a Cartesian product of a Kochergin
flow with itself is not covered by their proof, but Kanigowski and
Wei suggest that analogous results may also hold in that case. The
Kochergin flows are smooth, but have one degenerate fixed point. 

Another interesting recent result related to the smooth realization
problem was obtained by M. Foreman and Weiss in a series of papers \cite{FW1, FW2, FW3}, based partly
on earlier joint work with Rudolph \cite{FRW}. For $M=\mathbb{T}^{2},\mathbb{D}^{2}$
or the annulus, they found a way to code smooth diffeomorphisms on
$M$ constructed by the (untwisted) AbC method into some symbolic
spaces called \emph{uniform circular systems.} Thereby, they
were able to show that the measure isomorphism relation among pairs
of measure-preserving smooth diffeomorphisms on $M$ is not a Borel
set with respect to the $C^{\infty}$ topology. As discussed in \cite{FW1}, this can be interpreted as an "anti-classification" result for measure-preserving smooth diffeomorphisms on $M$.

\section{Preliminaries}
We begin by reviewing the concept of periodic approximation and several important related notions and results. We continue with a presentation of the general scheme of the AbC (approximation-by-conjugation) method. 
Finally we give definitions of distances on the space of $C^{\infty}$ functions that will be useful later in this paper, especially in Section \ref{section:conv}.

\subsection{Periodic approximation in ergodic theory} \label{subsection:theory}
The brief introduction to periodic approximation in this section is based on the more comprehensive presentation in \cite{K}. 

Let $\left(X, \mu\right)$ be  an atom-free standard probability space. A \emph{tower} $t$ of height $h(t)=h$ is an ordered sequence of disjoint measurable sets $t=\left\{c_1,...,c_h\right\}$ of $X$ having equal measure, which is denoted by $m\left(t\right)$. The sets $c_i$ are called the \emph{levels} of the tower; in particular, $c_1$ is the \emph{base}. Associated with a tower there is a \emph{cyclic permutation} $\sigma$ sending $c_1$ to $c_2$, $c_2$ to $c_3$,..., and $c_h$ to $c_1$. Using the notion of a tower we can give the next definition:
\begin{dfn}
A \emph{periodic process} is a collection of disjoint towers in the space $X$ and the associated cyclic permutations together with an equivalence relation among these towers identifying their bases. 
\end{dfn}
\begin{rem}\label{towers}
The towers of a periodic process, as defined in \cite{KS1, KS, K}, are actually required to cover the space. However, this is not necessary for Theorems \ref{thm:erg}, \ref{thm:wm}, and \ref{thm:square} stated at the end of this subsection. In fact, along a sequence of exhaustive periodic processes (to be defined below), the limit of the measure of the union of the towers is equal to one, and this slightly weaker condition can be substituted for having the towers cover. This follows from the results of \cite[Section 1.1]{{K}} for equivalent sequences of periodic processes.
\end{rem}
Furthermore, we introduce the notion of a \emph{partial partition} of a measure space $\left(X,\mu\right)$, which is a pairwise disjoint countable collection of measurable subsets of $X$.
\begin{dfn}
\begin{itemize}
	\item A sequence of partial partitions $\nu_n$ \emph{converges to the decomposition into points} if and only if for every measurable set $A$ and for every $n \in \mathbb{N}:=\{1,2,\dots \}$ there exists a measurable set $A_n$, which is a union of elements of $\nu_n$, such that $\lim_{n \rightarrow \infty} \mu \left( A \triangle A_n \right) = 0$. We often denote this by $\nu_n \rightarrow \varepsilon$.
	\item A partial partition $\nu$ is a \emph{refinement} of a partial partition $\eta$ if and only if for every $C \in \nu$ there exists a set $D \in \eta$ such that $C\subseteq D$. We write this as $\eta \leq \nu$.
\end{itemize}
\end{dfn}
Using the notion of a partition we can introduce the weak topology in the space of measure-preserving transformations on a Lebesgue space:
\begin{dfn}
\begin{enumerate}
	\item For two measure-preserving transformations $T,S$ and for a finite partition $\xi$ the \emph{weak distance} with respect to $\xi$ is defined by $d\left(\xi, T, S\right) \coloneqq \sum_{c \in \xi} \mu\left(T\left(c\right) \triangle S\left(c\right)\right)$.
	\item The base of neighbourhoods of $T$ in the \emph{weak topology} consists of the sets 
	\begin{equation*}
	W\left(T, \xi, \delta\right) = \left\{S\;:\;d\left(\xi,T,S\right)< \delta \right\},
	\end{equation*}
	where $\xi$ is a finite partition and $\delta$ is a positive number.
\end{enumerate}
\end{dfn}
There are two partial partitions associated with a periodic process: The partition $\xi$ into all levels of all towers and the partition $\eta$ consisting of the union of bases of towers in each equivalence class and their images under the iterates of $\sigma$, where we disregard a tower once we go beyond the height of that tower and we continue until the highest tower in the equivalence class has been exhausted. Obviously, we have $\eta \leq \xi$. We will identify a periodic process with the corresponding triple $\left( \xi, \eta, \sigma \right)$. 

A sequence $\left(\xi_n, \eta_n, \sigma_n\right)$ of periodic processes is called \emph{exhaustive} if $\eta_n \rightarrow \varepsilon$. Moreover, we will call a sequence of towers $t^{(n)}$ from the periodic process $\left(\xi_n, \eta_n, \sigma_n\right)$ \emph{substantial} if there exists $r>0$ such that $h\left(t^{(n)}\right) \cdot m\left(t^{(n)}\right) >r$ for every $n \in \mathbb{N}$.
\begin{dfn}
Let $T: \left(X, \mu\right)\rightarrow \left(X, \mu\right)$ be a measure-preserving transformation. An exhaustive sequence of periodic processes $\left(\xi_n,\eta_n, \sigma_n\right)$ forms a \emph{periodic approximation} of $T$ if 
\begin{equation*}
d\left(\xi_n,T,\sigma_n\right)=\sum_{c \in \xi_n} \mu\left(T\left(c\right) \triangle \sigma_n\left(c\right)\right) \rightarrow 0 \ \ \ \ \text{as } n\rightarrow \infty.
\end{equation*}
Given a sequence $g\left(n\right)$ of positive numbers we will say that the sequence of periodic processes  $(\xi_n,\eta_n,\sigma_n)$ is a periodic approximation of the transformation $T$ with \emph{speed} $g(n)$ if the sequence is exhaustive and  $d\left(\xi_n,T,\sigma_n \right)\le g\left(n\right)$ for all $n$. 
\end{dfn}

There are various types of periodic approximations. We introduce the most important ones:
\begin{dfn}
\begin{enumerate}
	\item A \emph{cyclic process} is a periodic process which consists of a single tower of height $h$. An approximation by an exhaustive sequence of cyclic processes with towers of height $h_n$  is called a \emph{cyclic approximation}. A \emph{good cyclic approximation} will refer to a cyclic approximation with speed $o\left(1/h\right)$, that is, speed $g(n)$, where $g(n)h_n\to 0$ as $n\to\infty.$ 
	\item  A \emph{type} $(h,h+1)$ \emph{process} is a periodic process which consists of two towers of height $h$ and $h+1$. An approximation by an exhaustive sequence of type $(h_n,h_{n+1})$ processes, where the towers of height $h_n$ and the towers of height $h_{n+1}$ are substantial, is called a \emph{type} $(h,h+1)$ \emph{approximation}. Equivalently for some $r>0$ we have $\mu\left(B^{(n)}_1\right) > \frac{r}{h_n}$ as well as $\mu\left(B^{(n)}_2\right)>\frac{r}{h_n+1}$ where the heights of the two towers $t^{(n)}_1$ and $t^{(n)}_2$ with base $B^{(n)}_1$ and $B^{(n)}_2$, respectively, are equal to $h_n$ and $h_n+1$. We will call the approximation of type $\left(h,h+1\right)$ with speed $o\left(1/h\right)$ \emph{good} and with speed $o\left(1/h^2\right)$ \emph{excellent}.
	\item An approximation of type $\left(h,h+1\right)$ will be called a \emph{linked approximation of type $\left(h,h+1\right)$} if the two towers involved in the approximation are equivalent. This insures that the sequence of partitions $\eta_n$ generated by the union of the bases of the two towers and the iterates of this set converges to the decomposition into points.
\end{enumerate}
\end{dfn}

From the different types of approximations various ergodic properties can be derived. For example in \cite[Corollary 2.1.]{KS1} the following theorem is proven.
\begin{thm} \label{thm:erg}
Let $T: \left(X, \mu\right)\rightarrow \left(X, \mu\right)$ be a measure-preserving transformation. If $T$ admits a good cyclic approximation, then $T$ is ergodic.
\end{thm}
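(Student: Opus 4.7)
The plan is to argue by contradiction: suppose $A \subseteq X$ is measurable with $T^{-1}(A) = A$ and $\mu(A) \in (0,1)$. Let the good cyclic approximation be $(\xi_n, \sigma_n)$ with tower heights $h_n$, level measure $m_n = m(t^{(n)})$, and levels $c_i^{(n)}$ $(1 \le i \le h_n)$ cyclically permuted so that $\sigma_n(c_i^{(n)}) = c_{i+1}^{(n)}$ (indices mod $h_n$). The key quantities to track are the normalised densities
\[
p_i^{(n)} \; := \; \mu(c_i^{(n)} \cap A)/m_n \; \in \; [0,1].
\]
I intend to show that $T$-invariance of $A$ combined with the approximation $\sigma_n \approx T$ forces the $p_i^{(n)}$ to be nearly constant across the cycle, and then derive a contradiction from the fact that $A$ can be approximated arbitrarily well by unions of levels.

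The central step is a telescoping estimate. Since $T$ is measure preserving and $T^{-1}(A) = A$, we have $\mu(T(c_i^{(n)}) \cap A) = \mu(c_i^{(n)} \cap A) = p_i^{(n)} m_n$, while $\mu(c_{i+1}^{(n)} \cap A) = p_{i+1}^{(n)} m_n$; subtracting gives $|p_{i+1}^{(n)} - p_i^{(n)}| m_n \le \mu(c_{i+1}^{(n)} \triangle T(c_i^{(n)}))$, and summing over $i$ yields
\[
m_n \sum_{i=1}^{h_n} |p_{i+1}^{(n)} - p_i^{(n)}| \; \le \; d(\xi_n, T, \sigma_n) \; = \; o(1/h_n).
\]
Because exhaustiveness forces $h_n m_n \to 1$ (see Remark \ref{towers}), the cyclic total variation of $(p_i^{(n)})$ is $o(1)$. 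Since the arithmetic mean of the $p_i^{(n)}$ equals $\mu(A \cap \text{tower})/(h_n m_n) \to \mu(A)$, this forces $p_i^{(n)} = \mu(A) + o(1)$ uniformly in $i$.

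Finally, since $\xi_n \to \varepsilon$ I can choose unions of levels $A_n = \bigcup_{i \in I_n} c_i^{(n)}$ with $\mu(A \triangle A_n) \to 0$. Writing $\alpha_n = |I_n|/h_n$ and using the identity $\mu(A \triangle A_n) = \mu(A) + \mu(A_n) - 2\mu(A \cap A_n)$ together with the uniform density estimate, a direct computation gives
\[
\mu(A \triangle A_n) \; = \; \mu(A)(1-\alpha_n) + (1-\mu(A))\alpha_n + o(1) \; \ge \; \min\bigl(\mu(A),\, 1-\mu(A)\bigr) + o(1),
\]
which is bounded away from $0$ whenever $\mu(A) \in (0,1)$, contradicting $\mu(A \triangle A_n) \to 0$. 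Hence $\mu(A) \in \{0, 1\}$, and $T$ is ergodic.

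The main obstacle is the very first step: obtaining cyclic total variation $o(1)$ (rather than merely $O(1)$) for the density sequence. This is precisely where the \emph{good} hypothesis $d(\xi_n, T, \sigma_n) = o(1/h_n)$ is used essentially, because after dividing by $m_n \sim 1/h_n$ the required bound $\sum_i |p_{i+1}^{(n)} - p_i^{(n)}| = o(1)$ is equivalent to $h_n \cdot d(\xi_n,T,\sigma_n) \to 0$. With only $d(\xi_n,T,\sigma_n) = O(1/h_n)$ the densities could oscillate non-trivially, and the rigidity that powers the final contradiction would collapse; once that rigidity is in hand, the remainder is a routine pigeonhole argument.
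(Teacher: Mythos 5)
The paper does not supply its own proof of this theorem; it cites it directly from Katok and Stepin \cite[Corollary 2.1]{KS1}. Your argument is correct and is essentially the classical proof: tracking the relative density $p_i^{(n)}$ of an invariant set on the levels, using the speed $o(1/h_n)$ together with $m_n\sim 1/h_n$ to force the cyclic total variation $\sum_i|p_{i+1}^{(n)}-p_i^{(n)}|$ to $o(1)$, concluding $p_i^{(n)}=\mu(A)+o(1)$ uniformly, and then observing that any union $A_n$ of levels must satisfy $\mu(A\triangle A_n)\ge\min(\mu(A),1-\mu(A))+o(1)$, contradicting exhaustiveness when $0<\mu(A)<1$. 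Each step checks out, and you correctly identify that the $o(1/h)$ (as opposed to merely $O(1/h)$) hypothesis is exactly what rigidifies the densities after dividing by $m_n$.
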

Katok and Stepin deduced the following criterion for the weak mixing property:
\begin{thm}[\cite{KS}, Theorem 5.1.] \label{thm:wm}
Let $T: \left(X, \mu\right)\rightarrow \left(X, \mu\right)$ be a measure-preserving transformation. If $T$ admits a good linked approximation of type $(h,h+1)$ or if $T$ is ergodic and admits a good approximation of type $\left(h,h+1\right)$, then $T$ is weakly mixing.
\end{thm}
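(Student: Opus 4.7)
The plan is to show that $T$ has no nonconstant $L^2$-eigenfunctions, which together with ergodicity gives weak mixing. Ergodicity is assumed in the second hypothesis; in the linked case it follows from an adaptation of the proof of Theorem \ref{thm:erg}: a putative $T$-invariant set $A$ is approximated by unions of atoms of $\eta_n$, the bound $d(\xi_n,T,\sigma_n)=o(1/h_n)$ transfers invariance (up to vanishing error) to $\sigma_n$, and the cyclic structure of $\sigma_n$ together with the linkage (which forces the single equivalence class to propagate across both towers) forces $A$ to be a union of complete towers mod $\mu$; exhaustiveness then gives $\mu(A)\in\{0,1\}$.

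Next I would exploit the approximation on iterates. A straightforward induction, using that $T$ is measure preserving and $\sigma_n$ permutes the elements of $\xi_n$, yields
\[ d(\xi_n, T^k, \sigma_n^k) \le k \, d(\xi_n, T, \sigma_n) \qquad \text{for all } k \in \mathbb{N}. \]
Taking $k = h_n$ and $k = h_n+1$ and using the speed $o(1/h_n)$ gives $d(\xi_n, T^{h_n}, \sigma_n^{h_n}) \to 0$ and likewise for $h_n+1$. Since $\sigma_n^{h_n}$ is the identity on the first tower $A_n^{(1)}$ and $\sigma_n^{h_n+1}$ is the identity on the second tower $A_n^{(2)}$,
\[ \sum_{c \subseteq A_n^{(1)}} \mu(T^{h_n} c \triangle c) \to 0, \qquad \sum_{c \subseteq A_n^{(2)}} \mu(T^{h_n+1} c \triangle c) \to 0. \]

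Now suppose $Tf = \lambda f$ with $|\lambda|=1$ and $f \in L^2$. By ergodicity we may normalize $|f|=1$ a.e. Set $f_n = \mathbb{E}[f \mid \xi_n]$; since $\eta_n \to \varepsilon$ and $\xi_n$ refines $\eta_n$ one has $\xi_n \to \varepsilon$, so $f_n \to f$ in $L^2$. Because $f_n$ is constant on each level of $\xi_n$ and $|f_n|\le 1$,
\[ \|f_n \circ T^{h_n} - f_n\|_{L^2(A_n^{(1)})}^2 \le 4 \sum_{c\subseteq A_n^{(1)}} \mu(T^{h_n} c \triangle c) \to 0. \]
Combining this with $\|f_n - f\|_2 \to 0$ and the measure-preserving invariance of the $L^2$-norm under $T^{h_n}$, and then using $f \circ T^{h_n} = \lambda^{h_n} f$ together with $\|f\|_{L^2(A_n^{(1)})}^2 = \mu(A_n^{(1)}) > r$, we obtain $|\lambda^{h_n} - 1| \to 0$. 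The same argument on $A_n^{(2)}$ yields $|\lambda^{h_n+1} - 1| \to 0$.

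Finally, writing $\lambda - 1 = \overline{\lambda^{h_n}}(\lambda^{h_n+1} - \lambda^{h_n})$ and using $|\lambda|=1$,
\[ |\lambda - 1| \le |\lambda^{h_n+1} - 1| + |\lambda^{h_n} - 1| \to 0, \]
so $\lambda = 1$ and ergodicity forces $f$ to be constant; this proves weak mixing. The main obstacle is establishing ergodicity in the linked case, since Theorem \ref{thm:erg} does not apply directly; the rest is a clean combination of the iterated-approximation inequality with the fact that substantiality of \emph{both} towers converts $L^2$-closeness into numerical estimates on $\lambda^{h_n}$ and $\lambda^{h_n+1}$, whose ``near-coprimality'' pins $\lambda$ to $1$.
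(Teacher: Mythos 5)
The paper does not give a proof of this theorem; it only cites it as Theorem 5.1 of Katok--Stepin. Your reconstruction is therefore not directly comparable to ``the paper's proof,'' but as a stand-alone argument it is essentially correct and follows the classical line of attack.

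The key technical lemma $d(\xi_n,T^k,\sigma_n^k)\le k\,d(\xi_n,T,\sigma_n)$ is sound: one telescopes
\[
\mu\bigl(T^k c\,\triangle\,\sigma_n^k c\bigr)\le\sum_{j=0}^{k-1}\mu\bigl(T\sigma_n^j c\,\triangle\,\sigma_n\sigma_n^{j}c\bigr),
\]
sums over $c\in\xi_n$, and uses that $\sigma_n$ permutes $\xi_n$ to recollapse the inner sum. Taking $k=h_n$ and $k=h_n+1$, the $o(1/h_n)$ speed gives $d(\xi_n,T^{h_n},\sigma_n^{h_n})\to 0$ and $d(\xi_n,T^{h_n+1},\sigma_n^{h_n+1})\to 0$. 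Your pointwise estimate $\|f_n\circ T^{h_n}-f_n\|_{L^2(A_n^{(1)})}^2\le 4\sum_{c\subset A_n^{(1)}}\mu(T^{h_n}c\,\triangle\,c)$ is correct because $|f_n|\le 1$ and $f_n$ is $\xi_n$-measurable, and the passage from there to $|\lambda^{h_n}-1|\to 0$ crucially uses $\mu(A_n^{(1)})>r$, i.e.\ the substantiality of \emph{both} towers. The algebraic step $|\lambda-1|\le|\lambda^{h_n+1}-1|+|\lambda^{h_n}-1|$ (valid since $|\lambda|=1$) then pins $\lambda=1$; combined with ergodicity, this is exactly weak mixing. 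This is the standard Katok--Stepin approach, so you have reconstructed what the cited source does rather than found an alternative route.

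The one place that remains a sketch rather than a proof is ergodicity in the linked (non-ergodic-hypothesis) case. The short version you give conveys the right ideas but glosses over two points that deserve care: (i) transferring $T$-invariance of a putative nontrivial $A$ to approximate $\sigma_n^k$-invariance of its $\eta_n$-approximation uniformly for $k\le h_n$ (this is precisely where the full speed $o(1/h_n)$ is used, via the same iterated inequality, and an averaging over $\sigma_n$-orbits then forces $A$ to be, up to $o(1)$, a union of complete towers); and (ii) using the linked structure---specifically that $\eta_n$-atoms contain one level from each tower---to rule out $A\approx t_1^{(n)}$ or $A\approx t_2^{(n)}$, since such sets are not well-approximated by unions of $\eta_n$-atoms even though $\eta_n\to\varepsilon$. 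You flag this yourself as ``the main obstacle,'' and it is indeed the part that would require a few more lines if the proof were written out in full; the rest of the argument is complete.
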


The concept of periodic approximation can also be used to show that $T \times T$ is loosely Bernoulli:
\begin{thm}[\cite{K}, Proposition 3.5.] \label{thm:square}
Let $T: \left(X, \mu\right)\rightarrow \left(X, \mu\right)$ be a measure-preserving transformation. If $T$ admits an excellent linked approximation of type $(h,h+1)$ or if $T$ is ergodic and admits an excellent approximation of type $(h,h+1)$, then $T \times T$ is standard (i.e., zero entropy loosely Bernoulli).
\end{thm}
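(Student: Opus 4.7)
The plan is to verify Feldman's $\overline{f}$-criterion (Theorem~\ref{thm:Katok-Sataev} below) for $T\times T$: for every finite measurable partition $\mathcal{P}$ of $X\times X$ and every $\delta>0$, typical length-$N$ $\mathcal{P}$-names under $T\times T$ must be $\overline{f}$-close for $N$ large. Zero entropy of $T$ (and hence of $T\times T$) follows from the existence of any periodic approximation. Fix $\delta>0$. Since the approximation is exhaustive, $\eta_n\to\varepsilon$ and hence also $\xi_n\to\varepsilon$, so $\xi_n\times\xi_n$ refines $\mathcal{P}$ on a set of measure at least $1-\delta$ for $n$ large; it therefore suffices to control $\overline{f}$-distances of $\xi_n\times\xi_n$-names.

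Set $N_n=\lfloor\delta h_n^{2}\rfloor$ and
\[
E_n=\left\{x\in X\;:\;T^{j}x\text{ and }\sigma_{n}^{j}x\text{ lie in the same atom of }\xi_n\text{ for every }0\le j<N_n\right\}.
\]
A union bound gives $\mu(E_n)\ge 1-N_n\cdot d(\xi_n,T,\sigma_n)\to 1$, because the speed $d(\xi_n,T,\sigma_n)$ is $o(1/h_n^{2})$. On $E_n\times E_n$ the $\xi_n\times\xi_n$-names of length $N_n$ under $T\times T$ and under $\sigma_n\times\sigma_n$ coincide, so it will be enough to analyze names of the periodic process $\sigma_n\times\sigma_n$.

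A $\sigma_n$-orbit is periodic of period $h_n$ on tower $t_1$ and of period $h_n+1$ on $t_2$. For two starting points $(x_1,x_2),(y_1,y_2)$ whose coordinates lie in the \emph{same} tower pair, the respective names differ only by a phase shift of at most $h_n+1$ in each coordinate; trimming $h_n+1$ symbols from opposite ends yields $\overline{f}$-distance $O(h_n/N_n)=O(1/(\delta h_n))$. The decisive case is when one coordinate starts in $t_1$ and the other in $t_2$. Here the $(h,h+1)$ arithmetic enters: $\left|1/h_n-1/(h_n+1)\right|=1/(h_n(h_n+1))$, so the accumulated phase drift over $N_n\approx\delta h_n^{2}$ iterates is only $O(\delta)$. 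A concrete $\overline{f}$-match is then obtained by combining an initial phase alignment of length $\le h_n+1$ with one deletion per period on the faster sequence, giving $\overline{f}$-distance $O(\delta)+O(1/(\delta h_n))$.

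The linking hypothesis guarantees that the bases of $t_1$ and $t_2$ are identified, so the cross-tower match above respects $\xi_n$-atoms and is a genuine $\overline{f}$-match; in the alternative case the ergodicity of $T$ together with the exhaustiveness of $\eta_n$ provides a statistical substitute, because for typical pairs the empirical distributions of the two orbits over the four tower products in $X\times X$ are close to their expected values and can be re-paired segment by segment with $O(\delta)$ loss. Taking $n\to\infty$ and then $\delta\to 0$ completes the verification of the $\overline{f}$-criterion for $T\times T$. The main obstacle is precisely the cross-tower matching in the delicate case: this is where the \emph{excellent} speed $o(1/h^2)$ is indispensable (a merely \emph{good} speed $o(1/h)$ forces $N_n\ll h_n$, too short to register the gap $1/(h(h+1))$), and where the linking hypothesis, or its ergodic substitute, is used.
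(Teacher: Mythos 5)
Your overall strategy --- verify the Katok--Sataev $\overline{f}$-criterion for $T\times T$, pass from $T\times T$-names to $\sigma_n\times\sigma_n$-names on a large set, and then exploit the $(h,h+1)$ arithmetic --- is the same as the paper's route to Theorem~\ref{thm:square} via the stronger Proposition~\ref{prop:crit} and Lemma~\ref{fbar match on G}. Your choice $N_n=\lfloor\delta h_n^2\rfloor$ with a union bound so that the good set has measure $\to 1$ is a legitimate simplification available exactly because the speed is $o(1/h^2)$; the paper, which handles $O(1/h^2)$, must take $N_n\approx\sqrt{\alpha}h_n^2$ and accept a good set of measure only bounded below.

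The core matching step, however, has a genuine gap. For $\sigma_n\times\sigma_n$-names the two coordinates are coupled by a single time parameter, so to align the level pair $(a,b)$ of $(x_1,x_2)$ with $(a',b')$ of $(y_1,y_2)$ (with $a,a'\in\{0,\dots,h_n-1\}$, $b,b'\in\{0,\dots,h_n\}$) one needs a shift $c$ with $c\equiv a'-a\pmod{h_n}$ and $c\equiv b'-b\pmod{h_n+1}$; by CRT $c$ is determined modulo $h_n(h_n+1)$ and is typically of size $\sim h_n^2/4$, i.e.\ comparable to $N_n=\delta h_n^2$. Your ``one deletion per period'' does not fix this: while the $t_1$-levels disagree (which persists for $|a-a'|$ periods of length $h_n+1$) no symbols match at all, giving $\overline{f}$-loss $\sim|a-a'|h_n/N_n=|a-a'|/(\delta h_n)$, which is $\Theta(1/\delta)$ and not $O(\delta)$ for typical $|a-a'|\sim h_n/4$. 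The missing ingredient is precisely the paper's level-alignment condition~(\ref{eq:closelevel}): one must restrict $K_n$ to quadruples with $|a-a'|\lesssim\alpha h_n$, a set of measure $\gtrsim\alpha$ (not $\to 1$), on which the loss is $\lesssim\alpha/\delta$; the Katok--Sataev criterion is happy with a set of measure bounded below by $\alpha(\epsilon)$. Your statement that ``same tower pair'' starting points have phase shift at most $h_n+1$ and $\overline{f}$-distance $O(h_n/N_n)$ is also incorrect: if both coordinates of $(x_1,x_2)$ lie in $t_1$, the product orbit visits only level pairs with fixed $a_2-a_1\pmod{h_n}$, so two such orbits with different values of $a_2-a_1$ never share a $\xi_n\times\xi_n$ atom and the $\overline{f}$-distance is $1$. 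The paper avoids this entirely by defining $\widetilde{G}_n\subset t_1^{(n)}\times t_2^{(n)}\times t_1^{(n)}\times t_2^{(n)}$ from the outset. Finally, the role you assign to the linking hypothesis is not the one used in the paper: it (or ergodicity plus the approximation) enters through Remark~\ref{rem:wkmixing} and Theorem~\ref{thm:wm} to give weak mixing of $T$ and hence ergodicity of $T\times T$, which the Katok--Sataev criterion requires, not to justify the cross-tower matching itself.
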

Both forms of the hypothesis in Theorem \ref{thm:square} require an approximation of speed $o(1/h^2)$. The speed of the type $(h,h+1)$ approximation that we obtain for our example is only $O(1/h^2)$. However, in Proposition \ref{prop:crit} we weaken the hypothesis in Theorem \ref{thm:square}, replacing speed $o(1/h^2)$ by speed $O(1/h^2)$, while obtaining the same conclusion. Thus we obtain a diffeomorphism whose product with itself is zero entropy loosely Bernoulli.

\subsection{Approximation-by-conjugation method} \label{subsection:AbC}
As mentioned in the introduction, one of the most powerful tools of constructing smooth zero-entropy diffeomorphisms with prescribed ergodic or topological properties is the AbC method developed by Anosov and  Katok in \cite{AK}. In fact, on every smooth compact connected manifold $M$ of dimension $d\geq 2$ admitting a non-trivial circle action $\mathcal{S} = \left\{S_t\right\}_{t \in \mathbb{S}^1}$ preserving a smooth volume $\nu$ this method enables the construction of smooth diffeomorphisms with specific ergodic properties (e.\,g., weakly mixing ones in \cite[section 5]{AK}) or non-standard smooth realizations of measure-preserving systems (e.\,g., \cite[section 6]{AK} and \cite{FSW}). These diffeomorphisms are constructed as limits of conjugates $T_n = H^{-1}_n \circ S_{\alpha_{n+1}} \circ H_n$, where $\alpha_{n+1} = \frac{p_{n+1}}{q_{n+1}} = \alpha_n + \frac{1}{k_n \cdot l_n \cdot q^2_n} \in \mathbb{Q}$, $H_n =h_n \circ H_{n-1}$ and $h_n$ is a measure-preserving diffeomorphism satisfying $S_{\frac{1}{q_n}} \circ h_n = h_n \circ S_{\frac{1}{q_n}}$. In each step the conjugation map $h_n$ and the parameter $l_n \in \mathbb{N}$ are chosen such that the diffeomorphism $T_n$ imitates the desired property with a certain precision. Then the parameter $k_n \in \mathbb{N}$ is chosen large enough to guarantee closeness of $T_{n}$ to $T_{n-1}$ in the $C^{\infty}$-topology and so the convergence of the sequence $\left(T_n\right)_{n \in \mathbb{N}}$ to a limit diffeomorphism is guaranteed. See the very interesting survey article \cite{FK} for more details and other results of this method.

Recently, the second author used the AbC method to construct $C^{\infty}$-diffeomorphisms admitting a good linked approximation of type $(h,h+1)$ and a good cyclic approximation. Hereby, he proved that on any smooth compact connected manifold $M$ of dimension $d \geq 2$ admitting a non-trivial circle action $\mathcal{S} = \left\{S_t\right\}_{t \in \mathbb{S}^1}$ preserving a smooth volume $\nu$ the set of $C^{\infty}$-diffeomorphisms $T$, that have a homogeneous spectrum of multiplicity $2$ for $T\times T$ and a maximal spectral type disjoint with its convolutions, is residual (i.\,e., it contains a dense $G_{\delta}$-set) in $\mathcal{A}_{\alpha}\left(M\right)= \overline{\left\{h \circ S_{\alpha} \circ h^{-1} \ : h \in \text{Diff}^{\infty}\left(M, \nu \right)\right\}}^{C^{\infty}}$ for every Liouvillean number $\alpha$ (\cite{K-DC}). In \cite{BK} S. Banerjee and the second author obtained a real-analytic version of that result on any torus $\mathbb{T}^d$, $d \geq 2$. 

As mentioned above our methods use a linked approximation of type $(h,h+1)$ with speed of approximation $O\left(\nicefrac{1}{h^2}\right)$ in order to prove the loosely Bernoulli property of the Cartesian square. Although we have not been able to construct a diffeomorphism admitting such an approximation on manifolds admitting just an $\mathbb{S}^1$-action, we are able to do this on all smooth, compact and connected manifolds $M$, possibly with boundary, admitting a smooth effective torus action $\mathcal{S} = \left\{S_{\a,\b}\right\}_{(\a,\b) \in \mathbb{T}^2}$ (i.\,e., for each $(\a,\b) \in \T^2 \setminus \{(0,0)\}$ there is an $x \in M$ such that $S_{\a,\b}(x) \neq x$). Our construction can be thought of as a two-dimensional version of the AbC method. The assumption of the existence of a $\mathbb{T}^2$-action was already suggested by B. Fayad and Katok \cite[Section 7]{FK}, who stated that this assumption is likely to make it easier to produce approximations of type $(h,h+1)$ than if we just assume the existence of an $\mathbb{S}^1$-action.

Transformations obtained by the AbC method are usually uniformly rigid.
\begin{dfn}
Let $\left(X, \mathcal{B},\mu\right)$ be a probability space,  where $X$ is a compact metric space with metric $d$ and $\mathcal{B}$ is the Borel sigma-field. A measure-preserving homeomorphism $T: X\rightarrow X$ is called \emph{uniformly rigid} if there exists an increasing sequence  $\left(m_n\right)_{n \in \mathbb{N}}$ of natural numbers such that $\sup_{x\in  X}d\left(T^{m_n}(x),x\right)\to 0$ as $n\to\infty.$ 
\end{dfn}

In our setting we have $T^{q_nq^{\prime}_n}_n = \text{id}$ and obtain uniform rigidity of the limit diffeomorphism $T$ provided a sufficient closeness between $T_n$ and $T$ (see Remark \ref{rem:rigid} for details). The following result of Glasner and Maon, which depends on the earlier work of Furstenberg and Weiss \cite{FuW}, implies that our constructed diffeomorphism $T$ has topological entropy zero.
\begin{thm}[\cite{GM}, Proposition 6.3.] \label{thm:Glasner}
Let $X$ be a compact metric space and $T:X \rightarrow X$ be a uniformly rigid homeomorphism. Then the topological entropy of $T$ is zero.
\end{thm}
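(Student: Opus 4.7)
The plan is to bound the spanning numbers of $(X,T)$ by exploiting uniform rigidity to find very long iterates $T^M$ that are uniformly close to the identity, and then to argue that any $(M,\epsilon/2)$-spanning set is automatically $(n,\epsilon)$-spanning for $n$ in a long range beyond $M$. Fix $\epsilon>0$; I aim to show that $\limsup_n \frac{1}{n}\log r(n,\epsilon) = 0$, where $r(n,\epsilon)$ denotes the minimum cardinality of an $(n,\epsilon)$-spanning set for $T$ in the Bowen sense.

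Given any integer $K \geq 1$, use uniform rigidity to select $M=m_{k}$ large enough that $\delta := \sup_{x \in X} d(T^{M}x,x) < \epsilon/(4K)$. The essential elementary observation is that $d(T^{qM}z,z) \leq q\delta$ for every $z \in X$ and every $q \geq 0$, by a telescoping sum. Consequently, if $F$ is an $(M,\epsilon/2)$-spanning set for $T$, then for any $x \in X$ we can pick $y \in F$ with $d(T^{i}x,T^{i}y)\leq\epsilon/2$ for $0\leq i<M$, and for any $j = qM+r < KM$ with $0\leq r<M$ the triangle inequality through $T^{r}x$ and $T^{r}y$ gives $d(T^{j}x,T^{j}y) \leq 2q\delta + d(T^{r}x,T^{r}y) < 2K\cdot\epsilon/(4K) + \epsilon/2 = \epsilon$. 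Hence $F$ is already $(n,\epsilon)$-spanning for every $n \leq KM$, and since a crude covering argument gives $r(M,\epsilon/2) \leq N(\epsilon/4)^{M}$, where $N(\epsilon/4)$ is the minimum number of open $\epsilon/4$-balls needed to cover the compact space $X$, we obtain $r(n,\epsilon)\leq N(\epsilon/4)^{M}$ in that range.

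Evaluating at $n = KM$ yields $\frac{1}{n}\log r(n,\epsilon)\leq \frac{\log N(\epsilon/4)}{K}$, which tends to $0$ as $K\to\infty$ along the subsequence $n = KM_K$. To upgrade this subsequential bound to the full limsup, I would invoke the standard subadditivity $\log r(m+n,\epsilon)\leq \log r(m,\epsilon)+\log r(n,\epsilon)$ together with Fekete's lemma: the limit $\lim_n \frac{1}{n}\log r(n,\epsilon)$ exists and equals the infimum of the sequence, which is therefore $0$. Since $\epsilon>0$ was arbitrary, $h_{\text{top}}(T)=0$.

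The main obstacle I anticipate is not any individual computation but the fact that uniform rigidity provides no a priori quantitative control on how quickly $\sup_x d(T^{m_n}x,x)$ shrinks relative to $m_n$; for this reason the direct inequality $r(n,\epsilon)\leq r(M,\epsilon/2)$ yields information only along a potentially very sparse subsequence of $n$, and the Fekete-subadditivity step is precisely what converts this sparse information into the full limit. An alternative route, which I would also consider, is to pass to the enveloping semigroup in $\overline{\{T^n\}}^{C^{0}}$: uniform rigidity forces the closure to be a compact topological group, so $(X,T)$ is an equicontinuous (in particular, distal) system, and distal homeomorphisms of compact metric spaces are classically known to have zero topological entropy; but the direct spanning-set argument above is more self-contained.
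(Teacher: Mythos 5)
The paper does not prove this statement; it is cited from Glasner and Maon, so your argument is by necessity an independent one. The core of your spanning-set computation is sound: the telescoping estimate $d(T^{qM}z,z)\le q\delta$, the triangle inequality showing that any $(M,\epsilon/2)$-spanning set is automatically $(n,\epsilon)$-spanning for $n\le KM$ once $\delta<\epsilon/(4K)$, and the crude bound $r(M,\epsilon/2)\le N(\epsilon/4)^M$ are all correct, and together they establish $\liminf_{n\to\infty}\frac{1}{n}\log r(n,\epsilon)=0$ for every $\epsilon>0$.

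The step you yourself flag as the potential weak point is, however, where the write-up falters. The asserted subadditivity $\log r(m+n,\epsilon)\le\log r(m,\epsilon)+\log r(n,\epsilon)$ is not a standard fact and I do not believe it is true: the usual concatenation argument only produces an $(m+n,2\epsilon)$-spanning set of cardinality at most $r(m,\epsilon)\,r(n,\epsilon)$, yielding $r(m+n,2\epsilon)\le r(m,\epsilon)\,r(n,\epsilon)$, which does not close Fekete's lemma at fixed $\epsilon$. Exact subadditivity does hold for $\log N\bigl(\bigvee_{i=0}^{n-1}T^{-i}\alpha\bigr)$ with $\alpha$ a finite open cover, so one correct way to finish is to rerun your estimate at the level of open covers. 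An even lighter fix: your computation already yields $\liminf_{n}\frac{1}{n}\log r(n,\epsilon)=0$, and the identity $h_{\mathrm{top}}(T)=\lim_{\epsilon\to0}\liminf_{n}\frac{1}{n}\log r(n,\epsilon)$ (which follows from sandwiching $r(n,\epsilon)$ between open-cover counts and using the existence of $\lim_n\frac{1}{n}\log N\bigl(\bigvee_{i=0}^{n-1}T^{-i}\alpha\bigr)$) already gives the conclusion. So the subadditivity step is both unjustified and unnecessary.

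Your proposed alternative route is not correct: uniform rigidity does not imply that the closure of $\{T^n\}$ in the uniform topology is a compact group, nor that $(X,T)$ is equicontinuous or distal. Indeed, the diffeomorphisms constructed in this very paper are simultaneously uniformly rigid and weakly mixing, and a weakly mixing system on a nondegenerate space is never equicontinuous; the coexistence of uniform rigidity with weak mixing was one of the main themes in Glasner and Maon. With that aside removed, and with the conclusion drawn from the $\liminf$ characterization of entropy rather than from the false subadditivity, your direct Bowen-style estimate is a clean, self-contained proof.
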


\subsection{Metrics on smooth diffeomorphisms}
Throughout this paper, $M$ will denote a smooth compact connected manifold, possibly with boundary, $\nu$ will be a smooth measure on $M$, $M_0$ will denote the particular manifold with corners, $\mathbb{T}^2\times [0,1]^{d-2},$ and $\mu$ will be Lebesgue measure on $M_0.$ As we explain in Section
\ref{subsection:First steps}, it suffices to prove the existence part of the Main Theorem for $(M_0,\mu).$

For $k\in\mathbb{N}$, we let $\tilde{d}_k$ denote a complete metric on $C^k(M),$ the set of $C^k$ functions from $M$ to itself, such that a sequence $(\varphi_n)_{n\in\mathbb{N}}$ in $C^k(M)$ converges to $\varphi\in C^k(M)$ if and only if $\varphi_n$ and its first $k$ derivatives converge uniformly to $\varphi$ and its first $k$ derivatives. We define
\begin{equation*}
	\tilde{d}_{\infty}\left(f,g\right) = \sum^{\infty}_{k=0} \frac{\tilde{d}_k\left(f,g\right)}{2^k \cdot \left(1 + \tilde{d}_k\left(f,g\right)\right)},
	\end{equation*}
for $f,g\in C^{\infty}(M).$

In the case of $M_0,$ we use the same formula for $\tilde{d}_{\infty}$ in terms of  the $\tilde{d}_k$'s, but we give an explicit definition of $\tilde{d}_k$ that will be convenient for our estimates in Section \ref{section:conv}. 

We let $\text{Diff}^{\infty}(M,\nu)$ and $\text{Diff}^{\infty}(M_0,\mu)$ denote the set of $\nu$-preserving $C^{\infty}$ diffeomorphisms of $M$ and the set of $\mu$-preserving $C^{\infty}$ diffeomorphisms of $M_0$, respectively.  Note that these sets are closed in the $C^{\infty}$ topology, i.\,e.,  the topology determined by the metric $\tilde{d}_{\infty}$. Since $C^{\infty}(M)$ and $C^{\infty}(M_0)$ are complete metric spaces with respect to the metric $\tilde{d}_{\infty}$, 
$\text{Diff}^{\infty}(M,\nu)$ and 
$\text{Diff}^{\infty}(M_0,\mu)$ are also complete metric spaces with respect to $\tilde{d}_{\infty}.$

As announced we discuss explicit metrics on Diff$^{\infty}(M_0)$, the space of smooth diffeomorphisms of the manifold $M_0 = \mathbb{T}^2 \times \left[0,1\right]^{d-2}$ to itself. For a diffeomorphism $f = \left(f_1,...,f_d\right): \mathbb{T}^2 \times \left[0,1\right]^{d-2} \rightarrow \mathbb{T}^2 \times \left[0,1\right]^{d-2}$, where $f_1,\dots,f_d$ are the coordinate functions, let $\tilde{f}=(\tilde{f}_1,\dots,\tilde{f}_d):\mathbb{R}^2\times \left[0,1\right]^{d-2} \rightarrow \mathbb{R}^2 \times \left[0,1\right]^{d-2}$ be a lift of $f$ to the universal cover, with coordinate functions $\tilde{f}_1,\dots,\tilde{f}_d$.
Then for $n,m\in \mathbb{Z}$, $\tilde{f}_i\left(\theta_1 +m, \theta_2+n, r_1,...,r_{d-2}\right) - \tilde{f}_i \left(\theta_1, \theta_2, r_1,...,r_{d-2}\right) \in \mathbb{Z}$ for $i=1,2$, and $\tilde{f}_i\left(\theta_1 +m, \theta_2+n, r_1,...,r_{d-2}\right) = \tilde{f}_i \left(\theta_1, \theta_2, r_1,...,r_{d-2}\right)$, for $i=3,\dots,d-2$.

For defining explicit metrics on Diff$^k\left(\mathbb{T}^2 \times \left[0,1\right]^{d-2}\right)$ and throughout the paper the following notation will be useful:
\begin{dfn}
\begin{enumerate}
	\item For a sufficiently differentiable function $f: \mathbb{R}^d \rightarrow \mathbb{R}$ and a multiindex $\vec{a} = \left(a_1,...,a_d\right) \in \mathbb{N}^d_0,$
\begin{equation*}
D_{\vec{a}}f := \frac{\partial^{\left|\vec{a}\right|}}{\partial x_1^{a_1}...\partial x_d^{a_d}} f,
\end{equation*}
where $\left|\vec{a}\right| = \sum^{d}_{i=1} a_i$ is the order of $\vec{a},$ and $\mathbb{N}_0:=\{0,1,2,\dots\}$.

\item For a continuous function $F: \left[0,1\right]^d \rightarrow \mathbb{R},$
\begin{equation*}
\left\|F\right\|_0 := \sup_{z \in \left[0,1\right]^d} \left|F\left(z\right)\right|.
\end{equation*}
\end{enumerate}
\end{dfn}

A diffeomorphism $f\in$ Diff$^k\left(\mathbb{T}^2 \times \left[0,1\right]^{d-2}\right)$ can be regarded as a map from $\left[0,1\right]^d$ to $\mathbb{R}^d$ by taking a lift of $f$ to the universal cover and then restricting the domain to $\left[0,1\right]^d$. In this way the expressions $\left\|f_i\right\|_0,$ as well as  $\left\|D_{\vec{a}}f_i\right\|_0$ for any multiindex $\vec{a}$ with $\left| \vec{a}\right|\leq k,$ can be understood for $f=\left(f_1,...,f_d\right) \in$ Diff$^k\left(\mathbb{T}^2 \times \left[0,1\right]^{d-2}\right)$. (Here $\left\|f_i\right\|_0$ is taken to be the minimum value of $\left\|F\right\|_0$, over all choices of lifts of $f$, where $F$ is the $i$th coordinate function of the lift.) Thus such a diffeomorphism can be regarded as a continuous map on the compact set $\left[0,1\right]^d$,  and every partial derivative of order at most $k$ can be extended continuously to the boundary.  Therefore the maxima that occur in the definition below are finite.
\begin{dfn}
\begin{enumerate}
	\item For $f,g \in$ Diff$^k\left(\mathbb{T}^2 \times \left[0,1\right]^{d-2}\right)$ with coordinate functions $f_i$ and $g_i$ respectively we define
\begin{equation*}
\tilde{d}_0\left(f,g\right) = \max_{i=1,..,d} \left\{ \inf_{p \in \mathbb{Z}} \left\| \left(f - g\right)_i + p\right\|_0\right\},
\end{equation*}
as well as
\begin{equation*}
\tilde{d}_k\left(f,g\right) = \max \left\{ \tilde{d}_0\left(f,g\right), \left\|D_{\vec{a}}\left(f-g\right)_i\right\|_0 \ : \ i=1,...,d \ , \ 1\leq \left|\vec{a}\right| \leq k \right\}.
\end{equation*}
  \item Using the definitions from 1. we define for $f,g \in$ Diff$^k\left(\mathbb{T}^2 \times \left[0,1\right]^{d-2}\right)$:
  \begin{equation*}
  d_k\left(f,g\right) = \max \left\{ \tilde{d}_k\left(f,g\right) \ , \ \tilde{d}_k\left(f^{-1},g^{-1}\right)\right\}.
  \end{equation*}
\end{enumerate}
\end{dfn}

Obviously $d_k$ describes a metric on Diff$^k\left(\mathbb{T}^2 \times \left[0,1\right]^{d-2}\right)$ measuring the distance between the diffeomorphisms as well as their inverses. As in the case of a general compact manifold the following definition connects to it:

\begin{dfn}
\begin{enumerate}
	\item A sequence of Diff$^{\infty}\left(\mathbb{T}^2 \times \left[0,1\right]^{d-2}\right)$-diffeomorphisms is called convergent in Diff$^{\infty}\left(\mathbb{T}^2 \times \left[0,1\right]^{d-2}\right)$ if it converges in Diff$^k\left(\mathbb{T}^2 \times \left[0,1\right]^{d-2}\right)$ for every $k \in \mathbb{N}_0$.
	\item On Diff$^{\infty}\left(\mathbb{T}^2 \times \left[0,1\right]^{d-2}\right)$ we declare the following metric
	\begin{equation*}
	d_{\infty}\left(f,g\right) = \sum^{\infty}_{k=0} \frac{d_k\left(f,g\right)}{2^k \cdot \left(1 + d_k\left(f,g\right)\right)}.
	\end{equation*}
\end{enumerate}
\end{dfn}

For $k\in\mathbb{N}\cup \{\infty\}$, the metrics $\tilde{d_k}$ and $d_k$ give the same topology on   Diff$^k\left(\mathbb{T}^2 \times \left[0,1\right]^{d-2}\right)$, but $\tilde{d_k}$ is complete, while  $d_k$ is not complete. We will refer to this topology as the $C^k$-topology, or the Diff$^k$-topology, to emphasize the fact that the topology is being induced on the subset Diff$^k(M_0)$ of $C^k(M_0).$ If we restrict to Diff$^k\left(M_0,\mu\right),$ then the metrics $d_k$ and $\tilde{d}_k$ are both complete.

Again considering diffeomorphisms on $M_0=\mathbb{T}^2 \times \left[0,1\right]^{d-2}$ as maps from $\left[0,1\right]^d$ to $\mathbb{R}^d$ we add the following  notation:
\begin{dfn}
Let $f \in$ Diff$^k(M_0)$ with coordinate functions $f_i$ be given. Then
\begin{equation*}
\left\| Df \right\|_0 := \max_{i,j \in \left\{1,...,d\right\}} \left\| D_j f_i \right\|_0
\end{equation*}
and
\begin{equation*}
||| f ||| _k := \max \left\{ \left\|D_{\vec{a}} f_i \right\|_0 , \left\|D_{\vec{a}} \left(f^{-1}_{i}\right)\right\|_0 \ : \ i = 1,...,d, \ \vec{a} \text{ multi-index with } 0\leq \left| \vec{a}\right| \leq k \right\}.
\end{equation*}
\end{dfn}

\section{Main result}
As announced in the introduction, our main result is the smooth realization of a zero-entropy automorphism $T$ whose Cartesian product $T \times T$ is loosely Bernoulli, on any compact manifold admitting an effective smooth $\mathbb{T}^2$-action. Moreover, we obtain a genericity statement in the space $\mathcal{B}:= \overline{\left\{h \circ S_{\a,\b} \circ h^{-1} \;:\;h \in \text{Diff}^{\infty}\left(M,\nu\right), (\a,\b) \in \T^2\right\}}^{C^{\infty}}$ introduced in \cite[Section 7.2]{FK}.

\begin{theo}
Let $d \geq 2$ and let $M$ be a $d$-dimensional smooth, compact and connected manifold, possibly with boundary, admitting a smooth effective torus action $\mathcal{S} = \left\{S_{\a,\b}\right\}_{(\a,\b) \in \mathbb{T}^2}$ preserving a smooth volume $\nu$. \\
Then there is a   $C^{\infty}$-diffeomorphism $T \in \mathcal{B}= \overline{\left\{h \circ S_{\a,\b} \circ h^{-1} \;:\;h \in \text{Diff}^{\infty}\left(M,\nu\right), (\a,\b) \in \T^2\right\}}^{C^{\infty}}$ of topological entropy $0$ such that $T \times T$ is loosely Bernoulli. Furthermore, the set of such diffeomorphisms is residual in $\mathcal{B}$ in the $C^{\infty}(M)$-topology.
\end{theo}

\subsection{First steps of the proof}
\label{subsection:First steps}
First of all, we show how constructions on $M_0= \mathbb{T}^2 \times \left[0,1\right]^{d-2}$ with Lebesgue measure $\mu$ can be transferred to a general smooth compact connected manifold $M$ with an effective $\T^2$-action $\mathcal{S} = \left\{S_{\a,\b}\right\}_{(\a,\b) \in \mathbb{T}^2}$. Furthermore, let $\mathcal{R} = \left\{R_{\a,\b} \right\}_{(\a,\b) \in \mathbb{T}^2}$ be the standard action of $\mathbb{T}^2$ on $\mathbb{T}^2 \times \left[0,1\right]^{d-2}$, where the map $R_{\a,\b}$ is given by $R_{\a,\b}\left(\theta_1,\theta_2, r_1,...,r_{d-2}\right) = \left(\theta_1 + \a,\theta_2 + \b, r_1,...,r_{d-2}\right)$. Hereby, we can formulate the following result (see \cite[Proposition 6.4.]{FK}):
\begin{prop} \label{prop:G}
Let $M$, $d$, $\nu$, and $\mathcal{S}$ be as in the Main Theorem. Let $B$ be the union of the boundary of $M$ and the set of points with a nontrivial isotropy group. There exists a continuous surjective map $G: \mathbb{T}^2 \times \left[0,1\right]^{d-2} \rightarrow M$ with the following properties:
\begin{enumerate}
	\item The restriction of $G$ to $\mathbb{T}^{2} \times \left(0,1\right)^{d-2}$ is a $C^{\infty}$-diffeomorphic embedding.
	\item $\nu\left(G\left(\partial\left(\mathbb{T}^{2} \times \left[0,1\right]^{d-2}\right)\right)\right) = 0$.
	\item $G\left(\partial\left(\mathbb{T}^{2} \times \left[0,1\right]^{d-2}\right)\right) \supseteq B$.
	\item $G_{*}\left(\mu\right) = \nu$.
	\item $\mathcal{S} \circ G = G \circ \mathcal{R}$.
\end{enumerate}
\end{prop}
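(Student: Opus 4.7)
The strategy is to exploit the orbit structure of the effective smooth $\mathbb{T}^2$-action $\mathcal{S}$ on $M$. Let $M^{*}:= M\setminus B$ denote the regular set. By the slice theorem for smooth compact group actions, $B$ is a finite union of closed submanifolds of positive codimension, so $M^*$ is open, dense and of full $\nu$-measure. On $M^*$ the action is free, the quotient $Q:=M^*/\mathbb{T}^2$ is a smooth $(d-2)$-dimensional manifold, and $\pi:M^*\to Q$ is a principal $\mathbb{T}^2$-bundle; the full orbit space $M/\mathbb{T}^2$ is a compact stratified space obtained from $Q$ by gluing in lower-dimensional strata corresponding to the image of $B$.

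The key construction is a smooth embedding $\phi:(0,1)^{d-2}\to Q$ whose image has full $\pi_{*}\nu$-measure and whose continuous extension $\bar\phi:[0,1]^{d-2}\to M/\mathbb{T}^2$ is surjective with $\bar\phi(\partial[0,1]^{d-2})\supseteq \pi(B)$. This amounts to choosing a codimension-one cutting locus in $M/\mathbb{T}^2$ that contains the image of $B$ so that, after removing it, the remaining open piece is diffeomorphic to an open cube; all nontrivial topology of the orbit space is absorbed into the boundary of $[0,1]^{d-2}$. Since $\phi((0,1)^{d-2})$ is contractible, the principal bundle $\pi$ is trivial over it and admits a smooth section $\sigma:(0,1)^{d-2}\to M^*$ with $\pi\circ\sigma=\phi$. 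Setting
\begin{equation*}
G(\theta_1,\theta_2,r) := S_{\theta_1,\theta_2}\bigl(\sigma(r)\bigr),
\end{equation*}
equivariance $\mathcal{S}\circ G = G\circ\mathcal{R}$ is immediate, $G$ restricts to a smooth diffeomorphic embedding on $\mathbb{T}^2\times(0,1)^{d-2}$, and compactness of $M$ together with properness of the $\mathbb{T}^2$-action lets $G$ extend continuously to $\mathbb{T}^2\times[0,1]^{d-2}$ with boundary mapped into $B$ together with the image of the cutting locus, yielding properties (1), (2), (3), and (5).

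For property (4), I would adjust $\phi$ and $\sigma$ so that $G_{*}\mu = \nu$. Both measures disintegrate over the orbit space with normalized Haar measure along each torus fiber, so after choosing $\sigma$ compatibly with a fiberwise trivialization, the only remaining condition is that $\phi$ push Lebesgue measure on $(0,1)^{d-2}$ forward to the quotient measure $\pi_{*}(\nu|_{M^*})$. Both are smooth volumes on the same $(d-2)$-manifold $\phi((0,1)^{d-2})$ with equal total mass (assuming $\nu$ is normalized to $\nu(M)=1$), so Moser's theorem on the cube produces the required reparameterization, which can be chosen to be the identity near $\partial[0,1]^{d-2}$ in order to preserve the continuous extension. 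The main obstacle is the construction of the cutting locus: the orbit space may have intricate topology and a complicated singular stratification, and the cut must simultaneously contain $\pi(B)$, be transverse to and compatible with the local product models near the singular strata of the action, and leave the complementary open piece diffeomorphic to $(0,1)^{d-2}$. This is the geometric heart of the argument, and for the model case $M=M_0$ used in the remainder of the paper it is automatic from the product structure.
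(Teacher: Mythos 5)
The paper does not prove this proposition; it quotes it from Fayad--Katok \cite[Proposition~6.4]{FK} (which in turn extends the Anosov--Katok treatment of the $S^1$-case). So there is no in-paper proof to compare against, and your writeup should be read on its own merits.

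Your outline hits the right structural points and is in the spirit of what is done in the literature: stratify $M$ by orbit type, pass to the free-part quotient $Q=M^*/\mathbb{T}^2$, trivialize the principal bundle over a contractible full-measure open subset, define $G(\theta,r)=S_\theta(\sigma(r))$ so that equivariance and the diffeomorphic embedding on the interior are automatic, and use a Moser-type argument to repair property (4). However, the sketch has a genuine gap at exactly the point you flag. The existence of the ``cutting locus'' -- a closed, measure-zero subset $K\subset M/\mathbb{T}^2$ containing $\pi(B)$, compatible with the local slice models near the singular strata, whose complement is diffeomorphic to $(0,1)^{d-2}$ and whose closure operation yields a continuous surjection $[0,1]^{d-2}\to M/\mathbb{T}^2$ -- is the entire content of the proposition and is simply asserted. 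Two subsidiary points also need more than what you wrote: (i) the continuous extension of $G$ to $\mathbb{T}^2\times\partial[0,1]^{d-2}$ does not follow from compactness and properness alone; it depends on choosing the section $\sigma$ and the cut $K$ compatibly with the Bochner/slice normal forms near each singular orbit, since otherwise $G$ can fail to have a limit (or can have one orbit-independent of $\theta$ only where the isotropy forces it); (ii) the Moser step is on a non-compact manifold where the density $d(\pi_*\nu)/d\mathrm{Leb}$ need not be bounded away from $0$ or $\infty$ near $\partial[0,1]^{d-2}$, so ``choose the reparameterization to be the identity near the boundary'' is not available in general and one needs an end-aware Moser argument (Greene--Shiohama style), checking afterwards that the continuous extension survives. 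None of these is obviously wrong, but as written the argument proves properties (1), (4), (5) modulo the existence of the correct $\phi$, and leaves unproven precisely the properties (2), (3) and the continuity statement, which are what make the proposition non-trivial.
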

By the same reasoning as in \cite[Section 2.2.]{FSW} this proposition allows us to carry a construction from $\left(\mathbb{T}^{2} \times \left[0,1\right]^{d-2}, \mathcal{R}, \mu\right)$ to the general case $\left(M, \mathcal{S}, \nu\right)$. For this purpose, it is useful to introduce classes of diffeomorphisms on $\mathbb{T}^{2} \times \left[0,1\right]^{d-2}$ whose jets of all orders of the difference to a rotation decay sufficiently fast near to the boundary. To make this precise we say that a sequence $\rho = \left( \rho_n\right)_{n \in \mathbb{N}_0}$ of continuous functions $\rho_n: \mathbb{T}^{2} \times \left[0,1\right]^{d-2} \to \mathbb{R}_{\geq 0}$ is admissible if every function $\rho_n$ is positive on the interior. Let
\begin{equation*}
\begin{split}
    C^{\infty}_{\rho} \left(\mathbb{T}^{2} \times \left[0,1\right]^{d-2}\right) = & \Bigg{\{}h \in C^{\infty}\left(\mathbb{T}^{2} \times \left[0,1\right]^{d-2}\right) \, : \, \forall n \in \mathbb{N}_0 \ \exists \delta_n>0 \ \forall \text{ multiindices $\vec{a} \in \mathbb{N}^d_0$ of order $n$: } \\ 
    & \quad \abs{D_{\vec{a}}h \left(\vec{x}\right)} < \rho_n\left( \vec{x}\right) \text{ for every $\vec{x}$ outside of } \T^2 \times \left[\delta_n, 1-\delta_n\right]^{d-2} \Bigg{\}}.
    \end{split}
\end{equation*}
Using the representation in coordinate functions $f=\left( f_1, \dots, f_d \right)$ for a diffeomorphism $f \in \text{Diff}^{\infty}\left(\mathbb{T}^{2} \times \left[0,1\right]^{d-2}\right)$ we define
\begin{equation*}
    \text{Diff}^{\infty}_{\rho, \a, \a^{\prime}} \left(\mathbb{T}^{2} \times \left[0,1\right]^{d-2}\right) = \Meng{ f \in \text{Diff}^{\infty}\left(\mathbb{T}^{2} \times \left[0,1\right]^{d-2}\right)}{f_i - \left[R_{\a, \a^{\prime}} \right]_i \in C^{\infty}_{\rho} \left(\mathbb{T}^{2} \times \left[0,1\right]^{d-2}\right)}
\end{equation*}
for $\left( \a, \a^{\prime} \right) \in \T^2$.

Suppose $T: \mathbb{T}^{2} \times \left[0,1\right]^{d-2} \rightarrow \mathbb{T}^{2} \times \left[0,1\right]^{d-2}$ is a diffeomorphism obtained by $T = \lim_{n \rightarrow \infty} T_n$ with $T_n =  H^{-1}_{n}\circ R_{\alpha_{n+1}, \alpha^{\prime}_{n+1}} \circ H_n $, where $T_n = R_{\alpha_{n+1}, \alpha^{\prime}_{n+1}}$ in a neighbourhood of the boundary. Furthermore, let $\left(\a,\a^{\prime}\right) = \lim_{n \to \infty}\left(\a_n, \a^{\prime}_n \right)$. Then we define a sequence of diffeomorphisms:
\begin{equation*}
\tilde{T}_n: M \rightarrow M \ \ \ \ \ \ \tilde{T}_n\left(x\right) = \begin{cases}
G \circ T_n \circ G^{-1}\left(x\right)& \textit{if $x \in G\left(\mathbb{T}^2 \times \left(0,1\right)^{d-2}\right)$} \\
S_{\alpha_{n+1}, \a^{\prime}_{n+1}}\left(x\right)& \textit{if $x \in G\left(\partial\left(\mathbb{T}^2 \times \left[0,1\right]^{d-2}\right)\right)$}
\end{cases}
\end{equation*}
By \cite[Proposition 1.1]{K1} there exists an admissible sequence $\rho$ such that the sequence $\left( \tilde{T}_n \right)_{n \in \mathbb{N}}$ is convergent in the $C^{\infty}$-topology to the diffeomorphism 
\begin{equation*}
\tilde{T}: M \rightarrow M \ \ \ \ \ \ \tilde{T}\left(x\right) = \begin{cases}
G \circ T \circ G^{-1}\left(x\right)& \textit{if $x \in G\left(\mathbb{T}^2 \times \left(0,1\right)^{d-2}\right)$} \\
S_{\alpha, \a^{\prime}}\left(x\right)& \textit{if $x \in G\left(\partial\left(\mathbb{T}^2 \times \left[0,1\right]^{d-2}\right)\right)$}
\end{cases}
\end{equation*}
provided $T \in \text{Diff}^{\infty}_{\rho, \a, \a^{\prime}} \left(\mathbb{T}^{2} \times \left[0,1\right]^{d-2}\right)$, i.\,e. the jets of all orders of the difference between $T$ and $R_{\alpha, \a^{\prime}}$ decay sufficiently fast near to the boundary.  Moreover, for arbitrary $\left(A,B\right) \in \T^2$ and $\varepsilon > 0$, we get $d_{\infty}\left( \tilde{T}, S_{A,B} \right)< \varepsilon$ if $T$ and $R_{A,B}$ are sufficiently close in the Diff$^{\infty}$-topology and the jets of all orders of the difference between $T$ and $R_{\alpha, \a^{\prime}}$ decay sufficiently fast near to the boundary. In Proposition \ref{prop:red} we will see that these conditions on sufficient flatness and closeness can be satisfied in the constructions of this paper.  We observe that $T$ and $\tilde{T}$ are metrically isomorphic.

Moreover, we will see in Remark \ref{rem:rigid} that due to $T^{q_{n+1}q^{\prime}_{n+1}}_n = \text{id}$, the diffeomorphism $T$ is uniformly rigid along the sequence $\left(q_n q^{\prime}_n \right)_{n \in \mathbb{N}}$, provided $T_n$ and $T$ are sufficiently close. This also yields $\tilde{T}^{q_{n+1}q^{\prime}_{n+1}}_n = \text{id}$ and the uniform rigidity of $\tilde{T}$ along the sequence $\left(q_n q^{\prime}_n \right)_{n \in \mathbb{N}}$. Then both transformations have topological entropy zero by Theorem \ref{thm:Glasner}. 

Hence, it is sufficient to execute the construction of $T$ in the Main Theorem in the case of $\left(\mathbb{T}^{2} \times \left[0,1\right]^{d-2}, \mathcal{R}, \mu\right)$. In this setting we will show the following statement:
\begin{prop} \label{prop:red}
    Let $\left(A, B \right) \in \T^2$ be arbitrary. There are sequences $\left(\alpha_n\right)_{n \in \mathbb{N}}, \left(\alpha^{\prime}_n\right)_{n \in \mathbb{N}}$ of rational numbers $\alpha_n = \frac{p_n}{q_n}, \alpha^{\prime}_n = \frac{p^{\prime}_n}{q^{\prime}_n}$ converging to $\alpha$ and $\a^{\prime}$ respectively and a sequence of measure-preserving smooth diffeomorphisms $h_n$, that coincide with the identity in a neighbourhood of the boundary and satisfy $h_n \circ R_{\frac{1}{q_n}, \frac{1}{q^{\prime}_n}} = R_{\frac{1}{q_n}, \frac{1}{q^{\prime}_n}} \circ h_n$, such that the diffeomorphisms $T_n=H^{-1}_n \circ R_{\alpha_{n+1}, \a^{\prime}_{n+1}} \circ H_n$, where $H_n= h_n \circ H_{n-1}$, converge in the $C^{\infty}$-topology to a limit $T = \lim_{n \rightarrow \infty} T_n$ with topological entropy $0$, whose Cartesian product with itself is loosely Bernoulli. Moreover, for every $\varepsilon>0$ and every admissible sequence $\rho$, the parameters in the construction can be chosen in such a way that $d_{\infty}\left(T, R_{A,B}\right) < \varepsilon$ and $T \in \text{Diff}^{\infty}_{\rho, \a, \a^{\prime}}\left( \T^2 \times [0,1]^{d-2} \right)$.
\end{prop}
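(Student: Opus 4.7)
The plan is to carry out an inductive, two-dimensional variant of the Anosov--Katok AbC construction. At the $n$-th stage we assume we already have $\alpha_n = p_n/q_n$, $\alpha'_n = p'_n/q'_n$, and a measure-preserving smooth $h_n$ that is the identity in a neighbourhood $U_n$ of $\partial M_0$ and satisfies $h_n \circ R_{1/q_n, 1/q'_n} = R_{1/q_n, 1/q'_n} \circ h_n$, so that $H_n = h_n \circ H_{n-1}$ still commutes with $R_{1/q_n, 1/q'_n}$. The next rotation numbers are chosen in the form $\alpha_{n+1} = \alpha_n + 1/(k_n l_n q_n^2 (q'_n)^2)$ and similarly for $\alpha'_{n+1}$, with appropriate coprimality arranged so that $q_n \mid q_{n+1}$, $q'_n \mid q'_{n+1}$, and $\gcd(q_{n+1}, q'_{n+1})=1$. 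Here $l_n$ is fixed by the combinatorial quality demanded of $h_n$, and $k_n$ is a large free integer used to close the stage. The resulting $T_n = H_n^{-1} \circ R_{\alpha_{n+1}, \alpha'_{n+1}} \circ H_n$ then satisfies $T_n^{q_{n+1} q'_{n+1}} = \mathrm{id}$.

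The core of the argument is the design of $h_n$. Applying $H_{n-1}^{-1}$ to the natural partition of $\T^2$ into $q_{n+1} q'_{n+1}$ fundamental domains of $R_{1/q_{n+1}, 1/q'_{n+1}}$ (extended trivially over the $[0,1]^{d-2}$ factor) yields the partition $\xi_n$, and an appropriate grouping of its columns gives the two substantial Rokhlin towers. Because $\T^2$ has two independent circle factors, $h_n$ can be chosen to shear one column of fundamental domains by one unit in one of the circle directions relative to the rest, so that under the new rotation $R_{\alpha_{n+1}, \alpha'_{n+1}}$ the orbit splits into two linked Rokhlin towers of heights $h$ and $h+1$, each of substantial measure. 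This two-dimensional defect construction is exactly what a single $\mathbb{S}^1$-action cannot easily supply. The quantitative target is that $(\xi_n, \eta_n, \sigma_n)$ is a linked type $(h,h+1)$ approximation of $T$ of speed $O(1/h^2)$, with $h$ of order $q_{n+1} q'_{n+1}$; this forces $|\alpha_{n+1} - \alpha_n|$ to be of order $1/(q_{n+1} q'_{n+1})^2$, which in turn fixes the order of magnitude of $k_n l_n$.

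With $h_n$ fixed, the integer $k_n$ is chosen large enough to accomplish several things at once: (i) $\tilde{d}_n(T_n, T_{n-1}) < 2^{-n}$, so that $(T_n)$ is Cauchy and converges in the $C^\infty$-topology to some limit $T$; (ii) $\tilde{d}_0(T^{q_{n+1} q'_{n+1}}, \mathrm{id}) \to 0$, giving uniform rigidity of $T$ along $(q_{n+1} q'_{n+1})$ and hence, by Theorem \ref{thm:Glasner}, topological entropy zero; (iii) the combinatorial type $(h,h+1)$ approximation computed for $T_n$ persists, with the same asymptotic speed, for the limit $T$; and (iv) $d_\infty(T, R_{A,B}) < \varepsilon$, arranged by taking $\alpha_1, \alpha'_1$ close enough to $A,B$ and keeping $\sum_n \tilde{d}_n(T_n, T_{n-1}) < \varepsilon$. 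Boundary flatness $T \in \text{Diff}^{\infty}_{\rho, \alpha, \alpha'}(M_0)$ is secured by choosing $U_n$ to shrink slowly enough that the derivatives of $H_n - H_{n-1}$ decay faster than every $\rho_k$, a standard bootstrapping of $k_n$ together with the choice of support of $h_n$. The loosely Bernoulli conclusion for $T \times T$ then follows directly from Proposition \ref{prop:crit} applied to the type $(h,h+1)$ approximation of $T$.

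The main obstacle is the combination of step (iii) with the design in the second paragraph: producing a linked type $(h,h+1)$ approximation with speed $O(1/h^2)$. The classical one-dimensional AbC construction yields only speed $o(1/h)$, and sharpening to $O(1/h^2)$ requires both a subtle combinatorial design of $h_n$ (two coupled Rokhlin towers exploiting the $\T^2$-direction) and precise control over the $C^1$-norm of $H_n$, since its growth enters the error bound on each level. The weakening from $o(1/h^2)$ (as in Theorem \ref{thm:square}) to $O(1/h^2)$ provided by Proposition \ref{prop:crit} is exactly what makes the construction feasible; insisting on the full $o(1/h^2)$ in the $\T^2$ setting appears to be out of reach by this method.
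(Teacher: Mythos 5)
Your proposal captures the high-level strategy correctly: a two-dimensional AbC construction producing a linked type $(h,h+1)$ approximation with speed $O(1/h^2)$, feeding into Proposition \ref{prop:crit} for the loosely Bernoulli conclusion, with uniform rigidity along $(q_n q'_n)$ and Theorem \ref{thm:Glasner} handling topological entropy zero, and closure of the inductive step by taking $k_n$ large to settle convergence, proximity to $R_{A,B}$, and flatness near $\partial M_0$. However, the combinatorial core is wrong in a way that would sink the argument.

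The tower height scale you state is off. You claim $h$ is of order $q_{n+1}q'_{n+1}$ and that this forces $|\alpha_{n+1}-\alpha_n|\sim 1/(q_{n+1}q'_{n+1})^2$. But $q_{n+1}q'_{n+1}$ is the full period of $T_n$ (indeed $T_n^{q_{n+1}q'_{n+1}}=\mathrm{id}$), so you cannot simultaneously have two substantial towers of adjacent heights both comparable to the period; the orbit would close up before the taller tower is finished. In the paper the tower heights are $m_n-1 = q_{n+1}/(q_n q'_n)$ and $m_n = \bar{q}'_{n+1}/(q_n q'_n)$, far smaller than the period, and the increment is $\alpha_{n+1}-\alpha_n = 1/q_{n+1}$ with $q_{n+1}=k_n q_n q'_n$.

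More importantly, the mechanism you propose for producing two towers of adjacent heights — a shear by one fundamental domain built into $h_n$ — is not the mechanism and is not shown to deliver a periodic process with the right heights, substantiality, and speed. The paper's mechanism is \emph{arithmetic}, not geometric: the relation $\bar{q}'_{n+1}=q_{n+1}+q_n q'_n$, combined with $q_n q'_n \mid q_{n+1}$, makes $m_n-1$ and $m_n$ both integers differing by $1$, and the choice $\alpha'_{n+1}=\alpha'_n+1/\bar{q}'_{n+1}+D_n$ (with $D_n$ tiny) forces near-recurrence after $(m_n-1)q_n q'_n$ iterates in one coordinate and after $m_n q_n q'_n$ iterates in the other. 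The role of $h_n=h_{n,2}\circ h_{n,1}$ is only to organize the geometry of the tower bases and levels so that $\eta_n\to\varepsilon$ and the error estimates close; it does not itself create the height difference. Without the $\bar q'_{n+1}$ relation and the deviation term $D_n$, the $O(1/h^2)$ speed estimate in Lemma \ref{lem:error1} has no analogue, and your step (iii) has no content. So your proposal, as written, is missing the idea that makes the construction work.
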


We will prove the genericity statement in the Main Theorem in Section \ref{sec:gen}. 

\subsection{Outline of the proof}
Since we have not been able to construct a diffeomorphism $T$ admitting an excellent approximation of type $(h,h+1)$, we cannot apply the criterion in Theorem \ref{thm:square} to deduce the loosely Bernoulli property for $T \times T$. However, we are able to construct a diffeomorphism $T$ that admits a linked approximation of type $(h,h+1)$ 
and speed $O(1/h^2)$. According to Proposition \ref{prop:crit}, which may be of independent interest, this suffices for $T\times T$
to be loosely Bernoulli.

As indicated above, we will construct the smooth volume-preserving diffeomorphism $T$ by a variant of the \textit{approximation-by-conjugation} method. We let $\alpha_1=\frac{p_1}{q_1}$ and $\alpha_1'=\frac{p_1'}{q_1'}$ be rational numbers with $q_1$ and $q_1'$ relatively prime, and for $n\in\mathbb{N}$, we let 
\begin{align*}
&\alpha_{n+1} = \frac{p_{n+1}}{q_{n+1}}=\a_n + \frac{1}{q_{n+1}}, \\
&\alpha^{\prime}_{n+1} = \frac{p^{\prime}_{n+1}}{q^{\prime}_{n+1}}=\a^{\prime}_n+\frac{1}{\bar{q}^{\prime}_{n+1}}+D_n, 
\end{align*}
with $p_{n+1}$ and $q_{n+1}$ relatively prime, $p^{\prime}_{n+1}$ and $q^{\prime}_{n+1}$ relatively prime, and $\bar{q}^{\prime}_{n+1} = q_{n+1} + q_n q^{\prime}_n$. Smooth volume-preserving diffeomorphisms $h_0,h_1,\dots$ of $M_0=\mathbb{T}^2\times \left[0,1\right]^{d-2}$ are chosen so that $h_n \circ R_{\frac{1}{q_n}, \frac{1}{q^{\prime}_n}} = R_{\frac{1}{q_n}, \frac{1}{q^{\prime}_n}} \circ h_n$ for $n\in\mathbb{N}$. For the construction of our example $T$, we let $H_0=h_0=$ id. Then we define $H_n= h_n \circ H_{n-1}$ for $n\in\mathbb{N}.$ We will choose the parameters $q_{n+1}, D_n$ in Section \ref{section:conv} in such a way that we can guarantee that the sequence $\left(T_n\right)_{n \in \N},$  where $T_n=H^{-1}_n \circ R_{\alpha_{n+1}, \a^{\prime}_{n+1}} \circ H_n$, converges in the $C^{\infty}$-topology to a limit diffeomorphism $T$ and that this limit $T$ admits approximations by towers with the required speed.
Moreover, we will see in Remark \ref{rem:rigid} that due to $T^{q_{n+1}q^{\prime}_{n+1}}_n = \text{id}$ the diffeomorphism $T$ is uniformly rigid along the sequence $\left(q_n q^{\prime}_n \right)_{n \in \mathbb{N}}$. Then $T$ has topological entropy zero by Theorem \ref{thm:Glasner}. 

In Section \ref{section:conj} we construct the conjugation map $h_n$ mentioned before. Afterwards, we show that the constructed diffeomorphism satisfies the requirements of our criterion for the loosely Bernoulli property of $T \times T$. In particular, in Subsection \ref{subsection:towers}, we give the explicit definition of two sequences of towers whose height difference is one. Hereby, we show that $T$ admits a linked approximation of type $(h,h+1)$ and speed $O(1/h^2)$. Finally, we prove the genericity statement in Section \ref{sec:gen}.

\section{Criterion for $T \times T$ loosely Bernoulli} \label{section:crit}
The following proposition is a generalization of the result of Katok
stated in Theorem \ref{thm:square}. The conclusion remains the same, but we assume
that the speed of approximation is $O(1/h^{2}),$ instead of $o(1/h^{2}).$ 
\begin{prop}[Criterion for $T \times T$ loosely Bernoulli] 
\label{prop:crit} Let $T:(X,\mu)\to(X,\mu)$ be a measure-preserving
automorphism. If $T$ admits a linked approximation of type $(h,h+1)$
and speed $O(1/h^{2}),$ or if $T$ is ergodic and admits an approximation
of type (h,h+1) and speed $O(1/h^{2}),$ then $T$ has zero entropy
and $T\times T$ is loosely Bernoulli.\end{prop}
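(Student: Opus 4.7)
The plan is to use the Feldman/Katok--Sataev $\overline{f}$-characterization of zero-entropy loosely Bernoulli (``standard'') systems: $T\times T$ is standard iff for every finite partition $\mathcal P$ of $X\times X$ and every $\varepsilon>0$ there exists $L_0$ such that for $L\ge L_0$,
\[
(\mu\otimes\mu)^{\otimes 2}\bigl\{\bigl((x_1,y_1),(x_2,y_2)\bigr):\overline{f}^{\,\mathcal P}_L\bigl((x_1,y_1),(x_2,y_2)\bigr)<\varepsilon\bigr\}>1-\varepsilon.
\]
Zero entropy of $T$ follows separately from a standard result in periodic approximation theory: any transformation admitting a periodic approximation with $g(n)\to 0$ has measure-theoretic entropy $0$. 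A density reduction lets me verify the $\overline{f}$-criterion only for partitions refined by $\xi_n\times\xi_n$ for some sufficiently large $n$.

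The central structural input is the Chinese remainder theorem picture on the product tower. Writing $t_1,t_2$ for the two towers of heights $h:=h_n$ and $h+1$, the coprimality $\gcd(h,h+1)=1$ makes $\sigma_n\times\sigma_n$ cyclic on $t_1\times t_2$ with full period $h(h+1)$, so for any two points starting at levels $(i_1,j_1)$ and $(i_2,j_2)$ their $\xi_n\times\xi_n$-label sequences along the $\sigma_n\times\sigma_n$-orbit are cyclic shifts of one another by the unique $\ell\in[0,h(h+1))$ with $\ell\equiv i_2-i_1\pmod h$ and $\ell\equiv j_2-j_1\pmod{h+1}$. Consequently, the $\overline{f}$-distance of length-$L$ segments of the $\sigma_n\times\sigma_n$-label sequences is at most $\ell/L$, with $\ell$ approximately uniform on $[0,h(h+1))$ under the natural choice of starting pairs. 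The linked hypothesis---or, in the alternative, ergodicity together with the Rokhlin-tower exhaustion produced by the type-$(h,h+1)$ process---ensures that the 4-tuples landing in such a product tower have measure tending to $1$ as $n\to\infty$.

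The main obstacle is the passage from speed $o(1/h^2)$, as in Theorem \ref{thm:square}, to $O(1/h^2)$. Katok's proof chooses $L_n=M_n\cdot h(h+1)$ with $M_n\to\infty$ slowly; both the shift bound $1/M_n$ and the approximation error $L_n\,g(n)=M_n h^2 g(n)$ then tend to $0$ precisely because $h^2 g(n)\to 0$. Under the $O(1/h^2)$ hypothesis this slack is gone, and I would instead fix $\varepsilon>0$ and take $L\sim h(h+1)/\varepsilon$. For such $L$ the fraction of starting pairs with CRT shift $\ell\ge\varepsilon L$ is at most $\varepsilon$. Controlling the error from replacing $T$-orbits by $\sigma_n$-orbits is the key technical point: the naive bound $L\,g(n)=O(1/\varepsilon)$ on the measure of starting points whose $T$-orbit of length $L$ ever visits the discrepancy set $E_n=\{x:T(x),\sigma_n(x)\ \text{lie in different}\ \xi_n\text{-levels}\}$ is useless, so I would replace it by an orbit-averaged Fubini count: the expected proportion of positions $0\le j<L$ at which $T^j(x)\in E_n$ equals $\mu(E_n)\le g(n)=O(1/h^2)$, so by Markov the set of $x$ with more than $\varepsilon L$ such positions has measure $O(1/(\varepsilon h^2))\to 0$. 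The hard part will be converting ``few visits to $E_n$'' into ``few positions at which the $T$-orbit label disagrees with the $\sigma_n$-orbit label,'' which requires using the tower structure to re-synchronize the two orbits after each visit so that disagreements remain localized rather than propagating to the end of the orbit. Combining this with the CRT bound then yields $\overline{f}<C\varepsilon$ on a set of 4-tuples of $(\mu\otimes\mu)^{\otimes 2}$-measure at least $1-C\varepsilon$, completing the verification of the criterion.
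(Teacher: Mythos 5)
Your proposal rests on the Feldman form of the $\overline{f}$-criterion, which demands a matching set of measure $>1-\varepsilon$. That version cannot be reached by the Chinese-remainder shift argument, because the shift argument only applies to four-tuples that land in a ``symmetric'' product of the two towers (e.g.\ both pairs in $t_1\times t_2$): the periods of $\sigma_n\times\sigma_n$ on $t_1\times t_1$ and $t_2\times t_2$ are $h$ and $h+1$ and the pair names match only with probability $\sim 1/h$, and on $t_1\times t_2$ versus $t_2\times t_1$ the symbol alphabets are disjoint. Since the towers are merely \emph{substantial} (each of measure $>r$ with $r<1$), the usable four-tuples have measure $\sim r^4$, bounded away from $1$; the claim of measure $\ge 1-C\varepsilon$ at the end of your argument is therefore not attainable. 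The paper uses instead the Katok--Sataev criterion (its Theorem \ref{thm:Katok-Sataev}), which only requires the matching set $K_n$ to have measure $>\alpha(\epsilon)$ for some $\alpha(\epsilon)>0$ not tending to $1$. This is precisely what makes the $O(1/h^2)$ hypothesis workable, and it is the absence of this criterion from your proposal that breaks the scheme. (The passage from the four-tuple set to the pair set $K_n$ is then a Fubini slice plus the $\overline{f}$ triangle inequality, another step missing from your sketch.)

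The second gap is in the control of the discrepancy set $E_n$. You choose $L\sim h^2/\varepsilon$, and the orbit-averaged Markov bound does give that few $x$ visit $E_n$ more than $\varepsilon L$ times; but a typical $x$ still visits $E_n$ about $L\mu(E_n)\sim 1/\varepsilon$ times, and each such visit desynchronizes the $T$-name from the $\sigma_n$-name for the remainder of the window, so ``few visits'' does not translate to ``few disagreements.'' You flag the ``re-synchronization'' step as the hard part and leave it unresolved, and I do not see how to make it work. The paper avoids the issue entirely by taking the much shorter window $N_n=m_n\lceil \sqrt{\alpha}m_n\rceil$; then $N_n\,\mu(E_n)=O(\sqrt\alpha)$, so after thickening one obtains a set $F_n$ of measure $\ge 1-c_1\sqrt\alpha$ on which the full $\sigma_n$-orbit of length $N_n$ stays out of $E_n$, and hence $T^i=\sigma_n^i$ there for $0\le i<N_n$ with \emph{no} propagation to control. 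The CRT shift is then constrained by the ``close level'' condition to size $O(\alpha m_n/r^4)$, yielding $\overline{f}=O(\sqrt\alpha)$ on a set of measure $\gtrsim\alpha$; setting $\alpha\sim\varepsilon^2$ feeds directly into Katok--Sataev. In short, both the choice of criterion and the choice of time scale in your plan are the wrong ones for speed $O(1/h^2)$, and the missing re-synchronization argument is exactly what the paper's smaller window makes unnecessary.
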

\begin{rem}
\label{rem:wkmixing}By Theorem \ref{thm:wm},  either version of the hypothesis of Proposition
\ref{prop:crit} implies that $T$ is weakly mixing. 

Suppose that a periodic process $(\xi,\eta,\sigma)$ forms a periodic
approximation of a measure-preserving automorphism $S:(X,\mu)\to(X,\mu)$ and $\tilde{X}:=\cup_{c\in \xi}c$, the union of the levels in the towers.
Then there is a realization of $\sigma$ as a measure-preserving automorphism
of $\tilde{X}$ such that $E:=\{x\in \tilde{X}:S(x)\ne\sigma(x)\}$ has $\mu(E)=(\nicefrac{1}{2})d(\xi,S,\sigma)$.
(See part 1.1 of {[}Ka03{]}.) In this section, we will always replace
the permutation $\sigma$ with a realization of $\sigma$ as a measure-preserving
automorphism of the union of the levels of the towers. \end{rem}
\begin{dfn}
If $t$ is a tower of height $h$ for a periodic process $(\xi,\eta,\sigma)$
that forms a periodic approximation of $S,$ then we choose a realization
of $\sigma$ as above, and we define a \emph{column }of $t$ to be
a set $\{p,\sigma(p),\dots\sigma^{h-1}(p)\}$ where $p$ is a point
in the base of $t.$
\end{dfn}
\begin{dfn}
Suppose $S:(X,\mu)\to(X,\mu)$ is a measure-preserving automorphism
and $\mathcal{P}=(P_{1},\dots,P_{q})$ is a finite measurable partition
of $X.$ If $b$ and $c$ are integers with $b\le c,$ then the $S$\emph{-$\mathcal{P}$
name} of a point $x\in X$ from time $b$ to time $c$ is the finite
sequence $(a_{b},a_{b+1},\dots,a_{c})$ where $S^{i}(x)\in P_{a_{i}}$
for $b\le i\le c.$ If $b=0$ and $c=n-1$ for some $n\in\mathbb{N},$
then $(a_{0},\dots,a_{n-1})$ is called the $S$\emph{-$\mathcal{P}$-$n$
name} of $x.$ If $n\in\mathbb{N},$ and $x,y\in X,$ then the $\overline{f}$
\emph{distance} between the $S$-$\mathcal{P}$-$n$ names of $x$
and $y$ is 
\[
\overline{f}_{S,\mathcal{P},n}(x,y)=1-(m/n),
\]
where $m=\sup\big\{ j:\text{there exist }0\le k_{1}<\cdots<k_{j}<n\text{ and }0\le\ell_{1}<\cdots<\ell_{j}<n\text{ such that }\ensuremath{S^{k_{i}}x}$ and $\ensuremath{S^{\ell_{i}}y}\text{ are in the same element of }\mbox{\ensuremath{\mathcal{P}}}$
for $i=1,\dots,j\big\}.$ That is, the $\overline{f}$ distance measures
the proportion of the symbols in the $S$-$\mathcal{P}$-$n$ names
of $x$ and $y$ that cannot be matched, even if allow some ``sliding''
along the names, but require that the order of the symbols in the
names be kept the same. 
\end{dfn}
The following theorem is due to Katok and Sataev (\cite{KSa}). 
\begin{thm} \label{thm:Katok-Sataev}
\label{KScondition}Let $S:(X,\mu)\to(X,\mu)$ be an ergodic measure-preserving
automorphism. Suppose that for every finite measurable partition $\mathcal{P}$
and every $\epsilon>0,$ there exist a number $\alpha=\alpha(\epsilon)>0,$
a sequence of natural numbers $N_{n}=N_{n}(\epsilon)\to\infty,$ and
sets $K_{n}=K_{n}(\epsilon)$ such that $\mu(K_{n})>\alpha$ and for
$x,y\in K_{n},$ $\overline{f}_{S,\mathcal{P},N_{n}}(x,y)<\epsilon.$
Then $S$ has zero entropy and is loosely Bernoulli.\end{thm}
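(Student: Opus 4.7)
The plan is to verify the Katok-Sataev criterion (Theorem \ref{thm:Katok-Sataev}) for $S := T \times T$. Either form of the hypothesis forces $T$ to be weakly mixing via Theorem \ref{thm:wm}, so $T \times T$ is ergodic and the criterion is applicable. Fix a finite partition $\mathcal{P}$ of $X \times X$ and $\eps > 0$; I aim to construct $\alpha = \alpha(\eps) > 0$, $N_n = N_n(\eps) \to \infty$, and sets $K_n$ with $\mu(K_n) \ge \alpha$ such that $\overline{f}_{T \times T, \mathcal{P}, N_n}(z_1, z_2) < \eps$ for all $z_1, z_2 \in K_n$. By exhaustiveness of $(\xi_n)$, a standard approximation argument based on Markov's inequality for orbit hits to $\bigcup_i (P_i \triangle P'_i)$ allows us to replace $\mathcal{P}$ by a partition $\mathcal{P}'$ refined by $\xi_n \times \xi_n$ for all large $n$, at the cost of $O(\sqrt{\delta})$ in $\overline{f}$ distance.

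The set $K_n$ will live inside the product tower $t_1^{(n)} \times t_2^{(n)}$, whose measure is at least $r^2$ by substantiality. Coprimality of the heights $h_n$ and $h_n + 1$ is decisive: by the Chinese Remainder Theorem, the $N_n := h_n(h_n+1)$ level-pairs of the product tower are indexed by $\mathbb{Z}/N_n\mathbb{Z}$, and $\sigma_n \times \sigma_n$ advances this CRT-index by one at each step. I take $K_n$ to be the union of level-pairs whose CRT-index lies in a window of length $\lfloor \eps N_n \rfloor$, giving $\mu(K_n) \ge \eps r^2$. For any $z_1, z_2 \in K_n$, the $\sigma_n \times \sigma_n$-$\mathcal{P}'$-$N_n$ names are cyclic shifts of one another by offset at most $\eps N_n$, so an order-preserving sliding yields $\overline{f}_{\sigma_n \times \sigma_n, \mathcal{P}', N_n}(z_1, z_2) \le \eps$.

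To transfer this bound to $T \times T$, set $B_n := \{z : (T \times T)(z) \ne (\sigma_n \times \sigma_n)(z)\}$; Remark \ref{rem:wkmixing} together with the bound $d(\xi_n \times \xi_n, T\times T, \sigma_n\times\sigma_n) \le 2 d(\xi_n, T, \sigma_n)$ gives $\mu(B_n) = O(1/h_n^2)$, so $N_n \mu(B_n) = O(1)$. By Markov's inequality, the set of $z \in K_n$ whose $\sigma_n \times \sigma_n$-orbit over $[0, N_n)$ meets $B_n$ more than $M := C/\eps^2$ times has measure at most $O(\eps^2)$; removing this set leaves $\mu(K_n) \ge \alpha := \eps r^2/2$. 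For the surviving points, the $T \times T$-orbit decomposes into at most $M$ CRT-phase-shifted segments of $\sigma_n \times \sigma_n$-orbits, and the longest-common-subsequence between two such piecewise-cyclic names of $z_1, z_2 \in K_n$ must be shown to cover at least $(1 - O(\eps)) N_n$ positions.

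This cascading LCS estimate is the principal obstacle and the reason for the strengthening from Katok's $o(1/h^2)$ in Theorem \ref{thm:square} to the present $O(1/h^2)$. With $o(1/h^2)$ one may take $M \to 0$ so that error-free orbits are generic and the conclusion is immediate; under $O(1/h^2)$ the constant $M$ is merely bounded in $n$, and one must show that each of the $M$ CRT-phase shifts contributes at most $O(1/h_n) \to 0$ to the $\overline{f}$ matching rather than a positive constant, by carefully exploiting the periodic $\sigma_n \times \sigma_n$-column structure of the product tower to absorb each shift into the sliding. I expect this combinatorial bookkeeping of phase shifts along the CRT-cyclic structure to be the main technical hurdle, with the zero-entropy conclusion then following for free from Theorem \ref{thm:Katok-Sataev}.
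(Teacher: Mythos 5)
You have proved the wrong statement. Theorem \ref{thm:Katok-Sataev} (the Katok--Sataev criterion) is the assertion that the existence, for every finite partition $\mathcal{P}$ and every $\epsilon>0$, of sets $K_n$ on which the $\overline{f}$-distance is uniformly small forces $S$ to have zero entropy and be loosely Bernoulli. That implication is what needs to be established. Your proposal instead \emph{takes} Theorem \ref{thm:Katok-Sataev} as given and proceeds to verify its hypothesis for $S=T\times T$ when $T$ has a type $(h,h+1)$ approximation with speed $O(1/h^2)$; that is the content of Proposition \ref{prop:crit} and Lemma \ref{fbar match on G}, not of Theorem \ref{thm:Katok-Sataev}. (The paper itself does not supply a proof of Theorem \ref{thm:Katok-Sataev}: it is cited from Katok and Sataev, with a short remark explaining the passage from ``generating partition'' to ``every finite measurable partition'' via Krieger's finite generator theorem.)

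Even read charitably as an attempt at Proposition \ref{prop:crit}, the proposal is incomplete and, as set up, would not go through. You take $N_n=h_n(h_n+1)$, the full period of $\sigma_n\times\sigma_n$, with $K_n$ a window of $\varepsilon N_n$ CRT-indices. Over a full period the orbit spends a bounded-below \emph{constant} fraction of time in the error set $E_n:=\{z:(T\times T)z\neq(\sigma_n\times\sigma_n)z\}$, since $N_n\,\mu(E_n)\asymp 1$, and the ``cascading LCS estimate'' you flag as the principal obstacle would have to show that a bounded number of CRT-phase shifts does not cost a constant fraction of $\overline{f}$-matching --- this step is left unresolved. The paper's Lemma \ref{fbar match on G} sidesteps the cascade by choosing the much shorter scale $N_n=m_n\lceil\sqrt{\alpha}\,m_n\rceil$, so that the cumulative error set $\widehat{E}_n$ has measure only $O(\sqrt{\alpha})$; one then conditions on the small level-offset condition (\ref{eq:closelevel}) and aligns the $\xi_n\times\xi_n$-names of the two points by a \emph{single} shift of size $O(\alpha m_n)$, giving $\overline{f}<\tilde{c}_2\sqrt{\alpha}$ directly, with no need to compound phase shifts.
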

\begin{rem}
In \cite{KSa}, ``for every finite measurable partition'' is replaced
by ``for a finite generating partition.'' But if we assume the hypothesis
of Theorem \ref{KScondition} for every finite measurable partition
$\mathcal{P},$ then it follows from \cite{KSa} that the entropy of
$S$ is $0$ on $\lor_{-\infty}^{\infty}S^{-i}\mathcal{P}$. Since
this is true for every finite measurable partition $\mathcal{P},$
the entropy of $S$ is zero on the original measure space. Thus by
Krieger's finite generator theorem \cite{Kr} (which holds in case
of finite entropy), there exists a finite generating partition. Therefore
Theorem \ref{KScondition} as stated here is equivalent to the version
in \cite{KSa}. 
\end{rem}
Proposition \ref{prop:crit} will follow from Remark \ref{rem:wkmixing},
Theorem \ref{KScondition}, and Lemma \ref{fbar match on G} below. 
\begin{lem}
\label{fbar match on G}Assume that $T:(X,\mu)\to(X,\mu)$ is a measure-preserving
automorphism and admits an approximation of type $(h,h+1)$ and speed
$O(1/h^{2}).$ Let $\mathcal{P}$ be a finite measurable partition of
$X\times X.$ Then there exist positive constants $\alpha_{0}$ and
$c_{2}$ such that for $0<\alpha<\alpha_{0},$ there exist a sequence
of numbers $N_{n}=N_{n}(\alpha)$ with $N_{n}\to\infty,$ and sets
$G_{n}=G_{n}(\alpha)\subset X\times X\times X\times X$ with $(\mu\times\mu\times\mu\times\mu)(G_{n})>\alpha$
such that 
\[
\overline{f}_{T\times T,\mathcal{P},N_{n}}((x,y),(\tilde{x},\tilde{y}))<c_{2}\sqrt{\alpha},\text{ for all }(x,y,\tilde{x},\tilde{y})\in G_{n}.
\]
\end{lem}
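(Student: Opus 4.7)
The plan is to prove the lemma by a quantitative adaptation of Katok's argument behind Theorem \ref{thm:square} to the weaker hypothesis $O(1/h^2)$. Let $C_0$ satisfy $d(\xi_n,T,\sigma_n)\le C_0/h_n^2$, fix a substantial product tower $t_1\times t_2$ of heights $h_n,h_n+1$ and measures $a_n,b_n\ge r>0$, and write $A_n=\{x\in X:T(x)=\sigma_n(x)\}$, so $\mu(A_n^c)=O(1/h_n^2)$. Since $\xi_n\to\varepsilon$, for each $n$ I pick a partition $\mathcal{P}_n'$ measurable with respect to $\xi_n\times\xi_n$ with $\mu^2(\mathcal{P}\triangle\mathcal{P}_n')$ tending to zero so fast that it is negligible below; by this $\xi_n\times\xi_n$-measurability, the $\sigma_n\times\sigma_n$-$\mathcal{P}_n'$-name of any point in $t_1\times t_2$ at product-tower level $(i,j)\in\{0,\ldots,h_n-1\}\times\{0,\ldots,h_n\}$ depends only on $(i,j)$, not on the base column.

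Set $\beta=c_2\sqrt{\alpha}$ and $N_n=\lfloor 2\sqrt{\alpha}\,h_n(h_n+1)/(c_2(a_nb_n)^2)\rfloor$, so $N_n\to\infty$. Define $G_n\subset X^4$ by: (a) $(x,y),(\tilde x,\tilde y)\in t_1\times t_2$ at levels $(i,j),(\tilde i,\tilde j)$ such that the unique CRT-shift $k\in\{0,\ldots,h_n(h_n+1)-1\}$ with $k\equiv\tilde i-i\pmod{h_n}$ and $k\equiv\tilde j-j\pmod{h_n+1}$ satisfies $\min(k,h_n(h_n+1)-k)<\beta N_n$; (b) none of the $\sigma_n$-orbits of $x,y,\tilde x,\tilde y$ meets $A_n^c$ within $N_n-1$ iterates; (c) none of the $\sigma_n\times\sigma_n$-orbits of $(x,y)$ or $(\tilde x,\tilde y)$ enters $\mathcal{P}\triangle\mathcal{P}_n'$ within $N_n$ iterates. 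On $G_n$, condition (b) forces the $T\times T$-$\mathcal{P}$-names to coincide with the $\sigma_n\times\sigma_n$-$\mathcal{P}$-names, condition (c) then equates these to the $\sigma_n\times\sigma_n$-$\mathcal{P}_n'$-names, and (a) together with the column-independence above forces these to be cyclic shifts by $k$ (mod $h_n(h_n+1)$) of one and the same periodic sequence; hence $\overline f_{T\times T,\mathcal{P},N_n}((x,y),(\tilde x,\tilde y))\le\min(k,h_n(h_n+1)-k)/N_n<c_2\sqrt{\alpha}$.

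For the measure estimate, condition (a) has $\mu^4$-measure $M_{\text{shift}}=2\beta N_n(a_nb_n)^2/(h_n(h_n+1))=\Theta(\alpha)$ by our choice of $N_n$. The main obstacle, and the whole reason the argument is delicate, is to show that the failure of (b) intersected with (a) has measure only $O(\alpha^{3/2})$: at each good-shift level $(\tilde i,\tilde j)$ the set of base points $(p_2,q_2)\in B_1\times B_2$ whose $\sigma_n$-orbit hits $A_n^c$ is controlled by $\sum_{t<N_n}\bigl[\mu(A_n^c\cap\mathrm{level}(\tilde i+t)_{t_1})+\mu(A_n^c\cap\mathrm{level}(\tilde j+t)_{t_2})\bigr]\le(\lceil N_n/h_n\rceil+\lceil N_n/(h_n+1)\rceil)\,\mu(A_n^c)=O(\sqrt{\alpha}/h_n)$, and multiplying by the $O(\sqrt{\alpha}\,N_n)$ good shifts, by the base-pair measure, and integrating over $(x,y)$ yields $O(\alpha^{3/2})$; the failure of (c) contributes at most $N_n\mu^2(\mathcal{P}\triangle\mathcal{P}_n')$, chosen negligible. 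Thus $\mu^4(G_n)\ge\Theta(\alpha)-O(\alpha^{3/2})>\alpha$ for $\alpha<\alpha_0$ with $\alpha_0$ depending only on $c_2,C_0,r$. The scaling $N_n\sim\sqrt{\alpha}\,h_n^2$ is forced by the need to balance these two orders: $N_n\sim h_n^2$ (as in Katok's $o(1/h^2)$ regime) would make the bad-orbit measure $\Theta(1)$, while $N_n\sim\alpha h_n^2$ would make $M_{\text{shift}}$ too small; the $\sqrt{\alpha}$-gap between shift and bad-orbit terms produced by this intermediate scaling is precisely what yields the $c_2\sqrt{\alpha}$ bound on $\overline f$.
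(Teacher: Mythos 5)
Your overall strategy — restrict to quadruples whose product-tower time-shift is small, require the $\sigma_n$-orbits to avoid the error set $E_n$ over the sampling window, choose $N_n\sim\sqrt{\alpha}\,h_n^2$ to balance the shift measure $\Theta(\alpha)$ against the bad-orbit contribution $O(\alpha^{3/2})$, and read off $\overline f<c_2\sqrt{\alpha}$ from the shift bound — is essentially the same as the paper's, and your per-level estimate $\epsilon_{\tilde i}\le\lceil N_n/h_n\rceil\mu(E_n)=O(\sqrt{\alpha}/h_n)$ is a legitimate alternative to the paper's column-closure device for bounding the $E_n$-orbit contribution inside the good-shift event. Your CRT formulation of the shift condition is also a harmless variant of the paper's ``align on one tower, shift on the other'' formulation.

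However, there is a genuine gap in your handling of the general partition $\mathcal{P}$. You take a $\xi_n\times\xi_n$-measurable partition $\mathcal{P}_n'$ with $\mu^2(\mathcal{P}\,\triangle\,\mathcal{P}_n')$ small, require the $\sigma_n\times\sigma_n$-orbits of $(x,y)$ and $(\tilde x,\tilde y)$ to avoid the disagreement set entirely for $N_n$ steps, and assert that the failure contributes $N_n\,\mu^2(\mathcal{P}\,\triangle\,\mathcal{P}_n')$, ``chosen negligible.'' But for this to be $o(\alpha)$ (let alone $O(\alpha^{3/2})$) you need $\mu^2(\mathcal{P}\,\triangle\,\mathcal{P}_n')=o(1/h_n^2)$, and the hypotheses give you no such rate: exhaustivity only yields $\mu^2(\mathcal{P}\,\triangle\,\mathcal{P}_n')\to 0$, and for a general $T$, $\mathcal{P}$, and approximating sequence, $h_n$ can grow arbitrarily fast relative to how quickly $\xi_n$ refines. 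So the statement ``chosen negligible'' is not something you get to choose — it is a property of the given approximation that may simply fail. The paper avoids exactly this trap in two ways: it first approximates $\mathcal{P}$ by a \emph{fixed} ($n$-independent) product partition $\xi\times\widehat{\xi}$ up to an $\alpha^2/8$-measure exceptional set $B$, and only then uses exhaustivity to align $\xi_n$ with $\xi$ and $\widehat{\xi}$ on a large set $A_n$; and, crucially, it does \emph{not} demand that orbits avoid the exceptional set altogether, but only that they visit $(A_n^4\cap B^2)^c$ with empirical frequency $<\alpha$ over the $N_n$ steps — this is a mild requirement with measure defect $<\alpha/2$ by Markov, and it costs only an additive $2\alpha$ in the final $\overline f$ estimate (by deleting the at most $2\lfloor\alpha N_n\rfloor$ mismatched indices from the optimal coupling). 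To repair your argument you would need to replace your condition (c) by this low-density condition (and to decouple the quality of the $\mathcal{P}$-approximation from $h_n$); as written, the claimed $O(\alpha^{3/2})$ bound for the failure of (c) does not follow from the hypotheses.
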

\begin{pr}
Let $T$ be as in the hypothesis of this lemma. Let $(\xi_{n},\eta_{n},\sigma_{n})$
be an exhaustive sequence of periodic processes associated to two
substantial sequences of towers $t_{1}^{(n)}$ and $t_{2}^{(n)}$
of heights $m_{n}-1$ and $m_{n},$ respectively, where $X=t_{1}^{(n)}\cup t_{2}^{(n)}$.
Let $r,c_{1}>0$ be constants such that $\mu(t_{i}^{(n)})>r,$ for
$i=1,2,$ $n\in\mathbb{N},$ and $d(\xi_{n},T,\sigma_{n})\le c_{1}/m_{n}^{2}$
for $n\in\mathbb{N}.$ The permutation $\sigma_{n}$ of the levels
of $t_{1}^{(n)}$ and $t_{2}^{(n)}$ can be realized as a measure-preserving
automorphism of $X$ such that $E_{n}:=\{x\in X:T(x)\ne\sigma_{n}(x)\}$
has measure $(\nicefrac{1}{2})d(\xi_{n},T,\sigma_{n}).$ Thus $\mu(E_{n})\le c_{1}/(2m_{n}^{2}).$ 

Let $\alpha_{0}=\min(r^{4}/4,(.05r)^{2}/c_{1}^{2}),$ and assume $0<\alpha<\alpha_{0}.$
Let 
\[
\widehat{E}_{n}=\cup_{i=-(m_{n}-1)}^{m_{n}(1+\left\lceil \sqrt{\alpha}m_{n}\right\rceil )}\sigma_{n}^{-i}E_{n}.
\]
Then $\mu(\widehat{E}_{n})\le\big(c_{1}/(2m_{n}^{2})\big)\big(m_{n}(2+\left\lceil \sqrt{\alpha}m_{n}\right\rceil \big)\le c_{1}\sqrt{\alpha},$
for $n$ sufficiently large. (How large $n$ has to be depends on
$\alpha.)$ Let $\widetilde{E}_{n}$ be the subset of $\widehat{E}_{n}$
consisting of those points $p$ such that the entire column of $p$
in $t_{1}^{(n)}$ or $t_{2}^{(n)}$ is in $\widehat{E}_{n}.$ If $x\in F_{n}:=X\setminus\widetilde{E}_{n},$
then there is some point $x'$ in the column of $x$ that is not in
$\widehat{E}_{n}.$ Then 
\[
\cup_{i=0}^{m_{n}\left\lceil \sqrt{\alpha}m_{n}\right\rceil }\{\sigma^{i}(x)\}\subset\cup_{i=-(m_{n}-1)}^{m_{n}(1+\left\lceil \sqrt{\alpha}m_{n}\right\rceil )}\{\sigma^{i}(x')\}\subset X\setminus E_{n}.
\]
(The fact that $\widetilde{E}_{n}$ consists of entire columns of
$t_{1}^{(n)}$ and $t_{2}^{(n)}$ will be convenient for estimating
the probability that condition (\ref{eq:closelevel}) below holds.)
We have $\mu(F_{n}\cap t_{i}^{(n)})\ge r-c_{1}\sqrt{\alpha},$ for
$i=1,2$ and $n$ sufficiently large.

Let $\widetilde{G}_{n}$ be the subset of $\big(t_{1}^{(n)}\times t_{2}^{(n)}\times t_{1}^{(n)}\times t_{2}^{(n)}\big)\cap\big(F_{n}\times F_{n}\times F_{n}\times F_{n}\big)$
consisting of points $(x,y,\tilde{x},\tilde{y})$ with the additional
property (\ref{eq:closelevel}) given below. Let $0\le s,\tilde{s}\le m_{n}-1$
be such that $\sigma_{n}^{s}(y)$ is in the first level of $t_{2}^{(n)}$
and $\sigma_{n}^{\tilde{s}}(\tilde{y})$ is in the first level of
$t_{2}^{(n)}.$ For $(x,y,\tilde{x},\tilde{y})$ to be in $\widetilde{G}_{n}$
we require that
\begin{equation}
\begin{array}{c}
(\text{level of }\sigma_{n}^{s}(x)\text{ in }t_{1}^{(n)})-(\text{level of }\sigma_{n}^{\tilde{s}}(\tilde{x})\text{ in }t_{1}^{(n)})\\
\in\big\{-\left\lceil \frac{\alpha(m_{n}-1)}{r^{4}}\right\rceil ,\dots,-1,0,1,\dots,\left\lceil \frac{\alpha(m_{n}-1)}{r^{4}}\right\rceil \big\}\text{ mod }(m_{n}-1).
\end{array}\label{eq:closelevel}
\end{equation}

Condition (\ref{eq:closelevel}) happens with probability at least $\big(2\left\lceil \alpha(m_{n}-1)/r^{4}\right\rceil +1\big)/\big(m_{n}-1\big)>2\alpha/r^{4}$
among the points in $\big(t_{1}^{(n)}\times t_{2}^{(n)}\times t_{1}^{(n)}\times t_{2}^{(n)}\big)\cap\big(F_{n}\times F_{n}\times F_{n}\times F_{n}\big).$
Thus $(\mu \times \mu \times \mu \times \mu)(\widetilde{G}_{n})\ge\big(2\alpha/r^{4}\big)\big(r-c_{1}\sqrt{\alpha}\big)^{4}>\big(2\alpha/r^{4}\big)\big(.95r)^{4}>(\nicefrac{3}{2})\alpha.$ 
Let $(x,y,\tilde{x},\tilde{y})\in\widetilde{G}_{n}.$ Suppose that
\[
\big(\text{level of }\sigma_{n}^{s}(x)\text{ in }t_{1}^{(n)}\big)=\big(\text{level of }\sigma_{n}^{\tilde{s}}(\tilde{x})\text{ in }t_{1}^{(n)}\big)+k,
\]
 where $k\in\{1,\dots,\left\lceil \alpha(m_{n}-1)/r^{4}\right\rceil \} .$
(The other cases are similar.) Note that $\sigma_{n}^{\tilde{s}+m_{n}}(\tilde{y})$
is in the first level of $t_{2}^{(n)}$ and $\sigma_{n}^{\tilde{s}+m_{n}}(\tilde{x})$
is one level higher in $t_{1}^{(n)}$ than $\sigma_{n}^{\tilde{s}}(\tilde{x}).$
Continuing with this pattern, $\sigma_{n}^{\tilde{s}+km_{n}}(\tilde{y})$
is in the first level of $t_{2}^{(n)}$ and $\sigma_{n}^{\tilde{s}+km_{n}}(\tilde{x})$
is in the same level of $t_{1}^{(n)}$ as $\sigma_{n}^{s}(x).$ Thus
the $\xi_{n}\times\xi_{n}$ names of $(x,y)$ from time $s$ to time
$s+m_{n}\left\lceil \sqrt{\alpha}m_{n}\right\rceil -km_{n}$ match
exactly with the $\xi_{n}\times\xi_{n}$ names of $(\tilde{x},\tilde{y})$
from time $\tilde{s}+km_{n}$ to time $\tilde{s}+m_{n}\left\lceil \sqrt{\alpha}m_{n}\right\rceil .$
Thus 
\[
\begin{array}{ccc}
\overline{f}_{\sigma_{n}\times\sigma_{n},\xi_{n}\times\xi_{n},m_{n}\left\lceil \sqrt{\alpha}m_{n}\right\rceil }((x,y),(\tilde{x},\tilde{y})) & < & \frac{(k+1)m_{n}}{m_{n}\left\lceil \sqrt{\alpha}m_{n}\right\rceil }\\
 & < & \frac{(\alpha m_{n}^{2}/r^{4})+2m_{n}}{\sqrt{\alpha}m_{n}^{2}}\\
 & \le & \tilde{c}_{2}\sqrt{\alpha},
\end{array}
\]
for $\tilde{c}_{2}=2/r^{4},$ provided $n$ is sufficiently large.
Since $(T\times T)^{i}(x,y)=(\sigma_{n}\times\sigma_{n})^{i}(x,y)$
for $(x,y)\in F_{n}\times F_{n}$ and $0\le i<m_{n}\left\lceil \sqrt{\alpha}m_{n}\right\rceil ,$
it follows that 

\[
\overline{f}_{T\times T,\xi_{n}\times\xi_{n},m_{n}\left\lceil \sqrt{\alpha}m_{n}\right\rceil }((x,y),(\tilde{x},\tilde{y}))<\tilde{c}_{2}\sqrt{\alpha}.
\]

Since every element of $\mathcal{P}$ can be approximated by a finite
union of products of measurable subsets of $X,$ there exist $B\subset X\times X$
and finite measurable partitions $\xi$ and $\widehat{\xi}$ of $X$
such that $(\mu\times\mu)(B)>1-(\alpha^{2}/8)$, and if $(x,y),(\tilde{x},\tilde{y})\in B$
and $(x,y)$ and $(\tilde{x},\tilde{y})$ are in the same element
of $\xi\times\widehat{\xi},$ then $(x,y)$ and $(\tilde{x},\tilde{y})$
are in the same element of $\mathcal{P}.$ By the exhaustivity assumption,
if $n$ is sufficiently large, then there is a set $A_{n}\subset X$
with $\mu(A_{n})>1-(\alpha^{2}/16)$ such that if two points $x,\tilde{x}\in A_{n}$
are in the same element of $\xi_{n},$ then they are in the same element
of $\xi$ and in the same element of $\widehat{\xi}.$ Note that $(\mu\times\mu\times\mu\times\mu)\big((X\times X\times X\times X)\setminus\left( (A_{n}\times A_{n}\times A_{n}\times A_{n})\cap(B\times B) \right) \big)<4(\alpha^{2}/16)+2(\alpha^{2}/8)=\alpha^{2}/2.$
Let $G_{n}$ consist of those points $(x,y,\tilde{x},\tilde{y})$
in $\widetilde{G}_{n}$ such that the proportion of indices $i$ among
those $i$ with $0\le i\le m_{n}\left\lceil \sqrt{\alpha}m_{n}\right\rceil $
such that $(T\times T\times T\times T)^{i}(x,y,\tilde{x},\tilde{y})\notin(A_{n}\times A_{n}\times A_{n}\times A_{n})\cap(B\times B)$
is less than $\alpha.$ Then $\alpha m_{n}\left\lceil \sqrt{\alpha}m_{n}\right\rceil (\mu\times \mu \times \mu \times \mu)(\widetilde{G}_{n}\setminus G_{n})<(\alpha^{2}/2)m_{n}\left\lceil \sqrt{\alpha}m_{n}\right\rceil .$
Thus $(\mu\times \mu \times \mu \times \mu)(\widetilde{G}_{n}\setminus G_{n}\big)<\alpha/2,$ and $(\mu \times \mu \times \mu \times \mu)(G_{n})>(\nicefrac{3}{2})\alpha-(\nicefrac{1}{2})\alpha=\alpha.$ 
For $(x,y,\tilde{x},\tilde{y})\in G_{n},$ we have 
\[
\overline{f}_{T\times T,\mathcal{P},m_{n}\left\lceil \sqrt{\alpha}m_{n}\right\rceil }((x,y),(\tilde{x},\tilde{y}))<\tilde{c}_{2}\sqrt{\alpha}+2\alpha<c_{2}\sqrt{\alpha},
\]
for some constant $c_{2}>0.$ We take $N_{n}=m_{n}\left\lceil \sqrt{\alpha}m_{n}\right\rceil .$
Then $N_{n}\to\infty$ as $n\to\infty.$ The $2\alpha$ term in the
above estimate comes from considering indices $k_{1},\dots,k_{j}$
and indices $\ell_{1},\dots,\ell_{j}$ that achieve the supremum in
the definition of $\overline{f}_{T\times T,\xi_{n}\times\xi_{n},m_{n}\left\lceil \sqrt{\alpha}m_{n}\right\rceil }((x,y),(\tilde{x},\tilde{y})),$
and removing at most $\left\lfloor \alpha N_{n}\right\rfloor $ of
the $k_{i}$'s (and the corresponding $\ell_{i}$'s) for which $(T\times T)^{k_{i}}(x,y)\notin(A_{n}\times A_{n})\cap B,$
and removing at most $\left\lfloor \alpha N_{n}\right\rfloor $ of
the $\ell_{i}$'s (and the corresponding $k_{i}$'s)  for which $(T\times T)^{\ell_{i}}(\tilde{x},\tilde{y})\notin(A_{n}\times A_{n})\cap B.$
\end{pr}

\begin{proof}
[Proof of Proposition \ref{prop:crit}]If $G_{n}\subset X\times X\times X\times X$
is as in Lemma \ref{fbar match on G}, then there exists $(x_{0},y_{0})\in X\times X$
such that $K_{n}:=\{(x,y):(x_{0},y_{0},x,y)\in G_{n}\}$ has $(\mu\times\mu)(K_{n})>\alpha,$
and for $(x,y),(\tilde{x},\tilde{y})\in K_{n},$
\begin{equation*}
\overline{f}_{T\times T,\mathcal{P},N_{n}}((x,y),(\tilde{x},\tilde{y})) \leq \overline{f}_{T\times T,\mathcal{P},N_{n}}((x,y),(x_{0},y_{0}))+\overline{f}_{T\times T,\mathcal{P},N_{n}}((x_{0},y_{0}),(\tilde{x},\tilde{y}))< 2c_{2}\sqrt{\alpha}.
\end{equation*}

Thus the hypothesis of Theorem \ref{KScondition} holds for $S=T\times T,$
if we choose $\alpha(\epsilon)=\min(\alpha_{0},\epsilon^{2}/(4c_{2}^{2})).$\end{proof}

\section{Combinatorics of the construction} \label{section:comb}
In this section we describe the underlying combinatorics of our construction. For this purpose, we recall our choice of rotation numbers
\begin{equation*}
\a_{n+1}= \a_n + \frac{1}{q_{n+1}}, \ \a^{\prime}_{n+1}= \a^{\prime}_n+\frac{1}{\bar{q}^{\prime}_{n+1}}+D_n
\end{equation*}
with $p_{n+1}$ and $q_{n+1}$ relatively prime, $p^{\prime}_{n+1}$ and $q^{\prime}_{n+1}$ relatively prime as well as the relation $q_{n+1}=\bar{q}^{\prime}_{n+1}-q_n q^{\prime}_n$. We will choose the numbers $q_{n+1}$ and $D_n$ in Section \ref{section:conv} in order to get convergence of $\left(T_n \right)_{n \in \N}$ in Diff$^{\infty}(M_0,\mu)$ and small ``error terms'' in the difference of the combinatorics of $T_n$ and the limit diffeomorphism $T$. Amongst others, we will impose the following conditions:
\begin{equation} \tag{A} \label{eq:div}
q_nq^{\prime}_{n} \text{ divides } q_{n+1},
\end{equation}
\begin{equation} \tag{B} \label{eq:prime}
q_n \text{ and } q^{\prime}_n \text{ are relatively prime},
\end{equation}
\begin{equation} \tag{C} \label{eq:relation}
\bar{q}^{\prime}_{n+1} = q_{n+1}+q_n q^{\prime}_n.
\end{equation}
Additionally, we define the number
\begin{equation*}
m_n = \frac{q_{n+1}}{q_n q^{\prime}_n}+1= \frac{\bar{q}^{\prime}_{n+1}}{q_n q^{\prime}_n}.
\end{equation*}
Since $q_{n+1}$ is a multiple of $q_n q^{\prime}_n$ by equation (\ref{eq:div}), $m_n \in \N$. In fact, $m_n-1$ will be the height of the first tower and $m_n$ the height of the second one. We put
\begin{equation*}
r_n \coloneqq (m_n-1) \cdot p_n \mod q_n \ \ \text{ and } \ \ r^{\prime}_n \coloneqq (m_n-1) \cdot p^{\prime}_n \mod q^{\prime}_n.
\end{equation*}
Calculating modulo $1$, we have
\begin{align}
& (m_n-1) \cdot \a_{n+1} = \frac{r_n}{q_n}+\frac{1}{q_nq^{\prime}_n}, \label{a1} \\
& (m_n - 1) \cdot \a^{\prime}_{n+1} = \frac{r^{\prime}_n}{q^{\prime}_n} + \frac{1}{q_nq^{\prime}_n}-\frac{1}{\bar{q}^{\prime}_{n+1}}+\left(m_n - 1\right) \cdot D_n = \frac{r^{\prime}_n}{q^{\prime}_n} + \frac{1}{q_nq^{\prime}_n} - \Delta_n, \label{a2} \\
& m_n \cdot \a_{n+1} = \frac{r_n + p_n}{q_n}+\frac{1}{q_n q^{\prime}_n} + \frac{1}{q_{n+1}}, \label{a3}\\
& m_n \cdot \a^{\prime}_{n+1} = \frac{r^{\prime}_n+ p^{\prime}_n}{q^{\prime}_n}+\frac{1}{q_n q^{\prime}_n}+m_n \cdot D_n \label{a4}
\end{align}
using the notation $\Delta_n =\frac{1}{\bar{q}^{\prime}_{n+1}}-\left(m_n - 1\right) \cdot D_n$. 

For the rest of this section, we will let $R_{\alpha,\beta}$ denote the automorphism of $\mathbb{T}^2$ given by $(\theta_1,\theta_2)\mapsto (\theta_1 + \alpha,\theta_2+\beta)$. That is, $R_{\alpha,\beta}$ is as in our previous definition in Subsection \ref{subsection:First steps}, except that we only apply the map on the $\mathbb{T}^2$ factor of $\mathbb{T}^2\times [0,1]^{d-2}$. In later sections, we will go back to the original definition of $R_{\alpha,\beta}$.

\begin{rem} \label{rem:outline towers} 
We observe that after $q_nq^{\prime}_n \cdot \left(m_n - 1\right)$ iterates of $R_{\alpha_{n+1}, \alpha^{\prime}_{n+1}}$ there will be a perfect recurrence in the $\theta_1$-coordinate but some small deviation in the $\theta_2$-coordinate. In order to obtain a very small approximation error in Lemma \ref{lem:error1} we aim for very good recurrence properties in the tower base. Hence, we will define the base of the first tower (of height $m_n - 1$) using the union of $q_nq^{\prime}_n$ sets of very small $\theta_1$-width $\frac{q_n q^{\prime}_n}{q_{n+1}}$ and $\theta_2$-height about $\frac{1}{2q_nq^{\prime}_n}$ in Subsection \ref{subsection:towers}. 
On the other hand, since $m_n \cdot D_n$ will be much smaller than $\frac{1}{q_{n+1}}$ by inequalities (\ref{eq:D}) and (\ref{eq:large}) in Lemma \ref{lem:conv}, we have almost perfect recurrence in the $\theta_2$-coordinate and some slightly larger deviation in the $\theta_1$-coordinate after $q_n q^{\prime}_n \cdot m_n$ iterates of $R_{\alpha_{n+1}, \alpha^{\prime}_{n+1}}$. Thus the base of the second tower (of height $m_n$) will be defined using the union of $q_nq^{\prime}_n$ sets of very small $\theta_2$-height about $\frac{q_n q^{\prime}_n}{\bar{q}^{\prime}_{n+1}}$ and $\theta_1$-width about $\frac{1}{2q_nq^{\prime}_n}$.
\end{rem}

According to equations (\ref{a1}) to (\ref{a4}), the rotation $R_{\frac{r_n}{q_n}+\frac{1}{q_nq^{\prime}_n}, \frac{r^{\prime}_n}{q^{\prime}_n}+\frac{1}{q_nq^{\prime}_n}}$ is an approximation of $R_{\alpha_{n+1},\alpha'_{n+1}}^{m_n-1}$ and  $R_{\frac{r_n+p_n}{q_n}+\frac{1}{q_nq^{\prime}_n}, \frac{r^{\prime}_n+p^{\prime}_n}{q^{\prime}_n}+\frac{1}{q_nq^{\prime}_n}}$ is an approximation of  $R_{\alpha_{n+1},\alpha'_{n+1}}^{m_n}$. With regard to these approximations, we make the following observations:
\begin{lem} \label{lem:combidisj}
The sets 
\begin{equation*}
R^k_{\frac{r_n}{q_n}+\frac{1}{q_nq^{\prime}_n}, \frac{r^{\prime}_n}{q^{\prime}_n}+\frac{1}{q_nq^{\prime}_n}}\left( \left(0, \frac{1}{q_nq^{\prime}_n}\right) \times \left(0, \frac{1}{q_nq^{\prime}_n}\right) \right) \mod \left(\frac{1}{q_n}, \frac{1}{q^{\prime}_n}\right), \ 0\leq k < q_nq^{\prime}_n
\end{equation*}
are disjoint. The same holds true for the sets
\begin{equation*}
R^k_{\frac{r_n+p_n}{q_n}+\frac{1}{q_nq^{\prime}_n}, \frac{r^{\prime}_n+p^{\prime}_n}{q^{\prime}_n}+\frac{1}{q_nq^{\prime}_n}}\left( \left(0, \frac{1}{q_nq^{\prime}_n}\right) \times \left(0, \frac{1}{q_nq^{\prime}_n}\right) \right) \mod \left(\frac{1}{q_n}, \frac{1}{q^{\prime}_n}\right), \ 0\leq k < q_nq^{\prime}_n.
\end{equation*}
\end{lem}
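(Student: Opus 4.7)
The plan is to reduce the assertion to a pure Chinese-Remainder-Theorem statement about the base points of the translated rectangles, and then observe that the rectangles have side length exactly equal to the spacing of the resulting grid, so distinctness of base points is equivalent to disjointness on the quotient torus $\mathbb{R}^2/((1/q_n)\mathbb{Z}\times(1/q_n')\mathbb{Z})$.

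First I would compute the translation vector produced by $k$ iterates of the first rotation. Since $\frac{r_n}{q_n}+\frac{1}{q_nq'_n}=\frac{r_nq'_n+1}{q_nq'_n}$ and $\frac{r'_n}{q'_n}+\frac{1}{q_nq'_n}=\frac{r'_nq_n+1}{q_nq'_n}$, the $k$-th iterate translates by
\begin{equation*}
\left( \frac{k(r_nq'_n+1)}{q_nq'_n},\; \frac{k(r'_nq_n+1)}{q_nq'_n} \right).
\end{equation*}
Reducing the first component modulo $\frac{1}{q_n}=\frac{q'_n}{q_nq'_n}$ and the second modulo $\frac{1}{q'_n}=\frac{q_n}{q_nq'_n}$ gives the base point
\begin{equation*}
\left( \frac{k(r_nq'_n+1)\bmod q'_n}{q_nq'_n},\; \frac{k(r'_nq_n+1)\bmod q_n}{q_nq'_n} \right) = \left( \frac{k\bmod q'_n}{q_nq'_n},\; \frac{k\bmod q_n}{q_nq'_n} \right),
\end{equation*}
because $r_nq'_n+1\equiv 1\pmod{q'_n}$ and $r'_nq_n+1\equiv 1\pmod{q_n}$.

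Next I would invoke condition (\ref{eq:prime}), which says $\gcd(q_n,q'_n)=1$, together with the Chinese Remainder Theorem. These imply that the map $k\mapsto(k\bmod q'_n,\, k\bmod q_n)$ is a bijection from $\{0,1,\dots,q_nq'_n-1\}$ onto $\mathbb{Z}/q'_n\times\mathbb{Z}/q_n$, so the $q_nq'_n$ base points computed above are pairwise distinct. Since each base point lies in the grid $\frac{1}{q_nq'_n}\mathbb{Z}^2\cap\big([0,\tfrac{1}{q_n})\times[0,\tfrac{1}{q'_n})\big)$ and each translated copy of $\bigl(0,\tfrac{1}{q_nq'_n}\bigr)\times\bigl(0,\tfrac{1}{q_nq'_n}\bigr)$ is an open rectangle with side length exactly equal to the grid spacing, pairwise distinctness of base points immediately yields pairwise disjointness on the quotient torus; no rectangle wraps, because the largest admissible base coordinate is $\tfrac{q'_n-1}{q_nq'_n}=\tfrac{1}{q_n}-\tfrac{1}{q_nq'_n}$ (analogously for the second coordinate).

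For the second family, the computation is identical after replacing $r_n$ by $r_n+p_n$ and $r'_n$ by $r'_n+p'_n$. What matters is only that $(r_n+p_n)q'_n+1\equiv 1\pmod{q'_n}$ and $(r'_n+p'_n)q_n+1\equiv 1\pmod{q_n}$, which is automatic, so exactly the same CRT argument applies. I do not anticipate a genuine obstacle here; the only point that requires a small amount of care is the modular bookkeeping of the two fractions, and in particular checking that the residues simplify so that the two shift components become literally $k\bmod q'_n$ and $k\bmod q_n$, which is what allows the bijectivity argument via (\ref{eq:prime}) to run.
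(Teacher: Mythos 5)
Your proof is correct and follows essentially the same route as the paper: both reduce the disjointness claim to the observation that the base point of the $k$-th translated rectangle is $\bigl(\tfrac{k \bmod q'_n}{q_nq'_n},\tfrac{k\bmod q_n}{q_nq'_n}\bigr)$ and then apply the Chinese Remainder Theorem using condition (\ref{eq:prime}). The paper phrases this by stating that the congruence system (\ref{eq:modulo}) is equivalent to $k\equiv i\ (\mathrm{mod}\ q'_n)$ and $k\equiv j\ (\mathrm{mod}\ q_n)$, which is exactly the modular simplification you carry out explicitly; your added remark that the open rectangles tile the fundamental domain without wrapping is a welcome sanity check but is implicit in the paper's argument.
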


\begin{pr}
In order to deduce the first statement, we prove that for every pair $(i,j)$, where $0 \leq i < q^{\prime}_n$, $0 \leq j < q_n$, there is exactly one $0 \leq k=k(i,j) < q_n q^{\prime}_n$ such that
\begin{equation} \label{eq:modulo}
k \cdot \left( \frac{r_n}{q_n} + \frac{1}{q_nq^{\prime}_n}\right) \equiv \frac{i}{q_n q^{\prime}_n} \mod \frac{1}{q_n} \ \ \text{ and } \ \ k \cdot \left( \frac{r^{\prime}_n}{q^{\prime}_n} + \frac{1}{q_nq^{\prime}_n} \right) \equiv \frac{j}{q_n q^{\prime}_n} \mod \frac{1}{q^{\prime}_n}.
\end{equation}
These equations are equivalent to 
\[
k\equiv i\text{\ mod\ }q_{n}'\text{\ \ and\ \ }k\equiv j\text{\ mod\ }q_{n}.
\]
Since $q_{n}$ and $q_{n}'$ are relatively prime, the Chinese remainder
theorem implies that there is a unique solution $k$ with $0\le k<q_{n}q_{n}'$. 

The proof of the second statement in the lemma is similar. In fact,
the same $k=k(i,j)$ is also a solution to the equations in (\ref{eq:modulo}) if
we replace $r_{n}$ and $r_{n}'$ by $r_{n}+p_{n}$ and $r_{n}'+p_{n}'$, respectively.
\end{pr}

We consider the family of rectangles
\[
\begin{array}{cccc}
S_{i,j}^{(1)} & = & \left[\frac{i}{q_{n}q_{n}'},\frac{i+1}{q_{n}q_{n}'}\right]\times\left[\frac{j+(1/2)}{q_{n}q_{n}'},\frac{j+1}{q_{n}q_{n}'}\right], & \text{\ for\ }0\le i<q_{n},0\le j<q_{n}',\\
S_{i,j}^{(2)} & = & \left[\frac{i}{q_{n}q_{n}'},\frac{i+1}{q_{n}q_{n}'}\right]\times\left[\frac{j}{q_{n}q_{n}'},\frac{j+(1/2)}{q_{n}q_{n}'}\right], & \text{\ for\ }0\le i<q_{n},0\le j<q_{n}',
\end{array}
\]
in $\left[0,\frac{1}{q_{n}}\right]\times\left[0,\frac{1}{q_{n}'}\right]$.
Moreover, we consider the iterates 
\[
\begin{array}{cccc}
S_{k}^{(1)}: & = & R_{\frac{r_{n}}{q_{n}}+\frac{1}{q_{n}q_{n'}},\frac{r_{n}'}{q_{n}'}+\frac{1}{q_{n}q_{n}'}}^{k}(S_{0,0}^{(1)}), & \text{\ for\ }0\le k<q_{n}q_{n}',\\
S_{k}^{(2)}: & = & R_{\frac{r_{n}+p_{n}}{q_{n}}+\frac{1}{q_{n}q_{n'}},\frac{r_{n}'+p_{n}'}{q_{n}'}+\frac{1}{q_{n}q_{n}'}}^{k}(S_{0,0}^{(2)}), & \text{\ for\ }0\le k<q_{n}q_{n}'.
\end{array}
\]

\begin{figure}[hbtp] 
\begin{center}
\includegraphics[scale=0.8]{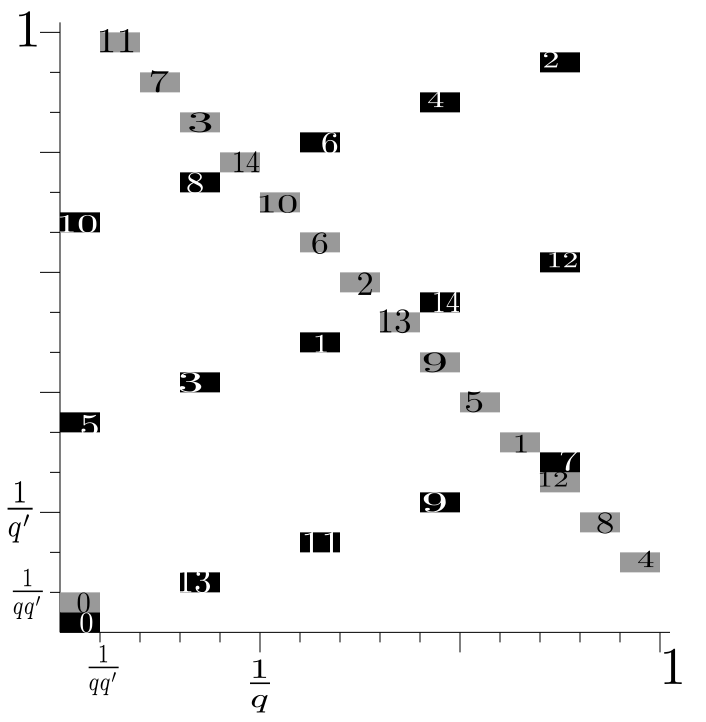}
\caption{Schematic representation of the combinatorics (with $p_{n}=2$, $p_{n}'=1$, $q_{n}=3$, $q_{n}'=5$, $r_{n}=2$, $r_{n}'=1$): The sets $S^{(1)}_{k}$ are colored in grey, the sets $S^{(2)}_{k}$ in black and both types
of sets are labelled by $k$. } \label{figure:combi}
\end{center}
\end{figure}

See also Figure \ref{figure:combi} for a sketch of these iterates. By the Lemma \ref{lem:combidisj}, for every pair $(i,j),$ where $0\le i<q_{n}',$ $0\le j<q_{n},$
there is exactly one $0\le k=k(i,j)<q_{n}q_{n}'$ such that $S_{k}^{(1)}$
has the same position as $S_{i,j}^{(1)}$ mod $\left(\frac{1}{q_{n}},\frac{1}{q_{n}'}\right).$
Similarly, for every pair $(i,j),$ where $0\le i<q_{n}',$ $0\le j<q_{n},$
the same $k=k(i,j)$ is the unique $k$ such that $S_{k}^{(2)}$ has
the same position as $S_{i,j}^{(2)}$ mod $\left(\frac{1}{q_{n}},\frac{1}{q_{n}'}\right).$
In particular, there are assignments $0\le a_{n,s}(i,j)<q_{n},$ and
$0\le a_{n,s}'(i,j)<q_{n}'$ , $s=1,2,$ such that
\[
\begin{array}{ccc}
S_{k(i,j)}^{(s)} & = & R_{\frac{a_{n,s}(i,j)}{q_{n}},\frac{a_{n,s}'(i,j)}{q_{n}'}}(S_{i,j}^{(s)}),\end{array}
\]
for all $0\le i<q_{n}'$, $0\le j<q_{n},s=1,2.$ \\
 
As mentioned in Remark \ref{rem:outline towers}, two towers will be defined in Subsection \ref{subsection:towers}. The $\mathbb{T}^2$ factor of the base of the first tower will consist of $q_nq^{\prime}_n$ sets of $\theta_1$-width $\frac{q_n q^{\prime}_n}{q_{n+1}}$ and $\theta_2$-height about $\frac{1}{2q_nq^{\prime}_n}$, one in each $S^{(1)}_{k(i,j)}$, $0\le i<q_{n}',$ $0\le j<q_{n}$ (the grey rectangles in Figure \ref{figure:combi}). On the other hand, the $\mathbb{T}^2$ factor of the base of the second tower will consist of $q_nq^{\prime}_n$ sets of $\theta_2$-height about $\frac{q_n q^{\prime}_n}{\bar{q}^{\prime}_{n+1}}$ and $\theta_1$-width about $\frac{1}{2q_nq^{\prime}_n}$, one in each $S^{(2)}_{k(i,j)}$, $0\le i<q_{n}',$ $0\le j<q_{n}$ (the black rectangles in Figure \ref{figure:combi}).

\section{Construction of conjugation maps} \label{section:conj}
We present step $n$ in our inductive process of construction. Accordingly, we assume that we have already defined the rational numbers $\alpha_1,...,\alpha_{n}, \alpha^{\prime}_1,..., \alpha^{\prime}_{n} \in \mathbb{S}^1$ and the conjugation map $H_{n-1}= h_{n-1} \circ ... \circ h_{0} \in \text{Diff}^{\infty}(M_0, \mu)$. We will construct the new conjugation map $h_n$ as a composition $h_n=h_{n,2} \circ h_{n,1}$. Both maps $h_{n,1}$ and $h_{n,2}$ will be constructed with a precise description of so-called ``good domains'' where we have good control on the action of the particular map. This will be important when we prove that the sequence of partitions consisting of the tower levels is generating and calculate the speed of approximation. 

We will make use of Lemma \ref{lem:Zauberlemma} below, which is essentially the same as \cite[Lemma 1.1]{AK}. First we need the following definition.
\begin{dfn}
Let $B_r^d=\{x=(x_1,\dots,x_d)\in \mathbb{R}^d:x_1^2+\cdots+x_d^2\le r^2\}.$ A $d$-\emph{cell} in a manifold $X$ is a set $F\subset X$ such that for some $\epsilon>0$, there exists a one-to-one non-singular $C^{\infty}$ map $\phi:\,$int($B_{1+\epsilon}^d)\to X$ with $\phi(B_1^d)=F.$
\end{dfn}
\begin{lem} \label{lem:Zauberlemma}
Let $d\ge 2$ and let $X$ be a connected smooth $d$-dimensional orientable manifold, possibly with boundary, with a smooth volume $\omega.$ Let $F_i$ and $G_i$, $i=1,\dots, k$, be two systems of $d$-cells in $X$  such that $F_i \cap F_j = \emptyset$ and $G_i \cap G_j = \emptyset$ for $i\neq j$. Moreover, for $i=1, \dots, k$, suppose that $\Theta_i$ is an orientation-preserving volume-preserving $C^{\infty}$ diffeomorphism from $F_i$ onto $G_i$ extendable to a diffeomorphism between open neighborhoods of $F_i$ and $G_i$. Then there exists an orientation-preserving diffeomorphism $\Theta \in \text{Diff}^{\infty}\left(X,\omega\right)$ which coincides with the identity outside of some compact set  $N\subset X\setminus \partial X$ and satisfies $\Theta|_{F_i} =\Theta_i$ for $i=1,\dots,k$.
\end{lem}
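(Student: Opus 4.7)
The plan is to reduce to the case of a single pair $(F_i,G_i)$ inside an ambient $d$-cell $N_i$, and to resolve each local problem via disk isotopy together with Moser's trick for volumes. Because $X$ is connected and $d\ge 2$, for each $i$ one can choose a smooth arc $\gamma_i\subset\mathrm{int}(X)$ joining $F_i$ to $G_i$, with the arcs $\gamma_1,\dots,\gamma_k$ together with all of the $F_j$'s and $G_j$'s pairwise disjoint; this is possible because a one-dimensional arc has codimension at least two in $X$ and can be perturbed away from finitely many compact obstacles. Thickening each $F_i\cup G_i\cup\gamma_i$ yields pairwise disjoint $d$-cells $N_i\subset\mathrm{int}(X)$ containing $F_i\cup G_i$ in their interiors.

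Once this is done, the global problem reduces to constructing, for each $i$, a volume-preserving orientation-preserving diffeomorphism $\Theta^{(i)}\in\mathrm{Diff}^\infty(N_i,\omega)$ with $\Theta^{(i)}|_{F_i}=\Theta_i$ and $\Theta^{(i)}$ equal to the identity outside a compact subset of $\mathrm{int}(N_i)$; pasting all $\Theta^{(i)}$ together with the identity on $X\setminus\bigcup_i N_i$ gives the desired $\Theta$. To build $\Theta^{(i)}$, identify $N_i$ with a standard Euclidean ball and proceed in two steps. First, use that any two smoothly and orientation-preservingly embedded closed $d$-disks in a ball are ambient isotopic to produce a compactly supported diffeomorphism $\Phi$ of $N_i$ with $\Phi(F_i)=G_i$; since $\Theta_i$ is volume-preserving, $F_i$ and $G_i$ have equal $\omega$-measure, so a Moser-type deformation converts $\Phi$ into a compactly supported $\Psi\in\mathrm{Diff}^\infty(N_i,\omega)$ still satisfying $\Psi(F_i)=G_i$. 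Second, note that $R_i:=\Theta_i\circ(\Psi|_{F_i})^{-1}:G_i\to G_i$ is an orientation- and volume-preserving diffeomorphism of the cell $G_i$ extending to a neighborhood; realize $R_i$ as the restriction to $G_i$ of a compactly supported $\Psi'\in\mathrm{Diff}^\infty(N_i,\omega)$, and set $\Theta^{(i)}:=\Psi'\circ\Psi$.

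The main obstacle is this realization step, namely extending a prescribed volume-preserving self-diffeomorphism of a cell to a compactly supported volume-preserving diffeomorphism of a slightly larger cell. I would handle it by first choosing a smooth (not yet volume-preserving) isotopy from the identity to $R_i$ inside an enlarged ball $\widehat G_i\supset G_i$ contained in $N_i$ --- which exists because $R_i$ is orientation-preserving on a ball and hence smoothly isotopic to the identity relative to the boundary --- and then applying Moser's trick to the divergence defect of the associated time-dependent vector field to obtain a divergence-free time-dependent vector field, compactly supported in $\widehat G_i$, whose time-one flow coincides with $R_i$ on $G_i$. The hypothesis $d\ge 2$ enters both in the disjointness construction of the first paragraph and in guaranteeing that the cohomological obstructions to solving Moser's equation with compact support vanish.
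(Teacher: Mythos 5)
Your approach replaces the paper's direct appeal to Palais' extension theorem with an explicit construction via connecting arcs plus Moser, but the first step has a gap that is fatal precisely when $d=2$. You justify choosing the arcs $\gamma_1,\dots,\gamma_k$ to be pairwise disjoint on the grounds that ``a one-dimensional arc has codimension at least two in $X$''; for $d=2$ an arc has codimension one, and the disjointness can genuinely fail. For instance, in $X=\mathbb{D}^2$ place four small disks $F_1,F_2,G_1,G_2$ near the boundary circle in that cyclic order: any arc in the interior joining $F_1$ to $G_1$ then separates $F_2$ from $G_2$, so every arc from $F_2$ to $G_2$ must cross it, and consequently no pairwise disjoint $2$-cells $N_1,N_2$ with $F_i\cup G_i\subset\text{int}(N_i)$ can exist. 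The ``pasting'' step in your argument then collapses.

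The fix is to drop disjointness of the $N_i$ and require only that each $N_i$ avoid $\bigcup_{j\ne i}(F_j\cup G_j)$; this uses only the connectivity of the complement of finitely many disjoint $d$-cells, which holds for all $d\ge 2$. One then composes, $\Theta=\Theta^{(k)}\circ\cdots\circ\Theta^{(1)}$: since each $\Theta^{(j)}$ fixes $F_i\cup G_i$ pointwise for $i\ne j$, the relation $\Theta|_{F_i}=\Theta_i$ still holds even though the supports overlap. A secondary misstatement: an orientation-preserving diffeomorphism of a closed $d$-ball need not be ``isotopic to the identity relative to the boundary'' when $d\ge 5$ (the relevant $\pi_0$ is the group of homotopy $(d+1)$-spheres). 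What your realization step actually needs, and what is true in all dimensions, is that an orientation-preserving embedding of a $d$-cell into the interior of a $d$-cell is unique up to ambient isotopy; that is precisely the content of Palais' theorem, which the paper's own proof invokes directly from \cite{Pa} to get a single compactly supported non-volume-preserving extension, followed by one application of Moser's trick.
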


\begin{pr} 
We give an outline of the proof that is in \cite{AK}. By \cite[Theorem B]{Pa} there is an orientation-preserving $C^{\infty}$ diffeomorphism $\tilde{\Theta}$ on $X$ coinciding with $\Theta_i$ on $F_i$. In fact, by \cite[Corollary 1]{Pa} $\tilde{\Theta}$ can be taken to be the identity outside a compact set $N\subset X\setminus \partial X$.  Then we apply ``Moser's trick'' (\cite{Mo}) to obtain a volume-preserving $C^{\infty}$ diffeomorphism $\Theta$ on $X$ which coincides with $\tilde{\Theta}$ on the sets $F_i$, $i= 1, \dots, k$, and $X \setminus N$.
\end{pr}

In the following constructions we choose
\begin{equation} \label{eq:eps}
\varepsilon_n = \frac{2}{nq_n q^{\prime}_n}.
\end{equation}

\subsection{The conjugation map $h_{n,1}$} \label{subsection:h1}
Our goal is to find a smooth volume-preserving diffeomorphism $h_{n,1}$ from $\mathbb{T}^2\times [0,1]^{d-2}$ to itself that maps $S_{i, j}^{(s)}\times [0,1]^{d-2}$ to $S^{(s)}_{k(i,j)}\times [0,1]^{d-2}$ on large parts of these domains and which is  $\left(\frac{1}{q_n}, \frac{1}{q^{\prime}_n} \right)$-equivariant (that is, it commutes with $R_{\frac{1}{q_n},\frac{1}{q_n'}}\times \text{Id}$). Additionally, we divide each  $S_{i,j}^{(s)} \times [0,1]^{d-2}$  into subcuboids of the type $\left[ \frac{i_1}{n^{d-2} \cdot \left(q_n q^{\prime}_n\right)^d}, \frac{i_1 + 1}{n^{d-2} \cdot \left(q_n q^{\prime}_n\right)^d}\right] \times \left[\frac{i_2}{2q_nq^{\prime}_n},\frac{i_2 + 1}{2q_nq^{\prime}_n} \right] \times [0,1]^{d-2}$, where $i_1, i_2 \in \mathbb{Z}$, and we require the images under $h^{-1}_{n,1}$ of all but small parts of each of these subcuboids to have diameter of order $\frac{1}{q_n q^{\prime}_n}$ (see Lemma \ref{lem:prop h1} and Remark \ref{rem:small diameter} for the precise statements). Later, we will exploit this property in proving that the sequence of partitions consisting of the tower levels is generating (see Lemmas \ref{lem:xi} and \ref{lem:eta}). 

As in Section \ref{section:comb}, $S_{i,j}^{(s)}$ is a closed rectangle with $\theta_{1}$-length
$\frac{1}{q_{n}q_{n}'}$ and $\theta_{2}$-length $\frac{1}{2q_{n}q_{n}'}.$ Define $\widetilde{\varepsilon}_n:=\varepsilon_n(nq_nq_n')^{2-d}.$
Let $S_{i,j,\varepsilon_{n}}^{(s)}$ be a closed rectangle with the
same center as $S_{i,j}^{(s)}$ and with $\theta_{1}$-length $\frac{1-(\widetilde{\varepsilon}_{n}/2)}{q_{n}q_{n}'}$
and $\theta_{2}$-length $\frac{1-\varepsilon_{n}}{2q_{n}q_{n}'}.$ Let
$S_{k(i,j),\varepsilon_{n}}^{(1)}$ and $S_{k(i,j),\varepsilon_{n}}^{(2)}$
be obtained from $S_{k(i,j)}^{(1)}$ and $S_{k(i,j)}^{(2)}$ in the
same way. More precisely, we let 
\[
\begin{array}{ccc}
S_{i,j,\varepsilon_{n}}^{(s)} & := & \left[\frac{i+(\widetilde{\varepsilon}_{n}/4)}{q_{n}q_{n}'},\frac{i+1-(\widetilde{\varepsilon}_{n}/4)}{q_{n}q_{n}'}\right]\times\left[\frac{j+(1/2)(2-s)+(\varepsilon_{n}/4)}{q_{n}q_{n}'},\frac{j+(1/2)(3-s)-(\varepsilon_{n}/4)}{q_{n}q_{n}'}\right],\\
S_{k(i,j),\varepsilon_{n}}^{(s)} & := & R_{\frac{a_{n,s}(i,j)}{q_{n}},\frac{a_{n,s}'(i,j)}{q_{n}'}}(S_{i,j,\varepsilon_{n}}^{(s)})
\end{array}
\]
 for $0\le i<q'_{n},0\le j<q_{n},s=1,2.$ 
\begin{lem} \label{lem:translationlemma}
There exists a measure-preserving $C^{\infty}$ diffeomorphism $\varphi_{n}:\mathbb{T}^{2}\to\mathbb{T}^{2}$
that is $\left(\frac{1}{q_{n}},\frac{1}{q_{n}'}\right)$-equivariant
and $\varphi_{n}(S_{i,j,\varepsilon_{n}}^{(s)})=S_{k(i,j),\varepsilon_{n}}^{(s)}$
for all $0\le i<q_{n}'$, $0\le j<q_{n},$ $s=1,2$. Moreover the
restriction of $\varphi_{n}$ to each $S_{i,j,\varepsilon_{n}}^{(s)}$
is a translation. \end{lem}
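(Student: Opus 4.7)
The plan is to construct $\varphi_n$ as an equivariant application of Lemma \ref{lem:Zauberlemma}. By the coprimality of $q_n$ and $q_n'$ (condition (\ref{eq:prime})), the Chinese Remainder Theorem gives that the cyclic group $\langle R_{1/q_n,1/q_n'}\rangle$ coincides with $\Lambda := \langle R_{1/q_n,0}, R_{0,1/q_n'}\rangle$; hence the required $(1/q_n,1/q_n')$-equivariance is identical to $\Lambda$-equivariance. The translation $v_{i,j,s} := \bigl(a_{n,s}(i,j)/q_n,\, a_{n,s}'(i,j)/q_n'\bigr)$ sending $S^{(s)}_{i,j,\varepsilon_n}$ to $S^{(s)}_{k(i,j),\varepsilon_n}$ lies in $\Lambda$, so every target rectangle is the $\Lambda$-translate of the corresponding source by an element of the equivariance group.

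To feed Lemma \ref{lem:Zauberlemma} $\Lambda$-invariant data, I would enlarge the families by including all $\Lambda$-translates: the source family $\{S^{(s)}_{i,j,\varepsilon_n}+w : w\in\Lambda\}$ and the target family $\{S^{(s)}_{k(i,j),\varepsilon_n}+w : w\in\Lambda\}$. By Lemma \ref{lem:combidisj} these are both collections of $2(q_nq_n')^2$ pairwise disjoint rectangles in $\mathbb{T}^2$, and since $v_{i,j,s}\in\Lambda$ the two families actually coincide as subsets of $\mathbb{T}^2$. On each source $S^{(s)}_{i,j,\varepsilon_n}+w$, prescribe the local diffeomorphism to be the translation by $v_{i,j,s}$; because this prescription is independent of $w$, the input data are $\Lambda$-equivariant by construction. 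Applying Lemma \ref{lem:Zauberlemma} then produces a $C^\infty$ measure-preserving diffeomorphism $\varphi_n$ of $\mathbb{T}^2$ that coincides with translation by $v_{i,j,s}$ on each source cell; in particular $\varphi_n|_{S^{(s)}_{i,j,\varepsilon_n}}$ is that translation and $\varphi_n\bigl(S^{(s)}_{i,j,\varepsilon_n}\bigr) = S^{(s)}_{k(i,j),\varepsilon_n}$.

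The main obstacle is arranging the $\Lambda$-equivariance globally: Lemma \ref{lem:Zauberlemma} matches equivariant prescribed data on the cells but does not by itself produce an equivariant diffeomorphism on the complement. I would handle this by carrying out the two ingredients in the proof of Lemma \ref{lem:Zauberlemma}---Palais' extension theorem and Moser's trick---equivariantly with respect to the free action of the finite group $\Lambda$ on $\mathbb{T}^2$. Because the $\Lambda$-action is free and the prescribed volume form is $\Lambda$-invariant, the Palais isotopy from the prescribed translations to the identity can be chosen $\Lambda$-invariant (equivalently, constructed on a fundamental domain of $\Lambda$ and extended equivariantly), and the cohomological equation arising in Moser's trick admits a $\Lambda$-invariant solution by averaging. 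This upgrades the output of Lemma \ref{lem:Zauberlemma} to the $\Lambda$-equivariant, hence $(1/q_n,1/q_n')$-equivariant, diffeomorphism asserted by the lemma, with all remaining properties (smoothness, volume preservation, and the translation behavior on each $S^{(s)}_{i,j,\varepsilon_n}$) inherited directly from the construction.
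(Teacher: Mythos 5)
Your approach is essentially to apply Lemma \ref{lem:Zauberlemma} once on all of $\mathbb{T}^2$, feeding it the whole $\Lambda$-orbit of cells with the prescribed translations $v_{i,j,s}$, and then make the Palais/Moser steps $\Lambda$-equivariant. The gap is in the parenthetical ``(equivalently, constructed on a fundamental domain of $\Lambda$ and extended equivariantly)''. The prescribed translations $v_{i,j,s}=\bigl(a_{n,s}(i,j)/q_n,\,a_{n,s}'(i,j)/q_n'\bigr)$ are nontrivial elements of $\Lambda$, so they move cells \emph{out} of any fundamental domain of $\Lambda$. A map constructed on a fundamental domain and equal to the identity near its boundary (which is what you need in order to extend by $\Lambda$-periodicity) can never realize $\Theta|_{S^{(s)}_{i,j,\varepsilon_n}}=\text{translation by }v_{i,j,s}$, because the image cell $S^{(s)}_{k(i,j),\varepsilon_n}$ lies in a different translate of the fundamental domain. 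There is no ``fundamental-domain Palais'' that produces the required cell motions. Passing to the quotient torus $\mathbb{T}^2/\Lambda$ does not help either: since $v_{i,j,s}\in\Lambda$, each cell descends to itself with the identity prescribed on it, and the desired cell-dependent translations live entirely in the choice of lift; that winding information is precisely what still has to be built, and the Moser averaging step cannot supply it because it only corrects the volume distortion of a diffeomorphism you must already have. Your first two paragraphs (CRT identification of the equivariance group, coincidence and disjointness of the enlarged source/target families) are correct and useful, but the core difficulty of the lemma --- equivariance in the presence of translations that cross fundamental-domain walls --- is not addressed.

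The paper circumvents exactly this by refusing to do the construction in one shot. It decomposes $\varphi_n$ into a composition of \emph{elementary moves}, each translating a single cell orbit by $(1/q_n,0)$ or $(0,1/q_n')$. An elementary move is realized as $\phi_2\circ\phi_1$: first $\phi_1$, a cyclic permutation of the $q_n'$ cells of a fixed row \emph{within} a fundamental domain, obtained from Lemma \ref{lem:Zauberlemma} on that box and equal to the identity near its boundary, hence legitimately extendable by $\Lambda$-periodicity; then $\phi_2$, a global shear $(\theta_1,\theta_2)\mapsto(\theta_1+f(\theta_2),\theta_2)$ with $f$ periodic of period $1/q_n'$, which is automatically $\Lambda$-equivariant. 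The shear is the device that ``carries'' the one wrapping cell across the fundamental-domain boundary in an equivariant way, and the fundamental-domain shuffle cancels the shear's effect on all the other cells. Composing $q_n$ (respectively $q_n'$) such moves yields an arbitrary translation by $(a/q_n,0)$ (respectively $(0,b/q_n')$), and composing those produces $\varphi_n$. If you wanted to rescue your one-shot approach, you would need to construct a $\Lambda$-equivariant ambient isotopy with the correct per-cell winding, and the only transparent way to do so is to build it from shears and in-domain shuffles --- which is the paper's argument in slightly different clothing.
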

\begin{proof}
The lemma will follow by applying the claim below, as well as an analogous statement for a translation by $\left(0, \frac{1}{q^{\prime}_n} \right)$, finitely many times and composing the diffeomorphisms that are obtained.

Fix a choice of $i_{0},j_{0},s_{0}.$ 
\begin{claim*}
There exists a measure-preserving $\left(\frac{1}{q_{n}},\frac{1}{q_{n}'}\right)$-equivariant
$C^{\infty}$ diffeomorphism $\phi:\mathbb{T}^{2}\to\mathbb{T}^{2}$
such that $\phi|S_{i,j,\varepsilon_{n}}^{(s)}=\text{Id}|S_{i,j,\varepsilon_{n}}^{(s)}$
for $(i,j,s)\ne(i_{0},j_{0},s_{0})$ and $\phi|S_{i_{0},j_{0},\varepsilon_{n}}^{(s_{0})}$
is a translation by $\left(\frac{1}{q_{n}},0\right).$ 
\end{claim*}

\noindent\emph{Proof of the Claim.} 
Let $\phi_{1}:\left[\frac{i_{0}}{q_{n}q_{n}'},\frac{1}{q_{n}}+\frac{i_{0}}{q_{n}q_{n}'}\right]\times\left[0,\frac{1}{q_{n}'}\right]\to\left[\frac{i_{0}}{q_{n}q_{n}'},\frac{1}{q_{n}}+\frac{i_{0}}{q_{n}q_{n}'}\right]\times\left[0,\frac{1}{q_{n}'}\right]$
be the measure-preserving diffeomorphism obtained from Lemma \ref{lem:Zauberlemma}
such that $\phi_{1}$ is the identity on $S_{i,j,\varepsilon_{n}}^{(s)}+\left(\frac{i_{0}}{q_{n}q_{n}'},0\right)$
if $(j,s)\ne(j_{0},s_{0})$ and $\phi_{1}:S_{i,j_{0},\varepsilon_{n}}^{(s)}+\left(\frac{i_{0}}{q_{n}q_{n}'},0\right)\to S_{i-1,j_{0},\varepsilon_{n}}^{(s)}+\left(\frac{i_{0}}{q_{n}q_{n}'},0\right)$
is a translation, where $i-1$ is taken mod $q'_{n}$ and chosen so
that $0\le i-1<q'_{n.}$ The diffeomorphism $\phi_{1}$ is chosen
to agree with the identity map in a neighborhood of the boundary of
$\left[\frac{i_{0}}{q_{n}q_{n}'},\frac{1}{q_{n}}+\frac{i_{0}}{q_{n}q_{n}'}\right]\times\left[0,\frac{1}{q_{n}'}\right].$
We extend $\phi_{1}$ to a $\left(\frac{1}{q_{n}},\frac{1}{q_{n}'}\right)$-equivariant
diffeomorphism of $\mathbb{T}^{2}$ to itself by defining $\phi_{1}(\theta_{1}+\frac{\lambda}{q_{n}},\theta_{2}+\frac{\lambda'}{q_{n}'})=\phi_{1}(\theta_{1},\theta_{2})+\left(\frac{\lambda}{q_{n}},\frac{\lambda'}{q_{n}'}\right)$,
for $0\le\lambda<q_{n},$ $0\le\lambda'<q_{n}'.$ Then we choose a
$C^{\infty}$ function $f:[0,\frac{1}{q_{n}'}]\to\mathbb{T}^{1}$
so that $f$ is zero on the complement of $\pi_{2}(S_{i_{0},j_{0}}^{(s)})$
and $f(\theta_{2})=\frac{1}{q_{n}q_{n}'}$ for $\theta\in\pi_{2}(S_{i_{0},j_{0},\varepsilon_{n}}^{(s)}).$
Here $\pi_{2}$ denotes projection onto the second coordinate. Extend
$f$ so that $f:\mathbb{T}^{1}\to\mathbb{T}^{1}$ and $f$ is periodic
with period $\frac{1}{q_{n}'}.$ Let $\phi_{2}(\theta_{1},\theta_{2})=(\theta_{1}+f(\theta_{2}),\theta_{2})$
on $\mathbb{T}^{2}.$ Then $\phi:=\phi_{2}\circ\phi_{1}$ satisfies
the Claim. 
\end{proof}
If $d=2,$ we define $h_{n,1}:=\varphi_{n}$ as in Lemma \ref{lem:translationlemma}. For
the rest of Section \ref{subsection:h1}, we assume $d>2.$

Divide each $\pi_{1}(S_{i,j}^{(s)})$ into $(nq_{n}q_{n}')^{d-2}$
equal intervals of $\theta_{1}$-length $\frac{1}{n^{d-2}(q_{n}q_{n}')^{d-1}}$,
and take the products of these intervals with $\pi_{2}(S_{i,j}^{(s)})\times\Pi_{r=1}^{d-2}[0,1].$
Denote the cuboids obtained in this way by $S_{i,j,\ell}^{(s)},$
$0\le\ell<(nq_{n}q_{n}')^{d-2}.$ Let $S_{i,j,\ell,\varepsilon_{n}}^{(s)}$
be the cuboid with the same center as $S_{i,j,\ell}^{(s)}$ with $\theta_{1}$-length
$\frac{1-\varepsilon_{n}}{n^{d-2}(q_{n}q_{n}')^{d-1}},$ $\theta_{2}$-length
$\frac{1-2\varepsilon_{n}}{2q_{n}q_{n}'}$, and $r_{1},\dots,r_{d-2}$-lengths
$1-2\varepsilon_{n}.$ Let $S^{(s)}_{k(i,j),\ell,\varepsilon_n}$ be obtained from $S^{(s)}_{k(i,j),\varepsilon_n}$ in the same way. More precisely, 
\[
\begin{array}{cccccc}
S_{i,j,\ell,\varepsilon_{n}}^{(s)}: & = & \left[\frac{i}{q_{n}q_{n}'}+\frac{\ell+(\varepsilon_{n}/2)}{n^{d-2}(q_{n}q_{n}')^{d-1}},\frac{i}{q_{n}q_{n}'}+\frac{\ell+1-(\varepsilon_{n}/2)}{n^{d-2}(q_{n}q_{n}')^{d-1}}\right]\ \ \ \ \ \ \ \ \ \ \ \ \ \ \ \ \ \ \ \ \ \\
 &  & \times\left[\frac{j+(1/2)(2-s)+(\varepsilon_{n}/2)}{q_{n}q_{n}'},\frac{j+(1/2)(3-s)-(\varepsilon_{n}/2)}{q_{n}q_{n}'}\right]\times\prod_{r=1}^{d-2}[\varepsilon_{n},1-\varepsilon_{n}],\\
S_{k(i,j),\ell,\varepsilon_{n}}^{(s)}: & = & R_{\frac{a_{n,s}(i,j)}{q_{n}},\frac{a_{n,s}'(i,j)}{q_{n}'}}(S_{i,j,\ell,\varepsilon_{n}}^{(s)}),
\end{array}
\]
for $0\le i<q'_{n},0\le j<q_{n},s=1,2.$
 
Note that $S_{i,j,\ell,\varepsilon_{n}}^{(s)}\subset\text{int }\left(S_{i,j,\varepsilon_{n}}^{(s)}\times\Pi_{r=1}^{d-2}[0,1]\right),$
for $0\le\ell<(nq_{n}q_{n}')^{d-2}$, $0\le i<q_{n}',0\le j<q_{n},s=1,2.$ 

Now take the product of a rectangle in the $(\theta_{1},\theta_{2})$-coordinates
with the same center as $S_{i,j}^{(s)}$ but $\theta_{1}$-length
$\frac{1-\varepsilon_{n}}{q_{n}q_{n}'}$ and $\theta_{2}$-length $\frac{1-2\varepsilon_{n}}{2q_{n}q_{n}'}$
with $\Pi_{r=1}^{d-2}[0,1].$ Divide the resulting cuboid into
$(nq_{n}q_{n}')^{d-2}$ congruent subcuboids by taking products of
intervals of length $\frac{1}{nq_{n}q_{n}'}$ in each of the $r_{1},\dots,r_{d-2}$
directions. Denote the resulting subcuboids by $\widetilde{S}_{i,j,\ell}^{(s)},$
$0\le\ell<(nq_{n}q_{n}')^{d-2}.$ Let $\widetilde{S}_{i,j,\ell,\varepsilon_{n}}^{(s)}$
be the cuboid with the same center as $\widetilde{S}_{i,j,\ell}^{(s)}$
and $\theta_{1}$-length $\frac{1-\epsilon_{n}}{q_{n}q_{n}'},$ $\theta_{2}$-length
$\frac{1-2\varepsilon_{n}}{2q_{n}q_{n}'},$ and $r_{1},\dots,r_{d-2}$-lengths
$\frac{1-2\varepsilon_{n}}{nq_{n}q_{n}'}.$ That is,
\[
\begin{array}{cccccc}
\widetilde{S}_{i,j,\ell,\varepsilon_{n}}^{(s)} & := & \left[\frac{i+(\varepsilon_{n}/2)}{q_{n}q_{n}'},\frac{i+1-(\varepsilon_{n}/2)}{q_{n}q_{n}'}\right]\times\left[\frac{j+(1/2)(2-s)+(\varepsilon_{n}/2)}{q_{n}q_{n}'},\frac{j+(1/2)(3-s)-(\varepsilon_{n}/2)}{q_{n}q_{n}'}\right]\\
 &  & \times\prod_{r=1}^{d-2}\left[\frac{\ell_{r}+\varepsilon_{n}}{nq_{n}q_{n}'},\frac{\ell_{r}+1-\varepsilon_{n}}{nq_{n}q_{n}'}\right],
\end{array}
\]
where each $\ell$ satisfying $0\le\ell<(nq_{n}q_{n}')^{d-2}$ is
identified with a unique $(d-2)$-tuple, $(\ell_{1},\dots,\ell_{d-2}),$
where $0\le\ell_{r}<nq_{n}q_{n}'$ for $r=1,\dots,d-2,$ so that $\ell=\Sigma_{r=1}^{d-2}\ell_r(nq_nq_n')^{r-1}.$ Then $\widetilde{S}_{i,j,\ell,\varepsilon_{n}}^{(s)}\subset\text{int }\left(S_{i,j,\varepsilon_{n}}^{(s)}\times\prod_{r=1}^{d-2}[0,1]\right),$ and the diameter of $\widetilde{S}_{i,j,\ell,\varepsilon_{n}}^{(s)}$ is less than $\sqrt{(5/4)+(d-2)n^{-2}}(q_nq_n')^{-1}.$
Moreover, vol($\widetilde{S}_{i,j,\ell,\varepsilon_{n}}^{(s)}$)=vol($S_{i,j,\ell,\varepsilon_{n}}^{(s)}).$
We apply Lemma \ref{lem:Zauberlemma} to obtain a measure-preserving $C^{\infty}$
diffeomorphism $\psi_{n}$ from $S_{0,0,\varepsilon_{n}}^{(2)}\times\prod_{r=1}^{d-2}[0,1]$
to itself which is the identity in a neighborhood of the boundary
and is an affine map from $\widetilde{S}_{0,0,\ell,\varepsilon_{n}}^{(2)}$
to $S_{0,0,\ell,\varepsilon_{n}}^{(2)},$ for $0\le\ell<(nq_{n}q_{n}')^{d-2}.$
We extend $\psi_{n}$ to be the identity on $\left(S_{0,0}^{(2)}\setminus S_{0,0,\varepsilon_{n}}^{(2)}\right)\times\prod_{r=1}^{d-2}[0,1].$
Then we further extend $\psi_{n}$ to a $\left(\frac{1}{q_{n}q_{n}'},\frac{1}{2q_{n}q_{n}'}\right)$-
equivariant diffeomorphism from $\mathbb{T}^{2}\times\prod_{r=1}^{d-2}[0,1]$
to itself. Then $\psi_n:\widetilde{S}^{(s)}_{i,j,\ell,\varepsilon_{n}}\to S^{(s)}_{i,j,\ell,\varepsilon_{n}}$ is an affine map, for $0\le i<q_n', 0\le j<q_n, 0\le \ell<(nq_nq_n')^{d-2}, s=1,2.$

We replace $\varphi_{n}:\mathbb{T}^{2}\to\mathbb{T}^{2}$ from 
Lemma \ref{lem:translationlemma} by $\varphi_{n}\times\text{Id}$, where Id denotes the identity
map on $\prod_{r=1}^{d-2}[0,1]$, so that the new $\varphi_{n}$ is
a diffeomorphism from $\mathbb{T}^{2}\times\prod_{r=1}^{d-2}[0,1]$ to
itself. Then we define $h_{n,1}:=\varphi_{n}\circ\psi_{n}.$

\begin{dfn} \label{dfn:goodh1}
The ``good domain'' of $h_{n,1}^{-1}$ is defined to be
\[
\mathscr{D}(h_{n,1}^{-1}):=\bigcup_{\begin{array}{c}
0\le b<q_{n}\\
0\le b'<q_{n}'
\end{array}}R_{\frac{b}{q_{n}},\frac{b'}{q_{n}'}}\left(\bigcup_{s=1,2}\bigcup_{\begin{array}{c}
0\le i<q_{n}'\\
0\le j<q_{n}
\end{array}}\bigcup_{0\le\ell<(nq_{n}q_{n}')^{d-2}}S_{i,j,\ell,\varepsilon_{n}}^{(s)}\right).
\]
\end{dfn}
From Lemma \ref{lem:translationlemma} and the definition of $\psi_{n}$ we obtain the following
lemma, which tells us that $h_{n,1}^{-1}$ is an affine map on each
component of $\mathscr{D}(h_{n,1}^{-1}).$
\begin{lem} 
\label{lem:prop h1} The restriction of the diffeomorphism $h_{n,1}^{-1}$
to $S_{i,j,\ell,\varepsilon_{n}}^{(s)}$ is an affine map such that

\[
\begin{array}{cccc}
h_{n,1}^{-1}(S_{i,j,\ell,\varepsilon_{n}}^{(s)}) & = & R_{\frac{-a_{n,s}(i,j)}{q_{n}},\frac{-a_{n,s}'(i,j)}{q_{n}'}}\widetilde{S}_{i,j,\ell,\varepsilon_{n}}^{(s)}, & \text{or equivalently,}\\
h_{n,1}(\widetilde{S}_{i,j,\ell,\varepsilon_{n}}^{(s)}) & = & R_{\frac{a_{n,s}(i,j)}{q_{n}},\frac{a_{n,s}'(i,j)}{q_{n}'}}S_{i,j,\ell,\varepsilon_{n}}^{(s)}=\ \ \ S^{(s)}_{k(i,j),\ell,\varepsilon_n}.
\end{array}
\]
\end{lem}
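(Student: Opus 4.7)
The plan is to prove the second (equivalent) form of the conclusion, namely $h_{n,1}(\widetilde{S}_{i,j,\ell,\varepsilon_{n}}^{(s)}) = S_{k(i,j),\ell,\varepsilon_{n}}^{(s)}$, by tracing the two factors in $h_{n,1} = \varphi_{n} \circ \psi_{n}$ through the composition. The first form of the conclusion is then obtained by inversion together with the $(1/q_{n},1/q_{n}')$-equivariance of $h_{n,1}$, inherited from the equivariance of both $\varphi_{n}$ and $\psi_{n}$.

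First I would analyze the factor $\psi_{n}$. By construction, $\psi_{n}$ restricts to an affine map from $\widetilde{S}_{0,0,\ell,\varepsilon_{n}}^{(2)}$ onto $S_{0,0,\ell,\varepsilon_{n}}^{(2)}$ for every $\ell$, and it is extended to $\mathbb{T}^{2}\times \prod_{r=1}^{d-2}[0,1]$ so as to be $(1/(q_{n}q_{n}'),1/(2q_{n}q_{n}'))$-equivariant. A direct inspection of the defining formulas for $\widetilde{S}_{i,j,\ell,\varepsilon_{n}}^{(s)}$ and $S_{i,j,\ell,\varepsilon_{n}}^{(s)}$ shows that each is the translate of the corresponding $(0,0,2)$-cuboid by the vector $(i/(q_{n}q_{n}'),(2j+2-s)/(2q_{n}q_{n}'),0,\dots,0)$, which lies in the orbit of the equivariance translation. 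Hence the equivariance transports the affine mapping, giving that $\psi_{n}|_{\widetilde{S}_{i,j,\ell,\varepsilon_{n}}^{(s)}}$ is affine with image exactly $S_{i,j,\ell,\varepsilon_{n}}^{(s)}$, for every allowed $(s,i,j,\ell)$.

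Next I would apply $\varphi_{n}\times \mathrm{Id}$. From Lemma \ref{lem:translationlemma}, $\varphi_{n}$ restricts on $S_{i,j,\varepsilon_{n}}^{(s)}$ to the translation by $(a_{n,s}(i,j)/q_{n},a_{n,s}'(i,j)/q_{n}')$, carrying it onto $S_{k(i,j),\varepsilon_{n}}^{(s)}$. The inclusion $S_{i,j,\ell,\varepsilon_{n}}^{(s)} \subset S_{i,j,\varepsilon_{n}}^{(s)} \times \prod_{r=1}^{d-2}[0,1]$ follows immediately from the coordinate formulas. Therefore $(\varphi_{n}\times \mathrm{Id})|_{S_{i,j,\ell,\varepsilon_{n}}^{(s)}}$ is the (affine) translation by the vector $(a_{n,s}(i,j)/q_{n},a_{n,s}'(i,j)/q_{n}',0,\dots,0)$, which yields
\begin{equation*}
(\varphi_{n}\times \mathrm{Id})(S_{i,j,\ell,\varepsilon_{n}}^{(s)}) = R_{\frac{a_{n,s}(i,j)}{q_{n}},\frac{a_{n,s}'(i,j)}{q_{n}'}}(S_{i,j,\ell,\varepsilon_{n}}^{(s)}) = S_{k(i,j),\ell,\varepsilon_{n}}^{(s)}.
\end{equation*}

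Composing the two steps establishes $h_{n,1}(\widetilde{S}_{i,j,\ell,\varepsilon_{n}}^{(s)}) = S_{k(i,j),\ell,\varepsilon_{n}}^{(s)}$, and since the restriction of each factor to the relevant set is affine, so is the restriction of $h_{n,1}$ (and therefore of $h_{n,1}^{-1}$). Finally, applying $R_{-a_{n,s}(i,j)/q_{n},-a_{n,s}'(i,j)/q_{n}'}$ to both sides and using the $(1/q_{n},1/q_{n}')$-equivariance of $h_{n,1}^{-1}$ gives the first form of the conclusion. The only potential obstacle is the index-bookkeeping to confirm that the translation vectors in the two equivariances match up correctly with the offsets between $\widetilde{S}^{(s)}_{i,j,\ell,\varepsilon_{n}}$, $S^{(s)}_{i,j,\ell,\varepsilon_{n}}$, and $S^{(s)}_{k(i,j),\ell,\varepsilon_{n}}$; the key check is that the fundamental domain of the $\psi_{n}$-equivariance, together with one translation by $(0,1/(2q_{n}q_{n}'))$, correctly interchanges the roles of $s=1$ and $s=2$ as predicted by the defining formulas.
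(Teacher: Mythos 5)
Your proof is correct and takes essentially the same route the paper intends: the paper states the lemma as following directly from Lemma \ref{lem:translationlemma} and the definition of $\psi_n$ (and already records that $\psi_n:\widetilde{S}^{(s)}_{i,j,\ell,\varepsilon_n}\to S^{(s)}_{i,j,\ell,\varepsilon_n}$ is affine), while you simply spell out the two factors and the equivariance bookkeeping, which all checks out — in particular the offset $(i/(q_nq_n'),(2j+2-s)/(2q_nq_n'),0,\dots,0)$, the inclusion $S^{(s)}_{i,j,\ell,\varepsilon_n}\subset S^{(s)}_{i,j,\varepsilon_n}\times\prod[0,1]$, and the derivation of the $(1/q_n,1/q_n')$-equivariance of $h_{n,1}$ from that of $\varphi_n$ and $\psi_n$.
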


\begin{rem}
\label{rem:small diameter} Note that $R_{\frac{a_{n,s}(i,j)}{q_{n}},\frac{a_{n,s}'(i,j)}{q_{n}'}}S_{i,j,\ell,\varepsilon_{n}}^{(s)}$
is a subcuboid of $S^{(s)}_{k(i,j),\varepsilon_{n}}$. Moreover, the diameter of $h_{n,1}^{-1}(S_{i,j,\ell,\varepsilon_{n}}^{(s)})$ is
less than $\sqrt{(5/4)+(d-2)n^{-2}}(q_nq_n')^{-1}.$ Thus the map $h_{n,1}$ achieves the goals stated at the beginning of Subsection \ref{subsection:h1}. 
\end{rem}

\subsection{The conjugation map $h_{n,2}$} \label{subsection:h2}
In Remark \ref{rem:outline towers} we gave a preview of the shape of the sets in the definition
of our tower bases. In case of the first tower these sets will be
a collection of $q_{n}q_{n}'$ rectangular boxes with $\theta_{1}$-width
$\frac{q_{n}q_{n}'}{q_{n+1}}$ and $\theta_{2}$-height slightly less
than $\frac{1}{2q_{n}q_{n}^{\prime}},$ while in case of the second
tower the boxes will have $\theta_{1}$-width slightly less than $\frac{1}{2q_{n}q_{n}^{\prime}}$
and $\theta_{2}$-height slightly less than $\frac{q_{n}q_{n'}}{\overline{q}'_{n+1}}$
. The levels of the towers will be the images of these bases under
$R_{\alpha_{n+1},\alpha_{n+1}^{\prime}}^{i}$, where $0\leq i<m_{n}-1$
in case of the first tower, and $0\le i<m_{n}$ in case of the second
tower. If we consider one of the boxes in the base of the first tower
(for example, $C_{0}^{(1)}$ in Figure \ref{fig:tower}), then the images of this
box under $R^{jq_{n}q_{n}'}_{\alpha_{n+1},\alpha_{n+1}^{\prime}},$ for $j$ ranging from $0$ to approximately
$\frac{q_{n+1}}{(q_{n}q_{n}')^{2}}$, are adjacent to each other and
they remain in the upper parallelogram in Figure \ref{fig:tower}. A similar statement
holds for the images of one of the boxes in the base of the second
tower (for example, $C_{0}^{(2)}$ in Figure \ref{fig:tower}) under $R^{jq_{n}q_{n}'}_{\alpha_{n+1},\alpha_{n+1}^{\prime}}$
, which remain in the lower parallelogram in Figure \ref{fig:tower}. However the
small boxes in Figure \ref{fig:tower} leave the standard domains of the form $\left[\frac{i_{1}}{q_{n}q_{n}^{\prime}},\frac{i_{1}+1}{q_{n}q_{n}^{\prime}}\right]\times\left[\frac{i_{2}}{q_{n}q_{n}^{\prime}},\frac{i_{2}+1}{q_{n}q_{n}^{\prime}}\right]$.
This would cause difficulties in proving that the family of tower
levels establishes a generating sequence of partitions, especially
with regard to a linked approximation of type $(h,h+1)$ (see Lemma
\ref{lem:eta}). Hence, the goal of the map $h_{n,2}$ is to deform the standard
domain in such a way that the iterates of the boxes in the tower bases
under $R^{jq_{n}q_{n}'}_{\alpha_{n+1},\alpha_{n+1}^{\prime}}$, for $j$ ranging from $0$ to approximately
$\frac{q_{n+1}}{(q_{n}q_{n}')^{2}}$ in the case of the first tower,
and from $0$ to approximately $\frac{\overline{q}_{n+1}'}{(q_{n}q_{n}')^{2}}$
in the case of the second tower, remain in the same domain. (See Figures
\ref{figure:shapes} and \ref{fig:domains} for sketchs of the intended shapes and Figure \ref{fig:tower} for a sketch
of the positioning of the tower levels.)

\subsubsection{The maps $\Xi_{j,\lambda,\gamma, \varepsilon, \delta}$}
For $\delta>0$ we let $\sigma_{\delta}: \mathbb{R}\rightarrow \left[0,1\right]$ be a smooth map satisfying $\sigma_{\delta}\left(x\right)=0$ for $x \leq \frac{\delta}{2}$, $\sigma_{\delta}\left(x\right)=1$ for $\delta \leq x\leq 1- \delta$ and $\sigma_{\delta}\left(x\right)=0$ for $x\geq 1 - \frac{\delta}{2}$. Additionally, for any $0<\rho<1$ we choose a $C^{\infty}$-function $\tilde{\b}_{\rho}: [0,1] \to \left[ 0, \frac{2-\rho}{4}\right)$ satisfying $\tilde{\b}_{\rho}(x) = 0$ for $x \leq \frac{1}{2}$, $\tilde{\b}_{\rho}(x) = x-\frac{1}{2}$ for $\frac{1+\rho}{2} \leq x \leq  1-\frac{\rho}{2}$ and $\tilde{\b}_{\rho}(x)=0$ for $x \geq 1-\frac{\rho}{4}$. We also require that $\tilde{\b}_{\rho}'(x)\ge 0$ for $\frac{1}{2}\le x\le \frac{1+\rho}{2}$, $\tilde{\b}_{\rho}(x)<x-\frac{1}{2}$ for $1-\frac{\rho}{2}<x\le 1-\frac{\rho}{4},$ and $\tilde{\b}_{\rho}(\frac{2+\rho}{4}+x)> x$ for $0\le x\le \frac{\rho}{4}$.
\begin{figure}[hbtp] 
\begin{center}
\includegraphics[scale=0.7]{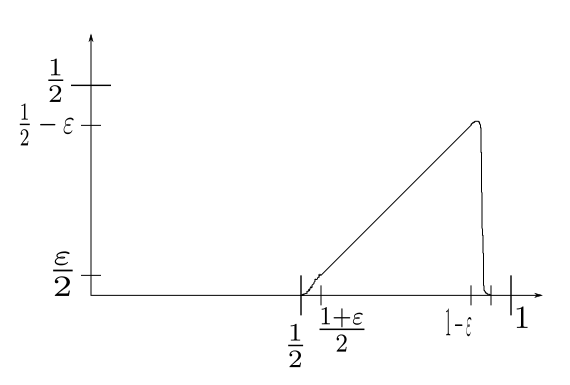}
\caption{Qualitative representation of the graph of $\tilde{\b}_{\rho}$.}
\end{center}
\end{figure}
We extend $\tilde{\b}_{\rho}$ to a $1$-periodic $C^{\infty}$-map $\tilde{\b}_{\rho}: \R \to \left[ 0, \frac{2-\rho}{4}\right)$. Then we define two measure-preserving diffeomorphisms $\Xi_{j,\rho, \delta}: \R^2 \times [0,1]^{d-2}\to \R^2\times [0,1]^{d-2}$, $j=1,2$,
\begin{align*}
\Xi_{1,\rho, \delta} \left(x_1, x_2, r_1,\dots,r_{d-2}\right) & = \left( x_1 + \tilde{\b}_{\rho}\left(x_2\right)\cdot \sigma_{\delta}\left(r_1\right) \cdot ...\cdot \sigma_{\delta}\left(r_{d-2}\right), x_2, r_1,\dots,r_{d-2} \right), \\
\Xi_{2,\rho, \delta} \left(x_1, x_2, r_1,\dots,r_{d-2}\right) & = \left( x_1 , x_2+ \tilde{\b}_{\rho}\left(x_1\right)\cdot \sigma_{\delta}\left(r_1\right) \cdot ...\cdot \sigma_{\delta}\left(r_{d-2}\right), r_1,\dots,r_{d-2}\right).
\end{align*} 
If $d=2$, then neither the coordinates $r_i$ nor the maps $\sigma_{\delta}$ occur. 

Let $\lambda, \gamma \in \N$. Then we define $\b_{\lambda, \gamma,  \varepsilon}: \left[0, \frac{1}{\lambda}\right] \to \left[ 0, \frac{2-(\varepsilon/\gamma)}{4\lambda} \right)$ by $\b_{\lambda, \gamma, \varepsilon}(x) = \frac{1}{\lambda}\tilde{\b}_{\rho}(\lambda \cdot x),$ where $\rho=\frac{\varepsilon}{\gamma}$. Since this map takes the value $0$ in a neighbourhood of the boundary, we can extend it smoothly to $\mathbb{T}$ by the rule $\b_{\lambda, \gamma,  \varepsilon}\left(x+\frac{1}{\lambda}\right) = \b_{\lambda, \gamma,  \varepsilon}\left(x\right)$. Then we define two measure-preserving diffeomorphisms $\Xi_{j,\lambda,\varepsilon, \delta}: \T^2 \times [0,1]^{d-2} \to \T^2 \times [0,1]^{d-2}$, $j=1,2$,
\begin{align*}
\Xi_{1,\lambda,\gamma,\varepsilon, \delta} \left(\theta_1, \theta_2, r_1,\dots,r_{d-2} \right) & = \left( \theta_1 + \b_{\lambda,\gamma, \varepsilon}\left(\theta_2\right)\cdot \sigma_{\delta}\left(r_1\right) \cdot ...\cdot \sigma_{\delta}\left(r_{d-2}\right), \theta_2,r_1,\dots,r_{d-2} \right), \\
\Xi_{2,\lambda,\gamma \varepsilon, \delta} \left(\theta_1, \theta_2, r_1,\dots,r_{d-2} \right) & = \left( \theta_1 , \theta_2+ \b_{\lambda,\gamma, \varepsilon}\left(\theta_1\right)\cdot \sigma_{\delta}\left(r_1\right) \cdot ...\cdot \sigma_{\delta}\left(r_{d-2}\right),r_1,\dots,r_{d-2} \right).
\end{align*}
The maps $\sigma_{\delta}$ are introduced to guarantee that both maps $\Xi_{j,\lambda,\gamma,\varepsilon, \delta}$, $j=1,2$,  coincide with the identity in a neighbourhood of the boundary.

\subsubsection{The map $\Theta_{\lambda, \gamma, \varepsilon,\delta}$} \label{subsubsec:Theta}
 We consider the sets

\begin{equation} \label{eq:Gtilde}
\begin{split} 
\tilde{G}_1= & \Meng{(x,y) \in \R^2}{\frac{\varepsilon}{2\gamma}\leq x \leq 1-\frac{\varepsilon}{2\gamma},\ \frac{\varepsilon}{2} + x \leq y \leq \frac{1-\varepsilon}{2}+x}, \\
\tilde{G}_2= & \Meng{(x,y) \in \R^2}{\frac{\varepsilon}{2\gamma} \leq y \leq 1-\frac{\varepsilon}{2\gamma},\ \frac{\varepsilon}{2} + y \leq x \leq \frac{1-\varepsilon}{2}+y},
\end{split}
\end{equation}
where $\gamma\in \mathbb{N}$ and $0<\delta<1/2.$ Later we will take $\gamma=(nq_nq_n')^{d-2}$ and $\delta=\varepsilon$. 

In order to give the idea of the construction we present it in the case $d=2$ first. In this case, $\gamma = 1.$
\begin{figure}[hbtp]
\begin{center}
\includegraphics[scale=0.7]{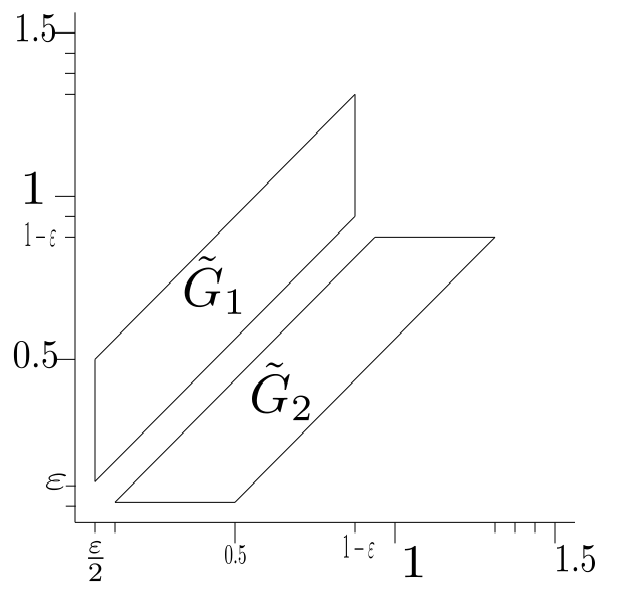}
\caption{Shape of the sets $\tilde{G}_1$ and $\tilde{G}_2$ in the case $d=2$.} \label{figure:shapes}
\end{center}
\end{figure}
As the set $G_s$ we take the set $\Xi^{-1}_{1,\rho, \delta} \circ \Xi^{-1}_{2,\rho, \delta} \left(\tilde{G}_s\right)$, $s=1,2$, where $\rho=\frac{\varepsilon}{\gamma} = \varepsilon$. We observe that $G_s\subset \left(\frac{\varepsilon}{4}, 1-\frac{\varepsilon}{4} \right)^2$.

\begin{figure}[hbtp] 
\begin{center}
\includegraphics[scale=0.7]{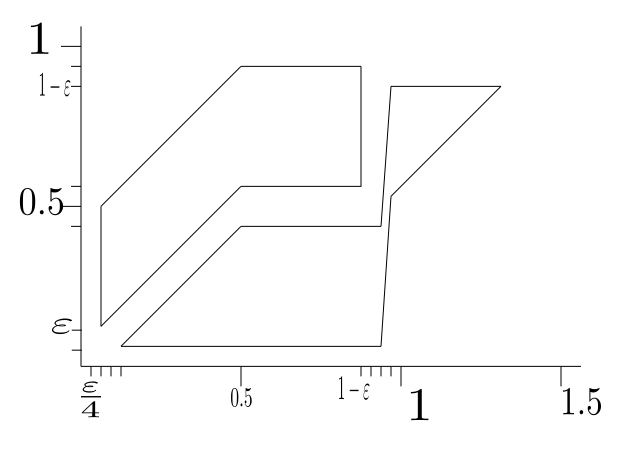}
\caption{Qualitative representation of the action of $\Xi^{-1}_{2,\varepsilon, \delta}$ on the sets $\tilde{G}_1$ and $\tilde{G}_2$. We note that the condition $\tilde{\b}_{\varepsilon}(\frac{2+\varepsilon}{4}+x)> x$ for $0\le x\le \frac{\varepsilon}{4}$ keeps the figure below $y=1-\frac{\varepsilon}{4}$.}
\end{center}
\end{figure}
\begin{figure}[hbtp] 
\begin{center}
\includegraphics[scale=0.7]{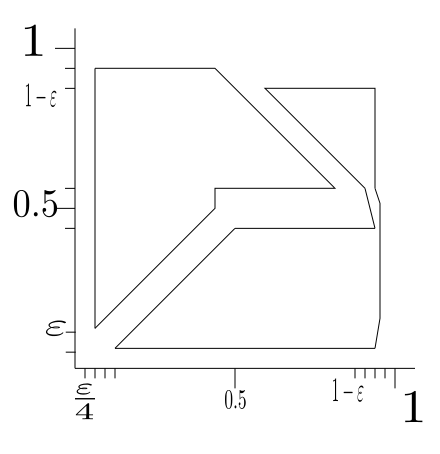}
\caption{Qualitative representation of the action of $\Xi^{-1}_{1,\varepsilon, \delta}$ on the sets $\Xi^{-1}_{2, \varepsilon, \delta}\left(\tilde{G}_1\right)$ and $\Xi^{-1}_{2,\varepsilon, \delta}\left(\tilde{G}_2\right)$. Hence, we see the shape of the sets $G_1$ and $G_2$.}
\end{center}
\end{figure}

We take a set 
\begin{equation*}
F_1=\left[\frac{\varepsilon}{2}, 1-\frac{\varepsilon}{2} \right] \times \left[ \frac{1+\varepsilon}{2}, 1-\frac{\varepsilon}{2} \right], 
\end{equation*}
as well as a set  
\begin{equation*}
F_2=\left[\frac{\varepsilon}{2}, 1-\frac{\varepsilon}{2} \right] \times \left[ \frac{\varepsilon}{2}, \frac{1-\varepsilon}{2} \right] .
\end{equation*}
Since $F_s$ and $\tilde{G}_s$ are parallelograms with $\mu(F_s)=\mu(\tilde{G}_s)$, there is an orientation-preserving measure-preserving affine map $A_s:\mathbb{R}^2\to\mathbb{R}^2$ with $A_s(F_s)=\tilde{G}_s$. Let $F_{s,+}$ and $\tilde{G}_{s,+}$ be 2-cells with $F_s\subset$ int($F_{s,+})$, $\tilde{G}_s\subset$ int($\tilde{G}_{s,+})$ such that $A_s(F_{s,+})=\tilde{G}_{s,+}.$ Let $G_{s,+}=\Xi^{-1}_{1,\varepsilon, \delta} \circ \Xi^{-1}_{2,\varepsilon, \delta} \left(\tilde{G}_{s,+}\right)$, $s=1,2$, where we assume $F_{s,+}$ was chosen small enough so that $G_{1,+}\cap G_{2,+} =\emptyset$ and $F_{s,+}\cup G_{s,+}\subset \left(\frac{\varepsilon}{4},1-\frac{\varepsilon}{4}\right)^2$. By Lemma \ref{lem:Zauberlemma}, there is a volume-preserving orientation-preserving smooth diffeomorphism $\Theta_{\varepsilon}$ of $[0,1]^2$ such that $\Theta_{\varepsilon}$ agrees with the identity outside $\left[\frac{\varepsilon}{4},1-\frac{\varepsilon}{4}\right]^2$ and $\Theta_{\varepsilon}|_{F_{s,+}}=A_s|_{F_{s,+}}$; in particular, $\Theta_{\varepsilon}(F_s)=G_s$. (The sets $F_{s,+}$ and $G_{s,+}$ were introduced because, technically, Lemma \ref{lem:Zauberlemma} cannot be applied directly to the sets $F_s$ and $G_s$, which have corner points. In the application of Lemma \ref{lem:Zauberlemma}, we can also take $M=\left(\frac{\varepsilon}{4},1-\frac{\varepsilon}{4}\right)^2$ to avoid corner points.)

Now we describe the construction in case of $d>2$. Let $\gamma\in \mathbb{N}.$ This time we consider the sets
\begin{align*}
\tilde{G}_{1,\ell}= & \Meng{(x,y) \in \R^2}{\frac{\ell}{\gamma}+\frac{\varepsilon}{2\gamma}\leq x \leq \frac{\ell+1}{\gamma}-\frac{\varepsilon}{2\gamma},\ \frac{\varepsilon}{2} + x \leq y \leq \frac{1-\varepsilon}{2}+x}\subset \widetilde{G}_1, \\
\tilde{G}_{2,\ell}= & \Meng{(x,y) \in \R^2}{\frac{\ell}{\gamma}+\frac{\varepsilon}{2\gamma}\leq y \leq \frac{\ell+1}{\gamma}-\frac{\varepsilon}{2\gamma},\ \frac{\varepsilon}{2} + y \leq x \leq \frac{1-\varepsilon}{2}+y}\subset \widetilde{G}_2,
\end{align*}
for $0 \leq \ell < \gamma$.
%and introduce the notation
%$$ \tilde{G}_1 = \bigcup_{0 \leq \ell < \gamma} %\tilde{G}_{1,\ell}\times [\delta,1-\delta]^{d-2} \ \ \ \text{ as well %as } \ \ \ \tilde{G}_2 = \bigcup_{0 \leq \ell < \gamma} %\tilde{G}_{2,\ell}\times [\delta,1-\delta]^{d-2}.$$
We let $G_{s,\ell}=\Xi^{-1}_{1,\rho, \delta} \circ \Xi^{-1}_{2,\rho, \delta} \left(\tilde{G}_{s,\ell} \times \left[ \delta, 1-\delta \right]^{d-2}\right)$, where $\rho=\frac{\varepsilon}{\gamma}$, $s=1,2$ and $0 \leq \ell < \gamma$. As above, we observe that $G_{s,\ell} \subset \left(\frac{\varepsilon}{4\gamma}, 1-\frac{\varepsilon}{4\gamma} \right)^2 \times [\delta,1-\delta]^{d-2}$. On the other hand, we take a set
\begin{equation*}
F_{1,\ell} = \left[\frac{\ell}{\gamma}+\frac{\varepsilon}{2\gamma}, \frac{\ell+1}{\gamma}-\frac{\varepsilon}{2\gamma} \right] \times \left[ \frac{1+\varepsilon}{2}, 1-\frac{\varepsilon}{2} \right] \times \left[ \delta, 1-\delta \right]^{d-2} ,  
\end{equation*}
as well as a set 
\begin{equation*}
F_{2,\ell}=\left[\frac{\ell}{\gamma}+\frac{\varepsilon}{2\gamma}, \frac{\ell+1}{\gamma} - \frac{\varepsilon}{2\gamma}\right] \times \left[ \frac{\varepsilon}{2}, \frac{1-\varepsilon}{2} \right] \times \left[ \delta, 1-\delta \right]^{d-2}   .
\end{equation*}
Since $F_{s,\ell}$ and $\tilde{G}_{s,\ell}$ are parallelepipeds with $\mu(F_{s,\ell})=\mu(\tilde{G}_{s,\ell})$, there are orientation-preserving measure-preserving affine maps $A_{s,\ell}:\mathbb{R}^d\to \mathbb{R}^d$ such that $A_s(F_{s,\ell})=\tilde{G}_{s,\ell}.$ Thus we can apply Lemma \ref{lem:Zauberlemma} as in the two-dimensional case to obtain a volume-preserving smooth diffeomorphism $\Theta_{\gamma,\varepsilon,  \delta}: \left[0,1\right]^d \to \left[0,1\right]^d$ with $\Theta_{\gamma,\varepsilon , \delta} \left( F_{s,\ell} \right) = G_{s,\ell}$, $s=1,2$, $0 \leq \ell < \gamma$, and which coincides with the identity outside of $\left[\frac{\varepsilon}{4\gamma},1-\frac{\varepsilon}{4\gamma}\right]^2 \times \left[ \frac{\delta}{2}, 1-\frac{\delta}{2}\right]^{d-2}$. 

%\begin{dfn} 
%By ``good domain'' of $\Theta_{\gamma,\varepsilon , \delta}$ we mean %the union $\bigcup_{s=1,2}\bigcup_{0 \leq \ell < \gamma} F_{s,\ell}$. %Accordingly, the ``good domain'' of $\Theta^{-1}_{\gamma,\varepsilon %, \delta}$ is defined as the union $\bigcup_{s=1,2}\bigcup_{0 \leq %\ell < \gamma} G_{s,\ell}$.
%\end{dfn}

In the next step, let $\lambda \in \N$. Then we define the stretching $C_{\lambda}: \left[0,\frac{1}{\lambda}\right]^2 \times \left[0,1\right]^{d-2} \to \left[0,1\right]^d$ by $C_{\lambda}(x_1,x_2,x_3, \dots,x_d) = (\lambda x_1, \lambda x_2, x_3, \dots, x_d)$ and $\Theta_{\lambda,\gamma, \varepsilon, \delta}: \left[0,\frac{1}{\lambda}\right]^2 \times \left[0,1\right]^{d-2} \to \left[0,\frac{1}{\lambda}\right]^2 \times \left[0,1\right]^{d-2}$ by $\Theta_{\lambda, \gamma, \varepsilon, \delta}=C^{-1}_{\lambda} \circ \Theta_{ \gamma,\varepsilon, \delta} \circ C_{\lambda}$. Since $\Theta_{\lambda, \gamma, \varepsilon, \delta}$ coincides with the identity in a neighbourhood of the boundary of its domain, we can extend it to a smooth diffeomorphism on $\T^2 \times [0,1]^{d-2}$ by
\begin{align*}
& \Theta_{\lambda, \gamma, \varepsilon, \delta}: \prod^2_{i=1} \left[\frac{j_i}{\lambda}, \frac{j_i + 1}{\lambda}\right] \times \left[0,1\right]^{d-2} \to \prod^2_{i=1} \left[\frac{j_i}{\lambda}, \frac{j_i + 1}{\lambda}\right]\times \left[0,1\right]^{d-2}, \\
& \Theta_{\lambda, \gamma, \varepsilon, \delta}\left( \frac{j_1}{\lambda}+x_1, \frac{j_2}{\lambda}+x_2, x_3, \dots,x_d \right) = \left(\frac{j_1}{\lambda}, \frac{j_2}{\lambda},0, \dots,0\right)+\Theta_{\lambda, \gamma, \varepsilon, \delta}\left(x_1,\dots,x_d\right),
\end{align*}
for $0\le j_1,j_2<\lambda$, $(x_1,\dots,x_d)\in \left[0,\frac{1}{\lambda}\right]^2\times \left[0,1\right]^{d-2}.$

\subsubsection{Definition of $h_{n,2}$} \label{subsubsection:h2}
Using the maps constructed in the previous subsections and choosing the parameters $\lambda=q_nq_n'$, $\gamma=(nq_nq_n')^{d-2}$, and $\delta=\varepsilon_n$, we define
\begin{equation*}
h_{n,2} = \Xi_{2, q_nq^{\prime}_n, (nq_nq_n')^{d-2},\varepsilon_n, \varepsilon_n} \circ \Xi_{1, q_nq^{\prime}_n, (nq_nq_n')^{d-2}, \varepsilon_n, \varepsilon_n} \circ \Theta_{q_n q^{\prime}_n, \left(nq_nq^{\prime}_n \right)^{d-2}, \varepsilon_n, \varepsilon_n},
\end{equation*}
and observe
\begin{equation*}
h_{n,2} \circ R_{\frac{i}{q_nq^{\prime}_n},\frac{j}{q_n q^{\prime}_n}} = R_{\frac{i}{q_nq^{\prime}_n},\frac{j}{q_n q^{\prime}_n}} \circ h_{n,2} \text{ for every } i,j \in \Z.
\end{equation*}

%\begin{dfn} 
%By ``good domain'' $G_{n,2}$ of $h_{n,2}$ we denote the domain, that %is located in the ``good domains'' of the occurring maps %$\Theta_{\gamma,\varepsilon , \delta}$.  
%\end{dfn}

By construction we have the following properties:
\begin{lem}[Properties of $h^{-1}_{n,2}$] \label{lem:prop h2}
Let $0\le j_{1},j_{2}<q_{n}q_{n}'.$ If $d>2,$ let $0\le\ell<(nq_{n}q_{n}')^{d-2}.$
If $d=2,$ then $\ell=0$ and we let $I_{j_{1},j_{2}}^{(n,s)}:=I_{j_{1},j_{2},0}^{(n,s)}$
(as defined below). The smooth diffeomorphism $h_{n,2}^{-1}$ maps
the set $I_{j_{1},j_{2},\ell}^{(n,1)}$ of points $\left(\theta_{1},\theta_{2},r_{1},\dots,r_{d-2}\right)\in\mathbb{T}^{2}\times\left[\varepsilon_{n},1-\varepsilon_{n}\right]^{d-2}$
with
\[
\begin{split} & \theta_{1}\in\Bigg[\frac{j_{1}}{q_{n}q_{n}^{\prime}}+\frac{\ell+(\varepsilon_{n}/2)}{n^{d-2}\cdot\left(q_{n}q_{n}^{\prime}\right)^{d-1}},\frac{j_{1}}{q_{n}q_{n}^{\prime}}+\frac{\ell+1-(\varepsilon_{n}/2)}{n^{d-2}\cdot\left(q_{n}q_{n}^{\prime}\right)^{d-1}}\Bigg],\\
 & \frac{2j_{2}+\varepsilon_{n}}{2q_{n}q_{n}^{\prime}}+\theta_{1}-\frac{j_{1}}{q_{n}q_{n}^{\prime}}\leq\theta_{2}\leq\frac{2j_{2}+1-\varepsilon_{n}}{2q_{n}q_{n}^{\prime}}+\theta_{1}-\frac{j_{1}}{q_{n}q_{n}^{\prime}}
\end{split}
\]
 to
\begin{equation} \label{eq:part1goodh2}
\begin{split} & \Bigg[\frac{j_{1}}{q_{n}q_{n}^{\prime}}+\frac{\ell+(\varepsilon_{n}/2)}{n^{d-2}\cdot\left(q_{n}q_{n}^{\prime}\right)^{d-1}},\frac{j_{1}}{q_{n}q_{n}^{\prime}}+\frac{\ell+1-(\varepsilon_{n}/2)}{n^{d-2}\cdot\left(q_{n}q_{n}^{\prime}\right)^{d-1}}\Bigg]\\
\times & \left[\frac{2j_{2}+1+\varepsilon_{n}}{2q_{n}q_{n}^{\prime}},\frac{2(j_{2}+1)-\varepsilon_{n}}{2q_{n}q_{n}^{\prime}}\right]\times\prod_{i=3}^{d}\left[\varepsilon_{n},1-\varepsilon_{n}\right].%\label{eq:part1goodh2}
\end{split}
\end{equation}
Moreover, $h_{n,2}^{-1}$ maps the set $I_{j_{1},j_{2},\ell}^{(n,2)}$
of points $\left(\theta_{1},\theta_{2},r_{1},\dots,r_{d-2}\right)\in\mathbb{T}^{2}\times\left[\varepsilon_{n},1-\varepsilon_{n}\right]^{d-2}$
with
\[
\begin{split} & \theta_{2}\in\Bigg[\frac{j_{2}}{q_{n}q_{n}^{\prime}}+\frac{\ell+(\varepsilon_{n}/2)}{n^{d-2}\cdot\left(q_{n}q_{n}^{\prime}\right)^{d-1}},\frac{j_{2}}{q_{n}q_{n}^{\prime}}+\frac{\ell+1-(\varepsilon_{n}/2)}{n^{d-2}\cdot\left(q_{n}q_{n}^{\prime}\right)^{d-1}}\Bigg],\\
 & \frac{2j_{1}+\varepsilon_{n}}{2q_{n}q_{n}^{\prime}}+\theta_{2}-\frac{j_{2}}{q_{n}q_{n}^{\prime}}\leq\theta_{1}\leq\frac{2j_{1}+1-\varepsilon_{n}}{2q_{n}q_{n}^{\prime}}+\theta_{2}-\frac{j_{2}}{q_{n}q_{n}^{\prime}}
\end{split}
\]
 to 
\begin{equation}\label{eq:part2goodh2}
\begin{split} & \Bigg[\frac{j_{1}}{q_{n}q_{n}^{\prime}}+\frac{\ell+(\varepsilon_{n}/2)}{n^{d-2}\cdot\left(q_{n}q_{n}^{\prime}\right)^{d-1}},\frac{j_{1}}{q_{n}q_{n}^{\prime}}+\frac{\ell+1-(\varepsilon_{n}/2)}{n^{d-2}\cdot\left(q_{n}q_{n}^{\prime}\right)^{d-1}}\Bigg]\\
\times & \left[\frac{2j_{2}+\varepsilon_{n}}{2q_{n}q_{n}^{\prime}},\frac{2j_{2}+1-\varepsilon_{n}}{2q_{n}q_{n}^{\prime}}\right]\times\prod_{i=3}^{d}\left[\varepsilon_{n},1-\varepsilon_{n}\right].
\end{split}
\end{equation}\end{lem}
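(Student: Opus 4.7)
The plan is to unfold the composition $h_{n,2}^{-1}=\Theta^{-1}\circ\Xi_{1}^{-1}\circ\Xi_{2}^{-1}$ (with the parameters specified in Subsection \ref{subsubsection:h2}) and apply the three inverses in succession to $I_{j_1,j_2,\ell}^{(n,s)}$, identifying the image at each stage with a set that was constructed explicitly in Subsection \ref{subsubsec:Theta}. Throughout I will write $\lambda=q_nq_n'$, $\gamma=(nq_nq_n')^{d-2}$, and $\varepsilon=\delta=\varepsilon_n$, so $\lambda\gamma=n^{d-2}(q_nq_n')^{d-1}$ is the denominator appearing in the definition of $I_{j_1,j_2,\ell}^{(n,s)}$, and $\rho=\varepsilon/\gamma$ is the shear parameter of the $\Xi$ maps. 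Since $\beta_{\lambda,\gamma,\varepsilon}$ is $1/\lambda$-periodic and $\Theta_{\lambda,\gamma,\varepsilon,\delta}$ is by construction $(1/\lambda,1/\lambda)$-equivariant, the map $h_{n,2}$ commutes with the translations $(\theta_1,\theta_2)\mapsto(\theta_1+1/\lambda,\theta_2)$ and $(\theta_1,\theta_2)\mapsto(\theta_1,\theta_2+1/\lambda)$. It therefore suffices to prove the lemma for $j_1=j_2=0$ and to recover the general case by translating.

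In the fundamental domain I would apply the change of variables $u=\lambda\theta_1$, $v=\lambda\theta_2$. A direct substitution into the inequalities defining $I_{0,0,\ell}^{(n,1)}$ yields
\[
\Bigl\{(u,v)\in\R^2:\tfrac{\ell+\varepsilon_n/2}{\gamma}\le u\le\tfrac{\ell+1-\varepsilon_n/2}{\gamma},\ \tfrac{\varepsilon_n}{2}+u\le v\le\tfrac{1-\varepsilon_n}{2}+u\Bigr\}\times[\varepsilon_n,1-\varepsilon_n]^{d-2},
\]
which is exactly $\tilde{G}_{1,\ell}\times[\varepsilon_n,1-\varepsilon_n]^{d-2}$; the analogous computation identifies the rescaled $I_{0,0,\ell}^{(n,2)}$ with $\tilde{G}_{2,\ell}\times[\varepsilon_n,1-\varepsilon_n]^{d-2}$. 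Now apply $\Xi_{2,\rho,\delta}^{-1}$ followed by $\Xi_{1,\rho,\delta}^{-1}$. Since each $r_i\in[\varepsilon_n,1-\varepsilon_n]=[\delta,1-\delta]$ we have $\sigma_\delta(r_i)\equiv 1$ on the relevant set, so these maps act as pure $\tilde{\beta}_\rho$-shears in the $(u,v)$-plane, and the defining equality $G_{s,\ell}=\Xi_{1,\rho,\delta}^{-1}\circ\Xi_{2,\rho,\delta}^{-1}(\tilde{G}_{s,\ell}\times[\delta,1-\delta]^{d-2})$ carries the set to $G_{s,\ell}$. Finally, applying $\Theta_{\gamma,\varepsilon,\delta}^{-1}$ returns $F_{s,\ell}$, because Lemma \ref{lem:Zauberlemma} was invoked in Subsection \ref{subsubsec:Theta} precisely to build $\Theta_{\gamma,\varepsilon,\delta}$ with $\Theta_{\gamma,\varepsilon,\delta}(F_{s,\ell})=G_{s,\ell}$. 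Undoing the rescaling $C_\lambda^{-1}$ and the translation by $(j_1/\lambda,j_2/\lambda)$ turns $F_{s,\ell}$ into precisely the rectangular box on the right of \eqref{eq:part1goodh2} or \eqref{eq:part2goodh2}.

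The main obstacle is not conceptual but clerical: three rescalings (the $\lambda$-stretch, the $\gamma$-subdivision, and the shear amplitude $\rho=\varepsilon_n/\gamma$) must be kept consistent at every step, and the exact arithmetic relating the denominators $q_nq_n'$ and $n^{d-2}(q_nq_n')^{d-1}$ appearing in the formulas for $I_{j_1,j_2,\ell}^{(n,s)}$ with the simpler parameters $\lambda,\gamma,\rho$ appearing in $\tilde{G}_{s,\ell}$ and $F_{s,\ell}$ must be performed on the nose. Once these bookkeeping identifications are verified, each application of an inverse is immediate from a definition established in Subsection \ref{subsubsec:Theta}.
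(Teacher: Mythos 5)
Your proof is correct and follows essentially the same route as the paper's: identify (via the rescaling $C_\lambda$ and translation by $(j_1/\lambda,j_2/\lambda)$) the sets $I^{(n,s)}_{j_1,j_2,\ell}$ with the scaled and translated versions of $\widetilde{G}_{s,\ell}\times[\varepsilon_n,1-\varepsilon_n]^{d-2}$ and the target boxes with those of $F_{s,\ell}$, and then invoke the defining relations $\Xi_{1,\rho,\delta}^{-1}\circ\Xi_{2,\rho,\delta}^{-1}(\widetilde{G}_{s,\ell}\times[\delta,1-\delta]^{d-2})=G_{s,\ell}$ and $\Theta_{\gamma,\varepsilon,\delta}(F_{s,\ell})=G_{s,\ell}$. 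You spell out the equivariance reduction to $j_1=j_2=0$ and the step-by-step action of the three inverse maps a bit more explicitly than the paper, which simply states the two set identifications and leaves the conclusion implicit in the construction of $h_{n,2}$; the content is the same.
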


\begin{pr}
Let $\delta=\varepsilon_{n}$ and $\gamma=(nq_{n}q_{n}')^{d-2}$ in
the definitions of $\widetilde{G}_{s,\ell}$ and $F_{s,\ell}$, and let $\lambda=q_nq_n'$ in the definition of the map $C_{\lambda}$. Note that the collection
of  sets $I_{j_{1},j_{2},\ell}^{(n,s)}$
for fixed $j_{1},j_{2},$ and $\ell$ varying as above, is exactly
the same as the collection of scaled and translated versions of the sets  $\widetilde{G}_{s,\ell}\times \left[ \varepsilon_n, 1-\varepsilon_n \right]^{d-2}$ given by  $C_{\lambda}^{-1}(\widetilde{G}_{s,\ell}\times \left[ \varepsilon_n, 1-\varepsilon_n \right]^{d-2})+(\frac{j_1}{q_nq_n'},\frac{j_2}{q_nq_n'},0,\dots,0)$.  Likewise, the sets in (\ref{eq:part1goodh2}) and (\ref{eq:part2goodh2}) are the same as the sets  $C_{\lambda}^{-1}({F}_{s,\ell})+(\frac{j_1}{q_nq_n'},\frac{j_2}{q_nq_n'},0,\dots,0)$, for $s=1,2,$ respectively. 
\end{pr}

    We let the union of the scaled and translated versions of the set  $\widetilde{G}_{s,\ell}\times \left[ \varepsilon_n, 1-\varepsilon_n \right]^{d-2}$ that occur in the proof of Lemma \ref{lem:prop h2} be denoted $\widehat{G}_s.$ That is,
\[
\widehat{G}_{s}:=\bigcup_{
0\le j_{1},j_{2}<q_{n}q_{n}'}\ \ \bigcup_{{0\le\ell<(nq_{n}q_{n}')^{d-2}}}\left[C_{\lambda}^{-1}(\widetilde{G}_{s, \ell} \times \left[ \varepsilon_n, 1-\varepsilon_n \right]^{d-2})+\left(\frac{j_{1}}{q_{n}q_{n}'},\frac{j_{2}}{q_{n}q_{n}'},0,\dots,0\right)\right].
\]
See Figure \ref{fig:domains}.

\begin{dfn} \label{dfn:goodh2}
The ``good domain'' of $h_{n,2}^{-1}$ is 
\[
\mathscr{D}(h_{n,2}^{-1}):=\bigcup_{s=1,2}\ \ \bigcup_{0\le j_{1},j_{2}<q_{n}q_{n}'}\ \ \bigcup_{0\le\ell<(nq_{n}q_{n}')^{d-2}}I_{j_{1},j_{2},\ell}^{(n,s)}=\bigcup_{s=1,2} \widehat{G}_s.
\]
\end{dfn}
\subsection{Definition of $h_n$}
The conjugation map $h_n$ is the composition
\begin{equation*}
h_n = h_{n,2} \circ h_{n,1}.
\end{equation*}

\begin{rem} \label{rem:identity}
We note that $h_n$ coincides with the identity outside of $\mathscr{R}_n \coloneqq \T^2 \times \left[\frac{\varepsilon_n}{2}, 1 - \frac{\varepsilon_n}{2} \right]^{d-2}$.
\end{rem} 

\begin{rem} \label{rem:hn2behavior}According to Lemma \ref{lem:prop h2}, we have 

\[
h_{n,2}^{-1}\left(I_{j_{1},j_{2},\ell}^{(n,s)}\right)=R_{\frac{b}{q_{n}},\frac{b'}{q_{n}'}}S_{i,j,\ell,\varepsilon_{n}}^{(s)},
\]
where
\begin{equation}\label{eq:j1j2}
\frac{i}{q_{n}q_{n}'}+\frac{b}{q_{n}}=\frac{j_{1}}{q_{n}q_{n}'}\text{\ and\ }\frac{j}{q_{n}q_{n}'}+\frac{b'}{q_{n}'}=\frac{j_{2}}{q_{n}q_{n}'}.
\end{equation}
In particular, if  
$b=a_{n,s}(i,j)$ and  $b'=a'_{n,s}(i,j)$, then  
\[
h_{n,2}(S^{(s)}_{k(i,j),\ell,\varepsilon_n})=I_{j_{1},j_{2},\ell}^{(n,s)},
\]
and thus, by Lemma \ref{lem:prop h1},
\[
h_{n}(\widetilde{S}^{(s)}_{i,j,\ell,\varepsilon_n})=I_{j_{1},j_{2},\ell}^{(n,s)}.
\]
\end{rem}
By Lemma \ref{lem:prop h2} we have $h_{n,2}^{-1}(\mathscr{D}(h_{n,2}^{-1}))=\mathscr{D}(h_{n,1}^{-1}). $ 
Thus we make the following definition.
\begin{dfn} \label{dfn:goodh} The ``good domain'' of $h_n^{-1}$ is $\mathscr{D}(h_n^{-1}):=\mathscr{D}(h_{n,2}^{-1}),$
as in Definition \ref{dfn:goodh2}.
\end{dfn}

\begin{figure}[hbtp] 
\begin{center}
\includegraphics[scale=0.7]{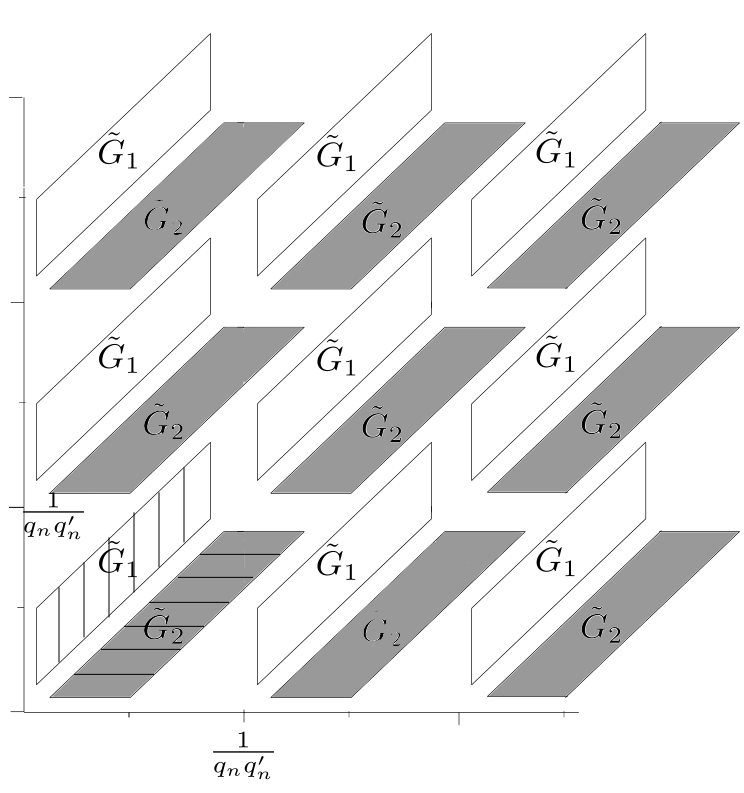}
\caption{``Good domains'' of the map $h^{-1}_{n}$ in the two-dimensional case. In the domains in the bottom left corner we have also indicated the additional subdivision that occurs in the $(\theta_1,\theta_2)-$factor of $\hat{G}_s$ in the higher dimensional case.}\label{fig:domains}
\end{center}
\end{figure}

\section{Convergence of $\left(T_n\right)_{n \in \N}$ in Diff$^{\infty}(M_0,\mu)$} \label{section:conv}
In the following we show that the sequence of constructed measure-preserving smooth diffeomorphisms $T_n = H^{-1}_n \circ R_{\alpha_{n+1}, \a^{\prime}_{n+1}} \circ H_{n}$ converges. For this purpose, the next result is very useful.

\begin{lem} \label{lem:konj}
Let $k \in \mathbb{N}_0$ and $h$ be a C$^{\infty}$-diffeomorphism on $M_0$. Then we get for every $\alpha, \a^{\prime},\beta, \b^{\prime} \in \mathbb{R}$:
\begin{equation*}
d_k\left(h \circ R_{\alpha, \a^{\prime}} \circ h^{-1}, h \circ R_{\beta, \b^{\prime}} \circ h^{-1}\right) \leq C_k \cdot ||| h |||^{k+1}_{k+1} \cdot \max \left(\left| \alpha - \beta \right|, \left| \alpha^{\prime} - \beta^{\prime} \right| \right)
\end{equation*}
where the constant $C_k$ depends solely on $k$. In particular $C_0 = 1$.
\end{lem}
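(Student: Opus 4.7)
Denote $f:= h\circ R_{\alpha,\alpha'}\circ h^{-1}$ and $g:=h\circ R_{\beta,\beta'}\circ h^{-1}$. Since $f^{-1}=h\circ R_{-\alpha,-\alpha'}\circ h^{-1}$ and $g^{-1}=h\circ R_{-\beta,-\beta'}\circ h^{-1}$, any bound obtained for $\tilde{d}_k(f,g)$ yields the same bound for $\tilde{d}_k(f^{-1},g^{-1})$, hence for $d_k(f,g)$. So the task reduces to estimating $\|D_{\vec{a}}(f-g)_i\|_0$ for all $i=1,\dots,d$ and $1\le |\vec{a}|\le k$, and also the $C^0$ piece.

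For the $C^0$ piece, the essential observation is that $R_{\alpha,\alpha'}(y)-R_{\beta,\beta'}(y)=(\alpha-\beta,\,\alpha'-\beta',0,\dots,0)$ is independent of $y$, with sup-norm at most $\max(|\alpha-\beta|,|\alpha'-\beta'|)$. Applying the mean value theorem coordinate-wise to $h_i$ along the segment connecting $R_{\alpha,\alpha'}(h^{-1}(x))$ and $R_{\beta,\beta'}(h^{-1}(x))$ gives the desired $k=0$ estimate with constant essentially $\|Dh\|_0\le |||h|||_1$. This absorbs into the $C_0=1$ convention once we take the max with $|||h|||_1$.

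For $k\ge 1$, I would apply the Faà di Bruno (multivariable chain rule) formula to the three-fold composition $F_{\alpha}:=h\circ R_{\alpha,\alpha'}\circ h^{-1}$. Each partial $D_{\vec{a}} (F_{\alpha})_i$ of order $|\vec{a}|=k$ is a finite sum (with universal combinatorial coefficients depending only on $k$) of terms of the form
\begin{equation*}
\bigl(D_{\vec{b}} h_i\bigr)\bigl(R_{\alpha,\alpha'}(h^{-1}(x))\bigr)\cdot\prod_{j=1}^{m} D_{\vec{c}_j}(h^{-1})_{\iota_j}(x),
\end{equation*}
where $1\le |\vec{b}|\le k$, $m\le k$, each $|\vec{c}_j|\le k$, and the only place the parameter $(\alpha,\alpha')$ appears is in the argument of the outer factor (because $R_{\alpha,\alpha'}$ has constant derivative matrix equal to the identity). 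Forming $D_{\vec{a}}(F_{\alpha})_i - D_{\vec{a}}(F_{\beta})_i$ therefore leaves the product $\prod_j D_{\vec{c}_j}(h^{-1})_{\iota_j}$ untouched and only creates the difference of the outer factor. Applying the mean value theorem once more to $D_{\vec{b}} h_i$ along the translation from $R_{\beta,\beta'}(h^{-1}(x))$ to $R_{\alpha,\alpha'}(h^{-1}(x))$ replaces that difference by a factor bounded by $\|D_{\vec{b}'}h_i\|_0\cdot\max(|\alpha-\beta|,|\alpha'-\beta'|)$ for some $\vec{b}'$ of order $|\vec{b}|+1\le k+1$. Since the product now has at most $m+1\le k+1$ factors of derivatives of $h$ or $h^{-1}$, each of order at most $k+1$, it is bounded by $|||h|||_{k+1}^{k+1}$; collecting the finitely many Faà di Bruno coefficients into a constant $C_k$ depending only on $k$ yields the claim.

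The main obstacle is the combinatorial bookkeeping that keeps the exponent on $|||h|||_{k+1}$ equal to $k+1$ rather than something larger (proving that each Faà di Bruno term contains at most $k$ factors from differentiating $h^{-1}$, plus the one additional factor created by the MVT on the outer $h$-factor). This is a standard induction on $k$, using that $R_{\alpha,\alpha'}$ has trivial Jacobian so it contributes no factors; all other steps are routine applications of MVT and the triangle inequality.
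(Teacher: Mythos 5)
Your proposal is correct and follows essentially the same route as the paper: the paper's "Claim" (proved by induction on $k$) is precisely your Faà di Bruno-type observation that each term in $D_{\vec a}[h\circ R_{\alpha,\alpha'}\circ h^{-1}]_i$ consists of one outer derivative of $h$ (the only place $(\alpha,\alpha')$ appears) times at most $k$ derivatives of $h^{-1}$, all of order at most $k$, and the paper then applies the mean value theorem to that outer factor exactly as you describe, collecting the combinatorial count $\frac{(d+k-1)!}{(d-1)!}$ into $C_k$ and using the symmetric form of the inverse $h\circ R_{-\alpha,-\alpha'}\circ h^{-1}$ to pass from $\tilde d_k$ to $d_k$.
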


\begin{pr}
The proof is similar to the one of \cite[Lemma 4]{FSW} and \cite[Lemma 6.4]{K-rigid2}. 

First of all, we will prove the following observation by induction on $k \in \mathbb{N}$: \\
\textbf{Claim: }\textit{For any multiindex $\vec{a} \in \mathbb{N}^d_0$ with $\left|\vec{a}\right|=k$ and $i\in \left\{1,...,d\right\}$ the partial derivative $D_{\vec{a}} \left[h \circ R_{\alpha, \a^{\prime}} \circ h^{-1}\right]_i$ consists of at most $\frac{(d+k-1)!}{(d-1)!}$ summands, where each summand is the product of one single partial derivative $\left(D_{\vec{b}}\left[h\right]_i\right)\circ R_{\alpha, \a^{\prime}} \circ h^{-1}$ of order at most $\big{|}\vec{b}\big{|} \leq k$ and at most $k$ derivatives $D_{\vec{b}} \left[ h^{-1}\right]_j$ with $\big{|}\vec{b}\big{|} \leq k$.}
\begin{itemize}
	\item \textit{Start:} $k=1$ \\
	For $i_1, i \in \left\{1,...,d\right\}$ we compute:
	\begin{equation*}
	D_{x_{i_1}} \left[h \circ R_{\alpha, \a^{\prime}} \circ h^{-1}\right]_i = \sum^{d}_{j_1=1} \left(D_{x_{j_1}} \left[h\right]_i\right) \circ  R_{\alpha, \a^{\prime}} \circ h^{-1} \cdot D_{x_{i_1}} \left[h^{-1}\right]_{j_1}.
	\end{equation*}
	Hence, this derivative consists of $d = \frac{(d+1-1)!}{(d-1)!}$ summands and each summand has the announced form.
	\item \textit{Induction assumption:} The claim holds for $k \in \mathbb{N}$.
	\item \textit{Induction step:} $k\rightarrow k+1$ \\
	Let $i \in \left\{1,..,d\right\}$ and $\vec{b} \in \mathbb{N}^d_0$ be any multiindex of order $\big{|}\vec{b}\big{|}=k+1$. There are $j \in \left\{1,...,d\right\}$ and a multiindex $\vec{a}$ of order $\left|\vec{a}\right|=k$ such that $D_{\vec{b}}= D_{x_j}D_{\vec{a}}$. By the induction assumption the partial derivative $D_{\vec{a}}\left[h \circ R_{\alpha, \a^{\prime}} \circ h^{-1}\right]_i$ consists of at most $\frac{(d+k-1)!}{(d-1)!}$ summands, such that the summands with the most factors are of the following form:
	\begin{equation*}
	\left(D_{\vec{c}_1} \left[h\right]_i\right) \circ R_{\alpha, \a^{\prime}} \circ h^{-1} \cdot D_{\vec{c}_2}\left[h^{-1}\right]_{i_2} \cdot \dots \cdot D_{\vec{c}_{k+1}}\left[h^{-1}\right]_{i_{k+1}},
	\end{equation*}
	where each $\vec{c}_i$ is of order at most $k$. Using the product rule we compute how the derivative $D_{x_j}$ acts on such a summand: 
	\begin{align*}
	&\sum^{d}_{j_1=1}\left( D_{x_{j_1}}D_{\vec{c}_1} \left[h\right]_i \right) \circ  R_{\alpha, \a^{\prime}} \circ h^{-1} \cdot D_{x_j}\left[ h^{-1} \right]_{j_1} D_{\vec{c}_2}\left[h^{-1}\right]_{i_2} \cdot \dots \cdot D_{\vec{c}_{k+1}}\left[h^{-1}\right]_{i_{k+1}}  +  \\
	&\left( D_{\vec{c}_1} \left[h\right]_i \right) \circ  R_{\alpha, \a^{\prime}} \circ h^{-1} \cdot D_{x_j}D_{\vec{c}_2}\left[h^{-1}\right]_{i_2}\cdot \dots \cdot D_{\vec{c}_{k+1}}\left[h^{-1}\right]_{i_{k+1}}+\dots + \\
	& \left( D_{\vec{c}_1} \left[h\right]_i \right) \circ  R_{\alpha, \a^{\prime}} \circ h^{-1} \cdot D_{\vec{c}_2}\left[h^{-1}\right]_{i_2} \cdot \dots \cdot D_{x_j}D_{\vec{c}_{k+1}}\left[h^{-1}\right]_{i_{k+1}}
	\end{align*}
	Thus, each summand is the product of one derivative of $h$ of order at most $k+1$ and at most $k+1$ derivatives of $h^{-1}$ of order at most $k+1$. Moreover, we observe that $d + k$ summands arise out of one. So the number of summands can be estimated by $\left(d+k\right) \cdot \frac{(d+k-1)!}{(d-1)!} = \frac{(d+k)!}{(d-1)!}$ and the claim is verified. \qed
\end{itemize}

Furthermore, with the aid of the mean value theorem we can estimate for any multiindex $\vec{a} \in \mathbb{N}^d_0$ with $\left|\vec{a}\right|\leq k$ and $i\in \left\{1,...,d\right\}$:
\begin{align*}
& \left|D_{\vec{a}}\left[h\right]_i\left( R_{\alpha, \a^{\prime}} \circ h^{-1}\left(x_1,...,x_d\right)\right)-D_{\vec{a}}\left[h\right]_i\left( R_{\b, \b^{\prime}} \circ h^{-1} \left(x_1,...,x_d\right)\right)\right| \\ 
\leq & |||h|||_{k+1} \cdot \max \left( \left|\alpha-\beta\right|, \left|\alpha^{\prime}-\beta^{\prime}\right| \right).
\end{align*}
Since $\left(h \circ R_{\alpha, \a^{\prime}} \circ h^{-1}\right)^{-1}= h \circ R_{-\alpha, -\a^{\prime}} \circ h^{-1}$ is of the same form, we obtain in conclusion:
\begin{align*}
d_k\left( h \circ R_{\alpha, \a^{\prime}} \circ h^{-1}, h \circ R_{\b, \b^{\prime}} \circ h^{-1}\right) & \leq \frac{(d+k-1)!}{(d-1)!} \cdot |||h|||_{k+1} \cdot |||h|||^k_{k} \cdot \max \left( \left|\alpha-\beta\right|, \left|\alpha^{\prime}-\beta^{\prime}\right| \right) \\
& \leq \frac{(d+k-1)!}{(d-1)!} \cdot |||h|||^{k+1}_{k+1} \cdot \max \left( \left|\alpha-\beta\right|, \left|\alpha^{\prime}-\beta^{\prime}\right| \right).
\end{align*}
\end{pr}

Under some conditions on the proximity of $\alpha_n$ to $\alpha_{n+1}$ as well as $\a^{\prime}_n$ to $\a^{\prime}_{n+1}$ we can prove convergence: 
\begin{lem} \label{lem:convgen}
Let $\varepsilon>0$, let $\varepsilon_n$ as in equation (\ref{eq:eps}) and $\left( l_n \right)_{n \in \N}$ be an increasing sequence of natural numbers satisfying $\sum^{\infty}_{i=1} \frac{1}{l_i} < \varepsilon$ and $\sum^{\infty}_{i=n+1} \frac{1}{l_i} < \varepsilon_n$. We assume that in our constructions the following conditions are fulfilled:
\begin{equation*}
\max\left( \left| \alpha_{n+1} - \alpha_n \right|, \left| \alpha^{\prime}_{n+1} - \alpha^{\prime}_n \right| \right) \leq \frac{1}{4 \cdot l_n \cdot C_{l_n} \cdot q_n q^{\prime}_n \cdot ||| H_n |||^{l_n+1}_{l_n+1}} 
\end{equation*}
for every  $n \in \mathbb{N}$, where $C_{n}$ are the constants from Lemma \ref{lem:konj}.
\begin{enumerate}
	\item Then the sequence of diffeomorphisms $T_n = H^{-1}_n \circ R_{\alpha_{n+1}, \a^{\prime}_{n+1}} \circ H_{n}$ converges in the Diff$^{\infty}(M_0)$-topology to a measure-preserving smooth diffeomorphism $T$.
	\item Let $\left( A, B \right) \in \T^2$ be arbitrary. Moreover, we assume $\max \left( \left| A-\a_1 \right|, \left| B-\a^{\prime}_1\right| \right)< \varepsilon$. Then we have 
\begin{equation*}
d_{\infty}\left(T,R_{A, B}\right)< 4 \cdot \varepsilon.
\end{equation*}
	\item We have for every $n \in \mathbb{N}$ and $m \leq q_{n+1}q^{\prime}_{n+1}$:
	\begin{equation*}
	d_0\left(T^{m}, T^{m}_n\right) < \frac{\varepsilon_n}{4} .
	\end{equation*} 
	\item Let $\rho = \left( \rho_n \right)_{n \in \mathbb{N}_0}$ be an admissible sequence. Under the additional assumption
	\begin{equation*}
	    \sum^{\infty}_{m=n+1} \frac{1}{l_m} < \min_{k=0,...,n} \min_{\vec{x} \in \mathscr{R}_{n+1} \setminus \mathscr{R}_n} \rho_k \left( \vec{x} \right)
	\end{equation*}
	we have $T \in \text{Diff}^{\infty}_{\rho, \a, \a^{\prime}} \left( \T^2 \times [0,1]^{d-2} \right)$, where $\a = \lim_{n \rightarrow \infty} \a_n$ and $\a^{\prime}= \lim_{n \rightarrow \infty} \a^{\prime}_n$.
\end{enumerate}
\end{lem}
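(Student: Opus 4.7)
The core trick behind all four parts is that $h_{n+1}$, being equivariant under the lattice generated by $(1/q_{n+1},1/q'_{n+1})$, commutes with $R_{\alpha_{n+1},\alpha'_{n+1}}$, since $\alpha_{n+1}=p_{n+1}/q_{n+1}$ and $\alpha'_{n+1}=p'_{n+1}/q'_{n+1}$. Hence $T_n=H_{n+1}^{-1}\circ R_{\alpha_{n+1},\alpha'_{n+1}}\circ H_{n+1}$, so $T_n$ and $T_{n+1}$ are conjugates of \emph{different} rotations by the \emph{same} diffeomorphism $H_{n+1}$. For Part 1, apply Lemma \ref{lem:konj} with $h=H_{n+1}$: for every $k\le l_{n+1}$, using $|||H_{n+1}|||_{k+1}\le|||H_{n+1}|||_{l_{n+1}+1}$ and $C_k\le C_{l_{n+1}}$ (both may be assumed $\ge 1$),
\[
d_k(T_{n+1},T_n)\le \frac{1}{4\,l_{n+1}\,q_{n+1}q'_{n+1}}\le \frac{1}{4\,l_{n+1}}.
\]
Since $l_n\to\infty$ and $\sum 1/l_n<\infty$, $(T_n)$ is Cauchy in every $\tilde d_k$ and thus convergent in $\tilde d_\infty$ to a measure-preserving $C^\infty$ diffeomorphism $T$.

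For Part 2, $T_0=R_{\alpha_1,\alpha'_1}$ since $H_0=\text{id}$, and any two rotations differ only in order zero, so $d_k(T_0,R_{A,B})=\max(|\alpha_1-A|,|\alpha'_1-B|)<\varepsilon$ for every $k$, giving $d_\infty(T_0,R_{A,B})<2\varepsilon$. Splitting each $d_\infty(T_{n+1},T_n)$ at $k=l_{n+1}$ (the low-order contribution is $\le 1/(2l_{n+1})$ by Part 1, the tail $\le 2^{-l_{n+1}}$) yields $d_\infty(T_{n+1},T_n)\le 1/l_{n+1}$; summing together with $\sum 1/l_i<\varepsilon$ gives $d_\infty(T,T_0)<\varepsilon$, and the triangle inequality concludes $d_\infty(T,R_{A,B})<4\varepsilon$.

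Part 3 applies the same rewriting to iterates: $T_j^m=H_{j+1}^{-1}\circ R^m_{\alpha_{j+1},\alpha'_{j+1}}\circ H_{j+1}$ and $T_{j+1}^m=H_{j+1}^{-1}\circ R^m_{\alpha_{j+2},\alpha'_{j+2}}\circ H_{j+1}$. The zero-order case of Lemma \ref{lem:konj} (with $C_0=1$) gives
\[
d_0(T_{j+1}^m,T_j^m)\le |||H_{j+1}|||_1\cdot m\cdot\max(|\alpha_{j+2}-\alpha_{j+1}|,|\alpha'_{j+2}-\alpha'_{j+1}|)\le \frac{m}{4\,l_{j+1}\,q_{j+1}q'_{j+1}},
\]
and for $j\ge n$ and $m\le q_{n+1}q'_{n+1}\le q_{j+1}q'_{j+1}$ this is $\le 1/(4l_{j+1})$. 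Telescoping, $d_0(T^m,T_n^m)\le\sum_{j\ge n}d_0(T_{j+1}^m,T_j^m)\le\tfrac14\sum_{j\ge n+1}1/l_j<\varepsilon_n/4$.

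For Part 4, invoke Remark \ref{rem:identity}: each $h_j$ is the identity off $\mathscr{R}_j$, and the sets $\mathscr{R}_j$ are increasing in $j$. Hence for $\vec x\notin\mathscr{R}_m$, $H_j(\vec x)=\vec x$ for all $j\le m$, and so $T_m(\vec x)=R_{\alpha_{m+1},\alpha'_{m+1}}(\vec x)$. Set $\delta_k:=\varepsilon_k/2$; then a point $\vec x$ outside $\mathbb{T}^2\times[\delta_k,1-\delta_k]^{d-2}$ lies in some annulus $\mathscr{R}_{m+1}\setminus\mathscr{R}_m$ with $m\ge k$ (or on the measure-zero boundary). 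Writing
\[
T-R_{\alpha,\alpha'}=(R_{\alpha_{m+1},\alpha'_{m+1}}-R_{\alpha,\alpha'})+\sum_{j\ge m}(T_{j+1}-T_j),
\]
the first summand is constant (annihilated by $D_{\vec a}$ when $|\vec a|\ge 1$, and dominated by $\sum_{j\ge m+1}1/q_{j+1}$ when $|\vec a|=0$), while the second is bounded at $\vec x$ by $\sum_{j\ge m}d_k(T_{j+1},T_j)\le\tfrac14\sum_{j\ge m+1}1/l_j$. The additional hypothesis applied with $n=m\ge k$ bounds this by $\rho_k(\vec x)$, establishing $T\in\text{Diff}^\infty_{\rho,\alpha,\alpha'}$. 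The principal technical subtlety lies in this last part: one must align the support property of the $h_j$'s, the annular decomposition near the boundary, and the calibration of $\rho$ so that the $C^k$ flatness bounds match the prescribed spatial decay.
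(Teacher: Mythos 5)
Your proof is correct and follows essentially the same strategy as the paper: exploit the $(1/q_n,1/q'_n)$-equivariance of $h_n$ to rewrite $T_{n-1}$ and $T_n$ as conjugates of two nearby rotations by the \emph{same} $H_n$, apply Lemma \ref{lem:konj}, and telescope in each $d_k$ (for Parts 1--3) or pointwise near the boundary (for Part 4); your index shift (conjugating by $H_{n+1}$ rather than $H_n$) is purely cosmetic. One small point in your favor: in Part 4 you explicitly handle the order-zero case $|\vec{a}|=0$ by bounding the constant $R_{\alpha_{m+1},\alpha'_{m+1}}-R_{\alpha,\alpha'}$ separately, whereas the paper's writeup treats only $|\vec{a}|\ge 1$ even though the definition of $C^{\infty}_{\rho}$ covers all $n\in\mathbb{N}_0$; the extra estimate is immediate, but it is good that you flagged and supplied it. (One minor imprecision: your domination of the rotation difference by $\sum_{j\ge m+1}1/q_{j+1}$ only tracks the $\alpha$-component; it is cleaner to bound $\max(|\alpha_{j+1}-\alpha_j|,|\alpha'_{j+1}-\alpha'_j|)$ directly by $1/(4l_j)$ from the hypothesis, which also covers the $\alpha'$-component.)
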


\begin{pr}
\begin{enumerate}
	\item According to our construction we have $h_n \circ R_{\alpha_n, \a^{\prime}_n} = R_{\alpha_n, \a^{\prime}_n} \circ h_n$, and hence we can apply Lemma \ref{lem:konj} for every  $k \in \mathbb{N}_0$, $n \in \mathbb{N}$:
	\begin{align*}
	d_k\left( T_n, T_{n-1}\right) & = d_k\left(H^{-1}_n \circ R_{\alpha_{n+1}, \a^{\prime}_{n+1}} \circ H_{n}, H^{-1}_n \circ R_{\alpha_n, \a^{\prime}_n} \circ H_{n}\right) \\
	& \leq C_k \cdot ||| H_n |||^{k+1}_{k+1} \cdot \max \left( \left| \alpha_{n+1} - \alpha_n \right|, \left| \a^{\prime}_{n+1}- \a^{\prime}_n \right| \right).
	\end{align*}
	By the assumptions of this Lemma it follows for every $k \leq l_n$:
	\begin{equation} \label{est4}
	d_k\left(T_n,T_{n-1}\right) \leq d_{l_n}\left(T_n,T_{n-1}\right) \leq C_{l_n} \cdot ||| H_n |||^{l_n+1}_{l_n +1} \cdot \frac{1}{4 l_n \cdot C_{l_n} \cdot q_nq^{\prime}_n \cdot ||| H_n|||^{l_n+1}_{l_n +1} } < \frac{1}{4l_n}.
	\end{equation}
	In the next step we show that for arbitrary $k \in \mathbb{N}_0$, $\left(T_n\right)_{n \in \mathbb{N}}$ is a Cauchy sequence in Diff$^k\left(M_0\right)$, i.\ e. $\lim_{n,m\rightarrow \infty} d_k\left(T_n,T_m\right) = 0$. For this purpose, we calculate:
	\begin{equation} \label{est2}
	\lim_{n \rightarrow \infty} d_k\left(T_n,T_m\right) \leq \lim_{n \rightarrow \infty} \sum^{n}_{i=m+1} d_k\left(T_i, T_{i-1}\right) = \sum^{\infty}_{i=m+1}d_k\left( T_i, T_{i-1}\right).
	\end{equation}
	We consider the limit process $ m \rightarrow \infty$, i.e. we can assume $k\leq l_m$ and obtain from equations (\ref{est4}) and (\ref{est2}):
	\begin{equation*}
	\lim_{n,m \rightarrow \infty} d_k\left( T_n,T_m\right) \leq \lim_{m \rightarrow \infty} \sum^{\infty}_{i=m+1} \frac{1}{4l_i} = 0.
	\end{equation*}
	Since Diff$^{k}\left(M_0\right)$ is complete in the $d_k$ metric, it follows that the sequence $\left(T_n\right)_{n \in \mathbb{N}}$ converges in Diff$^k\left(M\right)$ for every  $k \in \mathbb{N}_0$. Thus, the sequence converges in Diff$^{\infty}\left(M_0\right)$ by definition. 
	\item Furthermore, we estimate:
	\begin{equation} \label{est3}
	d_{\infty}\left(R_{A, B},T \right)= d_{\infty}\left(R_{A, B}, \lim_{n\rightarrow \infty} T_n\right)
 \leq d_{\infty}\left(R_{A, B}, R_{\alpha_1, \a^{\prime}_1}\right) + \sum^{\infty}_{n=1} d_{\infty}\left(T_n, T_{n-1}\right),  
	\end{equation} 
	where we used the notation $T_0=R_{\alpha_1, \a^{\prime}_1}$. Since $d_k\left(R_{A,B}, R_{\alpha_1, \a^{\prime}_1}\right) = d_0\left(R_{A, B}, R_{\alpha_1, \a^{\prime}_1}\right) = \max \left( \left| A - \alpha_1 \right|, \left| B - \alpha^{\prime}_1 \right| \right)$ for every $k \in \mathbb{N}_0$, we have 
	\begin{equation*}
	d_{\infty}\left(R_{A,B}, R_{\alpha_1, \a^{\prime}_1}\right) = \sum^{\infty}_{k=0} \frac{\max \left( \left| A - \alpha_1 \right|, \left| B - \alpha^{\prime}_1 \right| \right)}{2^k \cdot \left(1+d_k\left(R_{A,B}, R_{\alpha_1, \a^{\prime}_1}\right)\right)} \leq 2 \cdot \max \left( \left| A - \alpha_1 \right|, \left| B - \alpha^{\prime}_1 \right| \right).
	\end{equation*}
	Additionally it holds:
	\begin{align*}
	\sum^{\infty}_{n=1} d_{\infty}\left( T_n, T_{n-1}\right) & = \sum^{\infty}_{n=1} \sum^{\infty}_{k=0} \frac{d_k\left(T_n, T_{n-1}\right)}{2^k \cdot \left(1+d_k\left(T_n,T_{n-1}\right)\right)} \\
	&= \sum^{\infty}_{n=1}\left( \sum^{l_n}_{k=0}\frac{d_k\left( T_n, T_{n-1}\right)}{2^k \cdot \left(1+d_k\left( T_n,T_{n-1}\right)\right)} + \sum^{\infty}_{k=l_n +1}\frac{d_k\left(T_n, T_{n-1}\right)}{2^k \cdot \left(1+d_k\left(T_n,T_{n-1}\right)\right)}\right)
	\end{align*}
	As seen in equation (\ref{est4}) $d_k\left(T_n,T_{n-1}\right)\leq \frac{1}{4 \cdot l_n}$ for every $k\leq l_n$. Hereby, it follows further:
	\begin{align*}
	\sum^{\infty}_{n=1} d_{\infty}\left( T_n, T_{n-1}\right) & \leq \sum^{\infty}_{n=1}\left( \frac{1}{4l_n} \cdot \sum^{l_n}_{k=0}\frac{1}{2^k} + \sum^{\infty}_{k=l_n +1}\frac{d_k\left( T_n, T_{n-1}\right)}{2^k \cdot \left(1+d_k\left( T_n, T_{n-1}\right)\right)}\right) \\
	& \leq 2 \cdot \sum^{\infty}_{n=1} \frac{1}{4l_n} + \sum^{\infty}_{n=1}\sum^{\infty}_{k=l_n +1} \frac{1}{2^k}.
	\end{align*}
	Because of $\sum^{\infty}_{k=l_n +1} \frac{1}{2^k} = \left(\frac{1}{2}\right)^{l_n} \leq \frac{1}{l_n}$ we conclude:
	\begin{equation*}
	\sum^{\infty}_{n=1} d_{\infty}\left(T_n, T_{n-1}\right)\leq \sum^{\infty}_{n=1} \frac{1}{l_n} + \sum^{\infty}_{n=1}\frac{1}{l_n} < 2 \cdot \varepsilon.
	\end{equation*}
	Hence, using equation (\ref{est3}) we obtain the desired estimate $d_{\infty}\left(T, R_{A,B}\right) < 4 \cdot \varepsilon$.
	\item Again with the help of Lemma \ref{lem:konj} we compute for every $i \in \mathbb{N}$:
\begin{align*}
d_0\left( T^{m}_{i},T^{m}_{i-1}\right) & = d_0\left(H^{-1}_{i} \circ R_{m \cdot \alpha_{i+1}, m \cdot \alpha^{\prime}_{i+1}} \circ H_{i},  H^{-1}_{i} \circ R_{m \cdot \alpha_{i}, m \cdot \alpha^{\prime}_{i}} \circ H_{i}\right) \\
& \leq ||| H_{i} |||_1 \cdot m  \cdot \max\left( \left|\alpha_{i+1}-\alpha_{i}\right|, \left| \a^{\prime}_{i+1}- \a^{\prime}_i \right| \right).
\end{align*}
Since $m \leq q_{n+1}q^{\prime}_{n+1} \leq q_iq^{\prime}_i$ we conclude for every $i > n$:
\begin{equation*}
d_0 \left( T^{m}_{i}, T^{m}_{i-1}\right) \leq ||| H_i |||_1 \cdot m \cdot \frac{1}{4 \cdot l_i \cdot C_{l_i} \cdot q_iq^{\prime}_i \cdot ||| H_i |||^{l_i +1}_{l_i +1}} < \frac{m}{q_iq^{\prime}_i} \cdot \frac{1}{4l_i} \leq \frac{1}{4l_i}.
\end{equation*}
Thus, for every $m\leq q_{n+1}q^{\prime}_{n+1}$ we get the claimed result:
\begin{equation*}
d_0\left(T^{m}, T^{m}_{n}\right) \leq \lim_{k \rightarrow \infty} \sum^{k}_{i=n+1} d_0\left(T^{m}_{i}, T^{m}_{i-1}\right) < \sum^{\infty}_{i=n+1} \frac{1}{4l_i} < \frac{\varepsilon_n}{4}.
\end{equation*}
\item  By equation (\ref{est4}) we have 
\begin{equation*}
    d_k \left( T_n, T \right) \leq \sum^{\infty}_{m=n+1} d_k \left( T_m, T_{m-1} \right) \leq \sum^{\infty}_{m=n+1} \frac{1}{4 \cdot l_m}.
\end{equation*}
for every $k \leq l_n$. On the other hand, we have noticed in Remark \ref{rem:identity} that $T_n = R_{\a_{n+1}, \a^{\prime}_{n+1}}$ outside of $\mathscr{R}_n$. Obviously this yields $D_{\vec{a}} T_n = D_{\vec{a}} R_{\a, \a^{\prime}}$ outside of $\mathscr{R}_n$ for any multiindex $\vec{a} \in \N^d_0$ of order $1 \leq \abs{\vec{a}}$. Altogether we obtain
\begin{equation*}
     \abs{ \left[D_{\vec{a}} \left( T - R_{\a, \a^{\prime}}\right)\right]_i \left( \vec{x} \right) } \leq \sum^{\infty}_{m=n+1} \frac{1}{4 \cdot l_m} 
\end{equation*}
for every $\vec{x} \in \T^2 \times [0,1]^{d-2} \setminus \mathscr{R}_n$ and any $\vec{a} \in \N^d_0$ of order $1 \leq \abs{\vec{a}} \leq l_n$. By our assumption on the sequence $\left(l_n\right)_{n \in \N}$ we get
\begin{equation*}
     \abs{ \left[D_{\vec{a}} \left( T - R_{\a, \a^{\prime}}\right)\right]_i \left( \vec{x} \right) }< \rho_k \left( \vec{x} \right) 
\end{equation*} 
for every $\vec{x} \in  \mathscr{R}_{n+1} \setminus \mathscr{R}_n$ and any $\vec{a} \in \N^d_0$ of order $1 \leq \abs{\vec{a}} = k \leq l_n$. Since this holds true for every $n \in \mathbb{N}$, we conclude
\begin{equation*}
     \abs{ \left[D_{\vec{a}} \left( T - R_{\a, \a^{\prime}}\right)\right]_i \left( \vec{x} \right) }< \rho_k \left( \vec{x} \right) 
\end{equation*}
for every $\vec{x} \in \T^2 \times [0,1]^{d-2} \setminus \mathscr{R}_n$ and any $\vec{a} \in \N^d_0$ of order $1 \leq \abs{\vec{a}} = k \leq l_n$. Hence, $T\in \text{Diff}^{\infty}_{\rho, \a, \a^{\prime}} \left( \T^2 \times [0,1]^{d-2} \right)$.
\end{enumerate}
\end{pr}

We show that we can satisfy the conditions from this Lemma in our constructions:

\begin{lem} \label{lem:conv}
Let $\left( A,B\right) \in \T^2$ and $\varepsilon>0$ be arbitrary. Moreover, let $\rho = \left( \rho_n \right)_{n \in \mathbb{N}_0}$ be an admissible sequence. There exist sequences 
\begin{equation} \label{eq:alpha}
\alpha_{n+1} = \frac{p_{n+1}}{q_{n+1}} = \a_n + \frac{1}{q_{n+1}}
\end{equation}
and 
\begin{equation} \label{eq:alpha2}
\a^{\prime}_{n+1} = \frac{p^{\prime}_{n+1}}{q^{\prime}_{n+1}} = \a^{\prime}_n + \frac{1}{\bar{q}^{\prime}_{n+1}} + D_n
\end{equation}
of rational numbers with $p_{n+1}$ and $q_{n+1}$ relatively prime as well as $p^{\prime}_{n+1}$ and $q^{\prime}_{n+1}$ relatively prime with properties (\ref{eq:div}), (\ref{eq:prime}), (\ref{eq:relation}) and
\begin{equation} \tag{D} \label{eq:D}
0<D_n < \frac{1}{4n \cdot \left(\bar{q}^{\prime}_{n+1}\right)^{d+1}},
\end{equation}
\begin{equation} \tag{E} \label{eq:large}
q^{\prime}_{n+1} >\bar{q}^{\prime}_{n+1} >  q_{n+1} > 4 \cdot n^{d-1} \cdot \left(q_n q^{\prime}_n \right)^{d+1},
\end{equation}
\begin{equation} \tag{F} \label{eq:largeprime}
q^{\prime}_{n+1} > 4 \cdot \bar{q}^{\prime}_{n+1},
\end{equation}
\begin{equation} \tag{G} \label{eq:diam}
\max \left( \left\|DH_{n} \right\|_0 , \left\|DH^{-1}_{n} \right\|_0 \right)< \frac{q_{n+1} }{2\sqrt{d} \cdot (n+1)^2},
\end{equation}
\begin{equation} \tag{H} \label{eq:smalTranslCond}
    \text{property (\ref{eq:small translation}) in the proof of Lemma \ref{lem:error3}},
\end{equation}
such that our sequence of constructed diffeomorphisms $T_n$ converges in the Diff$^{\infty}(M_0)$-topology to a diffeomorphism $T$ for which $d_{\infty} \left(T, R_{A,B}\right)< 4 \varepsilon$ and $T \in \text{Diff}^{\infty}_{\rho, \a, \a^{\prime}} \left( \T^2 \times [0,1]^{d-2} \right)$ hold true, where $\a = \lim_{n \rightarrow \infty} \a_n$ and $\a^{\prime}= \lim_{n \rightarrow \infty} \a^{\prime}_n$. Additionally, we have $d_0 \left( T^{m_n}, T^{m_n}_n \right) < \frac{\varepsilon_n}{4}$ for every number $m_n \leq q_{n+1}q^{\prime}_{n+1}$ and $n \in \mathbb{N}$.
\end{lem}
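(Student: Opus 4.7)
The argument is an inductive construction of the parameters $q_{n+1}$, $q'_{n+1}$, $D_n$, together with an auxiliary integer $l_n$ as in Lemma~\ref{lem:convgen}. The key observation is that, once the choices made at previous stages are fixed, each of the algebraic conditions (A)--(H) and each of the hypotheses~(1)--(4) of Lemma~\ref{lem:convgen} translates into either a lower bound on one of the integers $q_{n+1}$, $q'_{n+1}$, $l_n$, or an upper bound on the rational $D_n$. Since at each stage there are only finitely many such bounds and none of them conflict, the inductive construction is possible, and the conclusions about $T$ then follow immediately by applying Lemma~\ref{lem:convgen}.

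\textbf{Inductive step.} Suppose $\alpha_1,\dots,\alpha_n$ and $\alpha'_1,\dots,\alpha'_n$ have been chosen, so that $H_n$, $\varepsilon_n = 2/(n q_n q'_n)$, and $\mathscr{R}_n$ are determined by Section~\ref{section:conj}. Pick $q_{n+1}$ to be a multiple of $q_n q'_n$ (ensuring (A)) that is large enough to enforce (E), (G), the technical requirement (H), and the speed estimate
\begin{equation*}
\tfrac{1}{q_{n+1}} \le \tfrac{1}{4 l_n\, C_{l_n}\, q_n q'_n\, ||| H_n |||^{l_n+1}_{l_n+1}}
\end{equation*}
required by Lemma~\ref{lem:convgen} for the $\alpha$-coordinate. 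An elementary arithmetic adjustment of the ratio $q_{n+1}/(q_n q'_n)$ forces $p_{n+1}$ to be coprime to $q_{n+1}$, where $\alpha_{n+1} = \alpha_n + 1/q_{n+1}$. Relation (C) then fixes $\bar q'_{n+1} = q_{n+1} + q_n q'_n$. Next choose $q'_{n+1}$ coprime to $q_{n+1}$ (giving (B)), larger than $4 \bar q'_{n+1}$ (giving (F)), and larger than $4n \, (\bar q'_{n+1})^{d+1}$; let $D_n$ be the smallest positive real for which $\alpha'_n + 1/\bar q'_{n+1} + D_n$ has the form $p'_{n+1}/q'_{n+1}$ with $\gcd(p'_{n+1}, q'_{n+1}) = 1$. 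Then $0 < D_n \le 1/q'_{n+1}$, which yields (D), and enlarging $q'_{n+1}$ further secures the $\alpha'$-coordinate speed bound of Lemma~\ref{lem:convgen}. Finally choose $l_{n+1} > l_n$ large enough that the tail $\sum_{i > n} 1/l_i$ in the eventual sequence is dominated both by $\varepsilon_n / 4$ and by $\min_{k \le n}\min_{\vec x \in \mathscr{R}_{n+1} \setminus \mathscr{R}_n}\rho_k(\vec x)$.

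\textbf{Main obstacle.} The principal subtlety is the apparent circularity in part~(4) of Lemma~\ref{lem:convgen}: the bound on $\sum_{m > n} 1/l_m$ involves the shell $\mathscr{R}_{n+1} \setminus \mathscr{R}_n$, which only becomes defined after $q_{n+1}$ and $q'_{n+1}$ are chosen, whereas the speed bounds of Lemma~\ref{lem:convgen} require $l_n$ to be fixed before $q_{n+1}$. This tension is resolved by the inductive order above: at stage $n$ one fixes $l_n$ first, then uses it to select $q_{n+1}$, $q'_{n+1}$, $D_n$, and only afterwards picks $l_{n+1}$; the shell-dependent requirement thus becomes a lower bound on the still-free tail $l_{n+1}, l_{n+2},\dots$. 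Strict positivity of $\min_{\vec x \in \mathscr{R}_{n+1} \setminus \mathscr{R}_n}\rho_k(\vec x)$ holds because the shell lies in the compact interior of $\mathbb{T}^2 \times [0,1]^{d-2}$ (at distance at least $\varepsilon_{n+1}/2$ from the boundary), where $\rho_k$ is positive. With these choices in place, Lemma~\ref{lem:convgen} delivers convergence $T_n \to T$ in $\text{Diff}^{\infty}(M_0)$, the estimates $d_{\infty}(T, R_{A,B}) < 4\varepsilon$ and $d_0(T^{m_n}, T_n^{m_n}) < \varepsilon_n/4$ for $m_n \le q_{n+1} q'_{n+1}$, and the boundary-flatness $T \in \text{Diff}^{\infty}_{\rho, \alpha, \alpha'}(\mathbb{T}^2 \times [0,1]^{d-2})$.
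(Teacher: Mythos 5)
Your proof takes the same inductive route as the paper, and your explicit ordering of choices (fix $l_n$, then $q_{n+1}$, $q_{n+1}'$, $D_n$, then $l_{n+1}$) is a helpful clarification of how the apparent circularity in condition (4) of Lemma~\ref{lem:convgen} is resolved, something the paper treats more tersely. One step as written is not quite correct: the smallest positive $D_n$ bringing $\alpha_n' + 1/\bar{q}_{n+1}' + D_n$ to the \emph{reduced} form $p_{n+1}'/q_{n+1}'$ need not satisfy $D_n \le 1/q_{n+1}'$, since for a composite $q_{n+1}'$ the nearest numerator above $\bar{\alpha}_{n+1}'$ may share a factor with $q_{n+1}'$, forcing $D_n$ to be several multiples of $1/q_{n+1}'$. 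The paper avoids this by choosing the rational $\alpha_{n+1}'$ directly in the prescribed small interval with $q_{n+1}'$ not fixed in advance; alternatively, requiring $q_{n+1}'$ to be prime gives $D_n \le 2/q_{n+1}'$, which still yields (D) after a harmless constant adjustment.
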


\begin{pr}
First of all, we choose numbers $l_n$ satisfying the requirements of Lemma \ref{lem:convgen} and we put 
\begin{equation*}
\alpha_{n+1} = \alpha_n +\frac{1}{k_n \cdot q_n \cdot q^{\prime}_n},
\end{equation*}
Then we have  $q_{n+1}=k_n \cdot q_n \cdot q^{\prime}_n$. In particular, our divisibility condition (\ref{eq:div}) is fulfilled. Moreover, we recall that the construction of the conjugation map $H_n$ does not involve $k_n$. Hence, we can choose the integer $k_n$ sufficiently large such that  conditions (\ref{eq:diam}) and (\ref{eq:smalTranslCond}), and the last inequality in (\ref{eq:large}), as well as
\begin{equation*}
\left| \alpha_{n+1} - \alpha_n \right| \leq \frac{1}{8 \cdot l_n \cdot C_{l_n} \cdot q_n \cdot q^{\prime}_n \cdot ||| H_n |||^{l_n+1}_{l_n+1}},
\end{equation*}
are satisfied. 

In the next step, we put $\bar{q}^{\prime}_{n+1} = q_{n+1}+q_n q^{\prime}_n$ and $\bar{\alpha}^{\prime}_{n+1} = \a^{\prime}_n + \frac{1}{\bar{q}^{\prime}_{n+1}}$. Then we choose a rational number $\alpha^{\prime}_{n+1}= \frac{p^{\prime}_{n+1}}{q^{\prime}_{n+1}}> \bar{\a}^{\prime}_{n+1}$ in a $\frac{1}{4n \cdot \left(\bar{q}^{\prime}_{n+1}\right)^{d+1}}$-neighbourhood of $\bar{\alpha}^{\prime}_{n+1}$ where $p^{\prime}_{n+1}$ and $q^{\prime}_{n+1}$ are relatively prime and $q^{\prime}_{n+1}> 4 \cdot \bar{q}^{\prime}_{n+1} > q_{n+1}$ is relatively prime to $q_{n+1}$. We denote this number by
\begin{equation*}
\a^{\prime}_{n+1} = \a^{\prime}_n + \frac{1}{\bar{q}^{\prime}_{n+1}}+\frac{p^{\prime \prime}_{n+1}}{q^{\prime \prime}_{n+1}} = \a^{\prime}_n + \frac{1}{\bar{q}^{\prime}_{n+1}}+ D_n.
\end{equation*}
In particular, we observe
\begin{equation*}
\left| \alpha^{\prime}_{n+1} - \alpha^{\prime}_n \right| \leq \frac{1}{4 \cdot l_n \cdot C_{l_n} \cdot q_n \cdot q^{\prime}_n  \cdot ||| H_n |||^{l_n+1}_{l_n+1}}
\end{equation*}
and so the conditions of Lemma \ref{lem:convgen} are satisfied.
\end{pr}

\begin{rem} \label{rem:rigid}
By the property $d_0 \left( T^{m_n}, T^{m_n}_n \right) < \frac{\varepsilon_n}{4}$ for every number $m_n \leq q_{n+1}q^{\prime}_{n+1}$ and $T^{q_{n+1}q^{\prime}_{n+1}}_n = H^{-1}_n \circ R^{q_{n+1}q^{\prime}_{n+1}}_{\a_{n+1}, \a^{\prime}_{n+1}} \circ H_n = \text{id}$ the constructed diffeomorphisms are uniformly rigid. Hence, they have topological entropy zero by Theorem \ref{thm:Glasner}.
\end{rem}

\section{Proof of the LB property for $T\times T$} \label{section:LB}

\subsection{Definition of towers} \label{subsection:towers}
In order to define the base of the first tower we consider the sets
\begin{equation*}\begin{split}
& C^{(1)}_{k(i,j)} \\
= & \left( \frac{a_{n,1}(i,j)}{q_n} + \frac{2i+\tilde{\varepsilon}_n}{2q_n q^{\prime}_n}, \frac{a_{n,1}(i,j)}{q_n} +  \frac{2i+\tilde{\varepsilon}_n}{2q_n q^{\prime}_n}+\frac{q_n q^{\prime}_n}{q_{n+1}} \right) \\
& \times \left[\frac{a^{\prime}_{n,1}(i,j)}{q^{\prime}_n}+ \frac{2j+3\varepsilon_n}{2q_n q^{\prime}_n}-k \cdot \Delta_n, \frac{a^{\prime}_{n,1}(i,j)}{q^{\prime}_n}+ \frac{2j+3\varepsilon_n}{2q_n q^{\prime}_n}-k \cdot \Delta_n + \frac{1-4\varepsilon_n}{2q_nq_n'}\right] \times \left[\varepsilon_n, 1-\varepsilon_n\right]^{d-2}
\end{split} \end{equation*}
for $0 \leq i < q^{\prime}_n$ and $0 \leq j < q_n$, where $\Delta_n=\frac{1}{\bar{q}^{\prime}_{n+1}}-\left(m_n - 1 \right) \cdot D_n>0$ as in Section \ref{section:comb}, $\tilde{\varepsilon}_n=\varepsilon_n (nq_n q_n')^{2-d}$ as in Subsection \ref{subsection:h1}, and $\varepsilon_n$ is defined in 
equation (\ref{eq:eps}). By condition (\ref{eq:large}), we have
\begin{equation} \label{eq:towerbase}
\frac{q_nq_n'}{\overline{q}'_{n+1}}<\frac{q_nq_n'}{q_{n+1}}<\frac{\varepsilon_n}{8n^{d-2}(q_nq_n')^{d-1}}=\frac{\tilde{\varepsilon}_n}{8q_nq_n'}\le \frac{\varepsilon_n}{8q_nq_n'}.
\end{equation}
Thus $q_n q^{\prime}_n \cdot \Delta_n < \frac{\varepsilon_n}{8 q_n q^{\prime}_n}$. Therefore 
$C^{(1)}_{k(i,j)}\subset I^{(n,1)}_{j_1,j_2,0}$, where $i,j,j_1,j_2$ are related as in equation (\ref{eq:j1j2}) with $b=a_{n,1}(i,j)$ and $b'=a'_{n,1}(i,j)$. By Definitions \ref{dfn:goodh2} and \ref{dfn:goodh}, $I^{(n,1)}_{j_1,j_2,0}$ is in the ``good domain'' $\mathscr{D}(h_n^{-1})$. 

\begin{rem} \label{rem:mapTower1}
The sets $C^{(1)}_{k}$ are defined in such a way that $R^{m_n - 1}_{\a_{n+1}, \a^{\prime}_{n+1}}\left(C^{(1)}_{k}\right) = C^{(1)}_{k+1}$ for $0 \leq k < q_n q^{\prime}_n-1$.
\end{rem}

We set
\begin{equation*}
\tilde{B}^{(n)}_{0,1} =\bigcup_{0 \leq i < q^{\prime}_n} \bigcup_{0 \leq j < q_n} C^{(1)}_{k(i,j)}.
\end{equation*}
By construction of our conjugation map $h_n$ and applying Remark \ref{rem:hn2behavior} with $\ell=0$, we have
\begin{equation*}
h^{-1}_n \left( \tilde{B}^{(n)}_{0,1}  \right) \subset \left[0, \frac{1}{q_n}\right] \times \left[0, \frac{1}{q^{\prime}_n}\right] \times \prod^d_{i=3} \left[\frac{\varepsilon_n}{nq_nq_n'},\frac{1-\varepsilon_n}{nq_nq_n'}\right]
\end{equation*}
Thus $h^{-1}_n \left( \tilde{B}^{(n)}_{0,1}  \right)$ has diameter less than $\sqrt{d}/q_n.$ This is a special case of the analysis in Lemma \ref{lem:xi}, in which it is shown that the diameter of most of the levels of the towers goes to 0 as $n$ goes to infinity. 
We define the base of the first tower as
\begin{equation*}
B^{(n)}_{0,1} = H^{-1}_n \left( \tilde{B}^{(n)}_{0,1} \right).
\end{equation*}

Similarly, we define the base of the second tower using sets
\begin{equation*}\begin{split}
& C^{(2)}_{k(i,j)} \\
= & \left[ \frac{a_{n,2}(i,j)}{q_n} + \frac{2i+3\varepsilon_n}{2q_n q^{\prime}_n}+\frac{k}{q_{n+1}}, \frac{a_{n,2}(i,j)}{q_n} +  \frac{2i+3\varepsilon_n}{2q_n q^{\prime}_n}+\frac{k}{q_{n+1}} +\frac{1-4\varepsilon_n}{2q_nq_n'} \right] \\
& \times \Bigg[\frac{a^{\prime}_{n,2}(i,j)}{q^{\prime}_n}+ \frac{2j+\tilde{\varepsilon}_n}{2q_n q^{\prime}_n}+k m_n D_n + \frac{2\varepsilon_n}{\bar{q}^{\prime}_{n+1}}, \frac{a^{\prime}_{n,2}(i,j)}{q^{\prime}_n}+ \frac{2j+\tilde{\varepsilon}_n}{2q_n q^{\prime}_n}+ k  m_n D_n+ \frac{q_nq^{\prime}_n - 2\varepsilon_n}{\bar{q}^{\prime}_{n+1}}\Bigg]  \\
& \times \left[\varepsilon_n, 1-\varepsilon_n\right]^{d-2}
\end{split} \end{equation*}
for $0 \leq i < q^{\prime}_n$ and $0 \leq j < q_n$. By condition (\ref{eq:D}) we have $\frac{1}{q_{n+1}}-m_nD_n >0$. From this inquality and (\ref{eq:towerbase}) it follows that $C^{(2)}_{k(i,j)}\subset I^{(n,2)}_{j_1,j_2,0}$, where $i,j,j_1,j_2$ are related as in equation (\ref{eq:j1j2}) with $b=a_{n,2}(i,j)$ and $b'=a'_{n,2}(i,j)$. 
\begin{rem} \label{rem:mapTower2}
This time, the sets $C^{(2)}_{k}$ are defined in such a way that $R^{m_n}_{\a_{n+1}, \a^{\prime}_{n+1}}\left(C^{(2)}_{k}\right) = C^{(2)}_{k+1}$ for $0 \leq k < q_n q^{\prime}_n-1$.
\end{rem}
Once again, we set
\begin{equation*}
\tilde{B}^{(n)}_{0,2} = \bigcup_{0 \leq i < q^{\prime}_n} \bigcup_{0 \leq j < q_n} C^{(2)}_{k(i,j)}.
\end{equation*}
As before, we apply Remark \ref{rem:hn2behavior} to obtain
\begin{equation*}
h^{-1}_n \left( \tilde{B}^{(n)}_{0,2} \right) \subset \left[0, \frac{1}{q_n}\right] \times \left[0, \frac{1}{q^{\prime}_n}\right] \times \prod^d_{i=3} \left[\frac{\varepsilon_n}{nq_nq_n'},\frac{1-\varepsilon_n}{nq_nq_n'}\right].
\end{equation*}
Hereby, we define the base of the second tower as
\begin{equation*}
B^{(n)}_{0,2} = H^{-1}_n \left( \tilde{B}^{(n)}_{0,2} \right).
\end{equation*}

Moreover, we recall the number
\begin{equation*}
m_n = \frac{q_{n+1}}{q_n q^{\prime}_n}+1 = \frac{\bar{q}^{\prime}_{n+1}}{q_n q^{\prime}_n} \in \N,
\end{equation*}
and we define the following sets:
\begin{align*}
&\tilde{B}_{i,1}^{(n)}:=R^i_{\alpha_{n+1},\alpha'_{n+1}}\left(\tilde{B}^{(n)}_{0,1}\right) \text{ for } i=0,1,...,m_n-2, \\
&\tilde{B}_{i,2}^{(n)}:=R^i_{\alpha_{n+1},\alpha'_{n+1}}\left(\tilde{B}^{(n)}_{0,2}\right) \text{ for } i=0,1,...,m_n-1.
\end{align*}
The levels of the towers are defined to be 
\begin{align*}
&B^{(n)}_{i,1} \coloneqq T^{i}_n\left(B^{(n)}_{0,1}\right)=H_n^{-1}\left(\tilde{B}^{(n)}_{i,1}\right) \text{ for } i=0,1,...,m_n-2, \\
&B^{(n)}_{i,2} \coloneqq T^{i}_n\left(B^{(n)}_{0,2}\right)= H_n^{-1}\left(\tilde{B}^{(n)}_{i,2}\right) \text{ for } i=0,1,...,m_n-1.
\end{align*}
As we will see in Lemma \ref{lem:disj}, the levels are pairwise disjoint. \\

For $0\le j_{1}<q_{n},0\le j_{2}<q_{n}',0\le\varepsilon<\frac{1}{4},$
we define
\begin{align*}
   P_{j_{1},j_{2},\varepsilon}^{(1)}= & \left\{ (\theta_{1},\theta_{2})\in\mathbb{T}^{2}:0\le\theta_{1}\le1,\theta_{1}+\frac{j_{2}}{q_{n}'}-\frac{j_{1}}{q_{n}}+\frac{\varepsilon}{2q_{n}q_{n}'}<\theta_{2}<\theta_{1}+\frac{j_{2}}{q_{n}'}-\frac{j_{1}}{q_{n}}+\frac{1-\varepsilon}{2q_{n}q_{n}'}\right\} \\
& \times[\varepsilon_{n},1-\varepsilon_{n}]^{d-2}
\end{align*}
and
\begin{align*}
P_{j_{1},j_{2},\varepsilon}^{(2)}= & \left\{ (\theta_{1},\theta_{2})\in\mathbb{T}^{2}:0\le\theta_{2}\le1,\theta_{2}+\frac{j_{1}}{q_{n}}-\frac{j_{2}}{q_{n}'}+\frac{\varepsilon}{2q_{n}q_{n}'}<\theta_{1}<\theta_{2}+\frac{j_{2}}{q_{n}'}-\frac{j_{1}}{q_{n}}+\frac{1- \varepsilon}{2q_{n}q_{n}'}\right\} \\
& \times[\varepsilon_{n},1-\varepsilon_{n}]^{d-2}.
\end{align*}
In case $\varepsilon=0,$ we let $P_{j_{1},j_{2}}^{(s)}=P_{j_{1},j_{2},0}^{(s)}.$
Since $q_{n}$ and $q_{n}'$ are relatively prime, the sets in the
collection $\{P_{j_{1},j_{2}}^{(s)}:0\le j_{1}<q_{n},0\le j_{2}<q_{n}',s=1,2\}$
are pairwise disjoint, and the union of their closures is $\mathbb{T}^{2}\times[\varepsilon_{n},1-\varepsilon_{n}]^{d-2}.$
Moreover, the boundaries of these sets within the $\mathbb{T}^{2}$
factor form a collection of $2q_{n}q_{n}'$ equally spaced lines of
slope $1.$ The lower boundary of the $\mathbb{T}^{2}$ factor of
$P_{j_{1},j_{2}}^{(1)}$ is the same as the left boundary of the $\mathbb{T}^{2}$
factor of $P_{j_{1},j_{2}}^{(2)},$ and these boundaries pass through
the point $(\theta_{1},\theta_{2})=(\frac{j_{1}}{q_{n}},\frac{j_{2}}{q_{n}'}).$
Each scaled $\widetilde{G}_{1}$ is contained in the closure of the $\mathbb{T}^2$ factor of some
$P_{j_{1},j_{2},\varepsilon_{n}}^{(1)}$ and has upper and lower boundaries
contained in the upper and lower boundaries of the $\mathbb{T}^2$ factor of that $P_{j_{1},j_{2},\varepsilon_{n}}^{(1)}$.
Similarly, each scaled $\widetilde{G}_{2}$ is contained in the closure
of the $\mathbb{T}^2$ factor of some $P_{j_{1},j_{2},\varepsilon_{n}}^{(2)}$ and has right and left
boundaries contained in the right and left boundaries of the $\mathbb{T}^2$ factor of that $P_{j_{1},j_{2},\varepsilon_{n}}^{(2)}.$
By a scaled version of $\widetilde{G}_{s}$ we mean a set of the form 
\[
{\lambda}^{-1}\left(\widetilde{G}_{s}\right)+\left(\frac{\tau_{1}}{q_{n}q'_{n}},\frac{\tau_{2}}{q_{n}q_{n}'}\right),
\]
with $\lambda = q_nq'_n$, for some $0\le \tau_1,\tau_2 <q_nq_n'$, where we use the definition of $\widetilde{G}_s$ given in equation (\ref{eq:Gtilde}).

In the proof of Lemma \ref{lem:disj}, we will make use of inequality (\ref{eq:towerbase}) and the following inequality,
which is an immediate consequence of the definitions and properties
(\ref{eq:D}) and (\ref{eq:large}) in Section \ref{section:conv}:

 \begin{equation}
-\frac{\varepsilon_{n}}{8m_{n}q_{n}q_{n}'}<q_{n}q_{n}'\left(\frac{1}{\overline{q}_{n+1}'}+D_{n}-\frac{1}{q_{n+1}}\right)<0.\label{eq:IncrementInequality}
\end{equation}

\begin{lem} \label{lem:disj}
For $0\le i<q_{n}q_{n}',$ the sets in the family $\{R_{\alpha_{n+1},\alpha_{n+1}'}^{i+jq_{n}q_{n}'}C_{0}^{(1)}:0\le j<m_{n}-1\}$
are pairwise disjoint subsets of $P_{ip_{n},ip_{n}',\varepsilon_{n}}^{(1)}$,
where $(ip_{n},ip_{n}')$ is taken mod $(q_{n},q_{n}')$. Similarly,
for $0\le i<q_{n}q_{n}',$ the sets in the family $\{R_{\alpha_{n+1},\alpha_{n+1}'}^{i+jq_{n}q_{n}'}C_{0}^{(2)}:0\le j<m_{n}\}$
are pairwise disjoint subsets of $P_{ip_{n},ip_{n}',\varepsilon_{n}}^{(2)}$.
Consequently, the sets in the family $\{R_{\alpha_{n+1},\alpha_{n+1}'}^{k}C_{0}^{(1)}:0\le k<(m_{n}-1)q_{n}q_{n}'\}\cup\{R_{\alpha_{n+1},\alpha_{n+1}'}^{k}C_{0}^{(2)}:0\le k<m_nq_{n}q_{n}'\}$
are pairwise disjoint, and it follows that the sets in the family
$\{B_{i,1}^{(n)},B_{k,2}^{(n)}:i=0,1,\dots,m_{n}-2;k=0,1,\dots,m_{n}-1\}$
are pairwise disjoint.
\end{lem}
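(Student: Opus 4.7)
My plan is to locate each iterate $R^{i+jq_nq_n'}_{\alpha_{n+1},\alpha_{n+1}'}(C_0^{(s)})$ explicitly in the two coordinates of $\mathbb{T}^2$. Setting $\delta:=1/\overline{q}_{n+1}'+D_n-1/q_{n+1}$ and using $q_nq_n'\alpha_n,q_nq_n'\alpha_n'\in\mathbb{Z}$, the translation by $(i+jq_nq_n')(\alpha_{n+1},\alpha_{n+1}')$ equals, modulo $\mathbb{Z}^2$,
\[
\biggl(\frac{ip_n}{q_n}+\frac{i+jq_nq_n'}{q_{n+1}},\ \frac{ip_n'}{q_n'}+\frac{i+jq_nq_n'}{q_{n+1}}+(i+jq_nq_n')\delta\biggr),
\]
so $\delta$ encodes the diagonal drift away from the slope-$1$ lines that define the strips $P_{j_1,j_2}^{(s)}$.

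For fixed $i$ the disjointness of $\{R^{i+jq_nq_n'}C_0^{(1)}\}_{0\le j<m_n-1}$ is visible in the $\theta_1$-coordinate: the $\theta_1$-projection of $C_0^{(1)}$ has length $q_nq_n'/q_{n+1}$, the per-$j$ shift is exactly this amount, and there are $m_n-1=q_{n+1}/(q_nq_n')$ of them, so the projections tile $\mathbb{T}^1$. For tower $2$ I would use the $\theta_2$-coordinate instead: the $\theta_2$-length $(q_nq_n'-4\varepsilon_n)/\overline{q}_{n+1}'$ is strictly less than the per-$j$ shift $q_nq_n'/\overline{q}_{n+1}'+q_nq_n'D_n$, leaving a positive gap $4\varepsilon_n/\overline{q}_{n+1}'+q_nq_n'D_n$ between consecutive images, and the total span over $0\le j<m_n$ stays below $1$ by comparing $(m_n-1)q_nq_n'D_n$ with $4\varepsilon_n/\overline{q}_{n+1}'$ using (\ref{eq:D}) and (\ref{eq:large}), so there is no wrap-around.

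For the containment in $P_{ip_n,ip_n',\varepsilon_n}^{(s)}$, using the explicit coordinates of $C_0^{(s)}$, the inequality $\tilde{\varepsilon}_n\le\varepsilon_n$, and (\ref{eq:towerbase}), I would verify that the intrinsic range of $\theta_2-\theta_1$ on $C_0^{(1)}$ sits in $\bigl[7\varepsilon_n/(8q_nq_n'),\,(1-\varepsilon_n)/(2q_nq_n')-\tilde{\varepsilon}_n/(2q_nq_n')\bigr]$, and that of $\theta_1-\theta_2$ on $C_0^{(2)}$ sits in $\bigl[7\varepsilon_n/(8q_nq_n'),\,(1-\varepsilon_n)/(2q_nq_n')-\tilde{\varepsilon}_n/(2q_nq_n')-2\varepsilon_n/\overline{q}_{n+1}'\bigr]$. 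The drift $(i+jq_nq_n')\delta$ is negative, shifting $\theta_2-\theta_1$ downward by at most $\varepsilon_n/(4q_nq_n')$ via (\ref{eq:IncrementInequality}), which is absorbed by the tower-$1$ lower-bound slack $7\varepsilon_n/(8q_nq_n')-\varepsilon_n/(2q_nq_n')=3\varepsilon_n/(8q_nq_n')$. In tower $2$ the drift makes $\theta_1-\theta_2$ larger; there I would use the sharper estimate $|(i+jq_nq_n')\delta|\le\overline{q}_{n+1}'|\delta|=q_nq_n'/q_{n+1}+\overline{q}_{n+1}'D_n<\tilde{\varepsilon}_n/(4q_nq_n')$ obtained from (\ref{eq:towerbase}) and (\ref{eq:D}), which is absorbed by the upper-bound slack. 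The global disjointness then follows: the coprimality of $q_n,q_n'$, $p_n,q_n$, and $p_n',q_n'$ makes $i\mapsto(ip_n\bmod q_n,ip_n'\bmod q_n')$ a bijection, placing different $i$ into different strips $P_{ip_n,ip_n',\varepsilon_n}^{(s)}$; the strips $P_{j_1,j_2,\varepsilon_n}^{(s)}$ are pairwise disjoint across $(j_1,j_2,s)$ by the slope-$1$ line description preceding the lemma; and because $\tilde B_{i,s}^{(n)}$ is a union of iterates of this form, the disjointness transfers to $\{B_{i,1}^{(n)},B_{k,2}^{(n)}\}$ via the diffeomorphism $H_n^{-1}$.

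The main obstacle is the tower-$2$ upper-bound check: the intrinsic maximum of $\theta_1-\theta_2$ on $C_0^{(2)}$ lies within $O(\tilde{\varepsilon}_n/(q_nq_n'))$ of the strip boundary $(1-\varepsilon_n)/(2q_nq_n')$, while the drift is positive. For $d>2$ the uniform bound $\varepsilon_n/(8q_nq_n')$ supplied by (\ref{eq:IncrementInequality}) alone overshoots the available slack, so one must exploit the identity $\overline{q}_{n+1}'\delta=\overline{q}_{n+1}'D_n-q_nq_n'/q_{n+1}$ directly and apply (\ref{eq:towerbase}) to obtain the sharper $q_nq_n'/q_{n+1}$-type bound that works uniformly in $d$.
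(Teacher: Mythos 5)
Your proposal is correct and follows essentially the same route as the paper: locate the iterates explicitly, use $\theta_1$-tiling for tower~1 and the $\theta_2$-gap bookkeeping for tower~2, track the drift of $\theta_2-\theta_1$ (resp.~$\theta_1-\theta_2$) against the slack at the boundary of $P^{(s)}_{ip_n,ip_n',\varepsilon_n}$, and combine with the strip disjointness and the injectivity of $i\mapsto(ip_n,ip_n')$. The sharper $\tilde{\varepsilon}_n$-type bound you isolate for the tower-2 upper boundary is precisely what the paper invokes when it remarks that (\ref{eq:IncrementInequality}) remains valid with $\varepsilon_n$ replaced by $\tilde{\varepsilon}_n$.
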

\begin{pr}
We start by proving the first statement in the case $i=0.$ We only
need to consider the $\mathbb{T}^{2}$ factors of the sets, and references
to upper, lower, right, or left will be with respect to the $(\theta_{1},\theta_{2})$
coordinate system in $\mathbb{T}^{2}.$ The upper left corner of $C_{0}^{(1)}$
is $(\frac{\tilde{\varepsilon}_n}{2q_nq_n'},\frac{1-\varepsilon_n}{2q_{n}q_{n}'}),$ and the lower right corner
of $C_{0}^{(1)}$ is $(\frac{\tilde{\varepsilon}_n}{2q_nq_n'}+\frac{q_{n}q_{n}'}{q_{n+1}},\frac{3\varepsilon_{n}}{2q_{n}q_{n}'}).$
Since $\tilde{\varepsilon}_n\le\varepsilon_n$, we have $\frac{1-2\varepsilon_{n}}{2q_{n}q_{n}'}<\theta_{2}-\theta_{1}<\frac{1-\varepsilon_{n}}{2q_{n}q_{n}'}$
for the upper left corner, and by (\ref{eq:towerbase}), we have $\frac{7\epsilon_{n}}{8q_{n}q_{n}'}<\theta_{2}-\theta_{1}<\frac{1-\varepsilon_n}{2q_nq_n'}$
for the lower right corner. Both of these points are in $P_{0,0,\varepsilon_{n}}^{(1)}$
and consequently $C_{0}^{(1)}\subset P_{0,0, \varepsilon_{n}}^{(1)}.$
When we apply $R_{\alpha_{n+1},\alpha_{n+1}'}^{q_{n}q_{n}'}$ to a
point $(\theta_{1},\theta_{2}),$ the first coordinate increases by
$q_{n}q_{n}'\alpha_{n+1}=q_{n}q_{n}'(\frac{p_{n}}{q_{n}}+\frac{1}{q_{n+1}})=\frac{q_{n}q_{n}'}{q_{n+1}}$
(mod 1), which is the $\theta_{1}$-width of $C_{0}^{(1)},$ and the
second coordinate increases by $q_{n}q_{n}'\alpha_{n+1}'=q_{n}q_{n}'(\frac{p_{n}'}{q_{n}'}+\frac{1}{\overline{q}_{n+1}'}+D_{n})=\frac{q_{n}q_{n}'}{\overline{q}_{n+1}'}+q_{n}q_{n}'D_{n}$
(mod 1). Thus the change in $\theta_{2}-\theta_{1}$ when applying
$R_{\alpha_{n+1},\alpha_{n+1}'}^{q_{n}q_{n}'}$ is $q_{n}q_{n}'(\frac{1}{\overline{q}_{n+1}'}+D_{n}-\frac{1}{q_{n+1}}).$
Therefore, by (\ref{eq:IncrementInequality}) the lower right corner
of $C_{0}^{(1)}$ moves closer (in the $\theta_{2}$ direction) to
the lower boundary of $P_{0,0,\varepsilon_{n}}^{(1)}$ by an amount smaller
than $\frac{\varepsilon_{n}}{8m_{n}q_{n}q_{n}'}$ under one application
of $R_{\alpha_{n+1},\alpha_{n+1}'}^{q_{n}q_{n}'}$, and it moves closer
by an amount smaller than $\frac{\varepsilon_{n}}{8q_{n}q_{n}'}$ after
$m_{n}-2$ applications of $R_{\alpha_{n+1},\alpha_{n+1}'}^{q_{n}q_{n}'}$.
Since the lower right corner of $C_{0}^{(1)}$ is above the lower
boundary of $P_{0,0,\varepsilon_{n}}^{(1)}$ by more than $\frac{7\varepsilon_{n}}{8q_{n}q_{n}'}-\frac{\varepsilon_{n}}{2q_{n}q_{n}'}$=$\frac{3\varepsilon_{n}}{8q_{n}q_{n}'}$,
it is still above the lower boundary of $P_{0,0,\varepsilon_{n}}^{(1)}$ after $m_{n}-2$ applications of $R_{\alpha_{n+1},\alpha_{n+1}'}^{q_{n}q_{n}'}$.
Therefore all of the sets $R_{\alpha_{n+1},\alpha_{n+1}'}^{jq_{n}q_{n}'}C_{0}^{(1)},0\le j<m_{n}-1,$
are contained in $P_{0,0,\varepsilon_{n}}^{(1)}.$ They are disjoint
because $C_{0}^{(1)}$ moves to the right by an amount exactly equal
to its $\theta_{1}$-width under application of $R_{\alpha_{n+1},\alpha_{n+1}'}^{q_{n}q_{n}'}$and
the combined width of these $m_{n}-1$ sets is $(m_{n}-1)\frac{q_{n}q_{n}'}{q_{n+1}}=1.$
(See Figure \ref{fig:tower}, which shows those images of $C_{0}^{(1)}$ under applications
of powers of $R_{\alpha_{n+1},\alpha_{n+1}'}^{q_{n}q_{n}'}$ that
lie in one of the scaled versions of $\widetilde{G}_{1}.$ To indicate
all of the $R_{\alpha_{n+1},\alpha_{n+1}'}^{jq_{n}q_{n}'}C_{0}^{(1)},0\le j<m_{n}-1,$
the figure would have to be continued for the full width of $\mathbb{T}^{2}.$) 

Now consider an arbitrary $i$ such that $0\le i<q_{n}q_{n}'.$ Under
application of $R_{\alpha_{n+1},\alpha_{n+1}'}^{i}$ to a point $(\theta_{1},\theta_{2}),$
the first coordinate increases by $i(\frac{p_{n}}{q_{n}}+\frac{1}{q_{n+1}})$
and the second coordinate increases by $i(\frac{p_{n}'}{q_{n}'}+\frac{1}{\overline{q}_{n+1}'}+D_{n}).$
Adding $i(\frac{p_{n}}{q_{n}},\frac{p_{n}'}{q_{n}'})$ to points in
$P_{0,0,\varepsilon_{n}}^{(1)}$ moves them to $P_{ip_{n},ip_{n}',\varepsilon_{n}}^{(1)}$,
and the additional increment of $i(\frac{1}{q_{n+1}},\frac{1}{\overline{q}_{n+1}'}+D_{n})$
moves the lower right corner point of $C_{0}^{(1)}$ closer (in vertical
distance) to the lower boundary of $P_{ip_{n},ip_{n}',\varepsilon_{n}}^{(1)}$
than it was to the lower boundary of $P_{0,0,\varepsilon_{n}}^{(1)}$
by $i(\frac{1}{\overline{q}_{n+1}'}+D_{n}-\frac{1}{q_{n+1}}).$ Subsequent
applications of $R_{\alpha_{n+1},\alpha_{n+1}'}^{jq_{n}q_{n}'},0\le j<m_{n}-1$
move the lower right corner point even closer to the lower boundary
of $P_{ip_{n},ip_{n},\varepsilon_{n}}^{(1)}$ by an additional amount of at most
$(m_{n}-2)q_{n}q_{n}'(\frac{1}{\overline{q}_{n+1}'}+D_{n}-\frac{1}{q_{n+1}}).$
Thus the total vertical distance that the lower right corner point
of $R_{\alpha_{n+1},\alpha_{n+1}'}^{i+jq_{n}q_{n}'}C_{0}^{(1)}$,
$0\le j<m_{n}-1$ is above the lower boundary of $P_{ip_{n},ip_{n}',\varepsilon_{n}}^{(1)}$
is still positive by the same estimate as in the case $i=0$. Therefore
all of the sets $R_{\alpha_{n+1},\alpha_{n+1}'}^{i+jq_{n}q_{n}'}C_{0}^{(1)}$,
$0\le j<m_{n}-1$ are contained in $P_{ip_{n},ip_{n}',\varepsilon_{n}}^{(1)}$.
They are disjoint, as in the case $i=0.$ This completes the proof
of the first statement in the Lemma. 

Analogously, we prove the second statement of the Lemma (see Figure
\ref{fig:tower} as well). For successive values of $j$, in the range $0\le j<m_{n}-1,$
the sets $R^{jq_{n}q_{n}'}C_{0}^{(2)}$ are almost adjacent, but there
is a gap (too small to be visible in Figure \ref{fig:tower}) between them. The
$\theta_{2}$-height of these sets is $\frac{q_{n}q_{n}'-4\varepsilon_{n}}{\overline{q}_{n+1}'}$
and the gap (in the $\theta_{2}$ direction) between the sets is $\frac{4\varepsilon_{n}}{\overline{q}_{n+1}'}+q_{n}q_{n}'D_{n}.$
Thus the top boundary of the set $R_{\alpha_{n+1},\alpha_{n+1}'}^{(m_{n}-1)q_{n}q_{n}'}C_{0}^{(2)}$
is $m_{n}\left(\frac{q_{n}q_{n}'}{\overline{q}_{n+1}'}\right)+(m_{n}-1)q_{n}q_{n}'D_{n}-\frac{4\varepsilon_{n}}{\overline{q}_{n+1}'}$
units above the lower boundary of $C_{0}^{(2)}.$ Since $m_{n}\left(\frac{q_{n}q_{n}'}{\overline{q}_{n+1}'}\right)=1,$
we exploit the inequality
\[
q_{n}q_{n}'\cdot(m_{n}-1)\cdot D_{n}<\frac{\varepsilon_{n}}{\overline{q}_{n+1}'},
\]
 by condition (\ref{eq:D}), to obtain the disjointness of $R_{\alpha_{n+1},\alpha_{n+1}'}^{(m_{n}-1)q_{n}q_{n}'}C_{0}^{(2)}$
from $C_{0}^{(2)}.$ The rest of the proof of the second statement
proceeds as for the first statement, making use of the fact that inequality (\ref{eq:IncrementInequality}) remains true with $\varepsilon_n$ replaced by $\tilde{\varepsilon}_n$. 

Note that for distinct values of $i$ with $0\le i<q_{n}q_{n}',$
the pairs $(ip_{n},ip_{n}')$ are distinct mod $(q_{n},q_{n}').$
Since the sets in the family $\{P_{j_{1},j_{2},\varepsilon_{n}}^{(s)}:0\le j_{1}<q_{n},0\le j_{2}<q_{n}',s=1,2\}$
are pairwise disjoint, the third statement follows as well. 
\end{pr}

\begin{figure}[hbtp] 
\begin{center}
\includegraphics[scale=0.7]{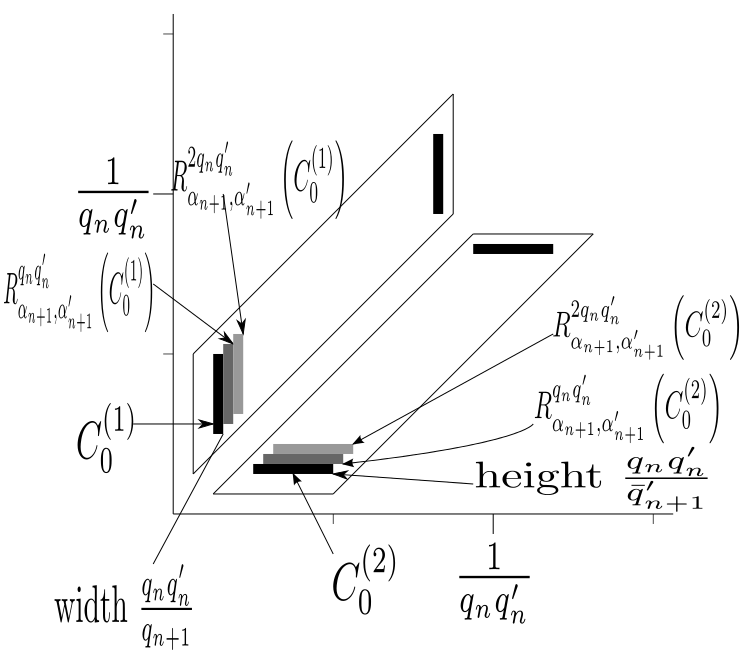}
\caption{Sketch of some parts of the levels of the two towers. The rightmost sets correspond to $R^{\frac{1-2\varepsilon_n}{q_n q^{\prime}_n }q_{n+1}}_{\a_{n+1}, \a^{\prime}_{n+1}} \left( C^{(1)}_0 \right)$ and $R^{\frac{1-2\varepsilon_n}{q_n q^{\prime}_n }\bar{q}^{\prime}_{n+1}}_{\a_{n+1}, \a^{\prime}_{n+1}} \left( C^{(2)}_0 \right)$ respectively. We point out that the levels are much smaller than the parallelograms $\tilde{G}_{s,\ell}$ of the additional subdivision in the higher dimensional case.}\label{fig:tower}
\end{center}
\end{figure}

Hence, we are able to define these two towers:
\begin{dfn}
The first tower $B^{(n)}_1$ with base $B^{(n)}_{0,1}$ and height $m_n-1$ consists of the sets $B^{(n)}_{i,1}$ for $i=0,1,...,m_n-2$.
The second tower $B^{(n)}_2$ with base $B^{(n)}_{0,2}$ and height $m_n$ consists of the sets $B^{(n)}_{i,2}$ for $i=0,1,...,m_n-1$.
\end{dfn}

In the rest of this subsection we check that these towers satisfy the requirements of the definition of an $\left(h,h+1\right)$-approximation. First of all, we notice that both towers are substantial because we have:
\begin{align*}
\mu\left(B^{(n)}_1\right)& =(m_n-1) \cdot \mu\left(B^{(n)}_{0,1}\right) = \frac{q_{n+1}}{q_n \cdot q^{\prime}_n}  \cdot q_n q^{\prime}_n \cdot \frac{q_n q^{\prime}_n}{q_{n+1}} \cdot \frac{1-4\varepsilon_n}{2q_n q^{\prime}_n}\cdot \left(1-2\varepsilon_n\right)^{d-2} \ge \frac{\left(1-4\varepsilon_n\right)^{d-1}}{2}, \\
\mu\left(B^{(n)}_2\right)& =m_n \cdot \mu\left(B^{(n)}_{0,2}\right) = \frac{\bar{q}_{n+1}^{\prime}}{q_{n}\cdot q_{n}^{\prime}}\cdot \frac{1-4\varepsilon_{n}}{2}\cdot q_{n}q_{n}^{\prime}\cdot\frac{q_{n} q_{n}^{\prime}\cdot(1-4\varepsilon_{n}/(q_{n}q_{n}'))}{\bar{q}_{n+1}^{\prime}}\cdot\frac{1}{2q_{n}q_{n}^{\prime}}\cdot \left(1-2\varepsilon_n\right)^{d-2}\\
& >\frac{\left(1-4\varepsilon_{n}\right)^{d}}{2}
\end{align*}
Using the notation from Section \ref{subsection:theory} we consider the partial partitions

\[
\begin{array}{c}
\xi_{n,1}\coloneqq\left\{ B_{i,1}^{(n)}\;:\;i=0,1,...,m_{n}-2\right\} ,\\
\xi_{n,2}\coloneqq\left\{ B_{i,2}^{(n)}\;:\;i=0,1,...,m_{n}-1\right\} ,
\end{array}
\]
and we let $\xi_{n}:=\xi_{n,1}\cup\xi_{n,2}.$ Let $\sigma_{n}$ be
the associated permutation of $\xi_{n}$ satisfying $\sigma_{n}\left(B_{i,1}^{(n)}\right)=B_{i+1,1}^{(n)}$
for $i=0,1,...,m_{n}-3$, $\sigma_{n}\left(B_{m_{n}-2,1}^{(n)}\right)=B_{0,1}^{(n)}$
as well as $\sigma_{n}\left(B_{i,2}^{(n)}\right)=B_{i+1,2}^{(n)}$
for $i=0,1,...,m_{n}-2$, $\sigma_{n}\left(B_{m_{n}-1,2}^{(n)}\right)=B_{0,2}^{(n)}$.
In Lemma \ref{lem:xi} below, we show that $\xi_{n}\rightarrow\varepsilon$
as $n\rightarrow\infty$. This is a preliminary step for obtaining
the stronger Lemma \ref{lem:eta}, which is needed for a linked approximation. 

We now introduce some notation that will be useful in the proofs of
Lemmas \ref{lem:xi} and \ref{lem:eta}. For $s=1,2,$ let $\Gamma_{n}^{(s)}$ be defined
by
\[
\begin{array}{ccc}
\Gamma_{n}^{(1)}: & = & \left\{ \left(\frac{a_{n,1}(i,j)}{q_{n}},\frac{a'_{n,1}(i,j)}{q'_{n}}\right)+\left(\frac{i}{q_{n}q_{n}'},\frac{j}{q_{n}q_{n}'}\right):0\le i<q_{n},0\le j<q_{n}'\right\} \\
 & = & \left\{ k\left(\frac{r_{n}}{q_{n}}+\frac{1}{q_{n}q_{n}'},\frac{r_{n}'}{q_{n}}+\frac{1}{q_{n}q_{n}'}\right):0\le k<q_{n}q_{n}'\right\} ,\\
\Gamma_{n}^{(2)}: & = & \left\{ \left(\frac{a_{n,2}(i,j)}{q_{n}},\frac{a'_{n,2}(i,j)}{q'_{n}}\right)+\left(\frac{i}{q_{n}q_{n}'},\frac{j}{q_{n}q_{n}'}\right):0\le i<q_{n},0\le j<q_{n}'\right\} \\
 & = & \left\{ k\left(\frac{r_{n}+p_{n}}{q_{n}}+\frac{1}{q_{n}q_{n}'},\frac{r_{n}'+p_{n}'}{q_{n}}+\frac{1}{q_{n}q_{n}'}\right):0\le k<q_{n}q_{n}'\right\} ,
\end{array}
\]
where $a_{n},a_{n}',r_{n},r_{n}'$ are as defined in Section \ref{section:comb}, and
all computations are mod 1. The equivalence of the two versions of
the definitions of $\Gamma_{n}^{(s)}$ follows from Lemma \ref{lem:combidisj}. More
concretely, $\Gamma_{n}^{(1)}$ is the set of vectors by which we
have to translate $S_{0,0}^{(1)}$ to obtain the collection of rectangles
$\left\{ S_{k}^{(1)}:0\le k<q_{n}q_{n}'\right\} ,$ that is, the grey
rectangles in Figure \ref{figure:combi}, and similarly, $\Gamma_{n}^{(2)}$ is the
set of vectors by which we have to translate $S_{0,0}^{(2)}$ to obtain
the collection of rectangles $\left\{ S_{k}^{(2)}:0\le k<q_{n}q_{n}'\right\} ,$
which are the black rectangles in Figure \ref{figure:combi}. We define the group (under
addition mod 1)

\[
\Lambda_{n}=\left\{ \left(\frac{\tau_{1}}{q_{n}q_{n}'},\frac{\tau_{2}}{q_{n}q_{n}'}\right):0\le\tau_{1},\tau_{2}<q_{n}q_{n}'\right\} .
\]
The second versions of the above definitions of $\Gamma_{n}^{(1)}$
and $\Gamma_{n}^{(2)}$ show that $\Gamma_{n}^{(1)}$ and $\Gamma_{n}^{(2)}$
are subgroups of $\Lambda_{n}.$ Moreover, it also follows from the
results in Section \ref{section:comb} that for $s=1,2$ the collection of cosets $\left\{ \Gamma_{n}^{(s)}+(\frac{i_{1}}{q_{n}},\frac{i_{2}}{q_{n}'}):0 \le i_1 < q_n, 0 \le i_2 < q'_n\right\} $
 form a partition of $\Lambda_{n}.$

For each $0\le \ell<\gamma$ (where $\gamma=(nq_{n}q_{n}')^{d-2}$
as in Subsection \ref{subsubsec:Theta}) and $s=1,2$,
let $U_{\ell}^{(s)}$ = $\left((\frac{1}{q_{n}q_{n}'})\widetilde{G}_{s,\ell}+\Gamma_{n}^{(s)}\right)\times[\varepsilon_{n},1-\varepsilon_{n}]^{d-2},$
that is, the product of all $\Gamma_{n}^{(s)}$ translates of a scaled
version of $\widetilde{G}_{s,\ell}$ with $[\varepsilon_{n},1-\varepsilon_{n}]^{d-2}.$
By Lemmas \ref{lem:prop h1} and \ref{lem:prop h2} and the observations following Lemma \ref{lem:prop h2}, for
each $s=1,2,$ and each $\ell$ with $0\le \ell<\gamma$, $U_{\ell}^{(s)}$
is contained in the good domain of $h_{n}^{-1},$ and $h_{n}^{-1}(U_{\ell}^{(1)}\cup U_{\ell}^{(2)})$
is contained in a single cuboid of dimensions $\frac{1}{q_{n}}\times\frac{1}{q_{n}'}\times\left(\frac{1}{nq_{n}q_{n}'}\right)^{d-2},$
which has diameter less than $\sqrt{d}/q_{n}.$ By the $(\frac{1}{q_{n}},\frac{1}{q_{n}'})$-equivariance
of the map $h_{n}^{-1},$ each $(\frac{i_{1}}{q_{n}},\frac{i_{2}}{q_{n}'})$
translate (in the first two coordinates) of $U_{\ell}^{(1)}\cup U_{\ell}^{(2)}$
also has $h_{n}^{-1}$ image contained in a single cuboid of the same
dimensions. By condition (\ref{eq:diam}) in Section \ref{section:conv}, it then follows that the
image under $H_{n}^{-1}=H_{n-1}^{-1}\circ h_{n}^{-1}$ of any $(\frac{i_{1}}{q_{n}},\frac{i_{2}}{q_{n}'})$
translate of $U_{\ell}^{(1)}\cup U_{\ell}^{(2)}$ has diameter less than
$1/(2n^{2}).$ In the proof of Lemma \ref{lem:eta}, we will show that for most
$i$'s the $i$th level of the combined tower is contained in one
of the translates under some $(\frac{i_{1}}{q_{n}},\frac{i_{2}}{q_{n}'})$
of some $U_{\ell}^{(1)}\cup U_{\ell}^{(2)},$ where $(i_{1},i_{2})$ and
$\ell$ depend on $i,$ while in Lemma \ref{lem:xi}, we will show that for $s=1,2$,
for most $i$'s the $i$th level of tower $s$ is contained in one
of the translates under some $(\frac{i_{1}}{q_{n}},\frac{i_{2}}{q_{n}'})$
of some $U_{\ell}^{(s)},$ where $(i_{1},i_{2})$ and $\ell$ depend on
$i$ and $s.$
\begin{lem} \label{lem:xi}
We have
\begin{equation*}
\xi_n \rightarrow \varepsilon \text{ as } n\rightarrow \infty. 
\end{equation*}
\end{lem}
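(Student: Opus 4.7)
The plan is standard: since $\mu\bigl(B^{(n)}_1\bigr)+\mu\bigl(B^{(n)}_2\bigr)\to 1$ by the computations just given, it suffices to exhibit a subfamily of levels of full asymptotic measure whose diameters tend uniformly to zero. A routine measure-theoretic argument then allows one to approximate any measurable set by unions of good levels, which is precisely the content of $\xi_n\to\varepsilon$.

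The key structural observation is that the base $\tilde B^{(n)}_{0,s}$ is an orbit: the $q_nq'_n$ pieces $C^{(s)}_{k(i,j)}$ are precisely the $\Gamma^{(s)}_n$-translates of $C^{(s)}_0$. Since $R_{\alpha_{n+1},\alpha'_{n+1}}$ is a translation on $\mathbb T^2$ acting trivially on $[0,1]^{d-2}$, it commutes with every element of $\Gamma^{(s)}_n$, and hence $\tilde B^{(n)}_{i,s}=R^i_{\alpha_{n+1},\alpha'_{n+1}}\bigl(C^{(s)}_0\bigr)+\Gamma^{(s)}_n$. Consequently, whenever $R^i_{\alpha_{n+1},\alpha'_{n+1}}\bigl(C^{(s)}_0\bigr)$ is contained in a single $\Lambda_n$-translate of $\tfrac{1}{q_nq'_n}\widetilde G_{s,\ell}$, the entire level $\tilde B^{(n)}_{i,s}$ lies in one $\bigl(\tfrac{i_1}{q_n},\tfrac{i_2}{q'_n}\bigr)$-translate of $U^{(s)}_\ell$, and therefore $B^{(n)}_{i,s}=H^{-1}_n\bigl(\tilde B^{(n)}_{i,s}\bigr)$ has diameter less than $1/(2n^2)$ by the discussion preceding the lemma. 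Indices $i$ for which this occurs will be called good.

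The main obstacle is counting the bad indices, which I would carry out using Lemma \ref{lem:disj}. For tower $1$ and fixed $0\le i_0<q_nq'_n$, the iterates $R^{i_0+jq_nq'_n}_{\alpha_{n+1},\alpha'_{n+1}}\bigl(C^{(1)}_0\bigr)$, $0\le j<m_n-1$, traverse the single parallelogram $P^{(1)}_{i_0 p_n,i_0 p'_n,\varepsilon_n}$, each having $\theta_1$-width $q_nq'_n/q_{n+1}$. The $\Lambda_n$-translates of the scaled sub-regions $\tfrac{1}{q_nq'_n}\widetilde G_{1,\ell}$ fill this parallelogram up to a complementary $\theta_1$-length of order $\varepsilon_n$, and they have at most $2n^{d-2}(q_nq'_n)^{d-1}$ boundaries in total. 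Dividing by $m_n-1=q_{n+1}/(q_nq'_n)$ and invoking condition (\ref{eq:large}), the proportion of bad indices per row is bounded by $O(\varepsilon_n)+O\bigl(1/(nq_nq'_n)\bigr)=O\bigl(1/(nq_nq'_n)\bigr)$, which tends to $0$. The same estimate applies to tower $2$, where the tiny vertical drift $q_nq'_n D_n$ between successive $R^{q_nq'_n}_{\alpha_{n+1},\alpha'_{n+1}}$-iterates is controlled by condition (\ref{eq:D}) (analogously to its role in the proof of Lemma \ref{lem:disj}). Hence the total measure of bad levels vanishes while every good level has diameter less than $1/(2n^2)$, which establishes $\xi_n\to\varepsilon$.
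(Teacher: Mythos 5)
Your outline follows the same overall strategy as the paper: reduce to showing that levels of uniformly small diameter have asymptotically full measure, use the fact that $H_{n-1}^{-1}\circ h_n^{-1}$ of each $\left(\frac{i_1}{q_n},\frac{i_2}{q_n'}\right)$-translate of $U^{(s)}_\ell$ has diameter less than $1/(2n^2)$, and then count indices $i$ for which $R^i_{\alpha_{n+1},\alpha'_{n+1}}C_0^{(s)}$ lands inside a single scaled $\widetilde G_{s,\ell}$ translate. The counting estimate at the end is sound.

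However, there is a genuine gap at your ``key structural observation.'' You assert that the pieces $C^{(s)}_{k(i,j)}$ are precisely the $\Gamma_n^{(s)}$-translates of $C_0^{(s)}$, so that $\tilde B^{(n)}_{i,s}=R^i_{\alpha_{n+1},\alpha'_{n+1}}\bigl(C_0^{(s)}\bigr)+\Gamma_n^{(s)}$. This is false. By construction, $C_k^{(1)}=R^{k(m_n-1)}_{\alpha_{n+1},\alpha'_{n+1}}\bigl(C_0^{(1)}\bigr)$, and by equations (\ref{a1})--(\ref{a2}) the vector $k(m_n-1)(\alpha_{n+1},\alpha'_{n+1})$ equals an element of $\Gamma_n^{(1)}$ \emph{minus} the small correction $(0,k\Delta_n)$; this correction is built visibly into the $\theta_2$-interval in the explicit definition of $C^{(1)}_{k(i,j)}$. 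Similarly $C_k^{(2)}=R^{km_n}\bigl(C_0^{(2)}\bigr)$ differs from a $\Gamma_n^{(2)}$-translate of $C_0^{(2)}$ by the shift $\left(\tfrac{k}{q_{n+1}},\,km_nD_n\right)$. So $\tilde B^{(n)}_{0,s}$ is \emph{not} a $\Gamma_n^{(s)}$-orbit, and the equation you want to use for the levels does not hold. Consequently, your step ``whenever $R^iC_0^{(s)}$ is contained in a single $\Lambda_n$-translate of $\tfrac{1}{q_nq_n'}\widetilde G_{s,\ell}$, the entire level $\tilde B^{(n)}_{i,s}$ lies in one translate of $U_\ell^{(s)}$'' is exactly what needs to be proved, and it does not follow from commutation alone.

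This is precisely what the paper's proof spends its effort on, in the form of the explicit \emph{Claim} inside the proof of Lemma~\ref{lem:xi}. The Claim shows that if $A_1=R^iC_0^{(1)}\subset\frac{1}{q_nq_n'}\widetilde G_{1,\ell}+\Gamma_n^{(1)}+\left(\frac{i_1}{q_n},\frac{i_2}{q_n'}\right)$, then each $A_2=R^{i+k(m_n-1)}C_0^{(1)}=A_1+(\gamma_2,\gamma_2')-(0,k\Delta_n)$ also lies in that set. The argument is not a commutation argument: one first observes (via Lemma~\ref{lem:disj} and inequality (\ref{eq:towerbase})) that $A_2$ and $A_1+(\gamma_2,\gamma_2')$ lie in the \emph{same} parallelogram $P^{(1)}_{j_1,j_2,\varepsilon_n}$, because the vertical shift $k\Delta_n$ is too small to cross a boundary; one then uses that the scaled $\widetilde G_{1,\ell}$ translates filling $P^{(1)}_{j_1,j_2,\varepsilon_n}$ have slanted upper and lower boundaries contained in those of $P^{(1)}_{j_1,j_2,\varepsilon_n}$, so membership reduces to a condition on the $\theta_1$-factor alone --- which is unaffected by the purely vertical shift. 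For tower $2$ the paper notes the shift has both horizontal and vertical components, which is why one shrinks $\widetilde G_{2,\ell}$ to $\widetilde G_{2,\ell,\varepsilon_n}$ before running the analogous argument. Your proposal needs this lemma-within-the-proof to close the gap before the counting you sketch can take over.
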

\begin{pr}
The lemma is proven if we show that the partial partitions $\tilde{\xi}_{n}\coloneqq\left\{ c\in\xi_{n}\;:\;\text{diam}\left(c\right)<\frac{1}{n}\right\} $
satisfy $\mu\left(\bigcup_{c\in\tilde{\xi}_{n}}c\right)\rightarrow1$
as $n\rightarrow\infty$. In fact, we will show that
\begin{equation}
\mu\left(\bigcup_{c\in\tilde{\xi}_{n}\cap\xi_{n,1}}c\right)\rightarrow\frac{1}{2}.\label{eq:xi_1}
\end{equation}
 A similar argument will show that
\begin{equation}
\mu\left(\bigcup_{c\in\tilde{\xi}_{n}\cap\xi_{n,2}}c\right)\rightarrow\frac{1}{2}.\label{eq:xi_2}
\end{equation}
 We note that
\[
B_{i,s}^{(n)}=T_{n}^{i}\left(B_{0,s}^{(n)}\right)=H_{n}^{-1}\circ R_{\alpha_{n+1},\alpha_{n+1}^{\prime}}^{i}\circ H_{n}\left(H_{n}^{-1}\left(\tilde{B}_{0,s}^{(n)}\right)\right)=H_{n-1}^{-1}\circ h_{n}^{-1}\circ R_{\alpha_{n+1},\alpha_{n+1}^{\prime}}^{i}\left(\tilde{B}_{0,s}^{(n)}\right)
\]
for $s=1,2.$ As we observed above, for each $\ell$ with $0\le \ell<\gamma$,
and each $0\le i_{1}<q_{n},0\le i_{2}<q_{n}',$ the set $H_{n-1}^{-1}\circ h_{n}^{-1}\left(U_{\ell}^{(1)}+\left(\frac{i_{1}}{q_{n}},\frac{i_{2}}{q_{n}'}\right)\right)$
has diameter less than $1/(2n^{2}).$ To prove (\ref{eq:xi_1}) it
thus suffices to show that for most $i$'s (that is, a proportion
going to $1),$ we have 
\[
\tilde{B}^{(n)}_{i,1} = R_{\alpha_{n+1},\alpha_{n+1}^{\prime}}^{i}\left(\tilde{B}_{0,1}^{(n)}\right)\subset U_{\ell}^{(1)}+\left(\frac{i_{1}}{q_{n}},\frac{i_{2}}{q_{n}'}\right),
\]
for some $\ell$ with $0\le \ell<\gamma$, and some $0\le i_{1}<q_{n},0\le i_{2}<q_{n}',$
where $\ell,i_{1},i_{2}$ depend on $i.$ For the rest of this proof,
we do not use the last $d-2$ coordinates of the sets under discussion,
because all of the sets $U_{\ell}^{(1)}$ and all of the sets $\tilde{B}_{i,1}^{(n)}$ have factor $[\varepsilon_{n},1-\varepsilon_{n}]^{d-2}$
in the last $d-2$ coordinates. 
(However,
we cannot assume $d=2,$ because each set $\widetilde{G}_{1,\ell}$ has
$\theta_{1}$ coordinate varying in an interval of length $\frac{1-\varepsilon_{n}}{\gamma},$
where $\gamma=(nq_{n}q_{n}')^{d-2}.)$

We will prove the following
\begin{claim*}
If 
\begin{equation}
R_{\alpha_{n+1},\alpha_{n+1}^{\prime}}^{i}C_{0}^{(1)}\subset\frac{1}{q_{n}q_{n}'}\widetilde{G}_{1,\ell}+\Gamma_{n}^{(1)}+\left(\frac{i_{1}}{q_{n}},\frac{i_{2}}{q_{n}'}\right),\label{eq:ClaimHypothesis}
\end{equation}
 then
\begin{equation}
\tilde{B}_{i,1}^{(n)}\subset\frac{1}{q_{n}q_{n}'}\widetilde{G}_{1,\ell}+\Gamma_{n}^{(1)}+\left(\frac{i_{1}}{q_{n}},\frac{i_{2}}{q_{n}'}\right).\label{eq:ClaimConclusion}
\end{equation}
Moreover, (\ref{eq:ClaimHypothesis}) holds if and only if the $\theta_{1}$-factor of $R_{\alpha_{n+1},\alpha_{n+1}^{\prime}}^{i}C_{0}^{(1)}$
is contained in the projection of $\frac{1}{q_{n}q_{n}'}\widetilde{G}_{1,\ell}+\Lambda_{n}$
in the $\theta_{1}$-direction. \end{claim*}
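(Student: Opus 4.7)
The plan is to prove the two parts of the claim by combining the translation structure of $\tilde{B}_{0,1}^{(n)}$ with the group structure of $\Gamma_n^{(1)}$ and the size comparison between $R^i_{\alpha_{n+1},\alpha'_{n+1}}C_0^{(1)}$ and $\frac{1}{q_nq'_n}\widetilde{G}_{1,\ell}$. First I would establish a decomposition: directly from the definitions, each set $C_{k(i',j')}^{(1)}$ equals $C_0^{(1)}$ translated in the $(\theta_1,\theta_2)$-plane by an element $\gamma_{i',j'}\in\Gamma_n^{(1)}$ together with an additional $\theta_2$-shift by $-k(i',j')\Delta_n$, whose absolute value is bounded by $q_nq'_n\Delta_n<q_nq'_n/\bar{q}'_{n+1}<\varepsilon_n/(8q_nq'_n)$ using condition~(\ref{eq:D}) together with estimate~(\ref{eq:towerbase}). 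Since $R^i_{\alpha_{n+1},\alpha'_{n+1}}$ is a rigid translation on $\mathbb{T}^2$, this gives $\tilde{B}_{i,1}^{(n)}=\bigcup_{i',j'}\bigl(R^i_{\alpha_{n+1},\alpha'_{n+1}}C_0^{(1)}+\gamma_{i',j'}+(0,-k(i',j')\Delta_n)\bigr).$

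For the main implication, assume (\ref{eq:ClaimHypothesis}). Since $\Gamma_n^{(1)}$ is a group, each $R^i_{\alpha_{n+1},\alpha'_{n+1}}C_0^{(1)}+\gamma_{i',j'}$ lies in $\frac{1}{q_nq'_n}\widetilde{G}_{1,\ell}+\Gamma_n^{(1)}+(i_1/q_n,i_2/q'_n)$. It then remains to absorb the small $-k(i',j')\Delta_n$ shift in the $\theta_2$-direction. The key estimate is that, at any fixed $\theta_1$, the $\theta_2-\theta_1$ slab of $\frac{1}{q_nq'_n}\widetilde{G}_{1,\ell}$ has length $\frac{1-2\varepsilon_n}{2q_nq'_n}$, while $R^i_{\alpha_{n+1},\alpha'_{n+1}}C_0^{(1)}$ contributes $\theta_2-\theta_1$ values in an interval of length at most $\frac{1-4\varepsilon_n}{2q_nq'_n}+\frac{q_nq'_n}{q_{n+1}}$; this leaves safety margins of order $\varepsilon_n/(2q_nq'_n)$ on each side, strictly larger than the $\Delta_n$-shift. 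Hence (\ref{eq:ClaimConclusion}) follows.

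For the biconditional, the ``only if'' direction is immediate by projecting to $\theta_1$. For the ``if'' direction, the $\theta_1$-projection of $\frac{1}{q_nq'_n}\widetilde{G}_{1,\ell}+\Lambda_n$ is a disjoint union of $q_nq'_n$ closed intervals of length $\frac{1-\varepsilon_n}{n^{d-2}(q_nq'_n)^{d-1}}$ with constant spacing $\frac{1}{q_nq'_n}$. Since by condition~(\ref{eq:large}) the $\theta_1$-width $\frac{q_nq'_n}{q_{n+1}}$ of $R^i_{\alpha_{n+1},\alpha'_{n+1}}C_0^{(1)}$ is much smaller than either of these scales, the $\theta_1$-projection of $R^i_{\alpha_{n+1},\alpha'_{n+1}}C_0^{(1)}$ lies in exactly one component interval, identifying a unique $\tau_1\in\{0,\dots,q_nq'_n-1\}$. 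At each such $\theta_1$, the set $\frac{1}{q_nq'_n}\widetilde{G}_{1,\ell}+\Lambda_n$ occupies a $1/(q_nq'_n)$-periodic family of $\theta_2-\theta_1$ slabs of width $\frac{1-\varepsilon_n}{2q_nq'_n}$ separated by gaps of width $\frac{\varepsilon_n}{q_nq'_n}$; since the $\theta_2-\theta_1$ range of $R^i_{\alpha_{n+1},\alpha'_{n+1}}C_0^{(1)}$ has length essentially $\frac{1-4\varepsilon_n}{2q_nq'_n}$, strictly less than the slab width, this range fits in a unique slab, identifying a unique $\tau_2$. The point $(\tau_1/(q_nq'_n),\tau_2/(q_nq'_n))\in\Lambda_n$ lies in a unique coset $\Gamma_n^{(1)}+(i_1/q_n,i_2/q'_n)$ of the partition of $\Lambda_n$, and for this choice of $(i_1,i_2)$ the containment (\ref{eq:ClaimHypothesis}) holds.

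The main obstacle will be the slab-matching argument in the last step: one must carefully track the tilted-parallelogram geometry of $\widetilde{G}_{1,\ell}$, verifying that the $\theta_2-\theta_1$ widths really force $R^i_{\alpha_{n+1},\alpha'_{n+1}}C_0^{(1)}$ into a single slab uniformly over its small $\theta_1$-extent, so that unique values of $\tau_1$ and $\tau_2$ are produced and the resulting coset genuinely contains the full rotated $C_0^{(1)}$.
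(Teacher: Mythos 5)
The decomposition
\[
\tilde{B}_{i,1}^{(n)}=\bigcup_{i',j'}\Bigl(R^i_{\alpha_{n+1},\alpha'_{n+1}}C_0^{(1)}+\gamma_{i',j'}+(0,-k(i',j')\Delta_n)\Bigr)
\]
that you derive from the explicit formula for $C^{(1)}_{k(i',j')}$ is correct and is exactly the same as the paper's $\tilde{B}_{i,1}^{(n)}=\bigcup_{0\le k<q_nq_n'} R^{i+k(m_n-1)}_{\alpha_{n+1},\alpha'_{n+1}}C_0^{(1)}$, written via equations (\ref{a1})--(\ref{a2}) instead of Remark \ref{rem:mapTower1}. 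The group step --- using that $\Gamma_n^{(1)}$ is a subgroup of $\Lambda_n$ --- is also the same.

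The genuine gap is in the way you absorb the $(0,-k\Delta_n)$ shift. You claim that the length comparison (a $\theta_2-\theta_1$ slab of width $\tfrac{1-2\varepsilon_n}{2q_nq_n'}$ versus $R^iC_0^{(1)}$'s range of length roughly $\tfrac{1-4\varepsilon_n}{2q_nq_n'}$) ``leaves safety margins of order $\varepsilon_n/(2q_nq_n')$ on each side.'' But the hypothesis (\ref{eq:ClaimHypothesis}) only asserts containment, not where $R^iC_0^{(1)}$ sits within the slab: a priori the full width difference could be on one side, with $R^iC_0^{(1)}$ flush against the lower edge, in which case the downward shift by $k\Delta_n$ would push $A_2$ out. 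To produce the needed margin you must either trace through the explicit position of $R^iC_0^{(1)}$ inside $P^{(1)}_{j_1,j_2,\varepsilon_n}$ (as done in the proof of Lemma \ref{lem:disj}) or, as the paper actually does, invoke Lemma \ref{lem:disj} directly: both $A_1=R^iC_0^{(1)}$ and $A_2=R^{i+k(m_n-1)}C_0^{(1)}$ lie in $\cup P^{(1)}_{j_1,j_2,\varepsilon_n}$, this union is $\Lambda_n$-invariant, the shift $k\Delta_n$ is smaller than the gap between distinct $P^{(1)}$'s, and the translates of $\tfrac{1}{q_nq_n'}\widetilde{G}_{1,\ell}$ that meet a given $P^{(1)}_{j_1,j_2,\varepsilon_n}$ have \emph{exactly} its $\theta_2-\theta_1$ range --- so membership in such a translate is decided by the $\theta_1$-factor alone, on which the shift has no effect. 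Without some form of this input, your margin assertion is unsupported.

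The same issue, more sharply, defeats the ``if'' direction of the biconditional. You write that the $\theta_2-\theta_1$ slabs of $\tfrac{1}{q_nq_n'}\widetilde{G}_{1,\ell}+\Lambda_n$ are ``separated by gaps of width $\varepsilon_n/(q_nq_n')$''; in fact the slabs have width $\tfrac{1-2\varepsilon_n}{2q_nq_n'}$ within a period of $\tfrac{1}{q_nq_n'}$, so the gaps have width $\tfrac{1+2\varepsilon_n}{2q_nq_n'}\approx\tfrac{1}{2q_nq_n'}$ --- comparable to the slab width, not tiny. More importantly, knowing only that the $\theta_1$-projection of $R^iC_0^{(1)}$ lands in one of the component intervals says nothing about whether its $\theta_2-\theta_1$ range lies in a slab or in one of these large gaps; ``this range fits in a unique slab'' does not follow. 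It is precisely Lemma \ref{lem:disj} that guarantees $R^iC_0^{(1)}\subset P^{(1)}_{j_1,j_2,\varepsilon_n}$, and the coincidence of $\theta_2-\theta_1$ ranges between $P^{(1)}_{j_1,j_2,\varepsilon_n}$ and the relevant translate of $\tfrac{1}{q_nq_n'}\widetilde{G}_{1,\ell}$ that makes the $\theta_1$-projection the only remaining condition. You need to bring Lemma \ref{lem:disj} and that coincidence into the argument explicitly for both directions.
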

\begin{proof}
Suppose that (\ref{eq:ClaimHypothesis}) holds. Then $R_{\alpha_{n+1},\alpha_{n+1}^{\prime}}^{i}C_{0}^{(1)}\subset\frac{1}{q_{n}q_{n}'}\widetilde{G}_{1,\ell}+(\gamma_{1},\gamma_{1}')+\left(\frac{i_{1}}{q_{n}},\frac{i_{2}}{q_{n}'}\right)$
for some $(\gamma_{1},\gamma_{1}')\in\Gamma_{n}^{(1)}.$ Since $\tilde{B}_{i,1}^{(n)}=\cup_{0\le k <q_{n}q_{n}'}R_{\alpha_{n+1},\alpha_{n+1}^{\prime}}^{i+k(m_{n}-1)}C_{0}^{(1)},$
we fix a choice of $k$ with $0\le k<q_{n}q_{n}'$ and we show that
$R_{\alpha_{n+1},\alpha_{n+1}^{\prime}}^{i+k(m_{n}-1)}C_{0}^{(1)}\subset\frac{1}{q_{n}q_{n}'}\widetilde{G}_{1,\ell}+\Gamma_{n}^{(1)}+\left(\frac{i_{1}}{q_{n}},\frac{i_{2}}{q_{n}'}\right).$ 

By equations (\ref{a1}) and (\ref{a2}), we have
\[
\begin{array}{ccc}
k(m_{n}-1)(\alpha_{n+1},\alpha_{n+1}') & = & k\left(\frac{r_{n}}{q_{n}}+\frac{1}{q_{n}q_{n}'},\frac{r_{n}'}{q_{n}'}+\frac{1}{q_{n}q_{n}'}\right)-\left(0,k\Delta_n\right)\\
 & = & (\gamma_{2},\gamma_{2}')-\left(0,k\Delta_n\right)
\end{array}
\]
for some $(\gamma_{2},\gamma_{2}')\in\Gamma_{n}^{(1)}.$ Let $A_{1}=R_{\alpha_{n+1},\alpha_{n+1}^{\prime}}^{i}C_{0}^{(1)}$
and $A_{2}=R_{\alpha_{n+1},\alpha_{n+1}^{\prime}}^{i+k(m_{n}-1)}C_{0}^{(1)}.$
Then $A_{2}=A_{1}+(\gamma_{2},\gamma_{2}')-\left(0,k\Delta_n\right).$
By Lemma \ref{lem:disj}, the rectangles $A_{1}$ and $A_{2}$ are both in $\cup\{P_{j_{1},j_{2},\varepsilon_{n}}^{(1)}:0\le j_{1}<q_{n},0\le j_2<q_{n}'\}.$
Moreover, since $\cup\{P_{j_{1},j_{2},\varepsilon_{n}}^{(1)}:0\le j_{1}<q_{n},0\le j_2<q_{n}'\}$
is invariant under addition by elements of $\Lambda_{n},$ the rectangle
$A_{1}+(\gamma_{2},\gamma_{2}')$ is also contained in $\cup\{P_{j_{1},j_{2},\varepsilon_{n}}^{(1)}:0\le j_{1}<q_{n},0\le j_2<q_{n}'\}$.
But the rectangles $A_{2}$ and $A_{1}+(\gamma_{2},\gamma_{2}')$
have the same $\theta_{1}$-factor, and in the $\theta_{2}$-factor,
$A_{2}$ is shifted by $k\Delta_n$, where $0\le k\Delta_n <q_nq_n'\Delta_n<\frac{\varepsilon_n}{8q_nq_n'}$,
which is smaller than the minimum vertical distance between different
$P_{j_{1},j_{2},\varepsilon_{n}}^{(1)}$'s. Therefore $A_{2}$ and $A_{1}+(\gamma_{2},\gamma_{2}')$
are in the same $P_{j_{1},j_{2},\varepsilon_{n}}^{(1)}.$ Every translate
of $\frac{1}{q_{n}q_{n}'}\widetilde{G}_{1,j}$ by an element of $\Lambda_{n}$
that intersects $P_{j_{1},j_{2},\varepsilon_{n}}^{(1)}$ is contained
in $P_{j_{1},j_{2},\varepsilon_{n}}^{(1)}$, and the upper and lower
boundaries of that translate of $\frac{1}{q_{n}q_{n}'}\widetilde{G}_{1,\ell}$
are contained in the upper and lower boundaries of $P_{j_{1},j_{2},\varepsilon_{n}}^{(1)}$.
Since the $\theta_{1}$-factors of $A_{2}$ and $A_{1}+(\gamma_{2},\gamma_{2}')$
are the same, it follows that $A_{2}\subset\frac{1}{q_{n}q_{n}'}\widetilde{G}_{1,\ell}+(\gamma_{1},\gamma_{1}')+(\gamma_{2},\gamma_{2}')+\left(\frac{i_{1}}{q_{n}},\frac{i_{2}}{q_{n}'}\right)$
if and only if $A_{1}+(\gamma_{2},\gamma_{2}')\subset\frac{1}{q_{n}q_{n}'}\widetilde{G}_{1,\ell}+(\gamma_{1},\gamma_{1}')+(\gamma_{2},\gamma_{2}')+\left(\frac{i_{1}}{q_{n}},\frac{i_{2}}{q_{n}'}\right).$
The latter inclusion holds by assumption. Therefore $A_{2}\subset\frac{1}{q_{n}q_{n}'}\widetilde{G}_{1,\ell}+\Gamma_{n}^{(1)}+\left(\frac{i_{1}}{q_{n}},\frac{i_{2}}{q_{n}'}\right),$
and (\ref{eq:ClaimConclusion}) follows. The second part of the claim
also holds by the above observation regarding the upper and lower
boundaries of the translates of $\frac{1}{q_{n}q_{n}'}\widetilde{G}_{1,\ell}$
by elements of $\Lambda_{n}.$ \end{proof}

By the Claim, (\ref{eq:xi_1}) holds if for most $i$'s with $0\le i<m_{n}-2,$
the $\theta_{1}$-factor of $R_{\alpha_{n+1},\alpha_{n+1}^{\prime}}^{i}C_{0}^{(1)}$
is contained in the projection of $\frac{1}{q_{n}q_{n}'}\widetilde{G}_{1,\ell}+\Lambda_{n}$
in the $\theta_{1}$-direction for some $\ell$ with $0\le \ell<\gamma.$
According to the
definitions in Section \ref{section:conj}, the projections of the translates of $\frac{1}{q_{n}q_{n}'}\widetilde{G}_{1,\ell}$
by elements of $\Lambda_{n}$ consist of $\gamma$ disjoint
subintervals of $[0,\frac{1}{q_{n}q_{n}'}],$ each of length $\frac{1-\varepsilon_{n}}{\gamma q_{n}q_{n}'}$,
and their translates under multiples of $\frac{1}{q_{n}q_{n}'}.$
Thus the projections obtained consist of approximately $\gamma q_{n}q_{n}'$
intervals of length slightly less than $\frac{1}{\gamma q_{n}q_{n}'.}$
The intervals in the $\theta_{1}$-factors of $R_{\alpha_{n+1},\alpha_{n+1}^{\prime}}^{i}C_{0}^{(1)}$
are of length $\frac{q_{n}q_{n}'}{q_{n+1}}$, and mod $\frac{1}{q_{n}q_{n}'}$
their left endpoints form an arithmetic progression with increment
$\frac{1}{q_{n+1}}.$ Thus mod $\frac{1}{q_{n}q_{n}'}$ the collection
of $m_{n}-1$ such left endpoints are evenly spaced throughout the
interval $[0,\frac{1}{q_{n}q_{n}'}].$ Since the ratio of $\frac{1-\varepsilon_{n}}{\gamma q_{n}q_{n}'}$
to $\frac{1}{q_{n+1}}$ is very large, for most $i$'s, the $\theta_{1}$-factor of $R_{\alpha_{n+1},\alpha_{n+1}^{\prime}}^{i}C_{0}^{(1)}$
is contained in the projection of $\frac{1}{q_{n}q_{n}'}\widetilde{G}_{1,\ell}+\Lambda_{n}$
in the $\theta_{1}$-direction for some $\ell$ with $0\le \ell<\gamma.$
This completes the proof of (\ref{eq:xi_1}).

The proof of (\ref{eq:xi_2}) is similar, but slightly more delicate.
In the hypothesis of the analogous Claim, we have to replace $\frac{1}{q_{n}q_{n}'}\widetilde{G}_{1,\ell}$
by $\frac{1}{q_{n}q_{n}'}\widetilde{G}_{2,\ell,\varepsilon_{n}}$ where
the set $\frac{1}{q_{n}q_{n}'}\widetilde{G}_{2,\ell,\varepsilon_{n}}$ is
a subset of $\frac{1}{q_{n}q_{n}'}\widetilde{G}_{2,\ell}$ obtained by
decreasing the $\theta_{1}$-interval by a factor of $\varepsilon_{n}$
at the beginning and at the end. The analogous rectangles $A_{2}$
and $A_{1}+(\gamma_{1},\gamma_{1}')$ in the proof of the Claim are
shifted slightly relative to each other in both the vertical and the
horizontal directions, but this shift is so small that we can again
conclude that $A_{2}$ and $A_{1}+(\gamma_{1},\gamma_{1}')$ are in
the same $P_{j_{1},j_{2},\varepsilon_{n}}^{(2)}.$ 
\end{pr}

\begin{lem} \label{lem:eta}
We have
\begin{equation*}
\eta_n \rightarrow \varepsilon \text{ as } n\rightarrow \infty. 
\end{equation*}
\end{lem}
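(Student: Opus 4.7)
My plan is to adapt the proof of Lemma \ref{lem:xi} to the combined-tower situation. Since our approximation is linked, $\eta_n$ consists of the levels $B^{(n)}_{i,1}\cup B^{(n)}_{i,2}$ for $0\le i\le m_n-2$ together with the orphan level $B^{(n)}_{m_n-1,2}$, whose measure $\mu(B^{(n)}_{0,2})=O(1/m_n)$ tends to zero. It suffices to show that the proportion of $i\in\{0,\dots,m_n-2\}$ for which $B^{(n)}_{i,1}\cup B^{(n)}_{i,2}$ has diameter less than $1/n$ tends to $1$.

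The first step is a joint Claim paralleling the one in the proof of Lemma \ref{lem:xi}: if there exist $\ell,i_1,i_2$ such that
\[
R^{i}_{\alpha_{n+1},\alpha'_{n+1}}C^{(s)}_{0}\subset \tfrac{1}{q_nq_n'}\widetilde{G}_{s,\ell}+\Gamma_n^{(s)}+\left(\tfrac{i_1}{q_n},\tfrac{i_2}{q_n'}\right),\quad s=1,2,
\]
holds simultaneously with the \emph{same} $\ell,i_1,i_2$, then $\tilde{B}^{(n)}_{i,1}\cup\tilde{B}^{(n)}_{i,2}\subset (U^{(1)}_\ell\cup U^{(2)}_\ell)+\left(\tfrac{i_1}{q_n},\tfrac{i_2}{q_n'}\right)$; the argument for each $s$ separately is identical to the Claim in the proof of Lemma \ref{lem:xi}. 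By the observation preceding Lemma \ref{lem:xi}, applying $H_n^{-1}=H_{n-1}^{-1}\circ h_n^{-1}$ lands the combined level in a single cuboid of diameter less than $1/(2n^2)<1/n$ for large $n$. In view of the second half of the Claim of Lemma \ref{lem:xi}, the joint hypothesis reduces to showing that, for most $i$, the $\theta_1$-projection of $R^{i}_{\alpha_{n+1},\alpha'_{n+1}}C^{(1)}_0$ and the $\theta_2$-projection of $R^{i}_{\alpha_{n+1},\alpha'_{n+1}}C^{(2)}_0$ lie in a common $\ell$-strip modulo $\tfrac{1}{q_nq_n'}$.

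The critical new input is the algebraic identity that, since $\alpha_n,\alpha'_n$ are integer multiples of $\tfrac{1}{q_nq_n'}$ modulo $1$,
\[
(\alpha_{n+1}-\alpha'_{n+1})\bmod \tfrac{1}{q_nq_n'}\;=\;\tfrac{q_nq_n'}{q_{n+1}\bar{q}'_{n+1}}-D_n,
\]
which by (\ref{eq:large}) and (\ref{eq:D}) is of order $O(1/(q_{n+1}\bar{q}'_{n+1}))$. Hence for $0\le i\le m_n-2$, the starting $\theta_1$-coordinate of $R^{i}_{\alpha_{n+1},\alpha'_{n+1}}C^{(1)}_0$ and the starting $\theta_2$-coordinate of $R^{i}_{\alpha_{n+1},\alpha'_{n+1}}C^{(2)}_0$ differ modulo $\tfrac{1}{q_nq_n'}$ by at most $m_n\cdot O(\tfrac{q_nq_n'}{q_{n+1}\bar{q}'_{n+1}})=O(1/q_{n+1})$, which by (\ref{eq:large}) is vastly smaller than the $\ell$-strip width $\tfrac{1-\varepsilon_n}{\gamma q_nq_n'}$. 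Since the $\theta_1$-projection of $R^{i}_{\alpha_{n+1},\alpha'_{n+1}}C^{(1)}_0$ moves through $[0,\tfrac{1}{q_nq_n'})$ in approximately equispaced increments of $\tfrac{1}{q_{n+1}}$, the joint condition fails only when these projections land within an $O(\tfrac{q_nq_n'}{q_{n+1}})$ neighborhood of a strip endpoint, a set of $i$ of proportion $O(\tfrac{\gamma q_nq_n'}{q_{n+1}})\to 0$ by (\ref{eq:large}).

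The main obstacle is precisely this joint matching $\ell_1(i)=\ell_2(i)$: a priori one would expect the two strip-indices to behave independently and thus to coincide for only a fraction $1/\gamma$ of the $i$'s, which would be insufficient. What rescues the argument is that the specific choice $\bar{q}'_{n+1}=q_{n+1}+q_nq_n'$ together with the flatness (\ref{eq:D}) of $D_n$ makes $\alpha_{n+1}$ and $\alpha'_{n+1}$ congruent modulo $\tfrac{1}{q_nq_n'}$ up to an error far below the strip resolution, forcing the two $\ell$-indices to coincide throughout the tower except near strip boundaries. The remaining details follow the equidistribution estimate at the end of the proof of Lemma \ref{lem:xi}.
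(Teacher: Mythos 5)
Your argument follows essentially the same route as the paper's proof: reduce to showing that for most $i$ the iterates $R^i_{\alpha_{n+1},\alpha'_{n+1}}C^{(s)}_0$, $s=1,2$, land in translates of $\frac{1}{q_nq_n'}\widetilde G_{s,\ell}$ with matching data, and derive this from the slow drift between $\alpha_{n+1}$ and $\alpha'_{n+1}$. Your algebraic identity
\[
(\alpha_{n+1}-\alpha'_{n+1})\bmod \tfrac{1}{q_nq_n'}=\tfrac{q_nq_n'}{q_{n+1}\bar q'_{n+1}}-D_n
\]
is correct (it repackages the paper's comparison of the $\theta_1$-coordinate of $R^ic^{(1)}$ mod $\frac{1}{q_n}$ against the $\theta_2$-coordinate of $R^ic^{(2)}$ mod $\frac{1}{q_n'}$), and you correctly identify the $\ell$-matching as the main new difficulty relative to Lemma \ref{lem:xi}.

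There is, however, a gap in the reduction step. You state that ``the joint hypothesis reduces to showing that, for most $i$, the $\theta_1$-projection of $R^iC^{(1)}_0$ and the $\theta_2$-projection of $R^iC^{(2)}_0$ lie in a common $\ell$-strip modulo $\frac{1}{q_nq_n'}$.'' That covers the matching of the index $\ell$, but the joint Claim you formulated also requires the coset representative $(i_1,i_2)$ to be the same for $s=1$ and $s=2$, and you never address why that holds. The paper handles this with a short but necessary additional argument: $R^ic^{(1)}$ and $R^ic^{(2)}$ differ by the fixed vector $c^{(1)}-c^{(2)}$, which has size $O(\varepsilon_n/q_nq_n')$, so they lie in the same $\frac{1}{q_nq_n'}$-square for all but an $O(\varepsilon_n)$-proportion of $i$; and since $\frac{m_n-1}{q_{n+1}}=\frac{1}{q_nq_n'}$, except for a few $i$ near $m_n-1$ this square is a coset-representative square modulo $(\frac{1}{q_n},\frac{1}{q'_n})$, i.e.\ of the form $(\frac{i_1}{q_n},\frac{i_2}{q'_n})+[0,\frac{1}{q_nq_n'}]^2$. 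Without that observation the conclusion that the combined level $\tilde B^{(n)}_{i,1}\cup\tilde B^{(n)}_{i,2}$ sits inside a single translate of $U^{(1)}_\ell\cup U^{(2)}_\ell$ does not follow. (Also, a minor arithmetic slip: the proportion of excluded $i$ near strip endpoints should be of order $\gamma(q_nq_n')^2/q_{n+1}$ rather than $\gamma q_nq_n'/q_{n+1}$ — the projection has width $\frac{q_nq_n'}{q_{n+1}}$ and steps by $\frac{1}{q_{n+1}}$, and there are $m_n-1=\frac{q_{n+1}}{q_nq_n'}$ values of $i$ — but both expressions tend to zero by (\ref{eq:large}), so this does not affect the conclusion.)
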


\begin{pr}
By the observations preceding Lemma \ref{lem:xi}, it suffices to show that
for most $i$'s with $0\le i<m_{n}-2,$ there exist $j$ and $i_{1},i_{2},$
depending only on $i$ such that we have simultaneously 
\[
\tilde{B}_{i,1}^{(n)}\subset\frac{1}{q_{n}q_{n}'}\widetilde{G}_{1,\ell}+\Gamma_{n}^{(1)}+\left(\frac{i_{1}}{q_{n}},\frac{i_{2}}{q_{n}'}\right)
\]
 and 
\[
\tilde{B}_{i,2}^{(n)}\subset\frac{1}{q_{n}q_{n}'}\widetilde{G}_{2,\ell}+\Gamma_{n}^{(2)}+\left(\frac{i_{1}}{q_{n}},\frac{i_{2}}{q_{n}'}\right).
\]
If we let $\widetilde{G}_{1,\ell,\varepsilon_{n}}$ be the same as $\widetilde{G}_{1,\ell}$
except the $\theta_{1}$-interval is reduced by a factor of $\varepsilon_{n}$
at both ends, and $\widetilde{G}_{2,\ell,\varepsilon_{n}}$ is the same
as $\widetilde{G}_{2,\ell}$ except that the $\theta_{2}$-interval is
reduced by a factor of $\varepsilon_{n}$ at both ends, then by the
proof of Lemma \ref{lem:xi}, it suffices to prove that for most $i$'s there
exist $\ell$ and $i_{1},i_{2}$ such that 
\begin{equation}
R_{\alpha_{n+1},\alpha_{n+1}^{\prime}}^{i}C_{0}^{(1)}\subset\frac{1}{q_{n}q_{n}'}\widetilde{G}_{1,\ell,\varepsilon_{n}}+\left(\frac{i_{1}}{q_{n}},\frac{i_{2}}{q_{n}'}\right)\label{eq:EpsilonInclusion1}
\end{equation}
 and 
\begin{equation}
R_{\alpha_{n+1},\alpha_{n+1}^{\prime}}^{i}C_{0}^{(2)}\subset\frac{1}{q_{n}q_{n}'}\widetilde{G}_{2,\ell,\varepsilon_{n}}+\left(\frac{i_{1}}{q_{n}},\frac{i_{2}}{q_{n}'}\right).\label{eq:EpsilonInclusion2}
\end{equation}
Note that the lower left corner points $c^{(1)}$ and $c^{(2)}$ of
$C_{0}^{(1)}$ and $C_{0}^{(2)}$ are $(\frac{\tilde{\varepsilon}_n}{2q_nq_n'},\frac{3\varepsilon_{n}}{2q_{n}q_{n}'})$
and $(\frac{3\varepsilon_{n}}{2q_{n}q_{n}'},\frac{\tilde{\varepsilon}_n}{2q_nq_n'})$, respectively.
Thus $\theta_{1}=\frac{\tilde{\varepsilon}_n}{2q_nq_n'}$ on the left boundary of $C_{0}^{(1)},$
and $\theta_{2}=\frac{\tilde{\varepsilon}_n}{2q_nq_n'}$ on the lower boundary of $C_{0}^{(2)}.$
Modulo $\frac{1}{q_{n}}$ the $\theta_{1}$-coordinate of $R_{\alpha_{n+1},\alpha_{n+1}^{\prime}}^{i}c^{(1)}$
is equal to $\frac{\tilde{\varepsilon}_n}{2q_nq_n'}+\frac{i}{q_{n+1}},$ and modulo $\frac{1}{q_{n}'}$
the $\theta_{2}$-coordinate of $R_{\alpha_{n+1},\alpha_{n+1}^{\prime}}^{i}c^{(2)}$
is equal to $\frac{\tilde{\varepsilon}_n}{2q_nq_n'}+i\left(\frac{1}{\overline{q}_{n+1}'}+D_{n}\right).$
For $0\le i<m_{n}-1,$ the difference between these two coordinates
is less than $m_{n}(\frac{1}{q_{n+1}}-\frac{1}{\overline{q}_{n+1}'}-D_{n})<\frac{1}{4n^{d-1}(q_{n}q_{n}')^{d+1}}$,
which is much smaller than the $\theta_{1}$-width of $\frac{1}{q_{n}q_{n}'}\widetilde{G}_{1,\ell,\varepsilon_{n}}$
or the $\theta_{2}$-height of $\frac{1}{q_{n}q_{n}'}\widetilde{G}_{2,\ell,\varepsilon_{n}},$
which is of the order $\frac{1}{n^{d-2}(q_{n}q_{n}')^{d-1}}.$ This
implies that for most $i$'s, we have $R_{\alpha_{n+1},\alpha_{n+1}^{\prime}}^{i}C_{0}^{(1)}\subset\frac{1}{q_{n}q_{n}'}\widetilde{G}_{1,\ell,\varepsilon_{n}}+\Lambda_{n}$
and $R_{\alpha_{n+1},\alpha_{n+1}^{\prime}}^{i}C_{0}^{(2)}\subset\frac{1}{q_{n}q_{n}'}\widetilde{G}_{2,\ell,\varepsilon_{n}}+\Lambda_{n}$
for the same $\ell.$ The points $R_{\alpha_{n+1},\alpha_{n+1}^{\prime}}^{i}c^{(1)}$
and $R_{\alpha_{n+1},\alpha_{n+1}^{\prime}}^{i}c^{(2)}$ are close
together compared to $\frac{1}{q_{n}q_{n}'}.$ Thus for most $i$'s
they are in the same $\left[0,\frac{1}{q_{n}q_{n}'}\right]\times\left[0,\frac{1}{q_{n}q_{n}'}\right]$+$\left(\frac{\tau_{1}}{q_{n}q_{n}'},\frac{\tau_{2}}{q_{n}q_{n}'}\right)$
square. Moreover, since $\frac{m_{n}-1}{q_{n+1}}=\frac{1}{q_{n}q_{n}'},$
except for possibly a few $i's$ near $m_{n}-1,$ both points $R_{\alpha_{n+1},\alpha_{n+1}^{\prime}}^{i}c^{(1)}$
and $R_{\alpha_{n+1},\alpha_{n+1}^{\prime}}^{i}c^{(2)}$ are in the same 
$\left[0,\frac{1}{q_{n}q_{n}'}\right]\times\left[0,\frac{1}{q_{n}q_{n}'}\right]$
square modulo $\left(\frac{1}{q_{n}},\frac{1}{q_{n}'}\right)$. Thus,
for most $i$'s there exist $\ell,i_{1},i_{2}$ such that (\ref{eq:EpsilonInclusion1})
and (\ref{eq:EpsilonInclusion2}) hold simultaneously.
\end{pr}

\subsection{Speed of approximation}
In this section we want to prove that $T$ admits a good linked approximation of type $(h,h+1)$:
\begin{prop} \label{prop:h+1}
The constructed diffeomorphism $T$ admits a good linked approximation of type $(h,h+1)$ with speed of order $O(1/h^2)$.
\end{prop}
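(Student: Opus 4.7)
The plan is to verify that the two towers defined in Subsection \ref{subsection:towers} together with their cyclic permutations $\sigma_n$ form the desired linked approximation, and then to estimate the resulting speed directly. The structural requirements are essentially already in place: by Lemma \ref{lem:disj}, the levels of the two towers of heights $h = m_n-1$ and $h+1 = m_n$ are pairwise disjoint; the measure computations right after the tower definitions show that both towers are substantial with a common constant $r>0$; and both exhaustivity and the linking property (equivalence of the two tower bases) follow from Lemma \ref{lem:eta}, which establishes $\eta_n\to\varepsilon$. Thus it remains to establish the speed estimate $d(\xi_n, T, \sigma_n) = O(1/m_n^2)$.

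I would split the error via the triangle inequality
\[
d(\xi_n, T, \sigma_n) \leq \sum_{c \in \xi_n} \mu\bigl(T(c) \triangle T_n(c)\bigr) + \sum_{c \in \xi_n} \mu\bigl(T_n(c) \triangle \sigma_n(c)\bigr).
\]
The second sum contains only two non-zero terms, since by construction $T_n$ matches $\sigma_n$ exactly on every level except the top of each tower. For tower $1$, volume-preservation of $H_n$ reduces the top-level contribution to $\mu\bigl(R_{\alpha_{n+1},\alpha'_{n+1}}^{m_n-1}\tilde B^{(n)}_{0,1} \triangle \tilde B^{(n)}_{0,1}\bigr)$; Remark \ref{rem:mapTower1} then cancels all but the $C^{(1)}_{q_nq_n'-1}$ and $C^{(1)}_0$ pieces, and equations \eqref{a1}--\eqref{a2} identify $R^{m_n-1}(C^{(1)}_{q_nq_n'-1})$ as $C^{(1)}_0$ translated by $(0, -q_nq_n'\Delta_n)$. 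Since $|\Delta_n|<1/\bar q'_{n+1}$ by \eqref{eq:D}, and the $\theta_1$-width of $C^{(1)}_0$ equals $q_nq_n'/q_{n+1}$, the resulting symmetric difference has measure $O((q_nq_n'/q_{n+1})^2)=O(1/m_n^2)$. The analogous computation for tower $2$, using \eqref{a3}--\eqref{a4}, Remark \ref{rem:mapTower2}, and the bound \eqref{eq:D} on $D_n$, yields a translation of $C^{(2)}_0$ by $(q_nq_n'/q_{n+1}, q_nq_n'm_nD_n)$, whose total symmetric difference is again $O(1/m_n^2)$ (the horizontal drift dominating). For the first sum, I would invoke Lemma \ref{lem:conv} (with $m=1$) to get $d_0(T, T_n) < \varepsilon_n/4$, and combine this with the Minkowski-type inequality $\mu(T(c) \triangle T_n(c)) \leq 2\mu(N_{\varepsilon_n/4}(\partial c))$, controlling the perimeters $|\partial c|$ by pushing through $H_n$ so that the polyhedral boundary of $\tilde c$ in the normalized coordinates is used instead; the additional constraint \eqref{eq:smalTranslCond} on $k_n$ is precisely what ensures that this sum is also $O(1/m_n^2)$.

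The main technical obstacle is Step $3$, namely that \emph{both} towers must simultaneously produce their single non-cancelling top-level symmetric difference at the $1/m_n^2$ scale. This is exactly what the carefully engineered offsets $D_n, \Delta_n$ and the identity $\bar q'_{n+1} = q_{n+1}+q_nq_n'$ from \eqref{eq:relation} are designed to achieve; the delicate bookkeeping is in confirming that the $\theta_2$-drift in tower $1$ and the $\theta_1$-drift in tower $2$ accumulated over $q_nq_n'$ iterations remain dominated by the corresponding $\theta_1$-width $q_nq_n'/q_{n+1}$ and $\theta_2$-height $q_nq_n'/\bar q'_{n+1}$ of the base rectangles. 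Once this is in hand, $h = m_n-1 = \Theta(m_n)$ yields $d(\xi_n, T, \sigma_n) = O(1/h^2)$, which is in particular $o(1/h)$, so the approximation is good.
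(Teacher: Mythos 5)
Your structural analysis is correct: Lemma \ref{lem:disj} gives disjointness, the measure computations after the tower definitions give substantiality, and Lemma \ref{lem:eta} gives the linked exhaustive sequence. Your triangle-inequality decomposition is exactly the paper's equation (\ref{eq:firstofall}), and your treatment of the second sum $\sum_{c\in\xi_n}\mu(T_n(c)\triangle\sigma_n(c))$ — only the two top levels contribute, and the wrap-around drifts dominated by $q_nq_n'\Delta_n$ and $q_nq_n'/q_{n+1}$ yield $O\bigl((q_nq_n')^2/(q_{n+1}\bar q'_{n+1})\bigr)=O(1/m_n^2)$ — matches the paper's Lemma \ref{lem:error1} precisely.

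The gap is in your treatment of $\sum_{c\in\xi_n}\mu(T(c)\triangle T_n(c))$. You propose to use $d_0(T,T_n)<\varepsilon_n/4$ from Lemma \ref{lem:conv}, then bound each $\mu(T(c)\triangle T_n(c))$ by $2\mu(N_{\varepsilon_n/4}(\partial c))$ and control the perimeter by pushing through $H_n$. This does not close, for two compounding reasons. First, pushing a $\delta$-neighborhood through the (measure-preserving, non-isometric) map $H_n$ magnifies the radius by a factor of $\|DH_n\|_0$, and condition (\ref{eq:diam}) permits $\|DH_n\|_0$ to be on the order of $q_{n+1}/(n+1)^2$; with $\delta=\varepsilon_n/4=1/(2nq_nq_n')$ the effective radius in the $\tilde c$-coordinates is roughly $(m_n-1)/(4\sqrt d\, n(n+1)^2)$, which exceeds $1$ for large $n$ by condition (\ref{eq:large}), so the neighborhood is the whole space. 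Second, even discounting this, the total $(d-1)$-dimensional boundary area of $\cup_{i,s}\tilde B^{(n)}_{i,s}$ is of order $m_n$ (each level is $q_nq_n'$ boxes of boundary area $\sim 1/(q_nq_n')$ each), so a fixed-radius tube around all boundaries would need radius $\lesssim 1/m_n^3$, which is far smaller than $\varepsilon_n$. Your appeal to (\ref{eq:smalTranslCond}) at the end is on the right track, but it is not a patch on the $d_0$/Minkowski argument — it replaces it. The paper's Lemma \ref{lem:error3} never invokes $d_0(T,T_n)$: it uses that $h_{n+k+1}$ commutes with $R_{\alpha_{n+k+1},\alpha'_{n+k+1}}$ to rewrite $T_{n+k}(c)=H_{n+k+1}^{-1}R_{\alpha_{n+k+1},\alpha'_{n+k+1}}H_{n+k+1}(c)$, then applies measure-preservation of $H_{n+k+1}$ to convert $\mu(T_{n+k+1}(c)\triangle T_{n+k}(c))$ into the measure of a set symmetric-differenced with its translate by $(\alpha_{n+k+2}-\alpha_{n+k+1},\alpha'_{n+k+2}-\alpha'_{n+k+1})$. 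Condition (\ref{eq:smalTranslCond}) is exactly the after-the-fact choice of $q_{n+k+2}$ large enough so that this translation is small in $L^1$ relative to $\mu(c)$; it does not pass through any sup-norm or perimeter bound. This gives $\sum_{c\in\xi_n}\mu(T(c)\triangle T_n(c))\le 2\varepsilon_{n+1}$, which is then shown to be $o(1/m_n^2)$ via condition (\ref{eq:largeprime}). You should replace your $d_0$/tube argument with this telescoping-plus-equivariance argument.
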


Since $T$ admits a linked approximation by Lemma \ref{lem:eta}, we have to compute the speed of the approximation in order to prove this statement. First of all, we observe 
\begin{equation} \label{eq:firstofall}
\sum_{c \in \xi_n} \mu\left(T\left(c\right) \triangle \sigma_n\left(c\right)\right) \leq \sum_{c \in \xi_n}\left(\mu\left(T\left(c\right) \triangle T_n\left(c\right)\right)+\mu\left(T_n\left(c\right) \triangle \sigma_n\left(c\right)\right)\right)
\end{equation}
recalling that $\sigma_n$ is the associated permutation satisfying $\sigma_n \left( B^{(n)}_{i,1} \right) = B^{(n)}_{i+1,1}$ for $i=0,...,m_n-3$, $\sigma_n \left( B^{(n)}_{m_n-2,1} \right) = B^{(n)}_{0,1}$ as well as $\sigma_n \left( B^{(n)}_{k,2} \right) = B^{(n)}_{k+1,2}$ for $k=0,1,...,m_n-2$, $\sigma_n \left( B^{(n)}_{m_n-1,2} \right) = B^{(n)}_{0,2}$. In the subsequent lemmas we examine each summand.
\begin{lem} \label{lem:error1}
We have
\begin{equation*} 
\sum_{c \in \xi_n}\mu\left(T_n\left(c\right) \triangle \sigma_n\left(c\right)\right) \leq \frac{6 \cdot q^2_n \cdot \left( q^{\prime}_n \right)^2}{q_{n+1} \cdot \bar{q}^{\prime}_{n+1}}.
\end{equation*}
\end{lem}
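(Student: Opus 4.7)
The plan is to first observe that only the top levels of the two towers contribute to the sum. For any non-top level $c=B^{(n)}_{i,s}$, one has $T_n(c)=B^{(n)}_{i+1,s}=\sigma_n(c)$, so the symmetric difference vanishes. Thus the sum reduces to
\[
\mu\bigl(T_n^{m_n-1}(B^{(n)}_{0,1})\triangle B^{(n)}_{0,1}\bigr)+\mu\bigl(T_n^{m_n}(B^{(n)}_{0,2})\triangle B^{(n)}_{0,2}\bigr).
\]
Using $B^{(n)}_{0,s}=H_n^{-1}(\tilde B^{(n)}_{0,s})$, $T_n^k=H_n^{-1}\circ R^k_{\alpha_{n+1},\alpha'_{n+1}}\circ H_n$, and that $H_n$ preserves $\mu$, this reduces further to
\[
\mu\bigl(R^{m_n-1}_{\alpha_{n+1},\alpha'_{n+1}}(\tilde B^{(n)}_{0,1})\triangle\tilde B^{(n)}_{0,1}\bigr)+\mu\bigl(R^{m_n}_{\alpha_{n+1},\alpha'_{n+1}}(\tilde B^{(n)}_{0,2})\triangle\tilde B^{(n)}_{0,2}\bigr).
\]

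Next, I would handle the two towers separately. For the first tower, iterating Remark \ref{rem:mapTower1} gives $R^{k(m_n-1)}_{\alpha_{n+1},\alpha'_{n+1}}(C^{(1)}_0)=C^{(1)}_k$ for $0\le k<q_nq'_n$; combined with the pairwise disjointness in Lemma \ref{lem:disj}, the first symmetric difference collapses to $\mu\bigl(C^{(1)}_0\triangle R^{q_{n+1}}_{\alpha_{n+1},\alpha'_{n+1}}(C^{(1)}_0)\bigr)$, since $q_nq'_n(m_n-1)=q_{n+1}$. The key computation is of the translation vector of $R^{q_{n+1}}$: using $q_nq'_n\mid q_{n+1}$ from (\ref{eq:div}) and $q_{n+1}/\bar{q}'_{n+1}=1-q_nq'_n/\bar{q}'_{n+1}$ from (\ref{eq:relation}), this vector reduces modulo $(1,1)$ to $\bigl(0,\,-q_nq'_n/\bar{q}'_{n+1}+q_{n+1}D_n\bigr)$, whose $\theta_2$-magnitude is bounded by $q_nq'_n/\bar{q}'_{n+1}$ in view of (\ref{eq:D}). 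Since $C^{(1)}_0$ has $\theta_1$-width $q_nq'_n/q_{n+1}$, the shifted rectangle differs from the original only in two thin horizontal strips, giving a bound of $2(q_nq'_n)^2/(q_{n+1}\bar{q}'_{n+1})$. An analogous calculation for the second tower, via Remark \ref{rem:mapTower2}, shows that $R^{\bar{q}'_{n+1}}$ translates by $\bigl(q_nq'_n/q_{n+1},\bar{q}'_{n+1}D_n\bigr)$ (the $\theta_2$-integrality here using $q'_n\mid\bar{q}'_{n+1}$); the horizontal shift against the $\theta_2$-height of $C^{(2)}_0$ (which is $(q_nq'_n-4\varepsilon_n)/\bar{q}'_{n+1}$) contributes the matching $2(q_nq'_n)^2/(q_{n+1}\bar{q}'_{n+1})$, and the $\bar{q}'_{n+1}D_n$ correction, bounded via (\ref{eq:D}) and (\ref{eq:large}), is of the same order and absorbs into the constant.

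The main obstacle will be the careful modular-arithmetic bookkeeping, and in particular verifying that the shifts of $C^{(1)}_0$ and $C^{(2)}_0$ are small enough that $R^{q_{n+1}}(C^{(1)}_0)$ and $R^{\bar{q}'_{n+1}}(C^{(2)}_0)$ do not meet any of the other $C^{(s)}_k$; without this, the symmetric difference would not reduce cleanly to the single two-rectangle comparison used above. The required smallness is precisely the content of condition (\ref{eq:large}) (which ensures, e.g., that $q_nq'_n/\bar{q}'_{n+1}$ is much smaller than the $\theta_2$-height $\sim 1/(2q_nq'_n)$ of $C^{(1)}_0$), together with the smallness of $D_n$ from (\ref{eq:D}). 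Summing the two tower contributions then yields the claimed bound $6(q_nq'_n)^2/(q_{n+1}\bar{q}'_{n+1})$, with the factor $6$ allowing comfortable slack for the $D_n$-corrections.
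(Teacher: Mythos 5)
Your proposal is correct and follows essentially the same route as the paper's proof: reduce to the top-level symmetric differences, compute the powers $R^{q_{n+1}}$ and $R^{\bar{q}'_{n+1}}$ (equivalently $R^{q_nq'_n(m_n-1)}$ and $R^{q_nq'_nm_n}$) modulo $1$, and bound the resulting strip-areas. One small note: the disjointness you flag as "the main obstacle" is not actually needed for the stated inequality — since the interior levels $C^{(s)}_k$, $1\le k<q_nq'_n$, occur on both sides, the symmetric difference of the bases is automatically contained in $C^{(s)}_0\triangle R^{q_nq'_n(m_n\mp1)}(C^{(s)}_0)$, giving the required upper bound without verifying that the shifted rectangle avoids the other pieces.
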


\begin{pr}
We note $\sigma_n|_{B^{(n)}_{i,1}} = T_n|_{{B^{(n)}_{i,1}}}$ for $i=0,...,m_n-3$ and $\sigma_n\left(B^{(n)}_{m_n-2,1}\right) = B^{(n)}_{0,1}$ as well as $\sigma_n|_{B^{(n)}_{i,2}} = T_n|_{{B^{(n)}_{i,2}}}$ for $i=0,...,m_n-2$, $\sigma_n\left(B^{(n)}_{m_n-1,2}\right) = B^{(n)}_{0,2}$. \\
To estimate the expression $\sum_{c \in \xi_{n,1}}\mu\left(T_n\left(c\right) \triangle \sigma_n\left(c\right)\right)$ we recall equations (\ref{a1}) as well as (\ref{a2}). The sets $C^{(1)}_{k_1}$ are defined in such a way that $R^{m_n - 1}_{\a_{n+1}, \a^{\prime}_{n+1}}\left(C^{(1)}_{k_1}\right) = C^{(1)}_{k_1+1}$ for $0 \leq k_1 < q_n q^{\prime}_n-1$ (see Remark \ref{rem:mapTower1}). Thus,
$$ \mu\left(T^{m_n-1}_n\left(B^{(n)}_{0,1}\right) \triangle B^{(n)}_{0,1}\right) = \mu \left( R^{m_n - 1}_{\a_{n+1}, \a^{\prime}_{n+1}} \left( C^{(1)}_{q_n q^{\prime}_n - 1} \right) \triangle C^{(1)}_0 \right). $$
Additionally, we observe that 
\begin{align*}
& q_n q^{\prime}_n \cdot \left(m_n -1\right) \cdot \alpha_{n+1} \equiv 0 \mod 1 \\
& q_n q^{\prime}_n \cdot \left(m_n -1\right) \cdot \alpha^{\prime}_{n+1} \equiv -\frac{q_n q^{\prime}_n}{\bar{q}^{\prime}_{n+1}} + q_n q^{\prime}_n \cdot \left( m_n-1\right) \cdot D_n \mod 1. 
\end{align*}
Hence, $R^{m_n - 1}_{\a_{n+1}, \a^{\prime}_{n+1}} \left( C^{(1)}_{q_n q^{\prime}_n - 1} \right)$ and $ C^{(1)}_0 $ differ in the deviation of at most $\frac{q_n q^{\prime}_n }{\bar{q}^{\prime}_{n+1}}$ from $0$ in the $\theta_2$- direction. This yields the following measure difference:
\begin{equation*}
\mu\left(T^{m_n-1}_n\left(B^{(n)}_{0,1}\right) \triangle B^{(n)}_{0,1}\right) \leq  2 \cdot \frac{q_n q^{\prime}_n}{\bar{q}^{\prime}_{n+1}} \cdot \frac{q_n q^{\prime}_n}{q_{n+1}} \cdot \left(1-2\varepsilon_n\right)^{d-2} \leq \frac{2 \cdot q^2_n \cdot \left( q^{\prime}_n \right)^2}{q_{n+1} \cdot \bar{q}^{\prime}_{n+1}}.
\end{equation*}
Analogously the sets $C^{(2)}_{k_2}$ are defined in such a way that $R^{m_n}_{\a_{n+1}, \a^{\prime}_{n+1}}\left(C^{(2)}_{k_2}\right) = C^{(2)}_{k_2+1}$ for $0 \leq k_2 < q_n q^{\prime}_n-1$ (see Remark \ref{rem:mapTower2}). So $T_n\left(B^{(n)}_{m_n-1,2}\right)$ and $B^{(n)}_{0,2}$ differ in the deviation of $\frac{q_nq^{\prime}_n}{q_{n+1}}$ from $0$ in the $\theta_1$-direction and the deviation of $q_nq^{\prime}_n \cdot m_n \cdot D_n$ from $0$ in the $\theta_2$-direction by equations (\ref{a3}) and (\ref{a4}):
\begin{equation*}
\mu\left(T^{m_n}_n\left(B^{(n)}_{0,2}\right) \triangle B^{(n)}_{0,2}\right) \leq 2 \cdot \left( \frac{q_n q^{\prime}_n}{q_{n+1}} \cdot \frac{q_n q^{\prime}_n}{\bar{q}^{\prime}_{n+1}} + \frac{1-4\varepsilon_n}{2q_n q^{\prime}_n} \cdot q_n q^{\prime}_n \cdot m_n \cdot D_n \right) \cdot \left(1-2 \varepsilon_n \right)^{d-2} \leq \frac{4 \cdot q^2_n \cdot \left( q^{\prime}_n \right)^2}{q_{n+1} \cdot \bar{q}^{\prime}_{n+1}}
\end{equation*}
using inequality (\ref{eq:D}). This yields
\begin{equation*} 
\sum_{c \in \xi_n}\mu\left(T_n\left(c\right) \triangle \sigma_n\left(c\right)\right) \leq \frac{6 \cdot q^2_n \cdot \left( q^{\prime}_n \right)^2}{q_{n+1} \cdot \bar{q}^{\prime}_{n+1}}.
\end{equation*}
\end{pr}

\begin{rem}\label{rem:bigO}
We note that this error term is of order $O\left(\frac{1}{\left(m_n - 1 \right) m_n} \right)$ but not of order $o\left(\frac{1}{\left(m_n-1\right)m_n}\right)$. So we cannot hope for an excellent approximation of type $\left(h,h+1\right)$.
\end{rem}

In the next step we consider $\sum_{c \in \xi_n}\mu\left(T\left(c\right) \triangle T_n\left(c\right)\right)$: 

\begin{lem} \label{lem:error3}
We have 
\begin{equation*} 
\sum_{c \in \xi_n}\mu\left(T\left(c\right) \triangle T_{n}\left(c\right)\right) \leq 2 \cdot \varepsilon_{n+1}.
\end{equation*}
\end{lem}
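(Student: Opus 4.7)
My approach is to pull back the symmetric difference to a translation acting on the explicit union-of-boxes sets $\widetilde{B}^{(n)}_{i,s}$, and then control it by a tubular neighborhood of an easily estimated boundary.

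For each level $c = B^{(n)}_{i,s} = H_n^{-1}(\widetilde{B}^{(n)}_{i,s})$, set $R := R_{\alpha_{n+1},\alpha'_{n+1}}$, $\widetilde{T} := H_n \circ T \circ H_n^{-1}$, and $\widetilde{B} := H_n(c)$. Since $T_n = H_n^{-1} R H_n$ and $H_n$ preserves $\mu$,
\[
\mu(T(c) \triangle T_n(c)) = \mu\bigl(\widetilde{T}(\widetilde{B}) \triangle R(\widetilde{B})\bigr),
\]
and from $\widetilde{T} - R = H_n \circ (T - T_n) \circ H_n^{-1}$ (applied coordinate-wise via the mean value theorem) we get $\delta := d_0(\widetilde{T}, R) \leq \|DH_n\|_0 \cdot d_0(T,T_n)$. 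Any point in $\widetilde{T}(\widetilde{B}) \triangle R(\widetilde{B})$ lies within $\delta$ of $\partial R(\widetilde{B})$, which, because $R$ is a translation, has the same $(d-1)$-Hausdorff measure as $\partial \widetilde{B}$. Hence $\mu(\widetilde{T}(\widetilde{B}) \triangle R(\widetilde{B})) \leq 2\delta \cdot \mathcal{H}^{d-1}(\partial \widetilde{B})$.

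Next I would invoke the explicit description of $\widetilde{B}^{(n)}_{0,s}$ from Subsection \ref{subsection:towers} as a union of $q_n q'_n$ boxes whose $\theta_1$-$\theta_2$ dimensions are either $(q_nq'_n/q_{n+1})\times((1-4\varepsilon_n)/(2q_nq'_n))$ (tower 1) or $((1-4\varepsilon_n)/(2q_nq'_n))\times(q_nq'_n/\overline{q}'_{n+1})$ (tower 2) with unit extent in the remaining $d-2$ directions. A direct computation shows that the total $(d-1)$-surface area of these $q_n q'_n$ boxes is bounded by a constant $P$ depending only on $d$ (the dominant face contribution is $q_n q'_n \cdot 1/(q_n q'_n) = O(1)$). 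Since $\widetilde{B}^{(n)}_{i,s}$ is a translate of $\widetilde{B}^{(n)}_{0,s}$ under $R^i$, it shares the same perimeter. Summing over the $(m_n-1) + m_n < 2m_n$ levels gives
\[
\sum_{c \in \xi_n} \mu(T(c) \triangle T_n(c)) \leq 4P\, m_n\, \|DH_n\|_0 \cdot d_0(T, T_n).
\]

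Finally, by Lemma \ref{lem:convgen} and Lemma \ref{lem:konj}, $d_0(T,T_n) \leq \sum_{m \geq n} d_0(T_{m+1}, T_m)$, and each summand is controlled by $C_{l_m}\||H_m|\|^{l_m+1}_{l_m+1} \cdot \max(|\alpha_{m+2} - \alpha_{m+1}|, |\alpha'_{m+2} - \alpha'_{m+1}|)$, which can be made arbitrarily small via the still-free parameters $k_{m+1}, q_{m+2}$. The condition \eqref{eq:small translation} to be declared in the proof is simply $4P m_n \|DH_n\|_0 \cdot d_0(T,T_n) \leq 2\varepsilon_{n+1}$; since $m_n, \|DH_n\|_0, \varepsilon_{n+1}$ depend only on data through step $n$, this is a legitimate constraint on the future choices of $k_j$ for $j > n$, which is exactly why property (H) was listed as an abstract hypothesis in Lemma \ref{lem:conv}.

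The main obstacle is the geometric verification that the cumulative surface area $P$ of the $q_nq'_n$ boxes constituting each base $\widetilde{B}^{(n)}_{0,s}$ is bounded by a constant independent of $n$; this is not automatic and is a consequence of the specific thin-box aspect ratios chosen in Subsection \ref{subsection:towers}. Given this bound, the rest is a routine combination of $C^1$ Lipschitz estimates, measure-preservation, and the telescoping summation that makes $d_0(T,T_n)$ as small as required.
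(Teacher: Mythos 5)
Your proposal takes a genuinely different route from the paper. The paper's proof exploits the commutation $h_{m+1}\circ R_{\alpha_{m+1},\alpha_{m+1}'}=R_{\alpha_{m+1},\alpha_{m+1}'}\circ h_{m+1}$ to rewrite $T_m(c)=H_{m+1}^{-1}\circ R_{\alpha_{m+1},\alpha_{m+1}'}\circ H_{m+1}(c)$, so that after pushing forward by the measure-preserving $H_{m+1}$, the comparison of $T_{m+1}(c)$ with $T_m(c)$ becomes a comparison of a fixed measurable set $S=R_{\alpha_{m+1},\alpha_{m+1}'}H_{m+1}(c)$ with its translate $R_{\alpha_{m+2}-\alpha_{m+1},\alpha_{m+2}'-\alpha_{m+1}'}(S)$. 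The paper's condition (H) then asks only that this small translation change $\mu(S\triangle\cdot)$ by at most $\varepsilon_{m+1}\mu(c)$, which is immediate from the $L^1$-continuity of $(\alpha,\alpha')\mapsto \mathbf{1}_S\circ R_{\alpha,\alpha'}$ and involves no geometry, no boundary, no surface area. Your approach instead works from the global $C^0$-closeness $d_0(T,T_n)$ and a tubular-neighborhood bound $\mu(\widetilde T(\widetilde B)\triangle R(\widetilde B))\le c_d\,\delta\,\mathcal H^{d-1}(\partial\widetilde B)$. This does buy you an argument that is conceptually concrete, and your surface-area claim is in fact correct: each of the $q_nq_n'$ disjoint thin boxes making up a level has $\mathcal H^{d-1}(\partial C^{(s)}_k)\asymp 1/(q_nq_n')$ (the dominant faces are the two of codimension one in $\theta_2$, resp.\ $\theta_1$), so the total is $O(1)$ uniformly in $n$; moreover one should also check that $\delta$ can be forced much smaller than the thinnest side $q_nq_n'/q_{n+1}$ so that the tube estimate applies cleanly, and that $d_0$ already controls $\widetilde T^{-1}-R^{-1}$ (it does, by the paper's definition of $d_0$), which you need to put \emph{both} halves of the symmetric difference near $\partial R(\widetilde B)$.

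The more substantial gap is in your re-declaration of property (H). You propose to let (H) be the inequality $4Pm_n\|DH_n\|_0\,d_0(T,T_n)\le 2\varepsilon_{n+1}$, but this quantity involves the full limit $T$, hence the entire infinite tail $(q_m,q_m',D_m)_{m>n+1}$ of parameters, and so it cannot be imposed "at step $n$" the way conditions (D)--(G) (and the paper's (H)) are. In the paper, (H) is a condition on $|\alpha_{\ell+1}-\alpha_\ell|$ and $|\alpha_{\ell+1}'-\alpha_\ell'|$ for a single $\ell$, imposable by enlarging $q_{\ell+1}$ alone, with $H_\ell,\varepsilon_\ell$ and the finitely many partitions $\xi_n$ ($n<\ell$) already fixed. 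To make your version rigorous you would have to unwind $d_0(T,T_n)\le\sum_{m>n}d_0(T_m,T_{m-1})$ into step-by-step constraints, for each $m$ simultaneously satisfying the bounds requested by all $n<m$, and verify consistency; this is doable, but it is a genuine additional argument that your sketch leaves implicit. In short: the approach is workable, but it replaces the paper's light-weight $L^1$-continuity device with a geometric estimate (fillable) and a tail constraint on (H) that needs to be recast per step (not yet done).
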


\begin{pr}
In requirement (\ref{eq:smalTranslCond}) of Lemma \ref{lem:conv} we choose $q_{\ell+1}$, $\ell\ge1$, sufficiently large so that 
\begin{equation}
\mu\left(R_{\alpha_{\ell+1}-\alpha_{\ell},\alpha_{\ell+1}'-\alpha_{\ell}'}\circ\left(R_{\alpha_{\ell},\alpha_{\ell}'}\circ H_{\ell}(c)\right)\triangle \left(R_{\alpha_{\ell},\alpha_{\ell}'}\circ H_{\ell}(c)\right)\right)\le\varepsilon_{\ell} \cdot \mu(c),\label{eq:small translation}
\end{equation}
for $c\in\xi_{n}$, for $1\le n\le\ell-1$. This is possible because
$R_{\alpha_{\ell},\alpha_{\ell}'}\circ H_{\ell}$ and $\varepsilon_{\ell}$
are determined by parameters with index at most $\ell$, $\xi_{n}$
is determined by parameters of index at most $n+1$, and for any $L^{1}(\mu)$-function $f$, the map from $\mathbb{T}^{2}$ to $L^{1}(\mu)$ defined
by $(\alpha,\alpha')\mapsto f\circ R_{\alpha,\alpha'}$ is continuous.\\

Then for $n\in\mathbb{N}$ and $k\ge0$ we compute for every $c\in \xi_n$ 
\begin{align*}
& \mu(T_{n+k+1}(c)\triangle T_{n+k}(c)) \\
= & \mu\left(H^{-1}_{n+k+1} \circ R_{\alpha_{n+k+2},\alpha_{n+k+2}'}\circ H_{n+k+1}(c)\triangle H^{-1}_{n+k} \circ R_{\alpha_{n+k+1},\alpha_{n+k+1}'}\circ H_{n+k}(c)\right) \\
= & \mu\left(H^{-1}_{n+k+1} \circ R_{\alpha_{n+k+2},\alpha_{n+k+2}'}\circ H_{n+k+1}(c)\triangle H^{-1}_{n+k} \circ h^{-1}_{n+k+1} \circ R_{\alpha_{n+k+1},\alpha_{n+k+1}'}\circ h_{n+k+1} \circ H_{n+k}(c)\right) \\
= & \mu\left(R_{\alpha_{n+k+2},\alpha_{n+k+2}'}\circ H_{n+k+1}(c)\triangle R_{\alpha_{n+k+1},\alpha_{n+k+1}'}\circ H_{n+k+1}(c)\right) \\
= & \mu\left(R_{\alpha_{n+k+2}-\alpha_{n+k+1},\alpha_{n+k+2}'-\alpha_{n+k+1}'}\circ\left(R_{\alpha_{n+k+1},\alpha_{n+k+1}'}\circ H_{n+k+1}(c)\right)\triangle R_{\alpha_{n+k+1},\alpha_{n+k+1}'}\circ H_{n+k+1}(c)\right) \\
\le & \varepsilon_{n+k+1} \cdot \mu(c).
\end{align*}
Thus
\begin{align*}
\sum_{c\in\xi_{n}}\mu(T(c)\triangle T_{n}(c)) & \le \sum_{c\in\xi_{n}}\sum_{k\ge0}\mu(T_{n+k+1}(c)\triangle T_{n+k}(c)) \le \sum_{c\in\xi_{n}}\sum_{k\ge0}\varepsilon_{n+k+1}\mu(c) \le \sum_{k\ge0}\varepsilon_{n+k+1} \\
& \le 2\varepsilon_{n+1}.
\end{align*}
\end{pr}

\begin{proof}[Proof of Proposition \ref{prop:h+1}]
Using equation (\ref{eq:firstofall}) and the precedent two lemmas we conclude
\begin{equation*}
\sum_{c \in \xi_n} \mu\left(T\left(c\right) \triangle \sigma_n\left(c\right)\right) \leq \frac{6 \cdot q^2_n \cdot \left( q^{\prime}_n \right)^2}{q_{n+1} \cdot \bar{q}^{\prime}_{n+1}}+ 2 \cdot \varepsilon_{n+1}.
\end{equation*}

We already observed in the Remark \ref{rem:bigO} that the first term on the right hand side of this inequality is of order $O\left(\frac{1}{\left(m_n - 1 \right) m_n} \right)$.
We recall our choice $\varepsilon_{n+1}=\frac{2}{(n+1) \cdot q_{n+1}q^{\prime}_{n+1}}$ in the construction of the conjugation map (see equation (\ref{eq:eps})).  We have
$\varepsilon_{n+1}$ of order $o\left(\frac{1}{\left(m_n - 1 \right) m_n} \right)$ because 

\begin{align*}
\frac{ \frac{2}{(n+1) \cdot q_{n+1}q^{\prime}_{n+1}}}{\frac{1}{\left(m_n-1\right)m_n}} & = \frac{q_{n+1} \cdot \bar{q}^{\prime}_{n+1}}{q^2_n \cdot \left(q^{\prime}_n\right)^2} \cdot \left( \frac{ 2}{(n+1) \cdot q_{n+1}\cdot q^{\prime}_{n+1}}\right) \\
& = \frac{2}{(n+1) \cdot q^2_n \cdot \left(q^{\prime}_n\right)^2}  \cdot \frac{\bar{q}^{\prime}_{n+1}}{q^{\prime}_{n+1}} \\
& < \frac{1}{2(n+1)\cdot q^2_n \cdot \left(q^{\prime}_n\right)^2},  \end{align*}
which goes to 0 as $n\to\infty$. In the last step of the above calculation we are exploiting condition (\ref{eq:largeprime}). Therefore $\sum_{c \in \xi_n} \mu\left(T\left(c\right) \triangle \sigma_n\left(c\right)\right)$ is of order $O\left(\frac{1}{\left(m_n - 1 \right) m_n} \right)$, or, equivalently, of order $O(1/h^2).$
\end{proof}

\subsection{Proof of Proposition \ref{prop:red}} \label{subsection:appl}
By Section \ref{section:conv} our sequence of volume-preserving diffeomorphisms $T_n$ converges in the $C^{\infty}$-topology and the limit diffeomorphism $T$ has topological entropy zero. In Proposition \ref{prop:h+1} we proved that $T$ admits a good linked approximation of type $(h,h+1)$ with speed $O\left(1/h^2\right)$. Hence, we can apply our criterion in Proposition \ref{prop:crit} and deduce that $T$ has zero entropy and $T \times T$ is loosely Bernoulli.

\subsection{Additional properties}
In fact, we can describe the approximation of the constructed diffeomorphism $T$ in an even more detailed way:
\begin{prop}
There exists $c>0$ such that the limit diffeomorphism $T$ admits an approximation with towers $\tau_n$ consisting of $N_n> c \cdot m_n$ consecutive passing of columns with height $m_n - 1$ and $N_n$ consecutive passing of columns with height $m_n $. 
\end{prop}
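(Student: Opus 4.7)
The plan is to construct a pair of towers $\tau_n^{(1)}, \tau_n^{(2)}$ of heights $N_n(m_n-1)$ and $N_n m_n$, respectively, whose bases are thin $\theta_2$- and $\theta_1$-slivers of the rectangles $C_0^{(1)}$ and $C_0^{(2)}$ from Subsection \ref{subsection:towers}. The key observation is that $q_{n+1}$ iterations of $R_{\alpha_{n+1},\alpha'_{n+1}}$ correspond exactly to $q_n q'_n$ consecutive passings through columns of height $m_n-1$; using $\bar q'_{n+1}=q_n q'_n m_n$ and the computation leading to (\ref{a1})--(\ref{a2}), the net effect is a shift of $\theta_2$ by $-q_n q'_n\Delta_n=-1/m_n + q_n q'_n(m_n-1)D_n$, where the correction is negligible compared to $1/m_n$ by (\ref{eq:D}). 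The analogous statement with the roles of $\theta_1$ and $\theta_2$ swapped, following (\ref{a3})--(\ref{a4}), applies to $\tau_n^{(2)}$.

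Concretely, set $L_n=\lfloor (1-4\varepsilon_n)m_n/(2 q_n q'_n)\rfloor$ and $N_n=L_n q_n q'_n$. By condition (\ref{eq:large}) we have $q_n q'_n\ll m_n$, hence $N_n\ge m_n/3$ for $n$ large, and we may take $c=1/3$. Define
\[
B_n^{(1)}=H_n^{-1}\bigl(C_0^{(1)}\cap\{\theta_2\in[y_0,y_0+1/m_n]\}\bigr),\quad B_n^{(2)}=H_n^{-1}\bigl(C_0^{(2)}\cap\{\theta_1\in[x_0,x_0+1/m_n]\}\bigr),
\]
with $y_0, x_0$ chosen so that the $L_n$ shifted slivers exactly tile the corresponding edge of $C_0^{(s)}$. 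The levels of $\tau_n^{(s)}$ are $T^i(B_n^{(s)})$ for $0\le i<N_n(m_n-1)$ when $s=1$, and for $0\le i<N_n m_n$ when $s=2$.

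The principal step is proving pairwise disjointness of the $N_n(m_n-1)$ levels of $\tau_n^{(1)}$. Since $N_n m_n\ll q_{n+1}q'_{n+1}$ by (\ref{eq:largeprime}), Lemma \ref{lem:convgen}(3) allows us to replace $T$ by $T_n$ with error at most $\varepsilon_n/4$. Writing $i=kq_{n+1}+j$ with $0\le k<L_n$ and $0\le j<q_{n+1}$, the level $T_n^i(B_n^{(1)})$ equals $H_n^{-1}$ applied to $R^j_{\alpha_{n+1},\alpha'_{n+1}}$ of a $\theta_2$-sliver of $C_0^{(1)}$ shifted by $-k q_n q'_n\Delta_n$ in $\theta_2$. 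For fixed $k$ and varying $j$, disjointness of these slivers follows from Lemma \ref{lem:disj} applied to the $q_{n+1}$ distinct levels of the original first tower; for varying $k$ at fixed $j$, the $L_n$ sliver positions tile the $\theta_2$-range of $R^j_{\alpha_{n+1},\alpha'_{n+1}}(C_0^{(1)})$ precisely because the sliver height $1/m_n$ matches the cycle shift and $L_n/m_n$ matches (up to the $\varepsilon_n$-margin) the $\theta_2$-extent of $C_0^{(1)}$. The same reasoning with coordinates swapped yields disjointness for $\tau_n^{(2)}$.

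Finally, the approximation error $\sum_c \mu(T(c)\triangle \sigma_n(c))$ decomposes, as in Proposition \ref{prop:h+1}, into a $T$-versus-$T_n$ contribution bounded by $2\varepsilon_{n+1}$ via an extension of Lemma \ref{lem:error3} (achievable by strengthening (\ref{eq:smalTranslCond}) to hold on cells of the new $\xi_n$, which only requires choosing $q_{\ell+1}$ sufficiently large) and a top-level wrap-around term equal to $\mu(T_n^{N_n(m_n-1)}(B_n^{(s)})\triangle B_n^{(s)})$; here $R^{N_n(m_n-1)}_{\alpha_{n+1},\alpha'_{n+1}}$ shifts $B_n^{(1)}$ in $\theta_2$ by $-L_n q_n q'_n \Delta_n\approx -1/(2q_n q'_n)$, which greatly exceeds the sliver height, making the symmetric difference equal to $2\mu(B_n^{(s)})=O(1/m_n^2)\to 0$. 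Substantiality holds because $\mu(\tau_n^{(s)})\approx 1/2$, and exhaustivity $\eta_n\to\varepsilon$ is obtained by adapting Lemmas \ref{lem:xi}--\ref{lem:eta} to the sliver bases via the diameter estimates of Remark \ref{rem:small diameter}. The chief technical obstacle will be the careful bookkeeping of the accumulated $D_n$-corrections over the $L_n$ cycles to guarantee that the slivers neither overlap nor spill outside $C_0^{(s)}$; this is controlled by (\ref{eq:D}), which forces $q_n q'_n(m_n-1)D_n$ to be much smaller than $1/m_n$ for $n$ large, so that $L_n$-fold accumulation remains well below the $\varepsilon_n$-safety margin built into the definition of $L_n$.
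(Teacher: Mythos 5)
Your proposal rests on the same core observation as the paper's: subdivide $C_0^{(1)}$ into $\theta_2$-slivers of height $\approx 1/m_n$ and $C_0^{(2)}$ into $\theta_1$-slivers, note that one full passage through all $q_nq_n'$ columns (i.e., $q_{n+1}$ resp.\ $\overline{q}'_{n+1}$ iterations of $R_{\alpha_{n+1},\alpha'_{n+1}}$) shifts a sliver by one increment, and control the accumulated $D_n$-drift via condition (\ref{eq:D}). The paper expresses this by introducing the slivers $C^{(s)}_{k(i,j),t}$ and $\tilde{C}^{(s)}_{k(i,j),t}$ and recording the two facts $R^{m_n-1}(C^{(1)}_{k,t})=C^{(1)}_{k+1,t}$ and $R^{tq_nq_n'(m_n-1)}(C^{(1)}_{0,t^*_n-1})\subset\tilde{C}^{(1)}_{0,t^*_n-t-1}$, thereby reading off the $\approx (1-4\varepsilon_n)m_n/2$ consecutive columns of the \emph{existing} tower of height $m_n-1$, whereas you repackage the same spiral as a single \emph{tall} tower of height $N_n(m_n-1)$ over a single sliver base $B_n^{(1)}$. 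These are two presentations of the same combinatorics, so your proposal is essentially the paper's approach; your $L_n$ is the paper's $t^*_n$, and your estimate of the accumulated drift $L_nq_nq_n'(m_n-1)D_n$ against the $\varepsilon_n$-margins is precisely the inequality $m_n^2D_n<\varepsilon_n/(2\overline{q}'_{n+1})$ that the paper invokes. Two small imprecisions worth noting: for the second tower the correct $\theta_1$-sliver width is $q_nq_n'/q_{n+1}=1/(m_n-1)$ rather than $1/m_n$ (this is what the paper's $C^{(2)}_{k,t}$ uses, matching the $\theta_1$-shift $\overline{q}'_{n+1}\alpha_{n+1}\equiv 1/(m_n-1)$), and you must in fact use the $\varepsilon_n$-trimmed slivers (as in the paper's $C^{(s)}_{k,t}$ vs.\ $\tilde{C}^{(s)}_{k,t}$) rather than literal height $1/m_n$ so that the drift stays inside the trimming gap; you acknowledge this but state the heights without the trim. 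Your extra verification of substantiality, exhaustivity, and approximation speed for the tall tower goes beyond what the paper writes out (the paper simply exhibits the column structure of the already-established approximation) but is consistent with it.
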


This tower structure is depicted in Figure \ref{fig:towerstructure}. 

In each set $C^{(1)}_{k(i,j)}$ we introduce the sets
\begin{equation*} \begin{split}
& C^{(1)}_{k(i,j), t} \\
= & \left( \frac{a_{n,1}(i,j)}{q_n} + \frac{2i+\tilde{\varepsilon}_n}{2q_n q^{\prime}_n}, \frac{a_{n,1}(i,j)}{q_n} +  \frac{2i+\tilde{\varepsilon}_n}{2q_n q^{\prime}_n}+ \frac{q_n q^{\prime}_n}{q_{n+1}} \right) \\
& \times \left[\frac{a^{\prime}_{n,1}(i,j)}{q^{\prime}_n}+ \frac{2j+3\varepsilon_n}{2q_n q^{\prime}_n}-k \cdot \Delta_n+ \frac{t \cdot q_nq^{\prime}_n+\varepsilon_n}{\bar{q}^{\prime}_{n+1}} , \frac{a^{\prime}_{n,1}(i,j)}{q^{\prime}_n}+ \frac{2j+3\varepsilon_n}{2q_n q^{\prime}_n}-k \cdot \Delta_n+ \frac{(t+1) \cdot q_nq^{\prime}_n-\varepsilon_n}{\bar{q}^{\prime}_{n+1}} \right] \\
& \times \left[\varepsilon_n, 1-\varepsilon_n\right]^{d-2}
\end{split} \end{equation*}
and
\begin{equation*} \begin{split}
& \tilde{C}^{(1)}_{k(i,j), t} \\
= & \left( \frac{a_{n,1}(i,j)}{q_n} + \frac{2i+\tilde{\varepsilon}_n}{2q_n q^{\prime}_n},\frac{a_{n,1}(i,j)}{q_n} +  \frac{2i+\tilde{\varepsilon}_n}{2q_n q^{\prime}_n} +\frac{q_n q^{\prime}_n}{q_{n+1}} \right) \\
& \times \Bigg[\frac{a^{\prime}_{n,1}(i,j)}{q^{\prime}_n}+ \frac{2j+3\varepsilon_n}{2q_n q^{\prime}_n}-k \cdot \Delta_n+ \frac{t \cdot q_nq^{\prime}_n}{\bar{q}^{\prime}_{n+1}}, \frac{a^{\prime}_{n,1}(i,j)}{q^{\prime}_n}+ \frac{2j+3\varepsilon_n}{2q_n q^{\prime}_n}-k \cdot \Delta_n+ \frac{(t+1) \cdot q_nq^{\prime}_n}{\bar{q}^{\prime}_{n+1}} \Bigg) \\
& \times \left[\varepsilon_n, 1-\varepsilon_n\right]^{d-2}
\end{split} \end{equation*}
for $t \in \N$, $0 \leq t < \lfloor\left(1-4\varepsilon_n \right) \cdot \frac{\bar{q}^{\prime}_{n+1}}{2q^2_n \left(q^{\prime}_n\right)^2}\rfloor \eqqcolon t^{\ast}_n$. By the same observation as in Remark \ref{rem:mapTower1} we have $R^{m_n-1}_{\a_{n+1}, \a^{\prime}_{n+1}}\left(C^{(1)}_{k,t}\right) =C^{(1)}_{k+1,t}$ for every $0 \leq k < q_n q^{\prime}_n-1$. From the relations on the rotation numbers in equations (\ref{a1}) and (\ref{a2}), as well as $m^2_n \cdot D_n< \frac{\varepsilon_n}{2\bar{q}^{\prime}_{n+1}}$ by condition (\ref{eq:D}), we have 
\begin{equation*}
R^{t \cdot q_n q^{\prime}_n \cdot \left(m_n - 1\right)}_{\a_{n+1}, \a^{\prime}_{n+1}}\left( C^{(1)}_{0,t^{\ast}_n -1} \right) \subset \tilde{C}^{(1)}_{0,t^{\ast}_n - t -1} 
\end{equation*}
for every $0 \leq t < \lfloor \left(1-4\varepsilon_n \right) \cdot \frac{\bar{q}^{\prime}_{n+1}}{2q^2_n \left(q^{\prime}_n\right)^2} \rfloor$. So for each value of $k$ and $t$ the sets $R^{i}_{\alpha_{n+1}, \alpha^{\prime}_{n+1}} \left( C^{(1)}_{k,t} \right)$, $0 \leq i < m_n - 1$, correspond to a column of the first tower. Hence, there are about $\left(1-4\varepsilon_n \right) \cdot \frac{\bar{q}^{\prime}_{n+1}}{2q_n q^{\prime}_n} = \left(1-4\varepsilon_n \right) \cdot \frac{m_n}{2}$ many columns.

\begin{figure}[hbtp] 
\begin{center}
\includegraphics[scale=0.7]{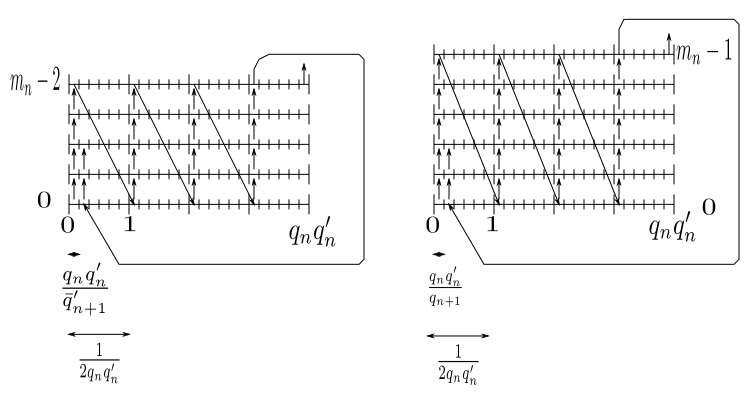}
\caption{General idea of the two towers and their levels. In the left tower each of the $q_nq^{\prime}_n$ sections of length $\frac{1}{2q_nq^{\prime}_n}$ in the base corresponds to a set $C^{(1)}_{k(i,j)}$. The sections of length $\frac{q_nq^{\prime}_n}{\bar{q}^{\prime}_{n+1}}$ contained in it correspond to the sets $C^{(1)}_{k(i,j),t}$ (except for the $\frac{\varepsilon_n}{\bar{q}^{\prime}_{n+1}}$-terms). Analogously, in the right tower each of the $q_nq^{\prime}_n$ sections of length $\frac{1}{2q_nq^{\prime}_n}$ in the base corresponds to a set $C^{(2)}_{k(i,j)}$ and the sections of length $\frac{q_nq^{\prime}_n}{\bar{q}^{\prime}_{n+1}}$ correspond to the sets $C^{(2)}_{k(i,j),t}$.}\label{fig:towerstructure}
\end{center}
\end{figure}

Analogously, in each set $C^{(2)}_{k(i,j)}$ we define the sets

\begin{equation*}\begin{split}
& C^{(2)}_{k(i,j),t} \\
= & \left[ \frac{a_{n,2}(i,j)}{q_n} + \frac{2i+3\varepsilon_n}{2q_n q^{\prime}_n}+\frac{k}{q_{n+1}}+\frac{t \cdot q_n q^{\prime}_n}{q_{n+1}}, \frac{a_{n,2}(i,j)}{q_n} +  \frac{2i+3\varepsilon_n}{q_n q^{\prime}_n}+\frac{k}{q_{n+1}}+\frac{(t+1) \cdot q_n q^{\prime}_n}{q_{n+1}}\right] \\
& \times \Bigg[\frac{a^{\prime}_{n,2}(i,j)}{q^{\prime}_n}+ \frac{2j+\tilde{\varepsilon}_n}{2q_n q^{\prime}_n}+k \cdot m_n \cdot D_n+ \frac{4\varepsilon_n}{\bar{q}^{\prime}_{n+1}}, \frac{a^{\prime}_{n,2}(i,j)}{q^{\prime}_n}+ \frac{2j+\tilde{\varepsilon}_n}{2q_n q^{\prime}_n}+k \cdot m_n\cdot D_n+ \frac{q_nq^{\prime}_n - 4 \varepsilon_n}{\bar{q}^{\prime}_{n+1}}\Bigg]  \\
& \times \left[\varepsilon_n, 1-\varepsilon_n\right]^{d-2}
\end{split}\end{equation*}

and 
\begin{equation*} \begin{split}
& \tilde{C}^{(2)}_{k(i,j),t} \\
= & \Bigg[ \frac{a_{n,2}(i,j)}{q_n} + \frac{2i+3\varepsilon_n}{2q_n q^{\prime}_n}+\frac{k}{q_{n+1}}+\frac{t \cdot q_n q^{\prime}_n}{q_{n+1}}, \frac{a_n(i,2j)}{q_n} +  \frac{2i+3\varepsilon_n}{2q_n q^{\prime}_n}+\frac{k}{q_{n+1}}+\frac{(t+1) \cdot q_n q^{\prime}_n}{q_{n+1}}\Bigg) \\
& \times \Bigg[\frac{a^{\prime}_{n,2}(i,j)}{q^{\prime}_n}+ \frac{2j+\tilde{\varepsilon}_n}{2q_n q^{\prime}_n}+ k \cdot m_n \cdot D_n + \frac{2\varepsilon_n}{\bar{q}^{\prime}_{n+1}}, \frac{a^{\prime}_{n,2}(i,j)}{q^{\prime}_n}+ \frac{2j+\tilde{\varepsilon}_n}{2q_n q^{\prime}_n} +k \cdot m_n \cdot D_n+ \frac{q_nq^{\prime}_n - 2\varepsilon_n}{\bar{q}^{\prime}_{n+1}}\Bigg]  \\
& \times \left[\varepsilon_n, 1-\varepsilon_n\right]^{d-2}
\end{split} \end{equation*}
for $t \in \N$, $0 \leq t < \lfloor \left(1-4\varepsilon_n \right) \cdot \frac{q_{n+1}}{2q^2_n \left(q^{\prime}_n\right)^2}\rfloor$. As in Remark \ref{rem:mapTower2} we have  $R^{m_n}_{\a_{n+1}, \a^{\prime}_{n+1}}\left(C^{(2)}_{k,t}\right) =C^{(2)}_{k+1,t}$ for every $0 \leq k < q_n q^{\prime}_n-1$. From the relations on the rotation numbers in equations (\ref{a3}) and (\ref{a4}), as well as $m^2_n D_n < \frac{\varepsilon_n}{2 \bar{q}^{\prime}_{n+1}}$ by condition (\ref{eq:D}), we also have
\begin{equation*}
R^{t \cdot q_n q^{\prime}_n \cdot m_n}_{\a_{n+1}, \a^{\prime}_{n+1}}\left( C^{(2)}_{0,0} \right) \subset \tilde{C}^{(2)}_{0,t} 
\end{equation*}
for every $0 \leq t < \lfloor \lfloor \left(1-4\varepsilon_n \right) \cdot \frac{q_{n+1}}{2q^2_n \left(q^{\prime}_n\right)^2}\rfloor$. So for each value of $k$ and $t$ the sets $R^{i}_{\alpha_{n+1}, \alpha^{\prime}_{n+1}} \left( C^{(2)}_{k,t} \right)$, $0 \leq i < m_n$, correspond to a column of the second tower. Hence, there are about $\left(1-4\varepsilon_n \right) \cdot \frac{q_{n+1}}{2q_n q^{\prime}_n}  = \left(1-4\varepsilon_n \right) \cdot \frac{m_n-1}{2}$ many columns.

\section{Proof of genericity} \label{sec:gen}
In this final section we prove that the set of smooth diffeomorphisms with topological entropy zero and loosely Bernoulli Cartesian square is a residual subset of $$\mathcal{B}=\overline{\left\{ g \circ S_{\alpha, \beta} \circ g^{-1} : g \in \text{Diff}^{\infty}\left(M, \nu\right), \ (\a, \b ) \in \T^2\right\}}^{C^{\infty}}.$$  We present a detailed proof of the statement in the special case of $M_0=\mathbb{T}^2 \times [0,1]^{d-2}$ in Subsection \ref{subsec:special case} and then describe its generalization to the general case in Subsection \ref{subsec:general}. 

\subsection{The case $\mathbb{T}^2 \times [0,1]^{d-2}$} \label{subsec:special case}
First of all, we will use Proposition \ref{prop:red} to prove the denseness of the set of diffeomorphisms with topological entropy zero and loosely Bernoulli Cartesian square in $$\mathcal{B}_0=\overline{\left\{ g \circ R_{\alpha, \beta} \circ g^{-1} : g \in \text{Diff}^{\infty}\left(M_0, \mu\right), \ (\a, \b ) \in \T^2\right\}}^{C^{\infty}}.$$
According to Proposition \ref{prop:red}, for $(A,B)\in \mathbb{T}^2$ and $k\in \mathbb{N}$, there exist a sequence $\left(T_{A,B,k,n}\right)_{n=1}^{\infty}$ in  $\mathcal{B}_0$, constructed as in Sections \ref{section:comb}-\ref{section:LB}, and $T_{A,B,k}\in$ $\mathcal{B}_0$ such that $\lim_{n\to\infty} d_{\infty}(T_{A,B,k,n},T_{A,B,k})=0$ and $d_{\infty}(T_{A,B,k},R_{A,B})<1/k.$ Each $T_{A,B,k}$ has topological entropy zero and loosely Bernoulli Cartesian square. We will fix a choice of such a sequence $(T_{A,B,k,n})_{n=1}^{\infty}$ for every $(A,B)\in\mathbb{T}^2$ and $k\in\mathbb{N}$ throughout Section \ref{sec:gen}.  
\begin{lem} \label{lem:dense}
The set $\mathcal{D}_0:=\{g\circ T_{A,B,k}\circ g^{-1}:(A,B)\in\mathbb{T}^2, k\in\mathbb{N}, g\in \text{Diff}^{\infty}(M_0,\mu)\}$ is a dense subset of $\mathcal{B}_0$ with respect to the $C^{\infty}$ topology, and it consists of diffeomorphisms with topological entropy zero and loosely Bernoulli Cartesian square. 
\end{lem}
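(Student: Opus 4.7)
The proof proposal divides naturally into two parts: first verifying that every element of $\mathcal{D}_0$ has the required dynamical properties (zero topological entropy and loosely Bernoulli Cartesian square), and then establishing the density of $\mathcal{D}_0$ in $\mathcal{B}_0$. For the first part, note that if $T = g \circ T_{A,B,k} \circ g^{-1}$, then $T$ is smoothly (hence topologically) conjugate to $T_{A,B,k}$, so its topological entropy equals that of $T_{A,B,k}$, which is zero by Proposition \ref{prop:red}. Similarly, $T \times T = (g \times g) \circ (T_{A,B,k} \times T_{A,B,k}) \circ (g \times g)^{-1}$ is measure-theoretically isomorphic to $T_{A,B,k} \times T_{A,B,k}$ (with respect to $\mu \times \mu$), so the loosely Bernoulli property is inherited.

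For density, I will use the fact that by definition of $\mathcal{B}_0$, the set $\{g \circ R_{A,B} \circ g^{-1} : g \in \text{Diff}^{\infty}(M_0, \mu),\ (A,B) \in \mathbb{T}^2\}$ is a dense subset. It therefore suffices to show that any element $g \circ R_{A,B} \circ g^{-1}$ of this set can be approximated arbitrarily well in the $C^{\infty}$ topology by elements of $\mathcal{D}_0$. The natural candidate is the sequence $(g \circ T_{A,B,k} \circ g^{-1})_{k \in \mathbb{N}}$, which lies in $\mathcal{D}_0$. Since $d_{\infty}(T_{A,B,k}, R_{A,B}) < 1/k \to 0$ by our choice of $T_{A,B,k}$ via Proposition \ref{prop:red}, the task reduces to the continuity of conjugation by a fixed diffeomorphism $g$ in the $C^{\infty}$ topology.

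The continuity of the map $f \mapsto g \circ f \circ g^{-1}$ is standard, and can be obtained exactly as in the proof of Lemma \ref{lem:konj}: for any $k \in \mathbb{N}_0$, the derivatives of order at most $k$ of $g \circ f \circ g^{-1}$ can be expressed as finite sums of products of derivatives of $g$ (of order at most $k$) evaluated at $f \circ g^{-1}$, with derivatives of $g^{-1}$ (of order at most $k$). Bounding the dependence on $f$ via the mean value theorem, one concludes that $d_k(g \circ f \circ g^{-1}, g \circ f' \circ g^{-1}) \leq C_k \cdot |||g|||_{k+1}^{k+1} \cdot d_k(f, f')$ for some constant $C_k$ depending only on $k$. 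Hence $g \circ T_{A,B,k} \circ g^{-1} \to g \circ R_{A,B} \circ g^{-1}$ in each $\text{Diff}^k(M_0, \mu)$ as $k \to \infty$, and consequently in $\text{Diff}^{\infty}(M_0, \mu)$ as well.

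I expect no serious obstacle in this argument: the dynamical properties pass transparently through smooth conjugation, the density reduction to the generating set of $\mathcal{B}_0$ is immediate, and the continuity estimate is essentially contained in Lemma \ref{lem:konj}. The only point requiring care is ensuring that the argument is written for a fixed (but arbitrary) $g$, since we cannot control the $|||g|||_{k+1}$ uniformly over all $g$; fortunately, for the density statement we only need approximation one $g \circ R_{A,B} \circ g^{-1}$ at a time, so this is not a problem.
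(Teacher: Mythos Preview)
Your argument is essentially the paper's own proof: both rest on the continuity of conjugation $\varphi\mapsto g\circ\varphi\circ g^{-1}$ in $C^\infty$, then use $T_{A,B,k}\to R_{A,B}$ to approximate $g\circ R_{A,B}\circ g^{-1}$, and both handle the dynamical properties via conjugacy invariance. One small omission: you argue $\overline{\mathcal{D}_0}\supset\mathcal{B}_0$ but never check $\mathcal{D}_0\subset\mathcal{B}_0$; the paper does this explicitly by noting $g\circ T_{A,B,k,n}\circ g^{-1}\in\mathcal{B}_0$ (these are conjugates of rotations) and passing to the limit using the same continuity, so you should add one line to that effect. Also, your Lipschitz-type bound $d_k(g f g^{-1},g f' g^{-1})\le C_k\,|||g|||_{k+1}^{k+1}\,d_k(f,f')$ is not quite what Lemma~\ref{lem:konj} gives (that lemma is specific to rotations, where all higher derivatives of $R_{\alpha,\alpha'}$ vanish); for general $f,f'$ the chain-rule expansion involves derivatives of $f$ too, but you only need continuity at each fixed $g$, which is standard and is exactly how the paper phrases it.
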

\begin{pr}
\noindent
From the compactness of $M_0$ and repeated applications of the chain rule, we see that for each $g\in\text{Diff}^{\infty}(M_0,\mu)$, the map from $\text{Diff}^{\infty}(M_0,\mu)$ to itself given by $\varphi \mapsto g \circ \varphi \circ g^{-1}$ is continuous with respect to the metric $d_{\infty}$. Thus
$\lim_{n\to\infty} d_{\infty}(gT_{A,B,k,n}g^{-1},gT_{A,B,k}g^{-1})=0$ for all $(A,B)\in\mathbb{T}^2, k\in\mathbb{N}, g\in\text{Diff}^{\infty}(M_0,\mu).$ By construction, $gT_{A,B,k,n}g^{-1}\in\mathcal{B}_0.$ Therefore $gT_{A,B,k}g^{-1}\in\mathcal{B}_0$ and $\mathcal{D}_0\subset\mathcal{B}_0.$ Since
$\lim_{k\to\infty}d_{\infty}(T_{A,B,k},R_{A,B})=0,$ we have $\lim_{k\to\infty}d_{\infty}(gT_{A,B,k}g^{-1},gR_{A,B}g^{-1})=0.$ Thus $gR_{A,B}g^{-1}\in\overline{\mathcal{D}}_0$ and $\mathcal{B}_0=\overline{\mathcal{D}}_0.$

Since $T_{A,B,k}$ has topological entropy zero and loosely Bernoulli Cartesian square, this is also true for each $g\circ T_{A,B,k}\circ g^{-1}$, because these properties are invariant under conjugation by a measure-preserving homeomorphism.
\end{pr}
Before we prove the genericity result in the case of $M_0$, we introduce the following definition.
\begin{dfn}
A measurable partial partition $\mathcal{P}$ of a probability space $(X,\mu)$ is said to $\delta$-\emph{generate} if for every measurable set $Y\subset X$ there is a set $Z$ consisting of a union of elements of $\mathcal{P}$ such that $\mu(Y\Delta Z)<\delta.$
\end{dfn}
\begin{rem} \label{rem:uniform eta}
As we showed in Lemma \ref{lem:eta}, there is a sequence of positive real numbers $(\delta_n)_{n=1}^{\infty}$ with $\delta_n \to 0$ such that the partial partition $\eta_n=\eta_{A,B,k,n}$ consisting of the levels in the union of the two towers in the construction of $T_{A,B,k,n}$ is $\delta_n$-generating. In addition, we see from the proof of Lemma \ref{lem:eta}, that we may choose $\delta_n$ independently of $A,B,k$. That is, even though the partial partitions $\eta_n$ depend on $A,B,k$, the convergence of $\eta_n$ to the partition into points as $n\to\infty$ is uniform in $A,B,k$.
\end{rem}

\begin{prop} \label{prop:residual}
The set of diffeomorphisms with topological entropy zero and loosely Bernoulli Cartesian square is a residual subset of $\mathcal{B}_0$ with respect to the $C^{\infty}$ topology.
\end{prop}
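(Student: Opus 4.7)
The plan is to exhibit a $G_\delta$ set $\mathcal{G}\subset\mathcal{B}_0$ that contains the dense set $\mathcal{D}_0$ from Lemma \ref{lem:dense} and is contained in the set of $T$ with topological entropy zero and $T\times T$ loosely Bernoulli. I take $\mathcal{G}=\mathcal{G}_1\cap\mathcal{G}_2\cap\mathcal{G}_3$, where $\mathcal{G}_1$ encodes uniform rigidity (giving zero topological entropy via Theorem \ref{thm:Glasner}), $\mathcal{G}_2$ encodes weak mixing of $T$ (so that $T\times T$ is ergodic and Theorem \ref{KScondition} can be invoked), and $\mathcal{G}_3$ encodes the Katok--Sataev condition for $T\times T$ at a countable dense family of partitions.

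For $\mathcal{G}_1$ I set $\mathcal{G}_1:=\bigcap_{n,k\in\N}\{T:\exists\,m\geq n,\,d_0(T^m,\mathrm{id})<1/k\}$, which is $G_\delta$; by Remark \ref{rem:rigid} each $T_{A,B,k}$ is uniformly rigid along $(q_{n+1}q'_{n+1})_n$, and uniform rigidity is conjugation invariant, so $\mathcal{D}_0\subset\mathcal{G}_1$. For $\mathcal{G}_2$ I use the classical fact that weak mixing is a $G_\delta$ property in the $C^\infty$ topology; each $T_{A,B,k}$ is weakly mixing by Proposition \ref{prop:h+1} combined with Theorem \ref{thm:wm}, so $\mathcal{D}_0\subset\mathcal{G}_2$. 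The main content is $\mathcal{G}_3$. Fix a countable family $(\mathcal{P}_i)_{i\in\N}$ of finite measurable partitions of $X\times X$ that is dense in the partition metric and whose atoms have null boundary (for instance, dyadic cube partitions). A crucial observation is that the constants $\alpha_0,c_2$ in Lemma \ref{fbar match on G} depend only on a uniform lower bound $r$ on the tower measures from Subsection \ref{subsection:towers} and on the uniform constant $c_1$ in the $O(1/h^2)$ approximation speed from Lemmas \ref{lem:error1} and \ref{lem:error3}; both bounds are preserved under measure-preserving conjugation, so one may choose $\alpha_0,c_2$ uniformly over all of $\mathcal{D}_0$. Picking $\alpha_j>0$ small enough that $2c_2\sqrt{3\alpha_j}<1/(2j)$ and $3\alpha_j\leq\alpha_0$, I define
\begin{equation*}
\mathcal{W}_{i,j,N}:=\left\{T\in\mathcal{B}_0:\exists\,n\geq N,\,\exists\,K\subset X\times X,\,(\mu\times\mu)(K)>2\alpha_j,\,\mathrm{diam}_{\overline{f}_{T\times T,\mathcal{P}_i,n}}(K)<\tfrac{1}{2j}\right\}
\end{equation*}
and set $\mathcal{G}_3:=\bigcap_{i,j,N}\mathcal{W}_{i,j,N}$.

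The openness of $\mathcal{W}_{i,j,N}$ in the $C^\infty$ topology follows because, for fixed $n$ and $\mathcal{P}_i$, a small $C^0$-perturbation $T'$ of $T$ disturbs the $T$-$\mathcal{P}_i$-names only on a set of small measure: the set $A(T,T',n,\mathcal{P}_i):=\{z\in X\times X:(T\times T)^\ell(z)$ and $(T'\times T')^\ell(z)$ lie in different atoms of $\mathcal{P}_i$ for some $\ell\leq n\}$ has $(\mu\times\mu)$-measure tending to $0$ as $\|T-T'\|_{C^0}\to 0$, because the atoms of $\mathcal{P}_i$ have null boundary and only finitely many iterates are involved. Starting from a witness $(n,K)$ for $T\in\mathcal{W}_{i,j,N}$ and setting $K':=K\setminus A(T,T',n,\mathcal{P}_i)$ produces a witness for $T'$ with some slack in both bounds. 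The inclusion $\mathcal{D}_0\subset\mathcal{G}_3$ follows by applying Lemma \ref{fbar match on G} to any $T\in\mathcal{D}_0$ with $\mathcal{P}=\mathcal{P}_i$ and $\alpha=3\alpha_j$: the resulting set $G_n\subset X^4$ has $\mu^4$-measure larger than $3\alpha_j$, and averaging over a base point $(x_0,y_0)$ produces a fiber $K=\{(x,y):(x_0,y_0,x,y)\in G_n\}$ with $(\mu\times\mu)(K)>2\alpha_j$ and $\overline{f}$-diameter at most $2c_2\sqrt{3\alpha_j}<1/(2j)$. Finally, any $T\in\mathcal{G}$ has topological entropy zero, $T\times T$ is ergodic, and the Katok--Sataev hypothesis of Theorem \ref{KScondition} holds for $T\times T$ at every $\mathcal{P}_i$; the standard $1$-Lipschitz dependence of $\overline{f}_{S,\cdot,n}$ on the partition metric upgrades this to every finite measurable partition, so $T\times T$ is loosely Bernoulli. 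Combined with Lemma \ref{lem:dense}, this proves residuality. The main technical obstacle is the openness verification, where one must simultaneously preserve both the measure lower bound and the $\overline{f}$-diameter upper bound by a single trimming of the witnessing $K$, uniformly for all $T'$ close to $T$.
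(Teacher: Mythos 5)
Your proof is essentially correct, but it takes a genuinely different route from the paper. The paper's proof builds the residual set directly at the level of periodic-approximation data: it fixes open neighborhoods $U_{A,B,k,n}$ of the constructed $T_{A,B,k,n}$'s requiring three explicit quantitative bounds (a $C^{l_{n+1}}$-closeness to $T_{A,B,k,n}$, a $C^0$-closeness of the $q_{n+1}q'_{n+1}$-powers, and the speed bound $\sum_{c\in\eta_n}\mu(\varphi(c)\triangle\sigma_n(c))<\frac{\mathcal{A}+1}{m_n(m_n-1)}$ against the fixed partial partition $\eta_n$), then forms $\Theta=\bigcap_n\bigcup_{s\geq n}\bigcup_k\bigcup_{(A,B),g}\,gU_{A,B,s,k}g^{-1}$; any $T\in\Theta\cap\mathcal B_0$ then inherits, from the conjugated tower structures, a good linked approximation of type $(h,h+1)$ with speed $O(1/h^2)$, so Proposition~\ref{prop:crit} and Theorem~\ref{thm:Glasner} finish the argument. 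You instead encode the conclusion-level Katok--Sataev $\overline{f}$-condition for $T\times T$ directly into a $G_\delta$ set $\mathcal G_3$, and add separate $G_\delta$'s for uniform rigidity and weak mixing. Your route is conceptually clean and avoids having to formulate the openness at the tower level, but it requires more auxiliary verification: (a) that the constants $r$ and $c_1$ can be chosen uniformly over all $T_{A,B,k}$ (they can: $\varepsilon_n\le 2/n$ gives $r\ge 1/4$ for $n$ large regardless of parameters, and the proof of Proposition~\ref{prop:h+1} gives $c_1\le 7$ using condition (F), so $\alpha_0,c_2$ are uniform); (b) that weak mixing is $G_\delta$ in $\mathcal B_0$ (true because it is $G_\delta$ in the weak topology, which is coarser than $C^\infty$, though you state it without justification); (c) the openness of $\mathcal W_{i,j,N}$, which you handle correctly but whose ``preserve both bounds simultaneously by one trimming'' step genuinely needs the strict inequality slack you built in; and (d) the upgrade from the countable dense family $\{\mathcal P_i\}$ to arbitrary finite partitions, which is not literally ``1-Lipschitz'' but works by the standard Markov-inequality shrinkage of $K_n$ combined with ergodicity of $T\times T$. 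The paper's approach is more economical because the conjugated towers are at hand to witness the approximation, whereas yours is more portable in the sense that it would apply verbatim to any dense family of examples to which Lemma~\ref{fbar match on G} applies with uniform constants.
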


\begin{pr}
We use the same approach as in \cite[Section 7]{AK}. For this purpose, we again consider the sequences of diffeomorphisms 
$\left(T_{A,B,k,n}\right)_{n=1}^{\infty}$ chosen at the beginning of this subsection.

Let $U_{A,B,k,n}$ be the following neighbourhood of the diffeomorphism $T_{A,B,k,n}$:
\begin{equation*}
\begin{split}
    U_{A,B,k,n} \coloneqq  \Bigg{\{} & \varphi \in \text{Diff}^{\infty}\left(M_0, \mu\right): \ d_{l_{n+1}}\left(T_{A,B,k,n}, \varphi \right) < \frac{2}{l_{n+1}}, \\
    & d_0\left(T_{A,B,k,n}^{q_{n+1}q^{\prime}_{n+1}}, \varphi^{q_{n+1}q^{\prime}_{n+1}}\right) < \varepsilon_n, \sum_{c \in \eta_n} \mu\left(\varphi\left(c\right) \triangle \sigma_n\left(c\right)\right) < \frac{\mathcal{A}+1}{m_n \cdot \left( m_n -1\right)} \Bigg{\}},
    \end{split}
\end{equation*}
where the sequence $(l_n)_{n\in\mathbb{N}}$ is as in Lemma \ref{lem:convgen},  and $\mathcal{A}$ is a constant, obtained from Proposition \ref{prop:h+1},  such that $\sum_{c \in \eta_n} \mu\left(T_{A,B,k,n}\left(c\right) \triangle \sigma_n\left(c\right)\right) \le \mathcal{A}/(m_n\cdot (m_n-1))$. 
We define 
$$\Theta_{n,k}:=\bigcup_{(A,B)\in \mathbb{T}^2}\bigcup_{g\in\text{Diff}^{\infty}(M_0,\mu)}gU_{A,B,k,n}g^{-1}.$$ Since the neighbourhoods $U_{A,B,k,n}$ are open, the sets $\Theta_{n,k}$ are open as well. Then
$$\Theta:=\bigcap_{n=1}^{\infty} \bigcup_{s= n}^{\infty} \bigcup_{k=1}^{\infty}
\Theta_{s,k}$$ is a $G_{\delta}$-set as the countable intersection of open sets.

For all the sequences $\left(T_{A,B,k,n}\right)_{n \in \mathbb{N}}$, the limit diffeomorphism $T_{A,B,k}$ belongs to $U_{A,B,k,n}$  for every $n \in \mathbb{N}$ by construction (see Lemma \ref{lem:convgen}, Lemma \ref{lem:conv}, and the proof of Proposition \ref{prop:h+1}). Thus $\Theta\cap\mathcal{B}_0$ contains all the diffeomorphisms of the form $g\circ T_{A,B,k}\circ g^{-1}$. By Lemma \ref{lem:dense} the set of such diffeomorphisms forms a dense subset of  $\mathcal{B}_0$. Therefore $\Theta\cap\mathcal{B}_0$ is a dense $G_{\delta}$-subset of $\mathcal{B}_0.$

In the next step we show that $T \in \Theta$ admits a good linked approximation of type $\left(h,h+1\right)$ with speed $O \left(1/h^2\right)$. For any $T \in  \Theta$ there exist a sequence $\left(n_j\right)_{j \in \mathbb{N}}$ with $n_j \rightarrow \infty$ as $j \rightarrow \infty$, a sequence $(k_j)_{j \in \mathbb{N}}$ in $\mathbb{N}$, a sequence $(A_j,B_j)_{j \in \mathbb{N}}$ in $\mathbb{T}^2$, and a sequence of diffeomorphisms $(g_j)_{j \in \mathbb{N}}$ in $\text{Diff}^{\infty}(M_0,\mu)$ such that $T \in g_jU_{A_j,B_j,k_j,n_j}g_j^{-1}$.  By Remark \ref{rem:uniform eta}, $\eta_{n_j} \rightarrow \varepsilon$ as $j \rightarrow \infty$, where $\eta_{n_j}$ is the partial partition consisting of levels of the unions of the two towers for the diffeomorphism $T_{A_j,B_j,k_j,n_j}$. Then $T$ admits a good linked approximation of type $\left(h,h+1\right)$ of speed $O\left( \frac{1}{m_{n_j} \cdot \left(m_{n_j}-1\right)} \right)$ by the definition of the neighbourhoods $U_{A_j,B_j,k_j,n_j}$. 

Moreover, $T$ is uniformly rigid along the sequence $\left(q_{n_j+1}q^{\prime}_{n_j+1} \right)_{j \in \mathbb{N}}$ because of $T_{A_j,B_j,k_j,n_j}^{q_{n_j+1}q^{\prime}_{n_j+1} } = \text{id}$ and the closeness between $g_jT_{A_j,B_j,k_j,n_j}g^{-1}_j$ and $T$. Hence, every $T \in \Theta$ has topological entropy zero by Theorem \ref{thm:Glasner}. 

Thus, the set of diffeomorphisms in $\mathcal{B}_0$ admitting a good linked approximation of type $\left(h,h+1\right)$ with speed $O\left( 1/h^2\right)$ contains a dense $G_{\delta}$-set. Since this type of approximation implies the loosely Bernoulli property for the Cartesian square by Proposition \ref{prop:crit}, we conclude that the set of diffeomorphisms $T \in \mathcal{B}_0$ with topological entropy zero and loosely Bernoulli Cartsesian square is a residual subset in the $C^{\infty}$-topology.
\end{pr}

\subsection{The general case} \label{subsec:general}
In the general case of a smooth compact connected manifold $M$ admitting an effective $\mathbb{T}^2$ action we let $\tilde{T}_{A,B,k}$ and $\tilde{T}_{A,B,k,n}$ be the diffeomorphisms obtained from $T_{A,B,k}$ and $T_{A,B,k,n}$, respectively, as in Section \ref{subsection:First steps} with the aid of the map $G:\mathbb{T}^2 \times [0,1]^{d-2} \to M$ from Proposition \ref{prop:G}. From the discussion preceding Proposition \ref{prop:red}, we see that $T_{A,B,k,n}$ and $T_{A,B,k}$ may be chosen so that $\lim_{n\to\infty}d_{\infty}(\tilde{T}_{A,B,k,n},\tilde{T}_{A,B,k})=0$ and $d_{\infty}(\tilde{T}_{A,B,k},S_{A,B})<1/k$. Now let $\mathcal{D}=\{g\tilde{T}_{A,B,k}g^{-1}:(A,B)\in\mathbb{T}^2, k\in\mathbb{N},g\in\text{Diff}^{\infty}(M,\nu)\}.$ The same proof as for Proposition \ref{lem:dense} shows that $\mathcal{D}$ is a dense subset of $\mathcal{B}.$ As observed in Subsection \ref{subsection:First steps}, each $\tilde{T}_{A,B,k}$ is loosely Bernoulli and has topological entropy zero.

To prove the analog of Proposition \ref{prop:residual} for $(M,\nu)$
we transfer the partial partition $\eta_n=\eta_{A,B,k,n}$ from the construction of $T_{A,B,k,n}$ to a partial partition $\tilde{\eta}_n = \Meng{G(c)}{c \in \eta_n}$ on $M$. Note that if ${\eta_n}$ is $\delta$-generating, then so is $\tilde{\eta}_n$. Therefore Remark \ref{rem:uniform eta} applies to the partial partitions $\tilde{\eta}_n$ as well, and the proof of genericity then follows along the lines of the proof in the special case. \\

\noindent\emph{Acknowledgement:} The second author would like to thank the Indiana University Bloomington for hospitality at a visit in August 2016 when large parts of this paper were completed. He also acknowledges financial support by the ``Forschungsfonds'' at the Department of Mathematics, University of Hamburg.

\end{document}